\newtheorem{theo}{Theorem}[section]
\newtheorem{prop}[theo]{Proposition}
\newtheorem{defi}[theo]{Definition}
\newtheorem{lemm}[theo]{Lemma}
\newtheorem{coro}[theo]{Corollary}
\newtheorem{hope}[theo]{Hope}
\newtheorem{conj}[theo]{Conjecture}
\newtheorem{rema}[theo]{Remark}
\newcommand{\wh}{\widehat}
\newcommand{\wt}{\widetilde}
\newcommand{\mb}{\mathbb}
\newcommand{\mc}{\mathcal}
\newcommand{\mf}{\mathfrak}
\newcommand{\sub}{\subseteq}
\newcommand{\Zp}{\mathbb{Z}_{p}}
\newcommand{\Qp}{\mathbb{Q}_{p}}
\newcommand{\Fp}{\mathbb{F}_{p}}
\newcommand{\Q}{\mathbb{Q}}
\newcommand{\R}{\mathbb{R}}
\newcommand{\C}{\mathbb{C}}
\newcommand{\ol}{\overline}
\newcommand{\G}{\Gamma}
\newcommand{\cF}{\mathcal{F}}
\newcommand{\bu}{\bullet}
\newcommand{\Z}{\mathbb{Z}}
\newcommand{\A}{\mathbb{A}}
\newcommand{\pr}{\prime}
\newcommand{\tH}{\widetilde{H}}
\newcommand{\tG}{\widetilde{G}}
\newcommand{\tX}{\widetilde{X}}
\newcommand{\an}{\mathrm{an}}
\newcommand{\rank}{\mathrm{rank}}
\DeclareMathOperator{\Ext}{Ext}
\DeclareMathOperator{\Hom}{Hom}
\DeclareMathOperator{\Sh}{Sh}
\DeclareMathOperator{\Spa}{Spa}
\DeclareMathOperator{\Ind}{Ind}
\DeclareMathOperator{\Map}{Map}
\DeclareMathOperator{\GL}{GL}
\DeclareMathOperator{\SL}{SL}
\DeclareMathOperator{\GSp}{GSp}
\DeclareMathOperator{\Ker}{Ker}
\DeclareMathOperator{\Spd}{Spd}
\title{Perfectoid Shimura varieties and the Calegari--Emerton conjectures}
\author{David Hansen} 
\address{David Hansen\\Department of Mathematics\\
National University of Singapore, 10 Lower Kent Ridge Road, Singapore 119076}
\email{dhansen@nus.edu.sg}
\urladdr{http://www.davidrenshawhansen.net}
\author{Christian Johansson}
\address{Christian Johansson\\Department of Mathematical Sciences, Chalmers University of Technology and the University of Gothenburg, 412 96 Gothenburg, Sweden}
\email{chrjohv@chalmers.se}
\urladdr{http://www.math.chalmers.se/~chrjohv/}
\date{\today}
\begin{document}

\begin{abstract}
We prove many new cases of a conjecture of Calegari-Emerton describing the qualitative properties of completed cohomology. The heart of our argument is a careful inductive analysis of completed cohomology on the Borel-Serre boundary. As a key input to this induction, we prove a new perfectoidness result for towers of minimally compactified Shimura varieties, generalizing previous work of Scholze.
\end{abstract}

\maketitle

\tableofcontents

\section{Introduction}

\subsection{Motivation for completed cohomology}
This paper is motivated by the notion of reciprocity in the Langlands program. Let $G / \mathbb{Q}$ be a connected reductive group. Roughly speaking, reciprocity is the expectation that there should be some precise relationship between

\begin{itemize}
\item algebraic automorphic representations $\pi$ of $G(\mathbb{A_Q})$, and

\smallskip

\item $p$-adic Galois representations $\rho: \mathrm{Gal}(\ol{\Q} / \Q) \to \, ^L G(\overline{\Q}_p) $ which are geometric in the sense of Fontaine-Mazur. 
\end{itemize}

For a more precise conjectural formulation of this relationship, we refer the reader to \cite{clo, bg}. While there are many partial results, the general problem of reciprocity seems very difficult to attack, for (at least) two reasons:

\begin{enumerate}
\item  Algebraic automorphic representations are inherently of an archimedean/real-analytic nature, while $p$-adic Galois representations are (of course) inherently $p$-adic. 

\smallskip

\item Algebraic automorphic representations are rigid, while $p$-adic Galois representations naturally deform into positive-dimensional families.
\end{enumerate}

These observations suggest that one should try to bridge the gap, by seeking a genuinely $p$-adic variant of the notion of automorphic representation, which is flexible enough to accommodate all $p$-adic Galois representations.  At present, the most satisfactory theory of ``$p$-adic automorphic representations'' is the notion of \emph{completed (co)homology}, introduced by Emerton \cite{em}. 

\medskip

Let us recall the key definitions; we refer the reader to the body of the paper for any unexplained notation. Fix a connected reductive group $G/ \Q$. Let $A \sub G$ be the maximal $\Q$-split central torus, and let $K_\infty \sub G(\R)$ be a maximal compact subgroup. Let $X^G=G(\R)/A(\R)K_\infty$ be the (connected) symmetric space for $G$; we write $X$ for $X^G$ if $G$ is clear. For any open compact subgroup $K \sub G(\A_f)$, we have the associated locally symmetric space $X_K = G(\Q)^+ \backslash (X \times G(\A_f)) / K$.

\begin{defi} Let $K^{p} \sub G(\A_{f}^{p})$ be any open compact subgroup. Then we define completed cohomology for $G$ with tame level $K^p$ as 
\[ \wt{H}^{\ast}(K^p) = \varprojlim_{n} \varinjlim_{K_p \sub G(\Qp)} H^{\ast}(X_{K^p K_p}, \Z/p^n).
\]
Similarly, we define completed homology for $G$ with tame level $K^p$ as  
\[ \wt{H}_{\ast}(K^p) = \varprojlim_{K_p \sub G(\Qp)} H_{\ast}(X_{K^p K_p}, \Zp).
\]
We also define compactly supported completed cohomology $\wt{H}^{\ast}_c(K^p)$ and completed Borel-Moore homology $\wt{H}_{\ast}^{BM}(K^p)$ by the obvious variants on these recipes.
\end{defi}

By construction, these spaces admit commuting actions of $G(\Qp)$ and a ``big'' Hecke algebra $\mathbb{T}(K^p)$, and the $G(\Qp)$-actions are continuous for the natural topologies. Moreover, these spaces are not ``too big''. In particular, they are all $p$-adically separated and complete with bounded $p^\infty$-torsion. Additionally, $\wt{H}_{\ast}$ and $\wt{H}_{\ast}^{BM}$ are finitely generated as modules over the completed group ring $\Zp \llbracket K_p \rrbracket$ for any open compact subgroup $K_p \sub G(\Qp)$, while $\wt{H}^{\ast}(K^p)[\tfrac{1}{p}]$ and $\wt{H}^{\ast}_c(K^p)[\tfrac{1}{p}]$ are naturally admissible unitary $\Qp$-Banach space representations of $G(\Qp)$. 

\medskip

The main motivations for considering completed (co)homology are summarized in the following conjecture, which we don't attempt to formulate precisely. For a more careful discussion, we refer the reader to \cite{ce} and \cite{em-icm}.

\begin{hope} Let $\psi: \mathbb{T}(K^p) \to \ol{\Q}_p$ be a system of Hecke eigenvalues occurring in $\wt{H}^{\ast}(K^p)[\tfrac{1}{p}]$. Then there exists a continuous, odd, almost everywhere unramified Galois representation $\rho_{\psi} : \mathrm{Gal}(\ol{\Q} / \Q) \to \, ^C G(\ol{\Q}_p)$ which matches $\psi$ in the usual sense. Moreover, the $\psi$-isotypic part of $\wt{H}^{\ast}(K^p)[\tfrac{1}{p}]$, as a $\Qp$-Banach space representation of $G(\Qp)$, should (up to multiplicities) depend only on $\rho_{\psi} |_{\mathrm{Gal}(\ol{\Q}_p / \Qp)}$. 

Finally, \emph{every} (suitable) continuous, odd, almost everywhere unramified Galois representation $\rho: \mathrm{Gal}(\ol{\Q} / \Q) \to \, ^C G(\overline{\Q}_p) $ should occur in this way. 
\end{hope}

Here $^C G$ denotes the $C$-group of $G$ as defined in \cite{bg}, which is an extension of $^L G$. When $G=\mathrm{GL}_2 / \mathbb{Q}$, this is (an imprecise version of) a theorem of Emerton \cite{em-lgc}. However, in general, very little is known. As mentioned, the precise formulation of this conjecture should not be taken too seriously. The reader wondering about the appearance of the $C$-group and what ``suitable'' means might want to consider the case $G=\mathrm{PGL}_2 / \Q$.

\subsection{Main results}

In this paper, we study the qualitative properties of completed (co)homology, which are encapsulated in a beautiful conjecture of Calegari--Emerton. To state this conjecture, we need a small amount of additional notation. If $G / \Q$ is a connected reductive group, we define nonnegative integers $l_0 = \rank\,G(\R) - \rank\,A(\R)K_\infty$ and $q_0 = \frac{\dim X^G - l_0}{2}$. Roughly speaking, for semisimple groups $l_0$ measures the failure of $G(\R)$ to admit discrete series representations, while $q_0$ is the lowest degree in which the locally symmetric spaces $X_K$ should have ``interesting'' cohomology. 

\begin{conj}[Calegari--Emerton] \label{ceconj} Let $G /\Q$ be a connected reductive group. Let $q_0$ and $l_0$ be the invariants of $G$ defined above. Let $K^p \sub G(\A_{f}^{p})$ be any open compact subgroup. Then

\begin{enumerate}
\item For all $i > q_0$, $\wt{H}^{i}_{c}(K^p) = \wt{H}^i(K^p)=0$.

\smallskip

\item For all $i > q_0$, $\wt{H}_{i}^{BM}(K^p) = \wt{H}_i(K^p)=0$, and $\wt{H}_{q_0}^{BM}(K^p)$ and $\wt{H}_{q_0}(K^p)$ are $p$-torsion-free.

\smallskip

\item For any compact open pro-$p$ subgroup $K_p \sub G(\Qp)$, the groups $\wt{H}_{i}(K^p)$ and $\wt{H}_{i}^{BM}(K^p)$ have codimension $ \geq q_0 + l_0 - i$ over the completed group ring $\Zp \llbracket K_p \rrbracket$ for any $i < q_0$.

\smallskip

\item The groups $\wt{H}_{q_0}(K^p)$ and $\wt{H}_{q_0}^{BM}(K^p)$ have codimension exactly $l_0$.
\end{enumerate}

\end{conj}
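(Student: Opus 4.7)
First, I would reduce the codimension statements (3) and (4) to the vanishing statements (1) and (2). Since $\wt{H}_*$ and $\wt{H}_*^{BM}$ are finitely generated over the Auslander--Gorenstein ring $\Zp\llbracket K_p \rrbracket$, their codimensions are read off from the spectral sequence
\[ E_2^{i,j} = \Ext^i_{\Zp\llbracket K_p\rrbracket}(\wt{H}_{-j}(K^p), \Zp\llbracket K_p\rrbracket) \Rightarrow \wt{H}^{i+j}(K^p), \]
so codimension bounds on homology are equivalent to vanishing bounds on cohomology, and the torsion-freeness in (2) translates to a depth statement. Poincaré--Lefschetz duality at finite level, taken carefully in the limit, further identifies $\wt{H}^*_c \leftrightarrow \wt{H}_*$ and $\wt{H}^* \leftrightarrow \wt{H}_*^{BM}$. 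The whole conjecture therefore reduces to proving $\wt{H}^i_c(K^p)=0$ and $\wt{H}^i(K^p)=0$ for $i > q_0$.

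Second, I would proceed by induction on $\dim X^G$ using the Borel--Serre compactification $\overline{X}_K$. The long exact sequence of the pair $(\overline{X}_K, \partial \overline{X}_K)$, combined with the stratification $\partial \overline{X}_K = \bigsqcup_{[P]} e(P)$ indexed by proper $\Q$-parabolics $P = M_P N_P$, reduces the vanishing of $\wt{H}^i_c$ in degree $>q_0$ to two ingredients: vanishing of $\wt{H}^i(X_K)$ in the same range, and vanishing of $\wt{H}^i$ of each boundary stratum $e(P)$. Each $e(P)$ is a nilmanifold bundle over a locally symmetric space for $M_P$, so a Hochschild--Serre/Leray spectral sequence (involving Lie algebra cohomology of $N_P$) combined with the inductive hypothesis for $M_P$ controls the stratum contribution, once the compatibility of the numerical invariants $q_0(G), q_0(M_P), l_0(M_P), \dim N_P$ is checked --- a standard parabolic root combinatorial verification.

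Third, controlling $\wt{H}^i(X_K)$ in high degree --- the content that cannot come from the boundary --- is where the new perfectoidness result enters. When $G$ carries a Shimura datum, the minimally compactified Shimura variety $\scx^*_{K^p}$ at infinite level is perfectoid, and the Hodge--Tate period map $\pi_{\mathrm{HT}}: \scx^*_{K^p} \to \mathscr{F}\ell$ into the associated flag variety (which has coherent cohomological dimension $q_0$, since $l_0=0$ in the Shimura case) yields almost vanishing of $R^i\pi_{\mathrm{HT},*}(\sco^+/p)$ for $i > q_0$. A Leray spectral sequence then supplies $\wt{H}^i(K^p)/p^n = 0$ almost, which upgrades integrally via the bounded $p^\infty$-torsion of completed cohomology. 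For reductive $G$ without a Shimura datum, this Shimura input is inserted via the Borel--Serre induction by applying it to Levi subgroups of parabolics whose Levis do come from Shimura data.

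The main obstacle, and the technical heart of the argument, is the new perfectoidness result itself. Scholze's original argument exploited the universal abelian variety over a PEL Shimura variety, which has no direct analogue in greater generality; moreover the perfectoid structure must be established all the way out to the boundary strata of the minimal compactification, which are themselves Shimura varieties attached to Levis of $\Q$-parabolics, so the argument is genuinely inductive on both the parabolic and the compactification sides simultaneously. Meshing this perfectoid input with the combinatorial Borel--Serre induction --- reconciling the strata of two distinct compactifications, and propagating almost vanishing through the limit --- is the delicate interface the proof must negotiate.
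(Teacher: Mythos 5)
Your proposal correctly identifies the two broad ingredients (perfectoid Shimura varieties and Borel--Serre boundary analysis), but the logical architecture is inverted in a way that would block the proof, and two structural ideas that make the paper's argument work are missing.

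\emph{The direction of the reduction is backwards.} You propose to obtain $\wt{H}^i(K^p)$ vanishing directly from the Hodge--Tate period map, and then deduce $\wt{H}^i_c(K^p)$ vanishing by the long exact sequence of the pair $(\overline{X}_K,\partial\overline{X}_K)$. But the perfectoid argument (Scholze's, and the paper's generalization of it) naturally yields vanishing of \emph{compactly supported} completed cohomology, not ordinary: the Leray spectral sequence along $\pi_{\mathrm{HT}}$ computes the cohomology of the minimal compactification $\mathcal{X}^*_{K^p}$ with sections supported away from the (Zariski-closed) boundary, which is $\wt{H}^i_c$. The paper's Step Two then propagates this to $\wt{H}^i$ via the boundary: one shows vanishing of $H^i_c$ on each boundary stratum $Z$ of $\overline{X}_K$ and uses excision. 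Your proposal needs the implication in the opposite direction, which you have no mechanism to supply.

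\emph{The stratification and the linear/Hermitian split are essential and absent.} You index boundary strata by all proper $\Q$-parabolics $P$ and propose to invoke an inductive hypothesis for the Levi $M_P$. But a generic Levi $M_P$ carries no Shimura datum, so no perfectoid input is available for it, and the crude cohomological-dimension bound from \cite{bs} is not strong enough to close the excision argument. The paper instead uses the \emph{coarser} stratification pulled back along Zucker's map $\overline{X}_K \to X^*_K$, indexed by maximal parabolics $Q$, and crucially exploits the almost-direct-product decomposition $M_Q = M_{Q,\ell}\cdot M_{Q,h}$ of the Levi into a \emph{linear} part and a \emph{Hermitian} part. The strong ($\wt{H}^i_c$, perfectoid) bound is applied only to the Hermitian factor $M_{Q,h}$, which is again of pre-abelian type (Lemma \ref{boundary components}); the weak bound from \cite{bs} suffices for the linear factor; and Lemma \ref{numerics} shows the two bounds add up exactly. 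This is what Remark \ref{remark on stratifications} calls the reason for the coarse stratification; with the fine stratification the bookkeeping fails. Your appeal to ``Lie algebra cohomology of $N_P$'' is also not what the paper uses: the unipotent contribution is killed by the injectivity/acyclicity statement for smooth induced representations of pro-$p$ unipotent groups (Proposition \ref{injectives are acyclic}), not by a Lie-theoretic computation.

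\emph{Part (4) is not a corollary of (1)--(2).} The duality spectral sequences only give the lower bound $cd(\wt{H}_{q_0}) \geq l_0$; the reverse inequality requires genuine input, namely the Calegari--Emerton result on corank zero of $\wt{H}^{q_0}$ (Theorem \ref{codimension 0}, via limit multiplicities) for the derived group and, in the general reductive case, the Leopoldt conjecture for $Z(G)$ (Proposition \ref{leopoldt and codimension} and the argument of Corollary \ref{general reductive coro}). Your reduction step silently drops this.

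Finally, the paper does not run an induction on $\dim X^G$. The perfectoid $\wt{H}^i_c$ vanishing is proved uniformly for all pre-abelian type connected Shimura data at once (Corollary \ref{compactly supported vanishing}), and the boundary argument consumes this result for the Hermitian parts $M_{Q,h}$ directly, not as an inductive hypothesis on dimension.
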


The individual portions of this conjecture are far from independent, and in fact there are natural implications $(1) \Rightarrow (2) \Rightarrow (3)$. Amusingly, these implications are ``asymmetric'' in the sense that (1) for $\tilde{H}^\ast$ implies (2) for $\tilde{H}_{\ast}$ implies (3) for $\tilde{H}_{\ast}^{BM}$, and similarly (1) for $\tilde{H}^{\ast}_c$ implies (2) for $\tilde{H}_{\ast}^{BM}$ implies (3) for $\tilde{H}_{\ast}$.

\medskip

Let us discuss what was previously known about this conjecture.
\begin{itemize}

\item For some groups of small rank (e.g. $\GL_2$, or $\mathrm{Res}_{K/\Q}\GL_2$ for $K/\Q$ quadratic, or $\mathrm{GSp}_4$), one can prove Conjecture \ref{ceconj} by hand using various tricks involving the congruence subgroup property, the cohomological dimension bounds of \cite{bs}, Poincar\'e duality, etc. However, these methods quickly run out of steam. 

\smallskip

\item When $G$ is semisimple and $l_0 = 0$, part (4) of the conjecture was proved by Calegari--Emerton \cite{ce-annals}, as a consequence of Matsushima's formula and limit multiplicity results for discrete series representations. 

\smallskip

\item When $G$ admits a Shimura datum of Hodge type, Scholze proved part (1) of Conjecture \ref{ceconj}, \emph{but for $\wt{H}^{\ast}_c$ only}, by perfectoid methods \cite{scho}. Shen \cite{shen} later treated the case when $G$ admits a \emph{compact} Shimura variety of abelian type and satisfies $l_0(G)=0$.

\smallskip

\item For the unitary Shimura varieties treated in \cite{caraiani-scholze2}, Conjecture \ref{ceconj}(1) for $\wt{H}^\ast$ follows from \cite[Theorem 2.6.2, Lemma 4.6.2]{caraiani-scholze2}. We make some further comments in Remark \ref{comments on cs}.
\end{itemize}

The main result of this paper is the following theorem (cf. Theorems \ref{main semisimple}, \ref{main reductive}, and \ref{general reductive}).

\begin{theo}\label{maintheorem} Let $G / \Q$ be a semisimple group such that $X$ is a Hermitian symmetric space and $(G,X)$ is a connected Shimura datum of pre-abelian type. Then Conjecture \ref{ceconj} is true for $G$.

More generally, let $G / \Q$ be a connected reductive group such that $Z(G)$ satisfies the Leopoldt conjecture and such that $G^{der}$ admits a connected Shimura datum of pre-abelian type.  Then Conjecture \ref{ceconj} is true for $G$.

Moreover, there exists a (computable) $j\leq q_0$ such the natural maps $\wt{H}_c^i \to \wt{H}^i$ and $\wt{H}_i \to \wt{H}_i^{BM}$ are isomorphisms for all $i > j$, and surjective in degree $i=j$.
\end{theo}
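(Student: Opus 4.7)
The overall plan is to reduce all four parts of Conjecture \ref{ceconj} to part (1) for $\widetilde H^\ast_c$, and to bootstrap that vanishing from the Hodge-type setting treated by Scholze via an induction on the Borel--Serre boundary. Parts (2) and (3) follow from (1) by the implications noted after the conjecture, while (4) is obtained by combining the same induction with the limit-multiplicity input of Calegari--Emerton in the $l_0=0$ case applied to the appropriate Levi stratum. The general reductive statement is reduced to the semisimple one by a Hochschild--Serre spectral sequence for $1 \to G^{der} \to G \to G/G^{der} \to 1$; here the Leopoldt hypothesis on $Z(G)$ is used to kill the contribution of the central torus in degrees greater than $q_0$.

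For the base of the induction, one needs to extend Scholze's vanishing of $\widetilde H^\ast_c$ above $q_0$ from Hodge type to pre-abelian type, and this is where the new perfectoidness input announced in the abstract enters. I would show that for a connected Shimura datum $(G,X)$ of pre-abelian type with $X$ Hermitian, the tower of minimal compactifications $\{X^*_{K^p K_p}\}_{K_p}$ becomes perfectoid in the limit and carries a Hodge--Tate period map to the compact dual flag variety $\mathscr{F}\ell$. The natural route is to propagate perfectoidness through the standard chain of central isogenies and product embeddings linking $(G,X)$ to a Hodge-type datum; since the induced maps on minimal compactifications are finite, perfectoidness should descend and extend in the expected way. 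Scholze's argument then identifies $R\varinjlim_{K_p} R\Gamma_c(X_{K^p K_p}, \mathbb{F}_p)$ almost with a complex on $\mathscr{F}\ell$ of amplitude $\leq q_0 = \dim_{\mathbb{C}} X$, giving part (1) for $\widetilde H^\ast_c$.

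To pass from $\widetilde H^\ast_c$ to $\widetilde H^\ast$, and to extract the isomorphism range $\widetilde H^i_c \to \widetilde H^i$ for $i \geq j$, I would use the long exact sequence relating these to the completed cohomology $\widetilde H^\ast(\partial)$ of the Borel--Serre boundary. The boundary is stratified by rational parabolics $P = MN$ of $G$, and each stratum fibers over a locally symmetric space for the Levi $M$ with fiber a compact nilmanifold for $N(\mathbb R)$; passing to the inverse limit and applying a Hochschild--Serre spectral sequence, the boundary contribution is expressible through completed (co)homology for the groups $M$, with coefficients in Lie algebra cohomology of $\mathfrak n$. Crucially, Hermitian Levis of a pre-abelian type $G$ are reductive groups whose derived subgroups again admit connected Shimura data of pre-abelian type (a consequence of the Hermitian Satake classification together with the robustness of the abelian-type condition under taking Levis) and whose centers inherit the Leopoldt condition from $Z(G)$; since $\dim X^M < \dim X^G$, the reductive case of the theorem applies by induction on $\dim X^G$. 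The same input controls the degree above which the boundary contribution vanishes, pinning down the value of $j$ and producing the asserted isomorphism range.

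The main obstacle is making this entire induction close up: one must verify that every Levi arising on the boundary genuinely falls within the scope of the inductive hypothesis, carefully track the invariants $q_0^M, l_0^M$ together with $\dim \mathfrak n$ so that the expected vanishing range matches $q_0^G$ on the nose, and formalize the joint induction in which the semisimple and reductive statements feed into one another at each step. A secondary technical difficulty is descending perfectoidness through isogenies and embeddings at the level of \emph{minimal} compactifications, where the singularities are delicate and the usual smooth-site arguments do not directly apply.
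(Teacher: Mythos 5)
Your reduction of the reductive case to the semisimple one via Leopoldt for the center, and of parts (2)--(3) to part (1), matches the paper. But there is a genuine gap in your Step Two, precisely at the point you flag as "making the induction close up": it does not close. You stratify the Borel--Serre boundary by \emph{all} rational parabolics $P=MN$ and propose to control each stratum by applying the theorem inductively to the full Levi $M$, asserting that such Levis have derived groups admitting connected Shimura data of pre-abelian type. This is false in general: already for the Siegel datum, the Levi of a maximal parabolic is (up to isogeny) $\GL_r\times \mathrm{Sp}_{2(g-r)}$, and the $\GL_r$ factor for $r\geq 3$ has a non-Hermitian symmetric space, so no connected Shimura datum exists for $M^{der}$ and no Calegari--Emerton-type vanishing is known or available for it; your inductive hypothesis simply does not apply to the groups that actually appear on the boundary. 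The paper avoids this by never invoking any completed-cohomology vanishing for the full Levi. Instead it uses a \emph{coarser} stratification of the Borel--Serre compactification, pulled back along Zucker's map from the minimal compactification, so that strata are indexed by maximal parabolics $Q$ only; each stratum fibers into a nilmanifold piece (killed by the vanishing principle for unipotent groups), a \emph{Hermitian} piece $M_{Q,h}$ (controlled by the sharp vanishing of \emph{compactly supported} completed cohomology above $\dim_{\C}X^{M_{Q,h}}$, which is Step One applied directly to $M_{Q,h}$ -- pre-abelian by the boundary-component lemma, so no induction on dimension is needed), and a \emph{linear} piece $L_Q$ for which only the crude Borel--Serre cohomological-dimension bound $\dim_\R X^{L_Q}-\mathrm{ss.rank}_\Q(L_Q)$ is used. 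A numerics lemma ($\dim_\C X^{M_h}+\dim_\R X^L = d - \tfrac12(\dim N-\dim Z(N))-\dim A_L$) then shows these contributions sum to at most $d$, which is exactly why the weak bound on the linear part suffices; your scheme has no substitute for this.

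A secondary gap is in Step One: "perfectoidness should descend along finite maps between minimal compactifications" is not enough to get from Hodge type to pre-abelian type. Finite maps let you go \emph{up} a tower (via Bhatt--Scholze perfectoidization), but passing to the adjoint/connected Shimura varieties at an arbitrary level requires taking quotients of perfectoid spaces by finite group actions, and the existence of such quotients as perfectoid spaces is a new theorem in the paper (proved without assuming a $G$-stable affinoid cover, using analytic separatedness and the "good tower" formalism); this is the key technical ingredient your sketch leaves unaddressed. Finally, for part (4) the limit-multiplicity input of Calegari--Emerton is applied to $G^{der}$ (or $G^{ad}$) itself together with an injective-dimension computation for the center under Leopoldt, not to a Levi stratum.
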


The assumptions on $G$ here guarantee that $l_0(G^{der}) = 0$, which allows us to prove part (4) of Conjecture \ref{ceconj} by a fairly straightforward analysis combining the results of \cite{ce-annals} with the Leopoldt conjecture for $Z(G)$. By our previous remarks, the whole conjecture now follows if we can prove part (1).  Note that when $ l_0 = 0$ and $X$ is a Hermitian symmetric domain, part (1) of the conjecture simply asserts that $\wt{H}^{i}_c=\wt{H}^{i}=0$ for all $i> d = \dim_{\C} X$. It is this vanishing conjecture which we focus on.

\medskip

Our proof of the vanishing conjecture builds on Scholze's methods and combines them with some new ideas. Roughly speaking, we first reduce to the case where $(G,X)$ is a connected Shimura datum of pre-abelian type, and then proceed in two steps:

\smallskip

\textbf{Step One.} We prove the vanishing of $\wt{H}^{i}_c$ for $i>d$ by pushing Scholze's methods to their limit.

\smallskip

\textbf{Step Two.} We prove the vanishing of $\wt{H}^{i}$ for $i>d$ by a careful analysis of boundary cohomology, using Step One for $G$ \emph{and} for various auxiliary almost direct factors of Levi subgroups related to the boundary strata of the minimal compactification.

\medskip

Let us now describe these steps in more detail.

\subsection{Step One: $p$-adic methods}

As described above, the proof of Theorem \ref{maintheorem} proceeds in two essentially distinct steps. In the first step, we prove the vanishing of $\wt{H}^{i}_c(K^p)$ for $i$ above the middle degree, using the $p$-adic geometry of Shimura varieties. For Shimura data of Hodge type, this is one of the main results of \cite{scho}, where it is deduced from the existence of perfectoid Shimura varieties of Hodge type (with infinite level at $p$). We thus need to generalize the geometric results of \cite{scho} to a wider class of Shimura data. To this end, we prove the following theorem.

\begin{theo} \label{perfshim} Let $(G,X)$ be a Shimura datum of \emph{pre-abelian type}, with reflex field $E$. Fix a complete algebraically closed field $C/\Qp$ and an embedding $E \to C$. Fix any open compact subgroup $K^p \sub G(\A_{f}^{p})$. For any open compact subgroup $K_p \sub G(\Qp)$, let $\mathcal{X}_{K^p K_p}^{\ast}$ denote the adic space over $\Spa C$ associated with the base change of the minimal compactification $\Sh_{K^p K_p} (G,X)^{\ast}$ along $E\to C$. Then there is a perfectoid space $\mathcal{X}_{K^p}^{\ast}$ such that 
\[ \mathcal{X}_{K^p}^{\ast} = \varprojlim_{K_p \sub G(\Q_p)}  \mathcal{X}_{K^p K_p}^{\ast, \lozenge}
\] 
as diamonds over $\Spd C$. Moreover, the boundary of $\mathcal{X}_{K^p}^{\ast}$ is Zariski-closed.
\end{theo}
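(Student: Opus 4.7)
My plan is to reduce to Scholze's theorem for Hodge-type Shimura data \cite{scho}, exploiting the defining feature of pre-abelian type: namely, the adjoint datum $(G^{\mathrm{ad}}, X^{\mathrm{ad}})$ agrees with that of some Hodge-type datum $(G_1, X_1)$. Since the symmetric space $X$ depends only on the adjoint datum, both $\Sh(G,X)$ and $\Sh(G_1, X_1)$, after decomposition into connected components, are of the form $\Gamma \backslash X^+$ for arithmetic subgroups $\Gamma \sub G^{\mathrm{ad}}(\Q)^+$, so the two Shimura varieties differ only by finite operations controlled by the difference between their centers and derived groups.

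First, I would decompose $\mathcal{X}_{K^p K_p}^*$ into its geometric connected components using the standard double-coset description, and carry out the analogous decomposition for $\mathcal{X}_{K_1^p K_{1,p}}^*$ on the Hodge-type side for a compatible choice of tame level $K_1^p$. On each connected component, the minimal compactification is the Baily-Borel compactification of $\Gamma_{K^p K_p} \backslash X^+$ (respectively of $\Gamma_{K_1^p K_{1,p}} \backslash X^+$), and for compatible choices the two chains of arithmetic subgroups become cofinal as $K_p$, $K_{1,p}$ shrink, modulo finite groups encoding the central data. Second, I would invoke Scholze's theorem on the Hodge-type side, yielding a perfectoid $\mathcal{X}^*_{K_1^p}$ with Zariski-closed boundary and Hodge-Tate period map to $\mathscr{F}\!\ell_{G_1, \mu_1}$, and then descend perfectoidness through the finite quotients relating the two inverse limits.

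The Hodge-Tate period map would then be transferred via the identification of the flag varieties $\mathscr{F}\!\ell_{G, \mu}$ and $\mathscr{F}\!\ell_{G_1, \mu_1}$ through their common adjoint flag variety: the central isogenies on the Shimura-variety side match the corresponding central isogenies on the flag-variety side, so pulling back and descending Scholze's $\pi_{\mathrm{HT}}$ yields the desired $G(\Qp)$-equivariant map for $(G,X)$, functorial in the tame level by construction. The Zariski-closedness of the boundary follows from the Hodge-type case, together with the fact that the finite quotients respect the boundary stratification.

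I expect the main obstacle to be controlling the minimal compactification under these descent operations. The boundary of the Baily-Borel compactification is highly singular, indexed by rational parabolic subgroups, and one must verify that the finite-group quotients relating the pre-abelian and Hodge-type sides extend to the boundary in a way that preserves both perfectoidness and Zariski-closedness. Concretely, this calls for a careful analysis of boundary charts on both sides, using the Levi decomposition of rational parabolics, together with the functoriality of the Baily-Borel construction under central isogenies of the derived group. One must also check compatibility of the finite-group actions with the inverse system near the boundary, so that the quotient in the perfectoid category exists and coincides with the expected target.
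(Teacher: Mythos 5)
Your high-level architecture matches the paper's: pass through the common adjoint/connected datum, decompose into connected components indexed by double cosets, relate the pre-abelian tower to a Hodge-type tower by finite maps and finite group quotients, and then transfer $\pi_{\mathrm{HT}}$ via the adjoint flag variety. But the proposal defers exactly the two steps where the real content lies, and the remedy you sketch for them would not work as stated. First, "descend perfectoidness through the finite quotients relating the two inverse limits" conflates two different operations. Pulling perfectoidness back along a finite (not necessarily Galois) morphism of towers is itself nontrivial: the paper needs a perfectoidization input \`a la Bhatt--Scholze (their Theorem 1.16(1)), packaged as the statement that if $(X_i)\to(Y_i)$ is a levelwise finite map of towers and $\varprojlim Y_i$ is perfectoid then $\varprojlim X_i$ is perfectoid. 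This is used repeatedly (Hodge type inside Siegel; connected covers of the adjoint variety; arbitrary arithmetic subgroups inside congruence ones), and nothing in your write-up supplies it.

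Second, and more seriously, the passage from the connected Hodge-type variety down to the adjoint variety at an \emph{arbitrary} arithmetic level genuinely requires forming the quotient of an infinite-level perfectoid space by a finite group $\Delta$ and showing this quotient is perfectoid \emph{and} agrees with $\varprojlim_n$ of the finite-level quotients. You flag this ("so that the quotient in the perfectoid category exists and coincides with the expected target") but offer no mechanism, and your proposed fix --- boundary charts for the Baily--Borel compactification, Levi decompositions, functoriality under central isogenies --- is precisely the kind of analysis that is hard to carry out (the boundary is very singular, and there is no reason the relevant affinoid covers are $\Delta$-invariant) and that the paper deliberately avoids. The paper's solution is a new general quotient theorem for perfectoid spaces requiring only analytic separatedness and that closures of rank-one points lie in affinoid perfectoid opens (proved via $G$-clean neighborhoods in the maximal Hausdorff quotient), together with the "good tower" formalism, verified for minimally compactified Hodge-type towers using the $\epsilon$-neighborhood covers from Scholze's anticanonical construction; commutation of the quotient with the inverse limit is then checked on $(C,C^+)$-points. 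Relatedly, your phrase "the two chains of arithmetic subgroups become cofinal ... modulo finite groups encoding the central data" hides the group-theoretic issue that images of congruence subgroups of the Hodge-type group need not exhaust arithmetic subgroups of $G^{\mathrm{ad}}(\Q)^+$; one needs the intersection-of-conjugates trick producing a normal finite-index subgroup, plus the fact that any arithmetic subgroup sits inside a congruence one, before the finite-quotient step can even be set up. Finally, for $\pi_{\mathrm{HT}}$, "pulling back and descending" across the quotient map needs an actual descent lemma for morphisms out of a perfectoid quotient that works over the (non-smooth) boundary; this is an extra argument, not a formality, though it is less of a gap than the quotient issue above.
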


Recall that a Shimura datum $(G,X)$ is of pre-abelian type if there exists a Shimura datum $(G',X')$ of Hodge type admitting an isomorphism of connected Shimura data $(G^{ad}, X^+) \simeq (G'^{ad},X'^+)$. This is slightly more general than the (somewhat more well-known) notion of a Shimura datum of abelian type. While it is probably true that every tower of minimally compactified Shimura varieties with infinite level at $p$ is perfectoid, we expect that Theorem \ref{perfshim} is the most general result which can be proved via current technology.  We also state and prove a similar result for connected Shimura varieties, cf. Theorem \ref{main p-adic}.

We also note that it is not difficult to construct the Hodge-Tate period map $\pi_{\mathrm{HT}}: \mathcal{X}_{K^p}^{\ast} \to \mathscr{F}\! \ell_{G,\mu}$ with all its expected properties, and in fact we did this in the first version of this paper. However, during the revision process we decided to remove this material, since it relied on an unpublished argument of one of us (DH), and also because more general results are now available in recent work of Boxer-Pilloni \cite[Section 4.4]{bp}.
\medskip

While the idea behind the proof of Theorem \ref{perfshim} is clear, the argument is unfortunately somewhat technical.\footnote{A glance at the proof of the key Proposition \ref{going down} should convince the reader of this.} Roughly speaking, there are two key ingredients:
\begin{itemize}
\item ``Perfectoidization results'' \`a la Bhatt--Scholze, building in particular on \cite[Theorem 1.16(1)]{bhatt-scholze}. Roughly speaking, these techniques let us prove that if $(X_i)_{i \in I} \overset{(f_i)_{i \in I}}{\to} (Y_i)_{i \in I}$ is a (pro-)finite morphism between two reasonable inverse systems of rigid analytic spaces, and $\varprojlim_{i \in I} Y_i^{\lozenge}$ is perfectoid, then $\varprojlim_{i \in I} X_i^{\lozenge}$ is also perfectoid. For a precise statement, see Lemma \ref{twotowers}.

\smallskip

\item A new general and user-friendly existence result for quotients of perfectoid spaces by finite groups, cf. Theorem \ref{perfgpquotient}.
\end{itemize}

For open Shimura varieties of abelian type, the problem of proving perfectoidness at infinite level was previously considered by Shen \cite{shen}. We remark that our method is more direct and uses very little from the theory of Shimura varieties and their connected components.

\subsection{Step Two: Topological methods}The second step is totally disjoint from the first, and doesn't use any $p$-adic geometry. We content ourselves with a somewhat impressionistic sketch here. In what follows, assume $G$ is a semisimple group such that $(G,X)$ is a connected Shimura datum of pre-abelian type, and set $d = \dim_{\C} X$ as before.

\medskip

First, we prove an isomorphism of the form $\wt{H}^{i}(K^p) \cong H^i(X_{K^p K_p}, \Map_{cts}(K_p, \Zp))$ for any choice of open compact subgroup $K_p \sub G(\Qp)$.  Here $\Map_{cts}(K_p, \Zp)$ denotes the $K_p$-module of continuous $\Zp$-valued functions on $K_p$. This is essentially a version of Shapiro's lemma, and goes back to a paper of Hill \cite{hi}.  Next, by standard properties of manifolds with boundary, this isomorphism induces an isomorphism $\wt{H}^{i}(K^p) \cong H^i(\ol{X}_{K^p K_p}, \Map_{cts}(K_p, \mathbb{Z}_p))$, where $\ol{X}_{K^p K_p}$ denotes the Borel--Serre compactification of $X_{K^p K_p}$.

\medskip 

By repeated use of excision for compactly supported cohomology, it now suffices to prove that for \emph{some} stratification $\overline{X}_{K^p K_p} = \cup_{Z \in \mathcal{Z}} Z$, we have $H^i_{c}(Z, \Map_{cts}(K_p, \mathbb{Z}_p)|_Z)=0$ for all $i > d$ and all $Z \in \mathcal{Z}$. The key idea can now be phrased as follows:

\medskip

($\dagger$) If we take $\mathcal{Z}$ to be the stratification of $\overline{X}_{K^p K_p}$ obtained by pulling back the usual stratification of $X^{\ast}_{K^p K_p}$ along the map $\pi: \overline{X}_{K^p K_p} \to X^{\ast}_{K^p K_p}$ constructed by Zucker \cite{zu}, then $\mathcal{Z}$ is a stratification with the above property.

\medskip

The idea that ($\dagger$) is both true and provable is perhaps the most novel contribution of this paper; we make some additional remarks on the use of this stratification in Remark \ref{remark on stratifications}. Let us give a sketch of the key ideas. Let $S \sub X^{\ast}_{K^p K_p}$ be a boundary stratum, with preimage $Z= \pi^{-1}(S) \sub \ol{X}_{K^p K_p}$. By the structure theory of the minimal compactification, the strata $S$ are indexed by (equivalence classes of) pairs $(Q, \alpha)$ where $Q \sub G$ is a $\Q$-rational parabolic subgroup whose projection to each simple factor $G_i$ of $G^{ad}$ is either a maximal parabolic or equal to $G_i$, and $\alpha$ is some auxiliary data depending on the level structure. (We will suppress all dependences on level structures in the following discussion.) Moreover, the parabolic $Q$ comes equipped with a canonically defined almost direct product decomposition $Q = U \cdot L \cdot H$. Here $U$ is the unipotent radical of $Q$, $L$ is a reductive group (the \emph{linear} part), $H$ is a semisimple group whose associated symmetric space is Hermitian (the \emph{Hermitian} part); $L\cdot H$ is the full Levi subgroup of $Q$.

\medskip

In parallel with this decomposition of $Q$, the stratum $Z$ almost admits a direct product decomposition $Z \approx Z_U \times Z_L \times Z_H$, where $Z_U$ is a compact nilmanifold, $Z_L$ is the Borel--Serre compactification of a locally symmetric space for the group $L$, and $Z_H \cong S$ is a locally symmetric space for the group $H$. The key idea now is that $H^i_{c}(Z, \Map_{cts}(K_p, \mathbb{Z}_p)|_Z)$ can also be decomposed accordingly, by a K\"unneth-like formula, into contributions coming from each of these three factors, which can each be controlled:
 
\begin{itemize}
 \item The contribution of $Z_U$ is trivial, which follows from a well-known vanishing principle for completed cohomology of unipotent groups.
 
 \item The contribution of $Z_H$ can be expressed in terms of compactly supported completed cohomology for $H$, which can be controlled by Step One.
 
 \item The contribution of $Z_L$ can be expressed in terms of completed cohomology for $L$, which can be controlled using the bounds in \cite{bs}, or even using the trivial bound.
 \end{itemize}
 
 The critical observation here is that Step One gives such good control over the contribution of $Z_H$ that we need very little control over the contribution of $Z_L$.
 
\medskip
 
In reality, the above sketch is somewhat oversimplified, because $Z$ does not really admit a direct product decomposition; rather, it has the structure of an iterated fibration whose fibers are as described above. This makes the proof somewhat more complicated. Nevertheless, the essential idea follows the outline given above.

\medskip

Let us briefly outline the contents of this paper. Section \ref{top preliminaries} collects some preliminaries on topology and arithmetic groups that are needed for the computations in later section. Section \ref{sec: cc} discusses completed (co)homology and the Calegari--Emerton conjectures, carrying out the core of ``Step Two'' above. Section \ref{sec: shimura} introduces Shimura varieties and proves our main results on the Calegari--Emerton conjectures, including Theorem \ref{maintheorem}. Section \ref{sec: perfectoid} carries out ``Step One'', proving Theorem \ref{perfshim} and deducing the vanishing theorem for compactly supported completed cohomology. We note that section \ref{sec: perfectoid} is completely independent of the previous sections. Conversely, the vanishing result Corollary \ref{compactly supported vanishing} is the only part of section \ref{sec: perfectoid} that gets used in previous sections.

\subsection*{Acknowledgments} We would like to thank Bhargav Bhatt, Frank Calegari, Ana Caraiani, Matt Emerton, Michael Harris, Ben Heuer, Kai-Wen Lan, Vincent Pilloni, Peter Scholze, and Jack Thorne for helpful conversations related to the material in this paper. We also extend a special thanks to Mark Goresky, whose survey \cite{goresky} was very helpful in the initial stages of the project. Moreover, we gratefully acknowledge the Herchel Smith Foundation and the Max Planck Institute for Mathematics in Bonn for supporting visits to Cambridge and Bonn, respectively, during which work on this project was carried out. Finally, we are grateful to the referee for their careful reading and very helpful comments and corrections.  C.J. was supported by the Herchel Smith Foundation during part of this project.

\section{Preliminaries}\label{top preliminaries}

In this section we collect some facts and definitions from topology and algebraic groups that we will need. We make no attempt to state results in maximal generality and none of them are original, but we have often had difficulties locating the precise statements that we need in the literature. We hope that collecting this material here is of sufficient aid to the reader to justify its inclusion.

\medskip

The topological spaces that we will work with will mostly be smooth manifolds with boundary; we will simply write ``manifold with boundary'' to mean a smooth manifold with boundary. Any smooth manifold with boundary admits a combinatorial triangulation for which the boundary is a subsimplicial complex (see e.g. \cite[Theorem 10.6]{mun}). We recall that if $\ol{X}$ is  a manifold with boundary with interior $X$ and $U\sub \ol{X}$ is an open subset containing $X$, then the inclusion $j :U \to \ol{X}$ is homotopy equivalence by the global collar neighbourhood theorem. In a very similar vein, if $\cF$ is a local system on $\ol{X}$, a simple local calculation shows that $Rj_\ast j^{-1}\cF = \cF$. In particular, we obtain canonical isomorphisms $H^i(U,\cF) \cong H^i(\ol{X},\cF)$ which we will often treat as equalities.

\medskip

All actions of groups on topological spaces will be left actions in this section. Of course, all results have natural analogues for right actions (and we will use them).

\subsection{Local systems}

Let $X$ be a topological space and let $\G$ be a group acting from the left on $X$. In this paper most of our actions will be \emph{free}\footnote{The most common terminology for this notion seems to be a free and properly discontinuous action, but we find this terminology rather cumbersome.}, by which we mean that every point $x\in X$ has an open neighborhood $U$ such that $U \cap \gamma U \neq \emptyset$ only if $\gamma =1$. The quotient map $\pi : X \to X_\G := \G \backslash X$ is then a covering map, and we recall that any left $\G$-module\footnote{By which we always mean a (left) $\Z[\G]$-module, unless otherwise stated.} $M$ defines a local system $\wt{M}$ on $X_\G$ given by 
\[
\wt{M}(U) = \Map_{lc,\G}(\pi^{-1}(U),M)
\]
where the right hand side denotes the locally constant functions $f : \pi^{-1}(U) \to M$ satisfying $f(\gamma x) = \gamma . f(x)$ for all $\gamma \in \G$ and all $x \in \pi^{-1}(U)$. When $X$ is a manifold with boundary, this may be written as
\[
\wt{M}(U) = \Map_{\G}(\pi_0(\pi^{-1}(U)),M),
\]
where $\Map$ simply denotes set-theoretic functions (as $\pi_0(\pi^{-1}(U))$ is discrete). The following theorem is well known, and follows directly from the fact that the singular chain complex $C_\bu(X)$ is a resolution of $\Z$ by free $\G$-modules.

\begin{theo}\label{singular and group coh}
Let $X$ is a contractible manifold with boundary with a free action of $\G$. Then
\[
H^\ast(X_\G,\wt{M}) \cong \Ext^\ast_{\Z[\G]}(\Z,M) \cong H^\ast(\G,M)
\]
canonically for every $\G$-module $M$.
\end{theo}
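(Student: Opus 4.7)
The plan is to prove both isomorphisms in parallel by identifying each with the cohomology of the same explicit complex, namely $\Hom_{\Z[\G]}(C_\bu(X),M)$, where $C_\bu(X)$ is the singular chain complex of $X$. The hint given in the text is exactly the correct organizing principle, so the task is to expand it carefully.

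First I would verify that $C_\bu(X) \to \Z$ is a free resolution over $\Z[\G]$. Freeness of the $\G$-action on $X$ (in the strong sense specified above) implies that $\G$ acts freely on singular simplices: any simplex $\sigma : \Delta^n \to X$ whose $\G$-orbit passes through itself under some $\gamma \neq 1$ would yield a point $x \in X$ with $\gamma x = x$, contradicting freeness. Choosing a set of orbit representatives in each degree therefore exhibits $C_n(X)$ as a free $\Z[\G]$-module. Since $X$ is contractible, the augmented complex $C_\bu(X) \to \Z$ is exact, so it is a free resolution of the trivial $\Z[\G]$-module $\Z$. Applying $\Hom_{\Z[\G]}(-,M)$ and passing to cohomology yields $\Ext^\ast_{\Z[\G]}(\Z,M)$, which by the very definition of group cohomology is $H^\ast(\G,M)$.

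Next I would identify $H^\ast(X_\G,\wt{M})$ with $H^\ast\bigl(\Hom_{\Z[\G]}(C_\bu(X),M)\bigr)$. For this I would use that $X_\G$ is a manifold with boundary (hence locally contractible and triangulable), so sheaf cohomology with coefficients in the local system $\wt{M}$ agrees with singular cohomology with local coefficients in $\wt{M}$. Now the covering $\pi : X \to X_\G$ is a universal cover (since $X$ is simply connected), with deck group $\G$. Using the description $\wt{M}(U) = \Map_{lc,\G}(\pi^{-1}(U),M)$ of the local system, one checks directly that assigning to a singular cochain on $X_\G$ its pullback along $\pi$ produces a natural isomorphism of cochain complexes
\[
C^\ast_{sing}(X_\G,\wt{M}) \;\xrightarrow{\sim}\; \Hom_{\Z[\G]}(C_\bu(X),M),
\]
compatible with coboundaries. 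Taking cohomology gives the first isomorphism of the theorem, and naturality in $M$ and in inclusions of open subsets makes everything canonical.

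The main obstacle, and the one step deserving the most care, is the bridge between sheaf cohomology and singular cohomology with local coefficients on $X_\G$. Everything else is formal: the $\G$-freeness of the simplex set follows immediately from the strong freeness hypothesis; acyclicity of $C_\bu(X)$ is contractibility; and the comparison of singular and equivariant cochains is just unwinding the definition of $\wt{M}$ given in the previous paragraph. Once the sheaf-to-singular comparison is in hand (standard for manifolds with boundary, where fine resolutions or \v{C}ech computations on contractible open covers are available), the two displayed isomorphisms of the theorem drop out from the identical complex $\Hom_{\Z[\G]}(C_\bu(X),M)$, manifestly naturally in $M$.
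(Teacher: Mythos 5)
Your proposal is correct and follows essentially the same route as the paper, which proves the theorem by exactly the observation you expand on: that $C_\bu(X)$ is a resolution of $\Z$ by free $\Z[\G]$-modules (freeness of the action giving freeness of the chain modules, contractibility giving acyclicity), from which both identifications follow. The extra care you take with the comparison between sheaf cohomology of $\wt{M}$ and equivariant singular cochains is a reasonable filling-in of details the paper leaves implicit.
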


We now consider a relative version of Theorem \ref{singular and group coh}. Let $p : E \to B$ be a fibre bundle with contractible fibre $F$ (all spaces are manifolds with boundary). Assume that we have a group $\G$ acting (from the left) on both $E$ and $B$, making $p$ $\G$-equivariant. We assume further that the action of $\G$ is free on $E$, and that the action of $\G$ on $B$ factors through a quotient $\Delta$ which acts freely on $B$. Set $N = \Ker(\Gamma \to \Delta)$; $N$ then acts freely on the fibres of $p$. Consider the induced map
\[
q : E_\G \to B_\Delta
\]
on quotients. 

\begin{coro}\label{formula for pushforward}
Let $M$ be a $\Gamma$-module and let $i\geq 0$. Then $R^i q_\ast \wt{M}$ is the local system on $B_\Delta$ given by the $\Delta$-module $H^i(N,M)$.
\end{coro}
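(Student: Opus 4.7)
The plan is to realize $q$ as a fibre bundle, compute the stalks of $R^i q_\ast \wt{M}$ via Theorem \ref{singular and group coh}, and then identify the resulting local system with $\wt{H^i(N,M)}$ by a monodromy analysis.

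First, I would check that $q$ is a fibre bundle with fibre $N\backslash F$. Set $E_N := N\backslash E$. Since $N = \Ker(\G \to \Delta)$ acts trivially on $B$, the map $p$ factors through $E_N$ to give a $\Delta$-equivariant fibre bundle $p_N\colon E_N \to B$ with fibre $N\backslash F$ (local trivializations of $p$ descend, possibly after shrinking). Since $\Delta$ acts freely on both $E_N$ (inherited from the free $\G$-action on $E$) and $B$, the induced map $q\colon E_\G = \Delta\backslash E_N \to \Delta\backslash B = B_\Delta$ is again a fibre bundle with fibre $N\backslash F$. For any $b \in B$ with image $\bar b \in B_\Delta$, we have $q^{-1}(\bar b) \cong N\backslash F_b$ where $F_b := p^{-1}(b) \cong F$ is contractible and carries a free $N$-action.

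Next I compute stalks. Pulling $\wt M$ back along the $\G$-cover $E \to E_\G$ gives the constant sheaf $M$; restricting to $F_b \subset E$ and descending along the $N$-cover $F_b \to N\backslash F_b$ identifies $\wt M|_{N\backslash F_b}$ with the local system on $N \backslash F_b$ coming from the $N$-module $M$ (by restriction of the $\G$-action). Theorem \ref{singular and group coh} applied to the free $N$-action on the contractible $F_b$ then yields $H^i(N\backslash F_b, \wt M|_{N\backslash F_b}) \cong H^i(N, M)$, and this equals the stalk $(R^i q_\ast \wt M)_{\bar b}$ by the standard fibre-bundle formula for higher direct images. Moreover, picking a contractible neighbourhood $\bar V \ni \bar b$ over which $q$ trivializes as $\bar V \times (N\backslash F)$, any local system on such a product pulls back from the second factor (since $\bar V$ is contractible), so $R^i q_\ast \wt M|_{\bar V}$ is the constant sheaf $\underline{H^i(N,M)}$. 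This establishes $R^i q_\ast \wt M$ as a local system on $B_\Delta$ with the expected stalks.

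Finally, I identify the monodromy with the canonical $\Delta$-action on $H^i(N, M)$ attached to the extension $1 \to N \to \G \to \Delta \to 1$ and the $\G$-module $M$. Local systems on $B_\Delta$ correspond to $\Delta$-equivariant local systems on $B$ via the free $\Delta$-cover $B \to B_\Delta$; by base change, the pullback of $R^i q_\ast \wt M$ to $B$ is the analogous higher direct image for $p_N$, and its $\Delta$-equivariant structure is induced from the $\G$-action on $E$. For $\delta \in \Delta$ with lift $\g \in \G$, translation by $\g$ sends $F_b$ to $F_{\delta b}$ and, by normality of $N$, descends to an isomorphism $N\backslash F_b \to N\backslash F_{\delta b}$; tracing through Theorem \ref{singular and group coh}, the induced map on $H^i(N, M)$ is conjugation by $\g$ on $N$ combined with the $\g$-action on $M$, which is independent of the lift (inner automorphisms act trivially on group cohomology) and coincides with the canonical $\Delta$-action. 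The main obstacle I expect is precisely this last step: carefully tracking the $\Delta$-equivariant structure through base change, local triviality, and Theorem \ref{singular and group coh} is the most delicate bookkeeping in an otherwise routine argument.
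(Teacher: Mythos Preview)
Your proposal is correct, but it proceeds quite differently from the paper. You compute $R^i q_\ast \wt{M}$ directly: identify the fibres, compute stalks via Theorem \ref{singular and group coh}, verify local constancy, and then track the monodromy by hand. The paper instead argues by homological algebra: it first checks the case $i=0$ by a short direct computation, obtaining a commuting square of functors $\mathrm{Mod}_\G \to \mathrm{Sh}(E_\G)$, $\mathrm{Mod}_\Delta \to \mathrm{Sh}(B_\Delta)$ with vertical arrows $M \mapsto M^N$ and $q_\ast$; it then shows that injective $\G$-modules are sent to $q_\ast$-acyclic sheaves (using Theorem \ref{singular and group coh} on trivializing neighbourhoods, exactly as you do for stalks, but only to see the vanishing for injectives). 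This upgrades the commuting square to derived functors, and the identification of $R^i q_\ast \wt{M}$ with $\wt{H^i(N,M)}$ follows immediately.

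The practical difference is that the paper's route sidesteps your final monodromy step entirely: the $\Delta$-module structure on $H^i(N,M)$ comes out automatically from the derived functor of $M \mapsto M^N$, with no need to chase lifts $\gamma$ of $\delta$ or invoke the triviality of inner automorphisms on group cohomology. Your approach is more hands-on and arguably more illuminating geometrically, but the bookkeeping you flag as ``the main obstacle'' is exactly what the paper's argument is designed to avoid.
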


\begin{proof}
We begin by proving the case $i=0$. Write $\pi_E : E \to E_\G$ and $\pi_B : B \to B_\Delta$ for the quotient maps and let $U \sub B$ be open. From the definitions, one sees that 
\[
q_\ast \wt{M} (U) = \Map_{lc,\G}(p^{-1}\pi_B^{-1}(U),M).
\]
Since the fibres of $p$ are connected and the action of $N$ preserves the fibres, we have $\Map_{lc,\G}(p^{-1}\pi_B^{-1}(U),M) = \Map_{lc,\Delta}(\pi_B^{-1}(U),M^N)$, which is the desired statement.

\medskip

This proves that the diagram of functors
\[
    \xymatrix{ \mathrm{Mod}_{\G} \ar[r] \ar[d]^{M \mapsto M^N} & \mathrm{Sh}(E_\Gamma) \ar[d]^{q_\ast} \\ \mathrm{Mod}_{\Delta} \ar[r] & \mathrm{Sh}(B_\Delta) }
\]
commutes up to natural isomorphism, where the horizontal functors are the local systems functors $M \mapsto \wt{M}$. The horizontal functors are exact (by looking at stalks), so it suffices to show that $M \mapsto \wt{M}$ sends injective $\G$-modules to $q_\ast$-acyclic sheaves on $E_\G$ (then the diagram above commutes also after passing to derived categories and derived functors, which is what we want).

\medskip

So, let $M$ be an injective $\G$-module, and let $i\geq 1$. $R^i q_\ast \wt{M}$ is the sheafification of the presheaf $U \mapsto H^i(q^{-1}(U),\wt{M}) $ on $B_\Delta$. There is a basis of open subsets $U$ of $B_\Delta$ which are contractible and for which the fibre bundle $q : q^{-1}(U) \to U$ is trivial. In this case $q^{-1}(U) \cong U \times N \backslash F$ and hence 
\[
H^i(q^{-1}(U),\wt{M}) \cong H^i(N,M)
\]
by Theorem \ref{singular and group coh}. But $M$ is an injective $N$-module since the restriction functor from $\G$-modules to $N$-modules has an exact left adjoint $V \mapsto \Z[\G] \otimes_{\Z[N]} V$. Thus $H^i(q^{-1}(U),\wt{M})=0$ for all such $U$, and hence $R^i q_\ast \wt{M}=0$ as desired.
\end{proof}

We will also use a (less precise but more general) version for pushforwards with proper support.

\begin{prop}\label{formula for proper pushforward}
Let $f : X \to Y$ be a fibre bundle of manifolds with boundary, with fibre $Z$ (also a manifold with boundary). Let $\cF$ be a local system on $X$. Then, for any $i\geq 0$, $R^i f_! \cF$ is a local system on $Y$ with fibre $H_c^i(Z,\cF)$. 
\end{prop}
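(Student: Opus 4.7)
The plan is to argue locally on $Y$: since the statement is local, it suffices, for each $y_0 \in Y$, to exhibit a contractible trivializing open neighborhood $U \ni y_0$ over which $(R^i f_! \cF)|_U$ is a constant sheaf with value $H^i_c(Z, \cF|_{f^{-1}(y_0)})$. First I would pick a trivialization $\varphi: f^{-1}(U) \xrightarrow{\sim} U \times Z$ intertwining $f$ with the first projection $p_1: U \times Z \to U$, shrinking $U$ if necessary so that it is contractible (and in particular connected and simply connected).

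Next, because $U$ is contractible, the pullback functor $p_2^*$ from local systems on $Z$ to local systems on $U \times Z$ (via the second projection $p_2$) is an equivalence of categories, since local systems are governed by the fundamental groupoid and contractibility of $U$ collapses $\pi_1(U \times Z)$ onto $\pi_1(Z)$. Consequently $\varphi_\ast (\cF|_{f^{-1}(U)}) \cong p_2^* \mathcal{G}$ for a uniquely determined local system $\mathcal{G}$ on $Z$, and restriction to the fibre $\{y_0\} \times Z$ identifies $\mathcal{G}$ with $\cF|_{f^{-1}(y_0)}$.

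It then remains to verify that $R^i (p_1)_! (p_2^* \mathcal{G})$ is the constant sheaf on $U$ with value $H^i_c(Z, \mathcal{G})$. I would do this either by proper base change, which identifies the stalk at every $u \in U$ with $H^i_c(\{u\} \times Z, p_2^* \mathcal{G}|_{\{u\} \times Z}) = H^i_c(Z, \mathcal{G})$ and whose variation in $u$ is trivial since $p_1$ is literally a product projection, or equivalently by the K\"unneth-type projection formula $R(p_1)_! (p_2^* \mathcal{G}) \cong R\Gamma_c(Z, \mathcal{G}) \otimes^L_{\Z} \Z_U$. Gluing these local identifications over a cover of $Y$ by contractible trivializing opens then yields the claimed locally constant sheaf structure on $R^i f_! \cF$ globally. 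The main technical hurdle is precisely this foundational proper base change / K\"unneth input in the manifold-with-boundary setting; this is standard once one notes that a fibre bundle with manifold-with-boundary fibre is a sufficiently well-behaved map for $Rf_!$, and in any case one can reduce to the simplicial setting via a combinatorial triangulation of $Y$ and $Z$, as recalled at the start of Section \ref{top preliminaries}, where these formulas are entirely classical.
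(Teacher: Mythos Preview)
Your proposal is correct and follows essentially the same approach as the paper: work locally over a contractible trivializing open $U$, observe that the local system is pulled back from the fibre $Z$ via $p_2$, and then use proper base change along the cartesian square
\[
\begin{tikzcd}
U \times Z \ar[r,"p_2"] \ar[d,"p_1"'] & Z \ar[d] \\
U \ar[r] & \mathrm{pt}
\end{tikzcd}
\]
to identify $R^i(p_1)_!(p_2^*\mathcal{G})$ with the constant sheaf with value $H^i_c(Z,\mathcal{G})$. The paper simply cites \cite[Proposition 2.6.7]{ks} for the base change input rather than invoking a K\"unneth formula or reducing to the simplicial setting, but the argument is the same.
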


\begin{proof}
We will use the commutation of derived pushforward with proper support with (arbitrary) pullbacks; see \cite[Proposition 2.6.7]{ks}. Let $U \sub Y$ be a contractible open subset such that $f$ is trivial over $U$, i.e. isomorphic to the canonical projection $p_U : U \times Z \to U$. These form an open cover of $Y$, so since $R^i f_!$ commutes with pullback it suffices to show that $R^i p_{U,!}\cF$ is a constant sheaf. Since $U$ is contractible, the restriction of $\cF$ to $U\times Y$ $\cF$ comes by pullback from a local system on $Y$, which we will call $\cF_Z$. Consider the cartesian diagram
\[
    \xymatrix{ U \times Z \ar[r]^{p_Z} \ar[d]^{p_U} & Z \ar[d]^{g} \\ U \ar[r]^{f} & pt, }
\]
where $pt$ denotes the point and $f$ and $g$ are the canonical maps. Then we have
\[
R^i p_{U,!}\cF = R^i p_{U,!}p_Z^{-1} \cF_Z \cong f^{-1}R^i g_! \cF_Z. 
\]
In other words, $R^i p_{U,!}\cF$ is the pullback of $H^i_c(Z, \cF_Z)$ via the canonical map $U \to pt$. This proves the proposition.
\end{proof}

Next, let $X$ be a manifold with boundary with a free left action of a group $\G$, and assume that $\G^\prime \sub \G$ is a finite index subgroup. Consider the natural map $q : X_{\G^\prime} \to X_\G$. If $M$ is a $\G^\prime$-module, we put 
\[
\Ind_{\G^\prime}^\G M = \{ f : \G \to M \mid f( \gamma^\prime \gamma) = \gamma^\prime . f(\gamma) \,\,\, \forall \gamma^\prime \in \G^\prime, \, \gamma \in \G \},
\]
which is a left $\G$-module under right translation $(\gamma.f)(x) = f(x\gamma)$. We then have the following.

\begin{prop}\label{pushforward at finite level}
With notation and assumptions as above, $R^i q_\ast \wt{M} = 0$ for $i \geq 1$, and $q_\ast \wt{M}$ is the local system attached to $\Ind_{\G^\prime}^\G M$.
\end{prop}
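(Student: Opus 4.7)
The plan is to exploit the fact that, since $\Gamma'$ has finite index in $\Gamma$ and $\Gamma$ acts freely on $X$, the map $q: X_{\Gamma'} \to X_\Gamma$ is a \emph{finite covering map} of degree $[\Gamma:\Gamma']$. For such a map, the direct image functor $q_\ast$ on sheaves is already exact: the stalk at a point $y \in X_\Gamma$ is canonically
\[
(q_\ast \mathcal{F})_y \cong \prod_{x \in q^{-1}(y)} \mathcal{F}_x,
\]
a finite product indexed by the (finite) fiber of $q$. From this exactness one reads off that $R^i q_\ast \widetilde{M} = 0$ for all $i \geq 1$, which settles the first claim without any further work.

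For the second claim, I would compute $q_\ast \widetilde{M}$ directly on sections. Let $\pi : X \to X_\Gamma$ denote the quotient map, and for $U \subseteq X_\Gamma$ open set $V = \pi^{-1}(U)$; note $V$ is simultaneously the preimage of $q^{-1}(U)$ under $X \to X_{\Gamma'}$. The definition of $\widetilde{M}$ on $X_{\Gamma'}$ then gives
\[
(q_\ast \widetilde{M})(U) = \widetilde{M}(q^{-1}(U)) = \Map_{lc,\Gamma'}(V,M),
\]
while the local system on $X_\Gamma$ attached to $\Ind_{\Gamma'}^\Gamma M$ has sections $\Map_{lc,\Gamma}(V, \Ind_{\Gamma'}^\Gamma M)$. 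I would exhibit a canonical bijection between these two groups: send a $\Gamma$-equivariant $g : V \to \Ind_{\Gamma'}^\Gamma M$ to the $\Gamma'$-equivariant map $f(v) := g(v)(1)$, with inverse $f \mapsto (v \mapsto (\gamma \mapsto f(\gamma v)))$. Both directions are straightforward to check using the definitions of $\Ind_{\Gamma'}^\Gamma M$ and its $\Gamma$-action by right translation, and the bijection is manifestly natural in $U$, hence induces the required isomorphism of sheaves.

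The only thing to double-check is that the maps $g \leftrightarrow f$ really do preserve the conditions of being $\Gamma'$- vs.\ $\Gamma$-equivariant and locally constant. These are a one-line calculation each: for instance, $\Gamma$-equivariance of $g$ evaluated at $x = 1$ gives $g(\gamma' v)(1) = g(v)(\gamma') = \gamma' \cdot g(v)(1)$ by the defining property of $\Ind_{\Gamma'}^\Gamma M$, so $f$ is $\Gamma'$-equivariant; conversely if $f$ is $\Gamma'$-equivariant then $g(v)(\gamma'\gamma) = f(\gamma'\gamma v) = \gamma' \cdot f(\gamma v) = \gamma' \cdot g(v)(\gamma)$ shows $g(v) \in \Ind_{\Gamma'}^\Gamma M$, and $g(\gamma v)(x) = f(x\gamma v) = g(v)(x\gamma) = (\gamma \cdot g(v))(x)$ gives $\Gamma$-equivariance of $g$.

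There is no real obstacle here; the main point is the observation that $q$ is a finite covering so $q_\ast$ is exact, after which everything reduces to unwinding definitions. The slightly delicate part is getting the conventions for $\Ind_{\Gamma'}^\Gamma M$ (defined via right translation in the excerpt) aligned with the left actions on $X$, which is handled by the formula $f(v) = g(v)(1)$ above.
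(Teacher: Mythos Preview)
Your proposal is correct and follows essentially the same approach as the paper. The only cosmetic difference is in the vanishing step: the paper invokes proper base change (citing \cite{ks}) to identify $(R^i q_\ast \wt{M})_x \cong H^i(q^{-1}(x),\wt{M})$ and then notes that a finite discrete fiber has no higher cohomology, whereas you argue directly that $q_\ast$ is exact via the stalk formula for a finite covering map; these are the same observation phrased slightly differently, and your version is arguably cleaner. For the identification $q_\ast\wt{M}\cong\wt{\Ind_{\G'}^\G M}$, your explicit Frobenius-reciprocity bijection $g\leftrightarrow f$ is exactly what the paper means by ``easily seen to be equal,'' and your checks of equivariance are correct with the paper's conventions.
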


\begin{proof}
The map $q$ is proper, so if $x\in X_\G$, then $(R^i q_\ast \wt{M})_x = H^i(q^{-1}(x),\wt{M})$ (by \cite[Proposition 2.6.7]{ks}), and $q^{-1}(x)$ has no higher cohomology since it is a finite set. This proves the first part. To compute $q_\ast \wt{M}$, let $U \sub X_\G$ be open and write $\pi : X \to X_\G$ for the quotient map. Unwinding the definitions, we see that 
\[
q_\ast \wt{M}(U) = \Map_{\G^\prime}(\pi_0(\pi^{-1}(U)),M),
\]
and the right hand side is easily seen to be equal to $\Map_{\G}(\pi_0(\pi^{-1}(U)),\Ind_{\G^\prime}^\G M)$ functorially in $U$, as desired.
\end{proof}

We move on to results on the commutation of $M \mapsto \wt{M}$ with direct limits. First, let $\ol{X}$ be a manifold with boundary, with a free left action of a group $\G$. Write $\ol{X}_\G := \G \backslash \ol{X}$; we assume that $\ol{X}_\G$ is compact, so it has a finite triangulation. Fix such a triangulation and pull it back to $\ol{X}$; this gives a triangulation whose corresponding complex of simplicial chains $C_\bu^\Delta(\ol{X})$ is a bounded complex of finite free $\Z[\G]$-modules. Let $(M_i)_{i\in I}$ be a directed system of $\G$-modules with direct limit $M = \varinjlim_i M_i$.

\begin{lemm}\label{direct limits}
The natural map
\[
\varinjlim_i H^\ast(\ol{X}_\G,\wt{M}_i) \to H^\ast(\ol{X}_\G,\wt{M})
\]
is an isomorphism.
\end{lemm}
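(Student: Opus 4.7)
The plan is to compute both sides using the simplicial cochain complex associated to the pulled-back triangulation, and then exploit the fact that filtered colimits commute with $\Hom$ out of finite free modules and with cohomology of complexes.

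First I would identify $H^\ast(\ol{X}_\G, \wt{M})$ with the cohomology of the complex $\Hom_{\Z[\G]}(C_\bu^\Delta(\ol{X}), M)$. Since $\ol{X} \to \ol{X}_\G$ is a covering map and the triangulation of $\ol{X}$ is pulled back from $\ol{X}_\G$, the simplicial cochain complex of $\ol{X}_\G$ with coefficients in the local system $\wt{M}$ is precisely $\Hom_{\Z[\G]}(C_\bu^\Delta(\ol{X}), M)$ (a $\G$-equivariant map from the free $\Z[\G]$-basis of $n$-simplices of $\ol{X}$ to $M$ is the same thing as an assignment of a section of $\wt{M}$ to each $n$-simplex of $\ol{X}_\G$). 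The cohomology of this simplicial cochain complex agrees with sheaf cohomology $H^\ast(\ol{X}_\G,\wt{M})$ by the standard comparison theorem (one can also prove this directly: cover $\ol{X}_\G$ by the open stars of the triangulation, which form a good cover on which $\wt{M}$ is trivial, and use a \v{C}ech argument).

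Once this identification is in hand, the lemma becomes a purely homological assertion. The simplicial chain complex $C_\bu^\Delta(\ol{X})$ is a bounded complex of \emph{finite free} $\Z[\G]$-modules by construction, since $\ol{X}_\G$ has only finitely many simplices and each lifts to a free $\G$-orbit in $\ol{X}$. Therefore $\Hom_{\Z[\G]}(C_n^\Delta(\ol{X}), -)$ commutes with arbitrary filtered colimits (finitely generated projective $\Z[\G]$-modules are compact objects in $\G$-modules). This gives a canonical isomorphism of complexes
\[
\varinjlim_i \Hom_{\Z[\G]}(C_\bu^\Delta(\ol{X}), M_i) \;\xrightarrow{\sim}\; \Hom_{\Z[\G]}(C_\bu^\Delta(\ol{X}), M).
\]

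Finally, filtered colimits of abelian groups are exact, and exact functors commute with taking cohomology of (bounded, in fact any) complexes. Hence
\[
\varinjlim_i H^\ast\!\left(\Hom_{\Z[\G]}(C_\bu^\Delta(\ol{X}), M_i)\right) \;\xrightarrow{\sim}\; H^\ast\!\left(\varinjlim_i \Hom_{\Z[\G]}(C_\bu^\Delta(\ol{X}), M_i)\right),
\]
which combined with the previous display and the sheaf-cohomology identification yields the claim.

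The only mildly delicate point is step one, the identification of $H^\ast(\ol{X}_\G,\wt{M})$ with simplicial cochain cohomology computed via $\Hom_{\Z[\G]}(C_\bu^\Delta(\ol{X}),M)$; however this is standard once one notes that $\ol{X} \to \ol{X}_\G$ is a universal cover of each component restricted to stars of simplices, so no real obstacle arises. Everything else is formal from compactness of finite free modules and exactness of filtered colimits.
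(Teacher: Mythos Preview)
Your proof is correct and rests on the same key point as the paper's: $C_\bu^\Delta(\ol{X})$ is a bounded complex of finite free $\Z[\G]$-modules, so $\Hom_{\Z[\G]}(C_\bu^\Delta(\ol{X}),-)$ commutes with filtered colimits, and then exactness of filtered colimits finishes.

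The only difference is in how you arrive at the identification $H^\ast(\ol{X}_\G,\wt{M}) \cong H^\ast\big(\Hom_{\Z[\G]}(C_\bu^\Delta(\ol{X}),M)\big)$. You invoke directly the comparison between sheaf cohomology of a local system and simplicial cochains on the triangulated base (e.g.\ via a \v{C}ech argument on open stars). The paper instead passes through singular chains: it uses that $H^\ast(\ol{X}_\G,\wt{M})$ is computed by $\Hom_{\G}(C_\bu(\ol{X}),M)$ (singular cohomology with local coefficients), and then observes that the inclusion $C_\bu^\Delta(\ol{X}) \to C_\bu(\ol{X})$ is a quasi-isomorphism between complexes of projective $\Z[\G]$-modules, hence a chain homotopy equivalence, so one may replace singular by simplicial chains. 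Your route is marginally more direct; the paper's route has the advantage that the singular-cohomology-with-local-coefficients identification is perhaps more standard to cite. Either way the substance is the same.
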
 

\begin{proof}
The canonical map
\[
i : C_\bu^\Delta(\ol{X}) \to C_\bu(\ol{X})
\]
is $\G$-equivariant and a quasi-isomorphism; since the terms of both complexes are projective $\Z[\G]$-modules the map is therefore a chain homotopy equivalence. This then gives us a commutative diagram of complexes
\[
  \xymatrix{ \varinjlim_i \Hom_\G(C_\bu(\ol{X}),M_i) \ar[r] \ar[d] & \Hom_\G(C_\bu(\ol{X}),M) \ar[d] \\ \varinjlim_i \Hom_\G(C^\Delta_\bu(\ol{X}),M_i) \ar[r] & \Hom_\G(C^\Delta_\bu(\ol{X}),M) }
\]
where the vertical maps are induced by $i$ and the horizontal maps are the natural maps. The vertical maps are then quasi-isomorphisms since they are induced from $i$, and the lower horizontal map is an isomorphism since $C^\Delta_\bu(\ol{X})$ is bounded complex of finite free $\Z[\G]$-modules. The top horizontal map is therefore a quasi-isomorphism as well, and taking cohomology gives the desired result.
\end{proof}

We can then prove the result in greater generality. With $\ol{X}$ and $\G$ as above, let $U\sub \ol{X}$ be a $\G$-invariant open subset containing the interior of $\ol{X}$. Set $U_\G := \G \backslash U$, $Z := \ol{X}\setminus U$ and $Z_\G := \G \backslash Z$. 

\begin{prop}\label{direct limits general}
The natural map
\[
\varinjlim_i H_?^\ast(U_\G,\wt{M}_i) \to H_?^\ast(X_\G,\wt{M})
\]
is an isomorphism, for $?\in \{\emptyset,c\}$.
\end{prop}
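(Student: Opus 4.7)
The plan is to leverage Lemma \ref{direct limits} on the compact quotient $\ol{X}_\G$, together with standard topological facts, to handle both cases. The two cases are quite different in flavor: the open case is essentially formal, while the compactly supported case requires an excision step and a secondary application of the Lemma \ref{direct limits} argument to the boundary.

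For $? = \emptyset$, I would invoke the collar-neighborhood fact recalled at the very start of this section: since $U_\G \sub \ol{X}_\G$ is an open subset containing the interior of the manifold with boundary $\ol{X}_\G$, the inclusion is a homotopy equivalence and $H^\ast(U_\G, \cF|_{U_\G}) \cong H^\ast(\ol{X}_\G, \cF)$ canonically for any local system $\cF$ on $\ol{X}_\G$. Applied to both $\wt{M}_i$ and $\wt{M}$, this identification reduces the claim at once to Lemma \ref{direct limits}.

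For $? = c$, note that $Z = \ol{X}\sm U$ lies in the boundary of $\ol{X}$ (since $U$ contains the interior), and $Z_\G$ is compact as a closed subset of $\ol{X}_\G$. The plan is to use the excision long exact sequence
\[
\cdots \to H^i_c(U_\G, \wt{M}) \to H^i(\ol{X}_\G, \wt{M}) \to H^i(Z_\G, \wt{M}|_{Z_\G}) \to \cdots,
\]
where $H^\ast_c = H^\ast$ on the compact spaces $\ol{X}_\G$ and $Z_\G$, and pass to the filtered colimit over $i$. Since filtered colimits are exact, this yields a map of long exact sequences; Lemma \ref{direct limits} handles the $\ol{X}_\G$-column, so by the five lemma it suffices to establish an analogous colimit-commutation statement for $H^\ast(Z_\G, \wt{(-)}|_{Z_\G})$.

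For the $Z_\G$-statement, I would choose the triangulation of $\ol{X}_\G$ (pulled back from the given one or refined) so that $Z_\G$ is a subcomplex, pull back to a $\G$-equivariant triangulation of $Z$ with finitely many orbits of simplices, and then rerun the proof of Lemma \ref{direct limits} verbatim on $Z$: the simplicial chain complex $C^\Delta_\bu(Z)$ is then a bounded complex of finite free $\Z[\G]$-modules, $\Hom_\G(C^\Delta_\bu(Z),-)$ commutes with filtered colimits, and the chain homotopy comparison with singular chains goes through unchanged. The main obstacle is precisely this triangulation-compatibility step — ensuring that one can triangulate the compact pair $(\ol{X}_\G, Z_\G)$ with $Z_\G$ a subcomplex, in a way that lifts $\G$-equivariantly to $(\ol{X}, Z)$. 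This is standard for closed subspaces of triangulated compact manifolds with boundary (and it is performed downstairs on the compact quotient, where the theory is elementary), but it is the only non-formal input to the argument; once it is in place, the rest of the proof is essentially automatic.
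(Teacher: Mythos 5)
Your proposal is correct and follows essentially the same route as the paper: the case $?=\emptyset$ is reduced to Lemma \ref{direct limits} via the collar-neighbourhood identification, and the case $?=c$ is handled by the excision long exact sequence, exactness of filtered colimits, and the five lemma, with Lemma \ref{direct limits} applied to both $\ol{X}_\G$ and $Z_\G$. The only difference is that you spell out why the lemma's argument applies to $Z_\G$ (triangulating the pair so that $Z_\G$ is a subcomplex and rerunning the finite-free simplicial chain argument), a point the paper simply asserts; this extra care is harmless and, if anything, makes the step more transparent.
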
 

\begin{proof}
For $?=\emptyset$ this reduces directly to Lemma \ref{direct limits} by our setup, so assume that $?=c$. By naturality of the excision sequence and exactness of direct limits we have a commutative diagram
\[
  \xymatrix{ \dots \ar[r] & \varinjlim_i H_c^j(U_\G,\wt{M}_i) \ar[r] \ar[d] & \varinjlim_i H^j(\ol{X}_\G,\wt{M}_i) \ar[r] \ar[d] & \varinjlim_i H^j(Z_\G,\wt{M}_i) \ar[r] \ar[d] & \dots \\ \dots \ar[r] & H_c^j(U_\G,\wt{M}) \ar[r] & H^j(\ol{X}_\G,\wt{M}) \ar[r] & H^j(Z_\G,\wt{M}) \ar[r] & \dots }
\]
with exact rows. The result then follows from Lemma \ref{direct limits} (since it is applicable to both $\ol{X}_\G$ and $Z_\G$) and the five lemma.
\end{proof}

\subsection{``Completed cohomology''} In this subsection we make some definitions and recall a theorem of Hill which we will use to handle completed cohomology later. To begin with, we make the following general definition. Let $R = \varprojlim_i R/\mathfrak{a}^n $ be an adic ring, with $\mathfrak{a}$ a finitely generated ideal of definition. 

\begin{defi}
Let $(X_i)_{i\in I}$ be an inverse system of topological spaces, with inverse limit $X$. We define the completed cohomology groups $\wt{H}_?^\ast(X,R)$ of $(X_i)_{i\in I}$ with coefficients in $R$, to be
\[
\wt{H}_?^\ast(X,R) = \varprojlim_n \varinjlim_i H_?^\ast(X_i,R/\mathfrak{a}^n).
\]
Here $?\in \{\emptyset,c\}$, i.e. we consider either usual or compactly supported cohomology, when the latter makes sense.
\end{defi}

\begin{rema}
A few remarks on this definition:
\begin{enumerate}

\item The notation is chosen for simplicity; we make no assertion that $\wt{H}_?^\ast(X,R)$ only depends on $X$. One weak form of independence is clear though: We may replace $I$ with a cofinal subsystem $J$. In particular, we may always assume that $I$ contains an initial element $0\in I$.

\medskip

\item We will almost exclusively work with discrete $R$, where the inverse limit in the definition of $\wt{H}_?^\ast(X,R)$ disappears.
\end{enumerate}
\end{rema}

We now recall the computation of completed cohomology as the cohomology of a ``big'' local system at finite level in some circumstances, which first appeared in \cite{hi}. Let $\ol{X}$ be a manifold with boundary, equipped with a left action of a group $G$. We assume that there is a subgroup $\G \sub G$ which acts freely on $\ol{X}$, and suppose that $\G = \G_0 \supseteq \G_1 \supseteq \G_2 \supseteq \dots $ is a sequence of finite index subgroups of $\G$. Let $X \sub \ol{X}$ be a $\G$-stable open subset containing the interior of $\ol{X}$. Set
\[
\wh{X}:= \varprojlim \left( \dots \to \G_2 \backslash X  \to \G_1 \backslash X \to \G_0 \backslash X \right).
\]
and define
\[
K = \varprojlim_i  \G_i \backslash \G;
\]
this is a profinite set with a right action of $\G$. Assume that $\ol{X}_\Gamma$ is compact. Then we get the following formula for completed cohomology of $\wh{X}$ (cf. \cite[Corollary 1]{hi}):

\begin{prop}\label{cc at finite level}
With assumptions as above, let $R$ be a discrete ring and let $?\in\{\emptyset,c\}$. Then there is a canonical isomorphism
\[
\tH_?^\ast(\wh{X},R) \cong H^\ast_?\left( X_\Gamma, \wt{\Map_{cts}(K,R)} \right), 
\] 
where $\G$ acts on $\Map_{cts}(K,R)$ via right translation.
\end{prop}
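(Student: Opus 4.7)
The plan is to assemble the two tools built up earlier in the section: Proposition \ref{pushforward at finite level} (which computes the pushforward along a finite covering in terms of an induced local system) and Proposition \ref{direct limits general} (which commutes cohomology with direct limits). Since $R$ is discrete, the outer inverse limit in the definition of completed cohomology is trivial, so I only need to show
\[
\varinjlim_i H_?^\ast(\G_i \backslash X, R) \cong H_?^\ast(X_\G, \wt{\Map_{cts}(K, R)}).
\]

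For the first step, let $q_i : \G_i \backslash X \to X_\G$ denote the canonical projection. Since $[\G : \G_i] < \infty$, this is a finite (in particular proper) covering. Noting that the constant sheaf $R$ on $\G_i \backslash X$ is the local system $\wt{R}$ attached to the trivial $\G_i$-module $R$, Proposition \ref{pushforward at finite level} shows that $R^j q_{i,\ast} \wt{R} = 0$ for $j \geq 1$ and identifies $q_{i,\ast} \wt{R}$ with the local system $\wt{\Ind_{\G_i}^\G R}$ on $X_\G$. The Leray spectral sequence therefore degenerates and yields a canonical isomorphism $H^\ast(\G_i \backslash X, R) \cong H^\ast(X_\G, \wt{\Ind_{\G_i}^\G R})$. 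Since $q_i$ is proper we have $Rq_{i,!} = Rq_{i,\ast}$, so the same vanishing gives the analogous isomorphism $H_c^\ast(\G_i \backslash X, R) \cong H_c^\ast(X_\G, \wt{\Ind_{\G_i}^\G R})$ in the compactly supported case. These isomorphisms are functorial in $i$ in the obvious sense, i.e.\ the transition maps on the left induced by the covering $\G_{i+1} \backslash X \to \G_i \backslash X$ match the transition maps on the right induced by the inclusions $\Ind_{\G_i}^\G R \hookrightarrow \Ind_{\G_{i+1}}^\G R$.

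For the second step, because $R$ carries the trivial $\G$-action, the definition of $\Ind_{\G_i}^\G R$ reduces to $\Map(\G_i \backslash \G, R)$, with $\G$ acting by right translation. Any continuous function from the profinite set $K = \varprojlim_i \G_i \backslash \G$ to the discrete ring $R$ factors through some finite quotient $\G_i \backslash \G$, so
\[
\varinjlim_i \Ind_{\G_i}^\G R \cong \varinjlim_i \Map(\G_i \backslash \G, R) \cong \Map_{cts}(K, R)
\]
as $\G$-modules. The hypotheses of Proposition \ref{direct limits general} are exactly those in force here (namely that $\ol{X}_\G$ is compact and $X$ is a $\G$-stable open subset containing the interior of $\ol{X}$), so we may commute the direct limit with $H_?^\ast(X_\G, -)$ to obtain
\[
\varinjlim_i H_?^\ast(X_\G, \wt{\Ind_{\G_i}^\G R}) \cong H_?^\ast(X_\G, \wt{\Map_{cts}(K, R)}).
\]
Stringing the isomorphisms together gives the proposition.

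The proof is essentially a bookkeeping exercise once the two underlying results are in place; there is no real obstacle. The only point deserving a moment of care is checking that the identification of $q_{i,\ast}\wt{R}$ with $\wt{\Ind_{\G_i}^\G R}$ is natural in $i$ (so that the direct system of cohomology groups upstairs matches the direct system of induced-module local systems downstairs), but this is immediate from the description of both local systems via $\G$-equivariant locally constant functions.
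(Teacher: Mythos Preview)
Your proof is correct and follows essentially the same approach as the paper: apply Proposition~\ref{pushforward at finite level} to rewrite each $H_?^\ast(\G_i\backslash X,R)$ as $H_?^\ast(X_\G,\wt{\Map(\G_i\backslash\G,R)})$, then use Proposition~\ref{direct limits general} to pass the direct limit inside, and finally identify $\varinjlim_i \Map(\G_i\backslash\G,R)$ with $\Map_{cts}(K,R)$. You have simply spelled out a few details (the Leray degeneration, the properness giving $Rq_{i,!}=Rq_{i,\ast}$) that the paper leaves implicit.
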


\begin{proof}
By Lemma \ref{pushforward at finite level} and the definition, we have
\[
\tH_?^\ast(\wh{X},R) \cong \varinjlim_i H_?^\ast\left( X_\Gamma, \wt{\Map(\G_i \backslash \G, R)} \right). 
\]
Our setup implies that we may apply Proposition \ref{direct limits general} to the right hand side, so it remains to show that
\[
\varinjlim_i \Map(\G_i \backslash \G, R) = \Map_{cts}(K,R)
\]
as $\G$-modules. But this is immediate from the definition of $K$.
\end{proof}

We will also encounter local systems slightly bigger than the one appearing in Proposition \ref{cc at finite level}. We keep the notation and assumptions of Proposition \ref{cc at finite level}, except that we forget the groups denoted by $K$ and $\G_i$, $i\geq 1$. Let $G$ be a profinite group with closed subgroups $K \sub H \sub G$, and assume that $K$ is normal in $H$. For simplicity, we assume that there is a countable basis of neighborhoods of $1\in G$. Suppose that we have a group homomorphism $\G \to H/K$; then $\Map_{cts}(H/K,R)$ and $\Map_{cts}(G/K,R)$ become left $\G$-modules via right translation, and hence induce local systems on the space $X_\G$. Then we have the following simple but useful lemma.

\begin{lemm}\label{big and small local systems}
Fix an integer $q\geq 0$ and let $? \in \{\emptyset,c\}$. 
\begin{enumerate}

\item $H_?^q(X_\G, \wt{\Map_{cts}(H/K,R)}) = 0$ if and only if $H_?^q(X_\G, \wt{\Map_{cts}(G/K,R)}) = 0$;

\item $H_c^q(X_\G, \wt{\Map_{cts}(H/K,R)}) \to H^q(X_\G, \wt{\Map_{cts}(H/K,R)})$ is injective (or surjective, or bijective) if and only if $H_c^q(X_\G, \wt{\Map_{cts}(G/K,R)}) \to H^q(X_\G, \wt{\Map_{cts}(G/K,R)})$ is injective (or surjective, or bijective).

\end{enumerate}
\end{lemm}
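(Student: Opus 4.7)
The plan is to exhibit both $\Map_{cts}(H/K,R)$ and $\Map_{cts}(G/K,R)$ as $\Gamma$-equivariant filtered colimits of modules at finite level, where the $G$-side module is a finite direct sum of copies of the $H$-side module; the cohomological claims then follow from Proposition \ref{direct limits general} together with the interaction of vanishing (resp.\ kernels and cokernels) with finite direct sums and filtered colimits.

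For the setup, choose a countable decreasing basis $\{N_i\}$ of open normal subgroups of $G$ at $1$, and put $K_i := N_iK$. Each $K_i$ is open in $G$, contains $K$, and is normalized by $H$ (using $N_i \lhd G$ and $K \lhd H$), so the right $H/K$-action on $G/K$ descends to $G/K_i$. A standard profinite compactness argument shows $\{K_i\}$ is cofinal in the open subgroups of $G$ containing $K$, and $\{H \cap K_i\}$ is cofinal in the open subgroups of $H$ containing $K$. The right $H/K$-orbits in the finite set $G/K_i$ are parametrized by the (finite) quotient $G/HK_i$, on which $\Gamma$ acts trivially, and each orbit is isomorphic to $H/(H \cap K_i)$ as a right $H/K$-set. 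Writing $M_i := \Map(H/(H \cap K_i),R)$, we obtain $\Gamma$-equivariant identifications
\[
\Map_{cts}(H/K,R) = \varinjlim_i M_i, \qquad \Map_{cts}(G/K,R) = \varinjlim_i \bigoplus_{G/HK_i} M_i,
\]
together with $\Gamma$-equivariant split inclusions $\iota_i : M_i \hookrightarrow \bigoplus_{G/HK_i} M_i$ (as the summand indexed by $1 \in G/HK_i$) and split projections $\pi_i$, compatible in $i$.

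Applying Proposition \ref{direct limits general} (which uses the compactness of $\ol{X}_\Gamma$) together with the commutation of cohomology with finite direct sums, and setting $V_i := H^q_?(X_\Gamma, \wt{M_i})$, gives
\[
H^q_?(X_\Gamma, \wt{\Map_{cts}(H/K,R)}) = \varinjlim_i V_i, \qquad H^q_?(X_\Gamma, \wt{\Map_{cts}(G/K,R)}) = \varinjlim_i \bigoplus_{G/HK_i} V_i,
\]
with induced split maps $\iota^*$, $\pi^*$ at the colimit level. Part (1), ``$\Leftarrow$'', is immediate from the split injection $\iota^*$. For ``$\Rightarrow$'', if $\varinjlim_i V_i = 0$ and $v = (v_g) \in \bigoplus_{G/HK_i} V_i$, each component $v_g$ dies in some $V_{j(g)}$, and since $G/HK_i$ is finite we may choose a single $j$ so that every component dies in $V_j$; the $G$-side transition map then sends $v$ to zero. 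Part (2) follows by applying the argument of (1) to $V_i := \Ker \alpha_i$ (for injectivity) or $V_i := \Coker \alpha_i$ (for surjectivity), where $\alpha_i : H^q_c(X_\Gamma, \wt{M_i}) \to H^q(X_\Gamma, \wt{M_i})$ is the canonical comparison; both $\Ker$ and $\Coker$ commute with finite direct sums and filtered colimits.

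The main obstacle is not any single hard step but rather setting up the decomposition correctly: one must identify the orbit structure of $G/K_i$ under the right $H/K$-action and verify that the resulting decomposition $\Map(G/K_i,R) = \bigoplus_{G/HK_i} M_i$ is $\Gamma$-equivariant with trivial $\Gamma$-action on the finite index set and the standard action on each summand, and check the cofinality statements for the subgroups $\{K_i\}$ and $\{H \cap K_i\}$. Once this is in place, everything reduces to routine compatibilities between filtered colimits, finite direct sums, kernels, cokernels, and Proposition \ref{direct limits general}.
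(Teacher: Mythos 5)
Your argument is correct, but it takes a genuinely different route from the paper's. The paper argues globally: the countable neighbourhood basis at $1\in G$ is used to choose a continuous splitting of $G/K \to G/H$, giving a homeomorphism $G/K \cong G/H \times H/K$ of right $H/K$-spaces and hence an isomorphism of $\Gamma$-modules $\Map_{cts}(G/K,R) \cong \Map_{cts}(G/H,R) \otimes_R \Map_{cts}(H/K,R)$ with trivial action on the first factor; since $\Map_{cts}(G/H,R)$ is a direct limit of finite free $R$-modules, Proposition \ref{direct limits general} yields $H^q_?(X_\Gamma,\wt{\Map_{cts}(G/K,R)}) \cong \Map_{cts}(G/H,R)\otimes_R H^q_?(X_\Gamma,\wt{\Map_{cts}(H/K,R)})$ compatibly with the maps in (2), and both parts follow at once from freeness of $\Map_{cts}(G/H,R)$. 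You instead decompose at finite level: your orbit analysis is right (the stabilizer of any left coset $gK_i$ under right $H$-translation is $H\cap K_i$, independent of $g$, and $H\cap K_i$ is normal in $H$, so every orbit is $H$-equivariantly isomorphic to $H/(H\cap K_i)$ and the orbit set is $G/HK_i$ with trivial $\Gamma$-action), and the rest is colimit bookkeeping via the same Proposition \ref{direct limits general}. What your route buys is that no continuous section is ever needed (countability only serves to index your system), at the price of losing the paper's clean tensor formula. The one point you should make explicit is that your summand identifications depend on base points, and base points for the orbits of $G/K_j$ lying over a fixed orbit of $G/K_i$ cannot in general all be chosen compatibly; consequently the components of the $G$-side transition maps agree with the standard maps $M_i \to M_j$ (hence $V_i \to V_j$) only up to composition with automorphisms induced by left translation on $H/(H\cap K_j)$, which are $\Gamma$-equivariant because left and right translation commute. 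Since such a twist kills whatever the standard transition kills, your concluding steps in (1) and (2) are valid, but as written the assertion ``the $G$-side transition map then sends $v$ to zero'' quietly relies on this observation and deserves a sentence of justification.
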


\begin{proof}
Choose a continuous splitting of the natural map $G/K \to G/H$ (the existence of which is easy to prove using the assumption that $1\in G$ has a countable basis of neighborhoods); this gives a homeomorphism
\[
G/K \cong G/H \times H/K 
\]
of right $H/K$-spaces (where $H/K$ acts on the right hand side through the second factor). Then
\[
\Map_{cts}(G/K,R) \cong \Map_{cts}(G/H \times H/K ,R)\cong \Map_{cts}(G/H,R) \otimes_R \Map_{cts}(H/K,R)
\]
as $H/K$-modules (and hence as $\G$-modules), where the action is trivial on $\Map_{cts}(G/H,R)$. Now $\Map_{cts}(G/H,R)$ is a direct limit of finite free $R$-modules, so using Proposition \ref{direct limits general} we have an isomorphism 
\[
H_?^q(X_\G, \wt{\Map_{cts}(G/K,R)}) \cong \Map_{cts}(G/H,R) \otimes_R H_?^q(X_\G, \wt{\Map_{cts}(H/K,R)})
\]
which respects the maps in part (2). The lemma follows from this (since $\Map_{cts}(G/H,R)$ is a free $R$-module).
\end{proof}

\subsection{Arithmetic and congruence subgroups}

Here we quickly recall some material on arithmetic and congruence subgroups. Let $G$ be a connected linear algebraic group over $\Q$. Congruence subgroups of $G(\Q)$ are subgroups of the form $G(\Q) \cap K$, where $K \sub G(\A_f)$ is a compact open subgroup and the intersection is taken inside $G(\A_f)$. A subgroup in $G(\Q)$ is usually said to be arithmetic if it is commensurable with one (equivalently any) congruence subgroup. In fact, one can require a slightly stronger condition.

\begin{prop}\label{arithmetic contained in congruence}
Any arithmetic subgroup in $G(\Q)$ is contained in a congruence subgroup.
\end{prop}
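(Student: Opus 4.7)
The plan is to enlarge $\Gamma$ to a compact open subgroup of $G(\A_f)$ by multiplying it by a suitable compact open subgroup $K' \sub G(\A_f)$ that $\Gamma$ normalizes; the intersection $G(\Q) \cap \Gamma K'$ will then be the desired congruence subgroup containing $\Gamma$. Starting with a congruence subgroup $\Gamma_0 = G(\Q) \cap K$ such that $\Gamma_1 := \Gamma \cap \Gamma_0$ has finite index in both $\Gamma$ and $\Gamma_0$, choose left coset representatives $\g_1 = 1, \g_2, \dots, \g_n \in \Gamma$ for $\Gamma/\Gamma_1$ and set
\[
K' := \bigcap_{i=1}^n \g_i K \g_i^{-1},
\]
which is a compact open subgroup of $G(\A_f)$ as a finite intersection of such.

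The key step is to verify that $\Gamma$ normalizes $K'$. For any $\g \in \Gamma$, left multiplication by $\g$ permutes the cosets $\g_i \Gamma_1$, so one can write $\g \g_i = \g_{\sigma(i)} \delta_i$ for some permutation $\sigma \in S_n$ and elements $\delta_i \in \Gamma_1 \sub K$. Since each $\delta_i$ lies in $K$, it normalizes $K$ trivially, so
\[
(\g \g_i) K (\g \g_i)^{-1} = \g_{\sigma(i)} \delta_i K \delta_i^{-1} \g_{\sigma(i)}^{-1} = \g_{\sigma(i)} K \g_{\sigma(i)}^{-1}.
\]
Intersecting over $i$ yields $\g K' \g^{-1} = K'$.

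Once normality is established, $\Gamma K' = K' \Gamma$ is a subgroup of $G(\A_f)$, open because it contains $K'$. It is also compact: the quotient $\Gamma K'/K' \cong \Gamma/(\Gamma \cap K')$ is finite, because $\Gamma \cap K' = \Gamma \cap (G(\Q) \cap K')$ is the intersection of $\Gamma$ with the congruence subgroup $G(\Q) \cap K'$, and any congruence subgroup is commensurable with $\Gamma$. Thus $\Gamma K'$ is a compact open subgroup of $G(\A_f)$, and $\Gamma \sub G(\Q) \cap \Gamma K'$ exhibits $\Gamma$ inside the congruence subgroup $G(\Q) \cap \Gamma K'$.

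The main obstacle is the normality argument for $K'$ under $\Gamma$; it hinges on the critical observation that the "correction" elements $\delta_i$ in fact lie inside $K$, which is what makes conjugation by them preserve $K$. Everything else is essentially bookkeeping about commensurability and finite-index subgroups.
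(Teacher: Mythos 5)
Your proof is correct, but it takes a genuinely different route from the paper's. The paper argues topologically: since $\Gamma$ is commensurable with the congruence subgroup $\Gamma_1 = G(\Q)\cap K_1$, and $\overline{\Gamma_1} \subseteq K_1$ is compact, the closure $\overline{\Gamma}$ in $G(\A_f)$ is a compact subgroup, which is therefore contained in some compact open subgroup $K_2$, giving $\Gamma \subseteq G(\Q)\cap K_2$. Your argument is more constructive: you explicitly build the ambient compact open subgroup as $\Gamma K'$, where $K' = \bigcap_i \gamma_i K \gamma_i^{-1}$ is forced to be $\Gamma$-normal by the coset bookkeeping (the correction elements $\delta_i$ lie in $\Gamma_1 \subseteq K$, so conjugation by them fixes $K$). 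This buys you an explicit description of a congruence subgroup containing $\Gamma$, at the cost of more bookkeeping; the paper's approach is shorter but leans on the general fact that compact subgroups of locally profinite groups lie in compact open subgroups. Both arguments are valid, and the commensurability of $\Gamma$ with $G(\Q)\cap K'$ (needed for your finiteness of $\Gamma/(\Gamma\cap K')$) holds for any compact open $K'$ since $\Gamma$ is arithmetic, so no gap there.
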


\begin{proof}
Let $\G$ be an arithmetic subgroup, let $K_1 \sub G(\A_f)$ be a compact open subgroup and set $\G_1 = G(\Q) \cap K_1$. The closure of $\G_1$ in $G(\A_f)$ is compact since $\G_1 \sub K_1$, and since $\G$ and $\G_1$ are commensurable this easily implies that the closure of $\G$ in $G(\A_f)$ is a compact subgroup. Since any compact subgroup of locally profinite group is contained in a compact open subgroup, we deduce the existence of a compact open subgroup $K_2 \sub G(\A_f)$ with $\G \sub K_2$. It follows that $\G$ is contained in the congruence subgroup $G(\Q) \cap K_2$, as desired.
\end{proof}

Moving on, let $H$ be another connected linear algebraic group, and let $\G \sub G(\Q)$ be an arithmetic subgroup. If $H \sub G$ is a subgroup, then directly from the definitions we see that $\G \cap H(\Q)$ is an arithmetic subgroup in $H(\Q)$, which is congruence if $\G$ is. If we instead have a surjection $f : G \to H$, then $f(\G)$ is an arithmetic subgroup (see \cite[Theorem 4.1]{pr}); this will be important in this paper and we will use it freely. Before moving on, we recall that group cohomology for any torsion-free arithmetic subgroup $\G$ commutes with direct limits.

\medskip

We recall the notion of neatness from \cite[\S 17.1]{bo2}. An element $\gamma\in G(\Q)$ is called neat if there is a faithful representation $r : G \to \GL(V)$ such that the multiplicative group generated by the eigenvalues of $r(\gamma)$ (in one, or equivalently any, algebraically closed field containing $\Q$) is torsion-free. A neat element cannot have finite order. An arithmetic subgroup $\G \sub G(\Q)$ is called neat if all its elements are neat; such subgroups are in particular torsion-free. From the definitions, we see that if $H \sub G$ is a connected linear algebraic subgroup and $\G \sub G(\Q)$ is neat, then $\G \cap H(\Q)$ is neat. If an element $\gamma$ is neat, then for any representation $\rho : G \to GL(W)$, the subgroup generated by the eigenvalues of $\rho(\gamma)$ is torsion-free \cite[Corollaire 17.3]{bo2}. An easy consequence of this is that if $f : G \to H$ is a surjection of linear algebraic groups and $\G$ is neat, then $f(\G)$ is neat.

\medskip

For language reasons, let us also introduce notions of neatness for adelic and $p$-adic groups. The notion of neatness for an element $g=(g_p)_p \in G(\A_f)$ and a subgroup $K\sub G(\A_f)$ is defined in \cite[\S 0.6]{pink-thesis}. For $p$-adic groups, we make the definition analogous to the case of arithmetic groups: An element $g\in G(\Qp)$ is called neat if there is a faithful representation $\rho : G_{\Qp} \to \GL(W)$ over $\Qp$ such that the multiplicative group generated by the eigenvalues of $\rho(g)$ (in one, or equivalently any, algebraically closed field containing $\Qp$) is torsion-free. Again, this is independent of the choice of $\rho$. A subgroup $K_p \sub G(\Qp)$ is called neat if all of its elements are neat. We note the following implications among these concepts: If $K_p \sub G(\Qp)$ is a neat compact open subgroup, then $K^p K_p \sub G(\A_f)$ is neat for any compact open $K^p \sub G(\A_f^p)$. If a compact open $K\sub G(\A_f)$ is neat, then $\G = \G(\Q) \cap K$ is a neat congruence subgroup of $G$.

\medskip

We record the following version of the standard result that ``sufficiently small'' congruence subgroups are neat; it will be important for us to be able to only impose congruence conditions at a fixed prime $p$.

\begin{prop}\label{existence of neat subgroups}
Let $p$ be a prime. Then sufficiently small compact open subgroups of $G(\Qp)$ are neat. In particular, if $K^p \sub G(\A^p_f)$ is compact open, then $K=K^pK_p$ and $\G = G(\Q) \cap K$ are neat for sufficiently small $K_p\sub G(\Qp)$.
\end{prop}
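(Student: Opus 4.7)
The plan is to reduce the proposition to producing a single neat compact open subgroup $K_p^0 \sub G(\Qp)$. Once $K_p^0$ is in hand, every compact open $K_p \sub K_p^0$ is also neat (a subgroup of a neat group is neat, directly from the definition), and the ``in particular'' statement will follow from the two implications recalled in the paragraph just before the proposition: $K_p \sub G(\Qp)$ neat forces $K = K^p K_p \sub G(\A_f)$ neat for any compact open $K^p$, and $K \sub G(\A_f)$ neat forces $\G = G(\Q)\cap K$ neat.

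To construct $K_p^0$, I would fix a faithful representation $\rho : G \to \GL(V)$ over $\Q$ together with a $\Z$-lattice $V_\Z \sub V$, and set
\[
K_p^0 = \{ g \in G(\Qp) \mid \rho(g) \in 1 + p^N \End(V_{\Zp}) \}
\]
for an integer $N > 1/(p-1)$ (so $N=1$ suffices for $p\geq 3$ and $N=2$ for $p=2$). Since $\rho$ identifies $G(\Qp)$ with a closed subgroup of $\GL(V)(\Qp)$, this $K_p^0$ is a compact open subgroup; and because neatness of an element is independent of the choice of faithful representation, it suffices to check that every $g \in K_p^0$ is neat with respect to this specific $\rho$.

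The core of the argument is a short $p$-adic $\log$/$\exp$ computation. For $g \in K_p^0$, write $\rho(g) = 1 + A$ with $A \in p^N \End(V_{\Zp})$; a Newton polygon analysis of $\det(X\cdot\mathrm{id} - A)$ shows that the eigenvalues $\lambda_1, \dots, \lambda_n \in \ol{\Qp}$ of $\rho(g)$ all satisfy $|\lambda_i - 1| \leq p^{-N} < p^{-1/(p-1)}$. Hence each $\lambda_i$ lies in the open disc around $1$ on which $\log$ and $\exp$ are mutually inverse isomorphisms between the multiplicative and additive groups of the disc; setting $\mu_i := \log \lambda_i$, we get $|\mu_i| = |\lambda_i - 1| < p^{-1/(p-1)}$. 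For any integers $a_1,\dots,a_n$, the ultrametric inequality together with $|a_i|_p \leq 1$ gives $|\sum_i a_i \mu_i| < p^{-1/(p-1)}$, so $\prod_i \lambda_i^{a_i} = \exp(\sum_i a_i \mu_i)$ still lies in the disc $\{ y \in \ol{\Qp}^\times \mid |y - 1| < p^{-1/(p-1)}\}$. That disc is, however, torsion-free: any root of unity $\zeta$ in it would satisfy $\log \zeta = 0$ by injectivity of $\log$, forcing $\zeta = 1$. Therefore the multiplicative group generated by the $\lambda_i$ is torsion-free, i.e.\ $g$ is neat.

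The only real obstacle is the $p$-adic analysis above, specifically the facts that on the open disc of radius $p^{-1/(p-1)}$ around $1$ the functions $\log$ and $\exp$ are inverse group isomorphisms compatible with integer powers, and that roots of unity in that disc are trivial. All of this is entirely standard; the case $p=2$ costs nothing beyond choosing $N\geq 2$ rather than $N\geq 1$.
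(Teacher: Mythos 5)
Your proof is correct and follows essentially the same route as the paper's: reduce via a faithful representation to a congruence subgroup of $\GL_n(\Qp)$, bound the $p$-adic valuations of the eigenvalues by a Newton polygon argument, and conclude that the multiplicative group generated by the eigenvalues contains no nontrivial roots of unity. The only cosmetic difference is at the last step, where the paper argues directly with valuations using the threshold $r > n/(p-1)$ while you route through $\log$/$\exp$ (incidentally obtaining the sharper, $n$-independent threshold $N > 1/(p-1)$); both rest on the same fact that the disc $\{\,x : |x-1| < p^{-1/(p-1)}\,\}$ is a torsion-free multiplicative group.
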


\begin{proof}
By choosing a faithful representation $\rho : G \to \GL_n$ (and remembering that any compact subgroup of a locally profinite group is contained in a compact open subgroup), we may reduce to $G=\GL_n$. In this case, set $K_{r,p} = \Ker( \GL_n(\Zp) \to GL_n(\Z/p^r))$; we will prove that if $r>n/(p-1)$, then $K_{r,p}$ is neat, so we assume this condition on $r$ from now on. To show neatness, it suffices to show that if $\gamma \in K_{r,p}$, then the group generated by the eigenvalues of $\gamma$ is torsion-free. Let $\alpha_1,\dots,\alpha_n$ be the eigenvalues of $\gamma$ (in some choice of $\ol{\Q}_p$, with valuation $v_p$ normalized so that $v_p(p)=1$). The characteristic polynomial of $\gamma$ reduces to $(X-1)^n$ modulo $p^r$, so by looking at Newton polygons $v_p(\alpha_i -1) \geq r/n$ for all $i$. Thus, if $\alpha$ is any element in the multiplicative group generated by the $\alpha_i$, $v_p(\alpha-1) \geq r/n$. In particular, since $r > n/(p-1)$, $\alpha$ cannot be a nontrivial root of unity.  This finishes the proof of the proposition.
\end{proof}

We also recall another fact about ``sufficiently small'' congruence subgroups, and set up some notation. For any real Lie group $J$, we write $J^+$ for the identity component of $J$. The following is \cite[Corollaire 2.0.14]{de}.

\begin{prop}\label{deligne}
Let $G$ be a connected reductive group over $\Q$. Then there exists a congruence subgroup $\G \sub G(\Q)$ which is contained in $G(\R)^+$. In particular, if $\Delta \sub G(\Q)$ is any congruence subgroup, then $\Delta \cap G(\R)^+$ is also a congruence subgroup.
\end{prop}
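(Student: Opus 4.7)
Plan:

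The proposition has two claims: (i) existence of a congruence subgroup $\G_\ast \sub G(\Q)$ contained in $G(\R)^+$, and (ii) that $\Delta \cap G(\R)^+$ is always a congruence subgroup. I would deduce (ii) from (i) by real approximation and a general lemma, then attack (i) via the isogeny structure of $G$.

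Deduction of (ii) from (i): Since $G$ is connected, real approximation gives that $G(\Q)$ surjects onto the finite group $\pi_0(G(\R))$, so $G(\Q) \cap G(\R)^+$ has finite index in $G(\Q)$. Given any congruence subgroup $\Delta$, the intersection $\Delta \cap \G_\ast$ is a congruence subgroup of finite index in $\Delta \cap G(\R)^+$. It therefore suffices to prove the general lemma that any subgroup $\Delta' \sub G(\Q)$ containing a congruence subgroup $\G_0 = G(\Q) \cap K_0$ of finite index is itself congruence: pick coset representatives $\delta_1,\dots,\delta_r$ of $\G_0$ in $\Delta'$, shrink $K_0$ to be normalized by each $\delta_i$ (a finite intersection of compact opens), and verify that $K := \bigsqcup_i \delta_i K_0$ is a compact open subgroup of $G(\A_f)$ with $G(\Q) \cap K = \Delta'$.

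For (i), I reduce to the two elementary cases of simply connected semisimple groups and tori via the central isogeny $\phi : \wt G \times Z(G)^0 \to G$, where $\wt G \to G^{\mathrm{der}}$ is the simply connected cover and $Z(G)^0$ is the identity component of the center. Both $\wt G(\R)$ (connected, a classical result) and $Z(G)^0(\R)^+$ (connected by definition) have image in $G(\R)^+$ under $\phi$, so it suffices to find congruence subgroups of $\wt G(\Q)$ and $Z(G)^0(\Q)$ landing in the respective identity components and to push them through $\phi$, using that the isogeny preserves the arithmetic/congruence class up to finite-index issues already handled by the general lemma above. The simply connected case is automatic since $\wt G(\R) = \wt G(\R)^+$; the torus case is handled by direct construction, e.g.\ for $T = \mathrm{GL}_1$ one has $\Q^\ast \cap \ker(\wh{\Z}^\ast \to (\Z/3\Z)^\ast) = \{1\} \sub \R_{>0}$. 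The main obstacle is the uniform torus case: one must exhibit explicit congruence conditions killing the image in $\pi_0(T(\R))$ for an arbitrary $\Q$-torus, which can be done by choosing characters of $T$ detecting the component group and imposing congruence at primes where those characters are witnessed.
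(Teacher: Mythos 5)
The paper itself offers no argument here: the proposition is quoted verbatim as Deligne's Corollaire 2.0.14, so your attempt has to be judged on its own, and as it stands the main claim (i) is not proved. The fatal step is ``push them through $\phi$, using that the isogeny preserves the arithmetic/congruence class up to finite-index issues already handled by the general lemma above.'' The image $\phi(\G_1\times\G_2)$ is an arithmetic subgroup of $G(\Q)$ contained in $G(\R)^+$, but arithmetic only means \emph{commensurable} with congruence subgroups; it need not \emph{contain} a congruence subgroup of $G(\Q)$, and your overgroup lemma applies only to groups that contain one with finite index. So nothing in the proposal produces congruence conditions on $G$ itself forcing positivity at the real place -- and that is precisely the content of the proposition. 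Concretely, for $G=\mathrm{PGL}_2$, $\wt{G}=\SL_2$, one must show that every element of a suitable $G(\Q)\cap K$ has determinant in $\Q^{\times 2}$: integrality at all finite places only gives $\pm$ a square, and one needs an auxiliary congruence at a prime $q\equiv 3 \pmod 4$ to exclude the sign. In general one has to control the obstruction classes of $G(\Q)\cap K$ in $H^1(\Q,\ker\phi)$ (bounded ramification, hence finitely many classes, then auxiliary congruences killing the restriction to $\R$), together with strong approximation for $\wt{G}$; this is how Deligne's 2.0.13--2.0.15 actually proceed and none of it is formal. The ``uniform torus case'' you defer is likewise a genuine theorem -- essentially Chevalley's theorem that any finite-index subgroup of a unit group (e.g.\ the totally positive units) contains the units $\equiv 1$ modulo some modulus -- and ``choosing characters of $T$ detecting the component group and imposing congruence at primes where those characters are witnessed'' is not an argument: the paper's own remark after the proposition ($\alpha=-2+\sqrt{3}$ totally negative with $\alpha^{3^n}\equiv 1 \bmod 3^n$) shows that congruences do not see archimedean signs directly; the link goes through Chevalley's theorem (equivalently, narrow ray class field theory).

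The deduction of (ii) from (i) is sound in outline, and your overgroup lemma is in fact true, but the justification ``shrink $K_0$ to be normalized by each $\delta_i$ (a finite intersection of compact opens)'' fails: $\bigcap_i \delta_i K_0\delta_i^{-1}$ is not normalized by the $\delta_i$ in general. The correct repair: $\Delta'$ is a finite union of cosets $\delta_i\G_0$, so its closure in $G(\A_f)$ is compact, hence contained in a compact open subgroup $K_2$; take $N$ an open \emph{normal} subgroup of $K_2$ with $N\sub K_0$ (the normal core of $K_0\cap K_2$ in $K_2$ is open), and set $K=\Delta' N=\ol{\Delta'}N$. Then $K$ is compact open and $G(\Q)\cap K=\Delta'$, since $\gamma=\delta n$ with $\delta\in\Delta'$, $n\in N$ forces $n\in G(\Q)\cap K_0=\G_0\sub\Delta'$. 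With that fix, (ii) follows from (i); but (i) itself remains unproved in your write-up, and repairing it amounts to reproducing Deligne's argument (real approximation, strong approximation for the simply connected cover, and Chevalley's theorem for the central torus), which is exactly why the paper cites \cite[Corollaire 2.0.14]{de} rather than proving it.
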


We remark that, unlike neatness, the condition $\G \sub G(\R)^+$ cannot be enforced only by congruence conditions at a single prime (chosen independently of $G$). For a simple example, consider $G= \mathrm{Res}_{\Q}^F \mathbb{G}_m$ with $F:= \Q(\sqrt{3})$, and consider the totally negative unit $\alpha = -2+ \sqrt{3} \in F$. One checks easily that $\alpha^{3^n} \equiv 1$ modulo $3^n$ for all $n$ but all the $\alpha^{3^n}$ are totally negative. For an example with a semisimple $G$, consider $G = \mathrm{Res}_{\Q}^F \mathrm{PGL}_2$ and the matrices
\[
\begin{pmatrix}
\alpha^{3^n} & 0 \\ 0 & 1
\end{pmatrix}, \,\,\,\, n \geq 1;
\]
again these tend to the identity $3$-adically but they all lie in a non-identity component since they have totally negative determinant.

\subsection{Cohomology of unipotent groups}\label{coh of unipotent groups}

From now on we fix a prime number $p$. Let $N$ be a unipotent algebraic group over $\Q$. The goal in this subsection is to prove the following theorem (we remark that $N$ satisfies strong approximation and that all arithmetic subgroups of $N(\Q)$ are congruence subgroups):

\begin{theo}\label{comparison of cts and disc gp coh} If $\G \sub N(\Q)$ is a congruence subgroup with closure $K_p \sub N(\Qp)$ and $V$ is a smooth $K_p$-representation over $\Fp$, then the natural map
\[
H^i_{cts}(K_p,V) \to H^i(\G,V)
\]
is an isomorphism for all $i$. 
\end{theo}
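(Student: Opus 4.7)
The plan is to proceed via two reductions followed by an induction on $\dim N$. \emph{First,} reduce to the case where $V$ is a finite-dimensional $\Fp$-vector space: since $V$ is smooth we may write $V = \varinjlim_j V_j$ as a filtered colimit of finite-dimensional smooth $K_p$-subrepresentations, and both sides commute with filtered colimits (on the continuous side, via $H^i_{cts}(K_p, V) = \varinjlim_H H^i(K_p/H, V^H)$ and the finiteness of $K_p/H$; on the discrete side, by the remark made just before Proposition \ref{existence of neat subgroups}). \emph{Second,} reduce to $V = \Fp$ with trivial action. Pick an open normal subgroup $H \sub K_p$ acting trivially on $V$, and set $Q = K_p/H$, a finite $p$-group. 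Density of $\G$ in $K_p$ forces $\G \cdot H = K_p$, so $\G \cap H$ is dense in $H$, the canonical map $\G/(\G \cap H) \to Q$ is an isomorphism, and $\G \cap H$ is an arithmetic (hence congruence, since $N$ is unipotent) subgroup of $N(\Q)$ with closure $H$. The Hochschild--Serre spectral sequences for the extensions $H \to K_p \to Q$ and $\G \cap H \to \G \to Q$ then have inner cohomology splitting as $H^q_{cts}(H, \Fp) \otimes_{\Fp} V$ and $H^q(\G \cap H, \Fp) \otimes_{\Fp} V$ respectively (as $Q$-modules, with $Q$ acting through conjugation, matched under $K_p/H = \G/(\G\cap H)$), so the theorem for $\G \cap H \sub H$ with trivial coefficients implies an isomorphism on $E_2$-pages and hence on abutments.

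It remains to prove the theorem when $V = \Fp$ is trivial, which we do by induction on $\dim N$. The base case $\dim N = 0$ is trivial. For the inductive step, choose a central $\Q$-subgroup $Z \sub N$ with $Z \cong \mathbb{G}_a^r$ for some $r \geq 1$ (such a $Z$ exists since $N$ is unipotent, hence nilpotent with nontrivial center having a nontrivial $\mathbb{G}_a^r$-piece). Let $\G_Z = \G \cap Z(\Q)$, with closure $K_Z \sub Z(\Qp)$, and set $\ol{\G} = \G/\G_Z \sub (N/Z)(\Q)$ with closure $\ol{K}_p = K_p/K_Z$; then $\ol{\G}$ is a congruence subgroup of $(N/Z)(\Q)$ by the paper's earlier remarks. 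Comparing the two Hochschild--Serre spectral sequences
\[
H^p_{cts}(\ol{K}_p, H^q_{cts}(K_Z, \Fp)) \Rightarrow H^{p+q}_{cts}(K_p, \Fp), \qquad H^p(\ol{\G}, H^q(\G_Z, \Fp)) \Rightarrow H^{p+q}(\G, \Fp),
\]
the inner cohomology groups match by the induction hypothesis applied to $Z$ (and the $\ol{K}_p$- and $\ol{\G}$-actions on them are trivial because $Z$ is central in $N$), so each inner module is an exterior power $\bigwedge^q \Fp^r$ with trivial action. The $E_2$-pages then match by the induction hypothesis applied to $\ol{\G} \sub \ol{K}_p$ with this trivial-action coefficient module, and we conclude.

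The hard part is not any deep input but rather the bookkeeping: one must verify that the natural comparison $H^*_{cts}(K_p, V) \to H^*(\G, V)$ is compatible with these Hochschild--Serre spectral sequences so that the induced maps on $E_2$-pages are precisely those supplied by the inductive hypothesis. This rests on the canonical identifications $K_p/H \cong \G/(\G \cap H)$ and $\ol{K}_p \cong $ closure of $\ol{\G}$, together with matching of the conjugation actions of $K_p$ and $\G$ on the various inner cohomology groups. All of this is routine by naturality once one is careful about tracking the actions.
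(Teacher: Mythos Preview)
Your argument is correct and takes a route genuinely different from the paper's. The paper does not reduce to trivial coefficients at all: instead it observes that $V^{\G}=V^{K_p}$ for smooth $V$, so the theorem follows once every injective in $\mathrm{Mod}_{sm}(K_p,\Fp)$ is $\G$-acyclic. It then checks this only for the standard injective $\Map_{cts}(K_p,\Fp)$ by induction on $\dim N$, starting from an explicit calculation for $\mathbb{G}_a$. The inductive step picks any proper normal subgroup $U\lhd N$ and uses a \emph{single} Hochschild--Serre sequence for $\G_U \lhd \G$; since restriction of smooth representations to a closed subgroup preserves injectives, the inner term vanishes in positive degree and the sequence collapses, leaving another injective to which the induction applies. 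Your version, by contrast, first strips the coefficients down to the trivial module and then compares \emph{two} spectral sequences at each step of the induction. Both work; the paper's route avoids the spectral-sequence comparison entirely at the cost of working with an infinite-dimensional coefficient module, while yours keeps the coefficients as small as possible but must track compatibility of the two towers.

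One point in your inductive step deserves a word of justification. You write $\ol{K}_p = K_p/K_Z$ and implicitly identify this with the closure of $\ol{\G}$ in $(N/Z)(\Qp)$; equivalently you are using that the closure $K_Z$ of $\G_Z$ equals $K_p\cap Z(\Qp)$. This is true, but it is not purely formal: it comes down to strong approximation for both $Z$ and $N/Z$ together with the fact that the surjection $\G \twoheadrightarrow \ol{\G}$ can be arranged (a short argument, again using strong approximation for $Z$, shows $\ol{\G} = (N/Z)(\Q)\cap f(K)$ when $\G = N(\Q)\cap K$). The paper makes the identical assertion elsewhere with the same one-line appeal to strong approximation, so your level of detail matches the ambient text; just be aware that a sentence is needed.
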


We start with some recollections. First, in the situation above, $\G = N(\Q) \cap K$ for some open compact subgroup $K \sub N(\A_f)$, and $\G$ is dense in $K$ by strong approximation for $N$. In particular, $K_p$ is the image of $K$ under the projection map $N(\A_f) \to N(\Qp)$, and hence open. We have a natural forgetful functor
\[
{\rm Mod}_{sm}(K_p,\Fp) \to {\rm Mod}(\G)
\]
and if $V \in {\rm Mod}_{sm}(K_p,\Fp)$, then $V^\G = V^{K_p}$ by smoothness of $V$ and density of $\G$ in $K_p$. In light of this, Theorem \ref{comparison of cts and disc gp coh} follows directly from the following special case, which is in fact all we will need.

\begin{prop}\label{injectives are acyclic}
Let $V$ be an injective smooth $K_p$-representation over $\Fp$. Then $H^i(\G,V)=0$ for all $i\geq 1$.
\end{prop}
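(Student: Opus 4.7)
The plan is to induct on $\dim N$, starting with a preliminary reduction. The forgetful functor from smooth $K_p$-representations on $\Fp$-vector spaces down to $\Fp$-vector spaces has a right adjoint $W \mapsto \Map_{cts}(K_p, W)$, with $K_p$ acting by right translation; since every $\Fp$-vector space is injective, this right adjoint preserves injectives, and the unit $V \hookrightarrow \Map_{cts}(K_p, V)$ of the adjunction (sending $v$ to $k \mapsto kv$) is always injective. Hence any injective smooth $K_p$-representation $V$ is a direct summand of $\Map_{cts}(K_p, V)$ (viewing $V$ on the right as a bare $\Fp$-vector space), and it suffices to prove $H^i(\G, \Map_{cts}(K_p, W)) = 0$ for all $i \geq 1$ and every $\Fp$-vector space $W$.

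For the base case $\dim N = 1$, we have $N \simeq \mathbb{G}_a$, $\G \simeq \Z$, and $K_p \simeq \Zp$. Since $\Z$ has cohomological dimension one, the claim reduces to surjectivity of $t-1$ on $V = \Map_{cts}(\Zp, W) = \varinjlim_n \Map(\Zp/p^n\Zp, W)$, where $t$ generates $\Z$ and acts by $(tf)(x) = f(x+1)$. Given $g \in \Map(\Zp/p^n\Zp, W)$, we lift to $\Map(\Zp/p^{n+1}\Zp, W)$: the only consistency condition is $\sum_{x=0}^{p^{n+1}-1} g(x) = 0$, which holds because that sum equals $p \sum_{x=0}^{p^n-1} g(x)$ and $pW = 0$. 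An explicit antiderivative is $f(x) = \sum_{y=0}^{x-1} g(y)$.

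For the inductive step ($\dim N > 1$), pick a $1$-dimensional $\Q$-subgroup $N' \simeq \mathbb{G}_a$ contained in $Z(N)^\circ$ (which is a nontrivial connected abelian unipotent $\Q$-group, hence of the form $\mathbb{G}_a^r$). Set $\Gamma' = \G \cap N'(\Q) \simeq \Z$, with closure $K_p' \simeq \Zp$ in $N'(\Qp)$. Since $N'$ is central in $N$, $\Gamma' \triangleleft \G$, and the Hochschild--Serre spectral sequence reads
\[
E_2^{p,q} = H^p(\G/\Gamma', H^q(\Gamma', V)) \Rightarrow H^{p+q}(\G, V), \qquad V = \Map_{cts}(K_p, W).
\]
A continuous section of $K_p \twoheadrightarrow K_p/K_p'$ identifies $K_p \simeq (K_p/K_p') \times K_p'$ as $K_p'$-sets, so the restriction of $V$ to $K_p'$ is still ``smoothly induced from the trivial subgroup''; the base case applied to $\Gamma' \subset K_p'$ therefore yields $H^q(\Gamma', V) = 0$ for $q \geq 1$, with $H^0(\Gamma', V) = V^{K_p'} = \Map_{cts}(K_p/K_p', W)$ by smoothness and density. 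The spectral sequence degenerates to
\[
H^i(\G, V) = H^i(\G/\Gamma', \Map_{cts}(K_p/K_p', W)).
\]
Now $N/N'$ is unipotent of dimension $\dim N - 1$, and the image $K_p/K_p' \sub (N/N')(\Qp)$ is the closure of $\G/\Gamma'$ (since image of closure equals closure of image under our continuous surjection), so the inductive hypothesis applies to the right-hand side.

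The main obstacle is the verification underlying the degeneration: one must check that $V$, restricted to $K_p'$, really is of the ``induced from trivial subgroup'' form, so that the base case kills $H^{\geq 1}(\Gamma', V)$ and pins down $H^0$. Once this Shapiro-type compatibility is in place --- together with the routine checks that $\G/\Gamma'$ is arithmetic in $(N/N')(\Q)$ with the expected closure --- the induction is mechanical.
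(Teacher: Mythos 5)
Your argument is correct, but it takes a genuinely different route through the induction than the paper's proof. The paper makes the same initial reduction to standard injectives $\Map_{cts}(K_p,W)$ (and then further to $W=\Fp$, using that $\G$-cohomology commutes with direct limits), but it handles the base case $N=\mb{G}_a$ via Proposition \ref{cc at finite level} and the vanishing of the colimit of $H^\ast(S^1,\Fp)$ under $p$-power transition maps, whereas you give a direct finite-level antiderivative computation (your consistency-sum argument, using $pW=0$, is correct); both work. In the inductive step the paper quotients by an \emph{arbitrary} proper nontrivial normal subgroup $U\sub N$ and runs Hochschild--Serre with $\G_U$: since the module there is an arbitrary injective, it must invoke \cite[Proposition 2.1.11]{em-ord2} to know that the restriction to $K_{U,p}$ stays injective, apply the induction hypothesis to $U$, and then use that $(-)^{K_{U,p}}$ preserves injectives before applying the induction hypothesis to $N/U$. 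You instead stay entirely inside the class of standard injectives with arbitrary coefficient space $W$, take $N'$ central and one-dimensional, and use a continuous section of $K_p\to K_p/K_p'$ to see that $V|_{K_p'}$ is again standard; this lets your explicit base case kill the inner cohomology and avoids any appeal to preservation of injectivity under restriction --- a real (if modest) streamlining, bought precisely by carrying the general $W$ through the induction. The one place you should actually write something is the ``routine check'' that the closure of the image of $\G$ in $(N/N')(\Qp)$ is exactly $K_p/K_p'$, equivalently that $K_p\cap N'(\Qp)$ equals the closure $K_p'$ of $\G'$: this is true, e.g.\ by strong approximation for $N$ and $N'$ together with the observation that any compact open subgroup of $N(\A_f)$ splits as the product of its $p$-component and its prime-to-$p$ component (the two Goursat quotients would be a finite $p$-group and a finite group of order prime to $p$), but it is not a tautology; note that the paper's own proof asserts the analogous statement (``$K_{U,p}$ is the closure of $\G_U$'') without proof as well, so you are in good company, but your argument genuinely uses it to identify $\Map_{cts}(K_p/K_p',W)$ with the standard injective for $(N/N',\,\G/\G')$ before invoking the induction hypothesis.
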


We will prove this by induction on $\dim N$. Before the main argument, we will discuss the structure of injective $K_p$-representations. Let $W$ be any $\Fp$-vector space, which we give the discrete topology. We can form $\Map_{cts}(K_p, W)$, where $K_p$ acts by right translation. This is the smooth induction of $W$, viewed as a representation of the trivial group, to $K_p$. Since smooth induction has an exact left adjoint (restriction), $\Map_{cts}(K_p, W)$ is injective for any $W$. We will refer to these representations as ``standard injectives''. Now if $V \in {\rm Mod}_{sm}(K_p)$ is arbitrary, there is a $K_p$-equivariant injection
\[
V \to \Map_{cts}(K_p,V)
\]
given by $v \mapsto (k \mapsto kv)$, where $K_p$ acts on $\Map_{cts}(K_p,V)$ by right translation. Thus there are enough standard injectives, and any injective is a direct summand of a standard injective. In particular, it suffices to prove Proposition \ref{injectives are acyclic} for standard injectives. Moreover, since group cohomology of $\G$ commutes with direct limits, it suffices to prove Proposition \ref{injectives are acyclic} for $\Map_{cts}(K_p,\Fp)$.

\medskip

We now begin the induction. First assume that $\dim N =1$, i.e. that $N=\mb{G}_a$. Then (up to isomorphism) $\G=\Z$ and $K_p =\Zp$. There are a number of ways of proving that $H^i(\Z, \Map_{cts}(\Zp,\Fp))=0$ for $i\geq 1$. For example, by Proposition \ref{cc at finite level},
\[
H^i(\Z, \Map_{cts}(\Zp,\Fp)) = \varinjlim_n H^i(\R/p^n\Z, \Fp) = \varinjlim_n H^i(S^1, \Fp)
\]
where on the right the transition maps come from pullback along the maps $S^1 \to S^1$, $z \mapsto z^p$. All groups are $0$ for $i\geq 2$, and for $i=1$ one easily sees that the transition maps are all $0$, so this proves Proposition \ref{injectives are acyclic} for $N = \mb{G}_a$. 

\medskip

We move on to the induction step. By the structure of unipotent groups, we can choose a proper non-trivial normal subgroup $U \sub N$. Set $H=N/U$ and let $f : N \to H$ denote the natural map. Put $\G_U = \G \cap U(\Q)$, $\G_H = f(\G)$, $K_{U,p} = K_p \cap U(\Qp)$ and $K_{H,p} = f(K_p) \sub H(\Qp)$. Then $K_{U,p}$ is the closure of $\G_U$ in $U(\Qp)$ and $K_{H,p}$ is the closure of $\G_H$ in $H(\Qp)$. Let $V$ be an injective smooth $K_p$-representation over $\Fp$. We have the Hochschild--Serre spectral sequence
\[
H^i(\G_H, H^j(\G_U, V)) \implies H^{i+j}(\G,V).
\]
The restriction of $V$ to $K_{U,p}$ is still injective by \cite[Proposition 2.1.11]{em-ord2}. Thus, by the induction hypothesis, $H^j(\G_U,V)=0$ for $j\geq 1$, and hence the spectral sequence degenerates to $H^i(\G, V) = H^i(\G_H, V^{\G_U})$. By above, $V^{\G_U}=V^{K_{U,p}}$, which is an injective\footnote{$M \mapsto M^{K_{U,p}}$ preserves injectives, since inflation from $K_{H,p}$ to $K_p$ provides an exact left adjoint.} $K_{H,p}$-module. By the induction hypothesis again we get
\[
H^i(\G, V) = H^i(\G_H, V^{K_{U,p}}) = 0
\]
for $i\geq 1$, as desired. This finishes the proof of Proposition \ref{injectives are acyclic}, and hence the proof of Theorem \ref{comparison of cts and disc gp coh}.  

\section{Completed cohomology of locally symmetric spaces}\label{sec: cc}

We continue to fix a prime number $p$.

\subsection{Locally symmetric spaces}\label{bs section}

In this section we recall some material on locally symmetric spaces and their Borel--Serre compactifications. Let $G$ be a connected linear algebraic group over $\Q$, let $A=A_G\sub G$ be a maximal torus in the $\Q$-split part of the radical of $G$ and let $K_\infty = K_{G,\infty}\sub G(\R)$ be a maximal compact subgroup. We will work with the (connected) symmetric space
\[
X = X^G := G(\R)^+/A(\R)^+K_\infty^+ = G(\R)/A(\R)K_\infty,
\]
which is the symmetric space part of any space of type $S-\Q$ for $G$, in the terminology of \cite{bs}. If $\G \sub G(\Q)$ is a torsion-free arithmetic subgroup, then $\G$ acts freely on $X$ and the quotient $\G\backslash X$ is a locally symmetric space. If $K \sub G(\A_f)$ is a compact (not necessarily open) subgroup, we will set
\[
X^G_K := G(\Q)^+ \backslash X \times G(\A_f) / K,
\]
where $G(\Q)^+ := G(\Q) \cap G(\R)^+$ and $K$ and $G(\A_f)$ carry their usual adelic topologies. When $K$ is additionally open and $g\in G(\A_f)$, set $\G_g = \G_{g,K} := G(\Q)^+ \cap gKg^{-1} $; these are congruence subgroups by Proposition \ref{deligne}. We have the following decomposition as topological spaces
\[
X^G_K \,\,\cong \bigsqcup_{g \in G(\Q)^+ \backslash G(\A_f) /K } \G_g \backslash X, 
\] 
where the set $\Sigma_K := G(\Q)^+\backslash G(\A_f) / K$ is finite by \cite[Theorem 5.1]{bo}. If $K$ is neat, then all the $\G_g$ are neat and in particular torsion-free, so $X^G_K$ is a (possibly disconnected) manifold of dimension $\dim_\R X$.

\medskip

Recall the Borel--Serre bordification $\ol{X}=\ol{X}^G$ of $X=X^G$ from \cite{bs}. $\ol{X}$ has a natural structure of a manifold with corners, with interior $X$. We write $\partial X = \ol{X}\setminus X$. The action of $G(\Q)$ on $X$ extends to an action of $\ol{X}$, and again any torsion-free arithmetic subgroup $\G \sub G(\Q)$ acts freely on $\ol{X}$. As a set,
\[
\ol{X} = \bigsqcup_Q X^Q
\] 
where $Q$ runs through the (rational) parabolic subgroups of $G$. The closure of $X^Q$ inside $\ol{X}$ is $\ol{X}^Q = \bigsqcup_{P^\pr \sub Q} X^{P^\pr}$. Write $C_Q$ for the set of parabolics $Q^\pr$ of $G$ which are conjugate to $Q$ (over $\Q$); $C_Q$ carries a (tautological) left $G(\Q)$-action by conjugation. Fix a minimal parabolic $P$ of $G$ over $\Q$ for simplicity. We can then write
\[
\ol{X} = \bigsqcup_Q X^Q = \bigsqcup_{Q \supseteq P} \bigsqcup_{Q^\pr \in C_Q} X^{Q^\pr},
\]
and the subsets $X^{G,Q} = \bigsqcup_{Q^\pr \in C_Q} X^{Q^\pr}$ are stable under $G(\Q)$. If $g\in G(\Q)$, then $gX^{Q^\pr} = X^{gQ^\pr g^{-1}}$ and hence the stabilizer of $X^{Q^\pr}$ is $Q^\pr(\Q)$. In particular, if $\G \sub G(\Q)$ is an arithmetic subgroup, we see that
\[
\G \backslash \ol{X} = \bigsqcup_{Q \supseteq P} \bigsqcup_{Q^\pr \in C_{Q,\Gamma}} \Gamma_{Q^\pr} \backslash X^{Q^\pr}, 
\]
where $C_{Q,\G} = \Gamma \backslash C_Q$ and $\G_{Q^\pr} = \G \cap Q^\pr(\Q)$. If $\G$ is neat, then $\G_{Q^\pr}$ is neat for all $Q^\pr$. The space $\G \backslash \ol{X}$ is a compact manifold with corners, which in particular implies that it is homeomorphic to a manifold with boundary \cite[Appendix]{bs}, so the results of \S \ref{top preliminaries} apply to it. 

\subsection{The vanishing conjecture for completed cohomology}\label{subsec: vanishing}

In this subsection we assume that $G$ is reductive. Fix a compact open subgroup $K^p \sub G(\A_f^p)$. Let $R$ be an adic ring with finitely generated ideal of definition $I$. We define completed cohomology of $G$ (with respect to $K^p$) to be
\[
\tH^\ast_?(K^p,R) := \tH^\ast_?\left( X_{K^p}, R \right) = \varprojlim_n \varinjlim_{K_p} H^\ast_?\left(  X_{K^p K_p}, R/I^n \right),
\]
where $? \in \{\emptyset,c\}$ and $K_p$ runs through the compact open subgroups of $G(\Q_p)$. We recall the quantities
\[
l_0 = l_0(G) := \mathrm{rank}(G(\R)) - \mathrm{rank}(A(\R)K_\infty)
\]
and
\[
q_0 = q_0(G) := \frac{\dim_\R X - l_0}{2},
\]
where $\mathrm{rank}$ denotes the rank as a Lie group. With these preparations, we may state the main vanishing conjecture of Calegari--Emerton:

\begin{conj}\label{ce vanishing}
Let $?\in \{\emptyset,c\}$. Then $\tH^n_?(K^p,\Z_p) = 0$ for all $n> q_0$.
\end{conj}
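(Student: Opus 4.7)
The plan is a two-step strategy that mirrors the structure laid out in the introduction: first attack the compactly-supported case using the $p$-adic geometry of Shimura varieties, then bootstrap to the ordinary case by a topological analysis of the Borel--Serre boundary. Since the conjecture seems hopeless at the level of generality stated, I would restrict from the outset to the situation where $G^{der}$ carries a connected Shimura datum of pre-abelian type, so $X$ is Hermitian, $l_0(G^{der})=0$, and $q_0$ equals $d := \dim_\C X$ on the semisimple factor.

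For $?=c$, I would deploy Theorem \ref{perfshim}: the tower $\mathcal{X}_{K^p}^{*} = \varprojlim_{K_p} \mathcal{X}_{K^pK_p}^{*}$ is perfectoid, and the Hodge--Tate period map $\pi_{HT}: \mathcal{X}_{K^p}^{*} \to \mathscr{F}\!\ell_{G,\mu}$ lets one compare compactly supported completed cohomology with sections on the flag variety. Following Scholze's Hodge-type argument in \cite{scho}, one combines almost vanishing of higher direct images $R^i\pi_{HT,*}$ above degree $d$ with the fact that the flag variety has a cover by affinoid perfectoid opens, yielding $\tH^i_c(K^p)[\tfrac{1}{p}]=0$ for $i>d$; an integral refinement then promotes this to a statement over $\Z_p$. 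The ``boundary is Zariski-closed'' clause in Theorem \ref{perfshim} is crucial for handling the minimal compactification rather than the open Shimura variety.

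For $?=\emptyset$, I would pass to the Borel--Serre compactification: by Proposition \ref{cc at finite level} together with the collar-neighborhood identification of $X_{K^pK_p}$ with an open subset of $\ol{X}_{K^pK_p}$, we have $\tH^i(K^p)\cong H^i(\ol{X}_{K^pK_p},\wt{\Map_{cts}(K_p,\Z_p)})$. Pulling back the standard stratification of $X_{K^pK_p}^{*}$ along Zucker's map $\pi:\ol{X}_{K^pK_p}\to X_{K^pK_p}^{*}$, excision reduces the vanishing to checking $H^i_c(Z,\wt{\Map_{cts}(K_p,\Z_p)}|_Z)=0$ for $i>d$ on each boundary stratum $Z$. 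Each such $Z$ is an iterated fibration whose pieces reflect the Levi decomposition $Q = U \cdot L \cdot H$ of the associated rational parabolic, with a nilmanifold factor $Z_U$, a Borel--Serre factor $Z_L$ for the linear part, and a locally symmetric factor $Z_H$ for the Hermitian part. Using Corollary \ref{formula for pushforward}, Proposition \ref{formula for proper pushforward} and Lemma \ref{big and small local systems} to propagate the ``big'' local system through each fibration, a Leray spectral sequence decomposes the total cohomology into three contributions: the $Z_U$-contribution vanishes by Theorem \ref{comparison of cts and disc gp coh} (with a Poincaré-duality flip to convert continuous $K_{U,p}$-cohomology into compactly supported $Z_U$-cohomology); the $Z_H$-contribution is controlled by Step One applied to the Hermitian factor $H$, which must itself be of pre-abelian type so the induction closes; and the $Z_L$-contribution is bounded in degree by the Borel--Serre virtual cohomological dimension of arithmetic subgroups of $L$.

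The main obstacle will be the bookkeeping in the final combination: one needs the inequality $\dim_\R Z_U + \dim_\C X_H + \mathrm{vcd}(\G_L) \le d$ (or a variant with strict inequality above the middle degree), and this delicate bound is what forces the use of the very strong vanishing from Step One rather than, say, Borel--Serre-type bounds uniformly across all three factors. A secondary difficulty is that $Z$ is not literally a product but an iterated twisted fibration, so the Künneth-style analysis has to be run through successive Leray spectral sequences with coefficients in the ``big'' local system; verifying that injectivity is preserved at each stage (as in the proof of Corollary \ref{formula for pushforward}) is what makes Lemma \ref{big and small local systems} and the smooth-induction formalism of \S \ref{coh of unipotent groups} indispensable.
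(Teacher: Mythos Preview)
First, note that the statement in question is a \emph{conjecture}, and the paper does not prove it in the stated generality; it establishes it only under the hypotheses of Theorems \ref{main semisimple}, \ref{main reductive}, and \ref{general reductive}. You correctly recognize this and restrict to the pre-abelian type setting from the outset, and your two-step outline (perfectoid methods for $?=c$, then boundary analysis via Zucker's stratification for $?=\emptyset$) is exactly the paper's strategy. So at the level of architecture your proposal is on target.

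That said, there are three concrete inaccuracies in the details. First, your treatment of the nilmanifold factor $Z_U$ invokes a ``Poincar\'e-duality flip to convert continuous $K_{U,p}$-cohomology into compactly supported $Z_U$-cohomology''. No such flip is needed or used: the fibration $f:\G_Q\backslash Y(Q)\to \G_M\backslash Y(M)$ is \emph{proper} (its fiber is the compact nilmanifold $\G_N\backslash X^{N_Q}$), so $Rf_! = Rf_\ast$, and Corollary \ref{formula for pushforward} identifies $R^i f_\ast$ with the local system for $H^i(\G_N,\Map_{cts}(S,\Fp))$, which vanishes for $i\geq 1$ directly by Proposition \ref{injectives are acyclic}. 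Second, and relatedly, your key inequality ``$\dim_\R Z_U + \dim_\C X_H + \mathrm{vcd}(\G_L) \le d$'' is wrong: since the unipotent contribution collapses to degree zero, $\dim_\R Z_U$ does not enter the degree count at all. The actual numerics are Lemma \ref{numerics}, namely $\dim_\C X^{M_h} + \dim_\R X^L = d - \tfrac{1}{2}(\dim N - \dim Z(N)) - \dim A_L$, and it is the strict positivity of $\dim A_L$ (together with the Borel--Serre bound on $\G_L\backslash \ol{X}^L$) that yields vanishing above $d$. With your inequality as written the argument would not close. Third, you restrict to $G$ with $G^{der}$ of pre-abelian type but say nothing about the center; the paper needs the Leopoldt conjecture for $Z(G)$ to pass from $G^{der}$ (or $G^{ad}$) to $G$ when $Z^a(\R)$ is noncompact (see Theorem \ref{general reductive} and the discussion preceding Proposition \ref{leopoldt and codimension}). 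Without this, your reduction to the semisimple case is incomplete.
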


\begin{rema} 
While Conjecture \ref{ce vanishing} is not explicitly stated in \cite{ce}, it is a direct consequence of \cite[Conjecture 1.5(5)-(8) and Theorem 1.1(3)]{ce}. We will discuss \cite[Conjecture 1.5]{ce} in \S \ref{codimension}.
\end{rema}

We will focus on the following equivalent version, which is also implicit in \cite{ce}.

\begin{conj}\label{ce vanishing 2}
Let $?\in \{\emptyset,c\}$. Then $\tH^n_?(K^p,\Fp) = 0$ for all $n> q_0$.
\end{conj}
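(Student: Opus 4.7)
The plan is to establish the conjecture for the class of groups treated in Theorem \ref{maintheorem}, following the two-step strategy sketched in the introduction. I would first reduce to the case where $G$ is semisimple and $(G, X^G)$ is a connected Shimura datum of pre-abelian type; in that setting $l_0 = 0$ and $q_0 = d := \dim_{\C} X$, so what must be shown is $\tH^n_?(K^p, \Fp) = 0$ for all $n > d$ and both values of $?$.

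For the compactly supported variant, the strategy is $p$-adic. Theorem \ref{perfshim} provides the perfectoid minimally compactified tower $\mathcal{X}^\ast_{K^p} = \varprojlim_{K_p} \mathcal{X}^\ast_{K^p K_p}$ carrying the Hodge--Tate period map $\pi_{\HT} : \mathcal{X}^\ast_{K^p} \to \mathscr{F}\! \ell_{G,\mu}$, with Zariski-closed boundary. Following Scholze's argument, now available in the pre-abelian-type setting, one computes $\tH^n_c(K^p, \Fp)$ as \'etale cohomology on the perfectoid tower and pushes it forward along $\pi_{\HT}$. Affinoid perfectoidness of the fibres of $\pi_{\HT}$ together with the almost vanishing of positive-degree cohomology of $\Fp$ on affinoid perfectoid spaces forces $R\pi_{\HT,!}\Fp$ to be concentrated in degrees $\leq d$; since the target flag variety has dimension $d$, the hypercohomology lives only in degrees $\leq d = q_0$. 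This will be recorded as Corollary \ref{compactly supported vanishing}.

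For the ordinary cohomology case, the argument is topological and will use the compactly supported vanishing just established, both for $G$ and for auxiliary groups. Fix a neat compact open $K_p \sub G(\Qp)$. By Proposition \ref{cc at finite level} and the collar neighbourhood homotopy equivalence identifying $X_{K^p K_p}$ with its Borel--Serre compactification $\overline{X}_{K^p K_p}$, we have $\tH^n(K^p, \Fp) \cong H^n(\overline{X}_{K^p K_p}, \cF)$, where $\cF$ is the local system attached to $\Map_{cts}(K_p, \Fp)$. Pulling back the canonical stratification of the minimal compactification $X^\ast_{K^p K_p}$ along Zucker's map $\pi : \overline{X}_{K^p K_p} \to X^\ast_{K^p K_p}$ and using iterated excision (combined with Proposition \ref{direct limits general}), it suffices to show $H^n_c(Z, \cF|_Z) = 0$ for every stratum $Z$ and every $n > d$. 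Each such $Z$ has the structure of an iterated fibration whose ``base'' is essentially a locally symmetric space $Z_H$ for the Hermitian part $H$ of the boundary parabolic $Q = U \cdot L \cdot H$, with fibres built from a compact nilmanifold $Z_U$ attached to $U$ and from the Borel--Serre compactification $Z_L$ of a locally symmetric space for the linear part $L$.

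I would then run the Leray spectral sequence for this iterated fibration, computing the higher direct images using Corollary \ref{formula for pushforward} and Proposition \ref{formula for proper pushforward}, to filter $H^\ast_c(Z, \cF|_Z)$ into three contributions. The unipotent contribution from $Z_U$ vanishes in every positive degree by Theorem \ref{comparison of cts and disc gp coh}, applied to the smooth $\Fp$-representation $\Map_{cts}(K_p, \Fp)$ restricted to the closure of $\Gamma \cap U(\Q)$. The Hermitian contribution from $Z_H$ is bounded by an inductive application of Step One to the smaller group $H$, which (by the structure theory of boundary parabolics of a Shimura datum of pre-abelian type) again carries a connected Shimura datum of pre-abelian type. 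The linear contribution from $Z_L$ is bounded by the Borel--Serre virtual cohomological dimension of an arithmetic subgroup of $L$; this bound is crude but suffices because the Hermitian bound is so sharp. The main obstacle will be the bookkeeping in this final step: combining the three degree bounds in the Leray spectral sequence, keeping track of the precise relation between $\dim Z_U + \dim Z_L + 2\dim_{\C} Z_H$ and $2d = \dim_\R X$ for every boundary parabolic $Q$, and verifying that the iterated-fibration Leray filtration achieves the claimed vanishing range for the ``big'' coefficient system $\cF|_Z$ rather than only for constant coefficients.
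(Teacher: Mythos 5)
Your proposal reproduces the paper's own two-step argument for the cases covered by Theorem \ref{maintheorem}: for $?=c$ it is Corollary \ref{compactly supported vanishing}, i.e.\ Scholze's argument run on the pre-abelian perfectoid tower and Hodge--Tate period map of Theorem \ref{perfshim}, and for $?=\emptyset$ it is exactly the Zucker-stratification/excision analysis of Theorem \ref{main top precise}, with the unipotent contribution killed via Proposition \ref{injectives are acyclic}, the Hermitian contribution controlled by Step One applied to $M_{Q,h}$ (using Lemma \ref{boundary components}), the linear contribution bounded by \cite[Corollary 11.4.3]{bs}, and Lemma \ref{numerics} doing the bookkeeping, all fed through the component analysis of Propositions \ref{passage to components 1}--\ref{passage to components 2}. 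Two cosmetic caveats: in Step One the pushforward along $\pi_{\mathrm{HT}}$ is (almost) concentrated in degree $0$, with the bound $d$ then coming from the cohomological dimension of $\mathscr{F}\!\ell_{G,\mu}$, rather than the pushforward itself sitting in degrees $\leq d$ as you wrote; and your initial ``reduce to $G$ semisimple'' is precisely where the Leopoldt hypothesis on $Z(G)$ is needed (Theorem \ref{general reductive}), which your sketch leaves implicit.
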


\begin{prop}
Conjecture \ref{ce vanishing} is equivalent to Conjecture \ref{ce vanishing 2}.
\end{prop}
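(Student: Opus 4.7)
The plan is to deduce both implications from the long exact sequences in cohomology attached to the two short exact sequences of coefficient groups
\[
0 \to \Z/p^n \xrightarrow{\mu} \Z/p^{n+1} \to \Fp \to 0 \quad \text{and} \quad 0 \to \Fp \to \Z/p^{n+1} \xrightarrow{\rho} \Z/p^n \to 0.
\]
Writing $M_i^n := \varinjlim_{K_p} H^i_?(X_{K^p K_p}, \Z/p^n)$ and $W_i := \tH^i_?(K^p,\Fp)$, and applying the exact functor $\varinjlim_{K_p}$ to the resulting long exact sequences, I would obtain natural maps $\mu_n : M_i^n \to M_i^{n+1}$ and $\rho_n : M_i^{n+1} \to M_i^n$ whose composition $\rho_n \circ \mu_n$ is multiplication by $p$ on $M_i^n$. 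By exactness, $\mu_n$ is surjective whenever $W_i = 0$, and $\rho_n$ is surjective whenever $W_{i+1} = 0$.

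For the direction Conjecture \ref{ce vanishing 2} $\Rightarrow$ Conjecture \ref{ce vanishing}, I would fix $i > q_0$. Both $W_i$ and $W_{i+1}$ vanish by assumption, so both $\mu_n$ and $\rho_n$ are surjective; hence multiplication by $p$ acts surjectively on $M_i^n$. Since $M_i^n$ is $p^n$-torsion (being a filtered colimit of $\Z/p^n$-modules), this forces $M_i^n = 0$ for every $n$, and consequently $\tH^i_?(K^p,\Zp) = \varprojlim_n M_i^n = 0$.

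For the converse direction I would proceed by descending induction on $i > q_0$, the base case being automatic once $i$ exceeds the real dimension of $X_{K^p K_p}$ (since $W_i$ then vanishes trivially). For the inductive step, the inductive hypothesis $W_{i+1} = 0$ makes $\rho_n$ surjective for every $n$, so the transition maps of the inverse system $(M_i^n)_n$ defining $\tH^i_?(K^p,\Zp)$ are all surjective; by Mittag--Leffler the natural projection $\tH^i_?(K^p,\Zp) \to M_i^1 = W_i$ is then surjective, and the vanishing $\tH^i_?(K^p,\Zp) = 0$ forces $W_i = 0$. No serious obstacle is expected: the argument is purely formal, and the main point is simply to correctly identify the two short exact sequences of coefficients and to keep track of the directions of the induced maps $\mu_n$ and $\rho_n$.
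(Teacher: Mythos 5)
Your argument is correct, but it is a genuinely different route from the paper's proof for one of the two implications, so a brief comparison is worthwhile.

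For the implication Conjecture \ref{ce vanishing 2} $\Rightarrow$ Conjecture \ref{ce vanishing}, the paper argues directly from the long exact sequence attached to $0 \to \Z/p^{n} \to \Z/p^{n+1} \to \Fp \to 0$: the hypothesis $W_i=0$ forces $M_i^{n}\to M_i^{n+1}$ to be surjective, and since the base case $M_i^1 = W_i = 0$ holds, induction on $n$ gives $M_i^n = 0$ for all $n$. Your version uses both short exact sequences and the factorization $\rho_n \circ \mu_n = p$, concluding $M_i^n = pM_i^n$ and hence $M_i^n=0$ from $p^n$-torsion. Both arguments work; yours is slightly heavier machinery for the same conclusion, and incidentally requires the extra (but freely available) input $W_{i+1}=0$.

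For the implication Conjecture \ref{ce vanishing} $\Rightarrow$ Conjecture \ref{ce vanishing 2}, the paper simply cites a structural theorem of Calegari--Emerton (\cite[Theorem 1.16(5)]{ce}), which essentially relates $\tH^i(\Zp)$ to $\tH^i(\Fp)$ via a long exact sequence involving multiplication by $p$. Your argument is entirely self-contained and elementary: descending induction on $i$, starting above the real dimension of the locally symmetric space, using the surjectivity of $\rho_n$ (granted by $W_{i+1}=0$) and the Mittag--Leffler criterion to deduce that the projection $\varprojlim_n M_i^n \twoheadrightarrow M_i^1 = W_i$ is surjective, hence that $W_i = 0$. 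This avoids the external citation entirely. The tradeoff is that your approach proves only exactly what is needed here, whereas the cited theorem in \cite{ce} encodes more structural information about completed cohomology that the paper uses elsewhere.

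One pedantic point worth noting: the surjectivity of $\rho_n \circ \mu_n$ in your first direction requires both $\mu_n$ and $\rho_n$ to be surjective separately (a composition of surjections), which you correctly supply; it does not follow from surjectivity of $\rho_n$ alone. You handle this properly.
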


\begin{proof}
That Conjecture \ref{ce vanishing} implies Conjecture \ref{ce vanishing 2} follows from \cite[Theorem 1.16(5)]{ce}. For the converse, note first that we have long exact sequences
\[
\dots \to \tH_?^i(K^p, \Z/p^{r-1}) \to \tH_?^i(K^p, \Z/p^r) \to \tH_?^i(K^p,\Fp) \to \dots 
\]
coming from the the corresponding long exact sequences at finite level, so by induction on $r$ we see that Conjecture \ref{ce vanishing 2} implies that $\tH_?^i(K^p, \Z/p^r) = 0$ for all $r$ and $n>q_0$. Conjecture \ref{ce vanishing} then follows since $\tH_?^i(K^p, \Z_p) = \varprojlim_r \tH_?^i(K^p, \Z/p^r)$.
\end{proof}

As usual in the Langlands program, adelic double quotients have the advantage that they make the Hecke actions and group actions transparent. These actions will, however, play essentially no role in this paper, and we found it simpler to work non-adelically. The rest of this subsection will discuss a version of Conjecture \ref{ce vanishing 2} in this language that we will treat. Let $\G \sub G(\Q)$ be an arbitrary arithmetic subgroup and set
\[
C_p = C_p(\G) := \{ \G \cap G(\A_f^p)K_p \mid K_p \sub G(\Q_p) \text{ compact open} \}.
\]
Informally, this is the set of arithmetic subgroups of $\G$ where we shrink the level at $p$. Armed with this definition, we set
\[
\wh{X} = \wh{X}(\G) = \wh{X}^G(\G) := \varprojlim_{\G^\prime \in C_p} \G^\prime \backslash X.
\]
We let $G(\R)_+$ denote the preimage of $G^{ad}(\R)^+$ under the natural map $G(\R) \to G^{ad}(\R)$; we also set $G(\Q)_+ = \G(\Q) \cap G(\R)_+$. We can then state the following conjecture.

\begin{conj}\label{connected vanishing}
Let $? \in \{\emptyset,c\}$ and assume that $\G \sub G(\Q)_+$ is an arithmetic subgroup. Then we have $\tH_?^n(\wh{X},\Fp) = 0$ for all $n>q_0$.
\end{conj}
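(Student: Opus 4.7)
The plan is to show that Conjecture \ref{connected vanishing} is equivalent to Conjecture \ref{ce vanishing 2}, thereby reducing the connected vanishing to the adelic formulation whose proof in cases of interest is the subject of the remainder of the paper. The key point is that for a suitable choice of $\G$ and tame level $K^p$, the connected completed cohomology $\wt{H}^\ast_?(\wh{X}(\G),\Fp)$ appears as a direct summand of the adelic completed cohomology $\wt{H}^\ast_?(K^p,\Fp)$.

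First I would verify that $\wt{H}^\ast_?(\wh{X}(\G),\Fp)$ is insensitive to replacing $\G$ by a finite index arithmetic subgroup, since such a replacement yields a cofinal subsystem of $C_p(\G)$ and hence the same inverse limit $\wh{X}(\G)$. Combining this with Propositions \ref{arithmetic contained in congruence}, \ref{deligne} and \ref{existence of neat subgroups}, and using that $G(\Q)^+$ has finite index in $G(\Q)_+$, we may assume $\G$ is a neat congruence subgroup contained in $G(\Q)^+$ of the form
\[
\G = G(\Q)^+ \cap K^p K_p^{(0)}
\]
for some compact open subgroups $K^p \sub G(\A_f^p)$ and $K_p^{(0)} \sub G(\Qp)$. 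As $K_p$ shrinks within $K_p^{(0)}$, the groups $G(\Q)^+ \cap K^p K_p$ form a cofinal subsystem of $C_p(\G)$.

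Second, I would exploit the adelic decomposition from \S \ref{bs section},
\[
X_{K^p K_p} \cong \bigsqcup_{g \in \Sigma_{K^p K_p}} \G_g \backslash X,
\]
whose component at $g = 1$ is $\G \backslash X$ when $K_p = K_p^{(0)}$. Applying Hill's lemma (Proposition \ref{cc at finite level}) rewrites the adelic completed cohomology $\wt{H}^\ast_?(K^p,\Fp)$ as the (compactly supported) cohomology of $X_{K^p K_p^{(0)}}$ with coefficients in the local system attached to $\Map_{cts}(K_p^{(0)},\Fp)$. The disjoint union decomposition of $X_{K^p K_p^{(0)}}$ then splits this cohomology as a direct sum indexed by $g \in \Sigma_{K^p K_p^{(0)}}$, with the summand at $g = 1$ being precisely $\wt{H}^\ast_?(\wh{X}(\G),\Fp)$ and the other summands being the analogous connected completed cohomologies $\wt{H}^\ast_?(\wh{X}(\G_g),\Fp)$.

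Consequently, Conjecture \ref{ce vanishing 2} (applied to the tame level $K^p$) directly implies Conjecture \ref{connected vanishing} for this particular $\G$. Ranging over all admissible $(K^p,g)$ yields the conjecture for all arithmetic $\G \sub G(\Q)^+$, and the case of general $\G \sub G(\Q)_+$ then follows by replacing $\G$ with the finite-index subgroup $\G \cap G(\Q)^+$ and invoking the insensitivity statement from the first step. Conversely, since all the summands in the decomposition are connected completed cohomologies, Conjecture \ref{connected vanishing} also implies Conjecture \ref{ce vanishing 2}. The bookkeeping (verifying the cofinalities, navigating the distinction between $G(\Q)^+$ and $G(\Q)_+$, and matching the local systems on each disjoint component) is delicate but essentially formal; the genuine obstacle is not the reduction but the proof of Conjecture \ref{ce vanishing 2} itself, which requires the perfectoid input (Step One) and Borel--Serre boundary analysis (Step Two) described in the introduction.
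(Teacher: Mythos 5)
There is a genuine gap, and it sits in your very first step. You claim that $\wt{H}^\ast_?(\wh{X}(\G),\Fp)$ is unchanged when $\G$ is replaced by an arbitrary finite-index arithmetic subgroup $\G'$, ``since such a replacement yields a cofinal subsystem of $C_p(\G)$''. This is false: the system $C_p(\G)$ only shrinks the level \emph{at $p$}, so any condition defining $\G'$ away from $p$ (a congruence condition at a prime $\ell \neq p$, or a non-congruence condition) is never absorbed in the limit. Concretely, for $\G' = \G \cap \Gamma(\ell)$ with $\ell \neq p$, no member of $C_p(\G)$ is contained in $\G'$, so $C_p(\G')$ is not cofinal in $C_p(\G)$ and $\wh{X}(\G') \neq \wh{X}(\G)$; via Proposition \ref{cc at finite level} the two completed cohomologies are cohomology of \emph{different} spaces ($\G'\backslash X$ versus $\G\backslash X$) with different big local systems, and there is no formal cofinality identification. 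The only harmless replacements are by members of $C_p(\G)$ itself, i.e.\ by shrinking at $p$ — this is exactly what the paper uses (via Proposition \ref{existence of neat subgroups}) to assume neatness. Because of this, your claimed equivalence of Conjecture \ref{connected vanishing} with Conjecture \ref{ce vanishing 2} does not hold: the paper explicitly notes that the connected version is \emph{a priori strictly stronger} (it allows arbitrary arithmetic subgroups of $G(\Q)_+$, not just congruence subgroups inside $G(\Q)^+$), and accordingly only proves the implication connected $\Rightarrow$ adelic (Proposition \ref{passage to components 2}), never the converse. A smaller inaccuracy in the same spirit: the $g=1$ summand in your decomposition is $H^\ast_?(\G\backslash X,\Map_{cts}(K_p,\Fp))$, which is not ``precisely'' $\wt{H}^\ast_?(\wh{X}(\G),\Fp)$ — the latter uses the closure $S\sub K_p$ of $\G$, and one must invoke Lemma \ref{big and small local systems} to compare; this is fixable, but the finite-index reduction is not.

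The directionality also matters for the architecture of the paper, and your proposal would be circular if implemented there. The paper never proves Conjecture \ref{ce vanishing 2} independently and then descends to components: Step One (Corollary \ref{compactly supported vanishing}) and Step Two (Theorem \ref{main top precise}) are both formulated and proved for the connected version with \emph{arbitrary} arithmetic subgroups $\G \sub G(\Q)_+$, and this generality is load-bearing. In the boundary induction, the groups that arise for the Hermitian parts $M_{Q,h}$ are images of $\G_Q$ under $Q \to M_{Q,h}$; images of congruence subgroups under such surjections are only known to be arithmetic, not congruence, and they land in $M_h(\Q)_+$ rather than in a fixed identity component. So a reduction of Conjecture \ref{connected vanishing} to the adelic/congruence statement would break precisely the inductive step that makes the whole argument work. (Note also that the statement you are asked about is a conjecture: the paper establishes it only for pre-abelian type connected Shimura data, by combining the perfectoid vanishing for $?=c$ with the Borel--Serre analysis for $?=\emptyset$, not by any formal equivalence with the adelic formulation.)
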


This is the conjecture that we will focus on. The restriction to $\G \sub G(\Q)_+$ seems unnatural but this condition will feature in all our unconditional theorems, so we have included for the sake of easy referencing. A priori, (the natural generalization of ) Conjecture \ref{connected vanishing} is slightly stronger than Conjecture \ref{ce vanishing 2} because we allow arbitrary arithmetic subgroups as our ``base level'' instead of just congruence subgroups inside the identity component $G(\R)^+$ of $G(\R)$. Let us give a general discussion of the passage between disconnected spaces and their components, and formalize the implication relevant to this paper. To simplify notation, we drop the notation $\wt{M}$ used in \S \ref{top preliminaries} to denote the local system associated with a representation $M$, simply writing $M$ for the local system as well in the rest of this paper. 

\medskip

First, for any compact subgroup $K\sub G(\A_f)$, define
\[
\mf{X}_K := \mf{X}^G_K := G(\Q)^+ \backslash X \times G(\A_f) / K,
\]
where we now give $G(\A_f)$ the \emph{discrete} topology. Note that $\mf{X}_K = X_K$ when $K$ is open. In general, $\mf{X}_K$ is a manifold when $K$ is neat. If $K_1 \sub K_2$ are neat, with $K_1$ normal in $K_2$, then $K_2/K_1$ acts freely on $\mf{X}_{K_1}$ with quotient $\mf{X}_{K_2}$. We similarly define $\ol{\mf{X}}_{K}$, replacing $X$ by $\ol{X}$. In particular, using $\mf{X}_{K^p}$ and $\ol{\mf{X}}_{K^p}$, we may apply Theorem \ref{cc at finite level} to deduce that
\[
\wt{H}^i_?(K^p, \Fp) \cong H^i_?(X_K, \Map_{cts}(K_p, \Fp))
\]
where $K=K^p K_p$ with $K_p$ neat. Using the decomposition into connected components, we see that
\[
\wt{H}^i_?(K^p, \Fp) \cong \bigoplus_{g \in G(\Q)^+ \backslash G(\A_f) /K } H^i_?(\G_g \backslash X, \Map_{cts}(K_p, \Fp)).
\]
Here, the right $K_p$-module $\Map_{cts}(K_p, \Fp)$ (action via left translation) becomes a right $\G_g = G(\Q)^+ \cap gKg^{-1}$-module via the composition $\Gamma_g \to K \to K_p$ where the first map is conjugation by $g^{-1}$ and the second is the projection, and then a left $\G_g$-module by inversion. In particular, we have an isomorphism
\[
\Map_{cts}(K_p, \Fp) \cong \Map_{cts}(g_p K_p g_p^{-1}, \Fp)
\]
of $\G_g$-modules (with the obvious $\G_g$-structure on the right hand side). Then, note that the left $\G_g$-module $\Map_{cts}(g_p K_p g_p^{-1}, \Fp)$, where the action is via inverting the left translation action, is isomorphic to the left $\G_g$-module $\Map_{cts}(g_p K_p g_p^{-1}, \Fp)$ where the action is the right translation action (the isomorphism is given by inversion on $g_p K_p g_p^{-1}$). This proves the following:

\begin{prop}\label{passage to components 1}
Fix $i$ and $K^p$. Choose $K_p$ sufficiently small to make $K=K^p K_p$ neat. For any other $K^\prime \sub G(\A_f)$ compact open, set $\G^\prime = G(\Q)^+ \cap K^\prime$. Then $\wt{H}^i_?(K^p, \Fp) = 0$ if and only if $H^i_?(\G^\prime \backslash X, \Map_{cts}(K^\prime_p, \Fp)) = 0$ for all conjugates $K^\prime$ of $K$ in $G(\A_f)$, where $\G^\prime$ acts on $\Map_{cts}(K^\prime_p, \Fp))$ either via right translation or by inverting the left translation action.
\end{prop}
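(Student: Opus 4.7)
The plan is to package the computation already carried out in the paragraph preceding the statement. Since $K = K^p K_p$ is neat, Proposition \ref{cc at finite level} yields
\[
\wt{H}^i_?(K^p, \Fp) \cong H^i_?(X_K, \Map_{cts}(K_p, \Fp)),
\]
with $K_p$ acting by right translation. Combining the decomposition $X_K = \bigsqcup_g \G_g \backslash X$ from \S \ref{bs section} (indexed by $g \in G(\Q)^+ \backslash G(\A_f)/K$) with the additivity of cohomology over the open-closed partition, together with the $\G_g$-equivariant identification $\Map_{cts}(K_p, \Fp) \cong \Map_{cts}(g_p K_p g_p^{-1}, \Fp)$ spelled out in the excerpt, one obtains
\[
\wt{H}^i_?(K^p, \Fp) \cong \bigoplus_{g \in G(\Q)^+ \backslash G(\A_f)/K} H^i_?\bigl(\G_g \backslash X, \Map_{cts}(g_pK_pg_p^{-1}, \Fp)\bigr).
\]

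For the ``only if'' direction, I would observe that for any $h \in G(\A_f)$ the choice $g := h$ is a representative of its own double coset, with $\G_g = G(\Q)^+ \cap hKh^{-1} = \G'$ and $g_p K_p g_p^{-1} = K'_p$, where $K' := hKh^{-1}$. Hence $H^i_?(\G' \backslash X, \Map_{cts}(K'_p, \Fp))$ appears literally as a direct summand of $\wt{H}^i_?(K^p, \Fp)$, and vanishing of the whole forces vanishing of this summand. For the ``if'' direction, if the finite-level group vanishes for every conjugate $K'$ of $K$ in $G(\A_f)$, then in particular it vanishes for every $K' = gKg^{-1}$ as $g$ ranges over a set of double coset representatives, killing every summand above and hence $\wt{H}^i_?(K^p, \Fp)$ itself.

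The equivalence of the ``right translation'' and ``inverted left translation'' conventions for the $\G'$-module structure on $\Map_{cts}(K'_p, \Fp)$ is implemented by the $\G'$-equivariant involution $f \mapsto (x \mapsto f(x^{-1}))$, so it suffices to prove the vanishing statement for either convention. There is no substantive mathematical obstacle here: the whole argument is bookkeeping of the conjugation and inversion actions on coefficient modules, essentially already done in the paragraph before the statement. If any step requires care, it is consistently tracking the convention under which $\G_g$ acts on $\Map_{cts}(g_pK_pg_p^{-1}, \Fp)$, but this is routine once the inversion involution is in hand.
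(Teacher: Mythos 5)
Your proof is correct and follows the same route as the paper: Proposition \ref{cc at finite level} to rewrite $\wt{H}^i_?(K^p,\Fp)$ as finite-level cohomology of a big local system, the decomposition $X_K = \bigsqcup_g \G_g\backslash X$ over $g \in G(\Q)^+\backslash G(\A_f)/K$, the $\G_g$-equivariant identification $\Map_{cts}(K_p,\Fp)\cong\Map_{cts}(g_pK_pg_p^{-1},\Fp)$, and the inversion involution to pass between the two module conventions. The paper simply presents the direct-sum formula and the module identifications in the preceding paragraph and declares it ``proves the following''; you have made the two implications of the equivalence explicit (noting in particular that any $h$ can serve as the representative of its own double coset, so the summand attached to any conjugate appears), which is the only detail the paper leaves to the reader.
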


As a corollary we get the implication between Conjecture \ref{connected vanishing} and Conjecture \ref{ce vanishing 2}.

\begin{prop}\label{passage to components 2}
Let $? \in \{\emptyset, c\}$. Then Conjecture \ref{connected vanishing} for $?$ implies Conjecture \ref{ce vanishing 2} for $?$.
\end{prop}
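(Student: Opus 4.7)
The plan is to combine Proposition~\ref{passage to components 1}, Proposition~\ref{cc at finite level}, and Lemma~\ref{big and small local systems} to transfer the vanishing from the connected setting to the adelic one. First I would choose $K_p \sub G(\Q_p)$ small enough that $K = K^p K_p$ is neat. By Proposition~\ref{passage to components 1}, the vanishing $\tH^n_?(K^p, \Fp) = 0$ is equivalent to $H^n_?(\G' \backslash X, \Map_{cts}(K'_p, \Fp)) = 0$ for every $G(\A_f)$-conjugate $K' = gKg^{-1}$ of $K$, where $\G' := G(\Q)^+ \cap K'$ acts on $\Map_{cts}(K'_p, \Fp)$ by right translation via the $p$-adic projection $\pi_p : \G' \to K'_p$. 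Since the natural map $G(\R) \to G^{ad}(\R)$ carries $G(\R)^+$ into $G^{ad}(\R)^+$, we have $G(\Q)^+ \sub G(\Q)_+$, so each $\G'$ is an arithmetic subgroup of $G(\Q)_+$ (neat, since $K'$ is), to which Conjecture~\ref{connected vanishing} applies.

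Next I would feed in the connected completed cohomology. Choosing a countable cofinal sequence $\G'^{(n)} := \G' \cap G(\A_f^p) K_p^{(n)}$ inside $C_p(\G')$ (possible because $G(\Q_p)$ has a countable basis of compact open neighbourhoods of $1$), Proposition~\ref{cc at finite level} gives
\[
\tH^n_?(\wh{X}(\G'), \Fp) \;\cong\; H^n_?(\G' \backslash X, \Map_{cts}(L, \Fp))
\]
where $L := \varprojlim_n \G'^{(n)} \backslash \G'$. The finite sets $\G'^{(n)} \backslash \G'$ inject into the finite sets $(K'_p \cap K_p^{(n)}) \backslash K'_p$ via $\pi_p$, so in the limit $L$ sits inside the profinite group $K'_p$ as the closure $H$ of $\pi_p(\G') = \G'$; this identification intertwines the right translation actions, giving $L \cong H$ as $\G'$-profinite sets. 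Now $H$ is a closed subgroup of the profinite group $K'_p$, so Lemma~\ref{big and small local systems}(1), applied with ambient group $K'_p$, subgroup $H$, and trivial normal subgroup, yields
\[
H^n_?(\G' \backslash X, \Map_{cts}(H, \Fp)) = 0 \;\Longleftrightarrow\; H^n_?(\G' \backslash X, \Map_{cts}(K'_p, \Fp)) = 0.
\]

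Chaining the three steps, Conjecture~\ref{connected vanishing} applied to each $\G'$ forces the left-hand side above to vanish for all $n > q_0$, hence the right-hand side, and Proposition~\ref{passage to components 1} then delivers Conjecture~\ref{ce vanishing 2}. The only real obstacle is the bookkeeping check that the $\G'$-module structures on $\Map_{cts}(L, \Fp)$, $\Map_{cts}(H, \Fp)$, and $\Map_{cts}(K'_p, \Fp)$ appearing in the three tools all agree under the inclusions $L \cong H \hookrightarrow K'_p$; all three arise as right translation pulled back along $\pi_p$, so the agreement is immediate (with the ``inverted left translation'' option in Proposition~\ref{passage to components 1} available as a safety net if one prefers the opposite convention).
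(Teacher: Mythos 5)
Your proposal is correct and follows essentially the same chain as the paper's proof: reduce to cohomology of the local system $\Map_{cts}(K'_p,\Fp)$ on a connected locally symmetric space via Proposition~\ref{passage to components 1}, identify the completed cohomology of $\wh{X}(\G')$ with cohomology of $\Map_{cts}(H,\Fp)$ (where $H$ is the $p$-adic closure of $\G'$) via Proposition~\ref{cc at finite level}, and then pass between the two local systems via Lemma~\ref{big and small local systems}. You are slightly more careful than the paper about tracking the full family of $G(\A_f)$-conjugates $K'$ and verifying $\G' \sub G(\Q)_+$ for each of them, whereas the paper treats a generic conjugate implicitly; that extra bookkeeping makes the application of Proposition~\ref{passage to components 1} cleaner but does not change the argument.
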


\begin{proof}
Let $K^p \sub G(\A_f^p)$ be compact open and let $n > q_0$.  We first show that $H^n_?(\G \backslash X, \Map_{cts}(K_p, \Fp)) = 0$ for some (any) sufficiently small $K_p$, where $\G = G(\Q)^+ \cap K^p K_p $ acts on $\Map_{cts}(K_p, \Fp))$ via right translation. Consider $\wh{X} = \wh{X}(\G)$. By Conjecture \ref{connected vanishing}, $\wt{H}_?^n(\wh{X},\Fp) = 0$. By Theorem \ref{cc at finite level}, 
\[
 H^n_?(\G \backslash X, \Map_{cts}(H,\Fp)) = \wt{H}_?^n(\wh{X},\Fp) = 0
\]
where $H$ is the closure of $\G$ in $K_p$ and $\G$ acts on $\Map_{cts}(H,\Fp)$ via right translation. An application of Lemma \ref{big and small local systems} then gives that $H^n_?(\G \backslash X, \Map_{cts}(K_p, \Fp)) = 0$, as desired. Now the argument applies equally well when replacing $K^pK_p$ by any conjugate of it inside $G(\A_f)$, so by Proposition \ref{passage to components 1} we deduce that $\wt{H}^i_?(K^p, \Fp) = 0$.
\end{proof}

\subsection{The case of Hermitian symmetric domains}\label{subsec: hermitian}

In this subsection, we assume that $G$ is semisimple and that $X$ is a Hermitian symmetric domain. In this case, $l_0=0$ and $q_0 = (\dim_\R X)/2 = \dim_\C X$; we will simply write $d$ for this quantity. We briefly recall some material from the theory of hermitian symmetric domains and their boundary components; some references for this material are \cite{amrt,bb,he}. We do not assume that $G$ has no $\R$-anisotropic $\Q$-simple factors. 

\medskip

First, let us recall that an element $g\in G(\R)$ acts holomorphically on $X$ if and only if $g\in G(\R)_+$; see \cite[Proposition 11.3]{bb} (note that $G$ is assumed to be adjoint in this reference). The space $X=X^G$ has a bordification $X^\ast=X^{G,\ast}$ obtained by adding the rational boundary components of $X$, see \cite{bb}. To describe it, we make a definition. If $G$ is $\Q$-simple, let us call a parabolic subgroup $Q$ \emph{admissible} if there is no parabolic subgroup $Q^\pr$ with $Q \subsetneq Q^\pr \subsetneq G$. For general $G$, we will call a parabolic subgroup $Q$ admissible if its projection to every $\Q$-simple factor is admissible in the previous sense. Let $Q$ be such an admissible parabolic subgroup of $G$; we write $N_Q$ for its unipotent radical and $M_Q$ for its Levi quotient. $M_Q$ decomposes into an almost direct product $M_Q = M_{Q,\ell}M_{Q,h}$; see \cite[Item (5), p. 142]{amrt} (in the notation of that reference, we take $M_{Q,\ell}=\mathscr{G}_\ell$ and $M_{Q,h}=\mathscr{G}_h \cdot \mathscr{M}$). $M_{Q,\ell}$ is called the \emph{linear} part; it is a connected reductive group. $M_{Q,h}$ is called the \emph{Hermitian} part and it is a semisimple group whose symmetric space is a Hermitian symmetric domain. Our main result in the topological part of this paper is the following.

\begin{theo}\label{main top}
With assumptions as above, assume that Conjecture \ref{connected vanishing} holds for $M_{Q,h}$ for all admissible parabolics $Q$ of $G$ (including $Q=G$) and $?=c$. Then Conjecture \ref{connected vanishing} holds for $G$ and $?= \emptyset$. 
\end{theo}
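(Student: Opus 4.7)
The plan is to convert the completed-cohomology vanishing into a statement about cohomology of a big local system on the Borel--Serre compactification, stratify it using Zucker's map to the minimal compactification, and control the contribution of each stratum via an iterated fibration argument. By Proposition \ref{cc at finite level} and Lemma \ref{big and small local systems}, together with the homotopy equivalence between $\Gamma \backslash X$ and its Borel--Serre compactification recalled at the start of \S\ref{top preliminaries}, it suffices to show that
\[
H^n(\Gamma \backslash \ol{X}, \mathcal{L}) = 0 \quad \text{for all } n > d,
\]
where $\mathcal{L}$ is the local system associated with $\Map_{cts}(K_p, \Fp)$ for some neat compact open $K_p \subset G(\Qp)$ containing (a conjugate of) the closure of $\Gamma$.

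Next, I would pull back the stratification of the minimal compactification $\Gamma \backslash X^\ast$ along Zucker's map $\pi \colon \Gamma \backslash \ol{X} \to \Gamma \backslash X^\ast$ to obtain a finite stratification of $\Gamma \backslash \ol{X}$ whose open stratum is $\Gamma \backslash X$ and whose remaining strata are indexed, up to level-dependent finite data, by conjugacy classes of proper maximal parabolics $Q$ of $G$. Iterated excision then reduces the problem to showing $H^n_c(Z, \mathcal{L}|_Z) = 0$ for $n > d$ and every stratum $Z$. The open stratum case is precisely the $Q = G$ case of the hypothesis applied to $G = M_{G, h}$ itself, so only the boundary strata remain.

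For a proper maximal $Q$ with almost direct Levi decomposition $Q = N_Q \cdot M_{Q, \ell} \cdot M_{Q, h}$, the associated stratum $Z$ admits a tower of fibrations $Z \to Z' \to Z_H$ whose layers are, respectively, a compact nilmanifold from $N_Q$, a Borel--Serre-type space $Z_L$ for $M_{Q, \ell}$, and a locally symmetric space $Z_H$ for $M_{Q, h}$. I would analyze $H^n_c(Z, \mathcal{L}|_Z)$ via the successive Leray spectral sequences provided by Corollary \ref{formula for pushforward} and Proposition \ref{formula for proper pushforward}. The nilmanifold layer kills positive degrees, because $\Map_{cts}(K_p, \Fp)$ restricted to the closure of the relevant $N_Q$-arithmetic subgroup remains an injective smooth representation, so Theorem \ref{comparison of cts and disc gp coh} forces $H^{>0}$ of that layer to vanish. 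The $Z_L$-direction is controlled by the trivial bound $H^b_c(Z_L, -) = 0$ for $b > \dim_\R X^{M_{Q, \ell}}$, while the resulting base contribution on $Z_H$ identifies with a compactly supported completed cohomology group for $M_{Q, h}$ (with coefficients in $\Map_{cts}(K_{h, p}, \Fp)$ for a suitable neat $K_{h, p} \subset M_{Q, h}(\Qp)$), which vanishes above $\dim_\C X^{M_{Q, h}}$ by the hypothesis of the theorem, applied via Proposition \ref{cc at finite level} and Lemma \ref{big and small local systems} to $M_{Q, h}$. A standard Baily--Borel dimension count gives $\dim_\R X^{M_{Q, \ell}} + \dim_\C X^{M_{Q, h}} < d$ for every proper maximal $Q$, so $H^n_c(Z, \mathcal{L}|_Z) = 0$ for $n > d$, as desired.

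The main obstacle I anticipate is identifying the Hermitian contribution as a genuinely \emph{compactly supported} completed cohomology group for $M_{Q, h}$, so that the hypothesis (which only bounds $? = c$) actually applies; this requires a careful analysis of how $\Gamma \cap Q(\Q)$ interacts with the three factors $N_Q$, $M_{Q, \ell}$, $M_{Q, h}$, and how the closure of $\Gamma$ in $K_p$ projects onto a neat compact open subgroup of $M_{Q, h}(\Qp)$. The bookkeeping for the successive Leray spectral sequences adds further technicality, since the tower $Z \to Z' \to Z_H$ is not literally a product; in particular one has to pass between left- and right-translation actions on $\Map_{cts}(-, \Fp)$ and keep track of invariants along the non-trivially extended fibration, much as is done in the setup preceding Proposition \ref{passage to components 2}.
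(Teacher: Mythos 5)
Your overall strategy is the paper's: reduce via Proposition \ref{cc at finite level} and Lemma \ref{big and small local systems} to cohomology of the big local system on $\G\backslash\ol{X}$, stratify by pulling back the minimal compactification along Zucker's map, excise, kill the nilmanifold direction by injectivity of $\Map_{cts}(S_N,\Fp)$, and finish with the dimension count of Lemma \ref{numerics} (your inequality $\dim_\R X^{L}+\dim_\C X^{M_{Q,h}}<d$ is correct, since $\dim A_{L_Q}>0$). However, there is a genuine gap in how you arrange the second fibration, and it sits exactly at the step you yourself flag as the ``main obstacle''. You fiber $Z'=\G_M\backslash(X^{M_{Q,h}}\times\ol{X}^{M_{Q,\ell}})$ over the Hermitian locally symmetric space $Z_H$, with compact fiber of $Z_L$-type, and then want to apply the $?=c$ hypothesis for $M_{Q,h}$ to the \emph{base}. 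But in that ordering the coefficients on $Z_H$ are the derived pushforward local systems, whose fibers are $H^j(\G_L,\Map_{cts}(S/S_N,\Fp))$ --- essentially completed cohomology of the linear part --- carrying a monodromy action of the image of $\G_M$. To invoke the hypothesis you must identify these as modules of induced type $\Map_{cts}(T_h,\Fp)\otimes W_j$ with the action concentrated on the first factor, and this is not formal: splitting $S/S_N$ over the closure of $\G_L$ produces a tensor factor $W_j=H^j(\G_L,\Map_{cts}(T_L,\Fp))$ on which the monodromy acts nontrivially, and since that action does not extend continuously to a compact $p$-adic group there is no projection-formula untwisting available, so Lemma \ref{big and small local systems} does not apply and the hypothesis (which only controls coefficients of the form $\Map_{cts}(K,\Fp)$) cannot be brought to bear directly.

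The paper avoids this by fibering the other way: $g:\G_M\backslash Y(M)\to\G_L\backslash\ol{X}^{L}$ with fiber $\G_h\backslash X^{M_{Q,h}}$ (Propositions \ref{step 2} and \ref{step 3}). Then the $?=c$ hypothesis is applied on the \emph{fibers}, where the coefficient module is literally the restriction of the big local system and the needed splitting is exactly Lemma \ref{big and small local systems} (right translation by $T_h$ acts trivially on $(S/S_N)/T_h$), while the base $\G_L\backslash\ol{X}^{L}$ is compact and only needs a coefficient-independent cohomological dimension bound ($\dim_\R X^{L}$, or the sharper Borel--Serre bound $\dim_\R X^{L}-\mathrm{ss.rank}_\Q(L)$, which no longer requires identifying $R^jg_!$ at all). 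So the fix for your argument is not additional bookkeeping but reversing the order of the two projections of $Y(M)$; as written, the identification of the ``Hermitian contribution'' with compactly supported completed cohomology of $M_{Q,h}$ is asserted but would not go through.
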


The proof will occupy the rest of this subsection. Let us now describe the bordification $X^\ast$. Set-theoretically, 
\[
X^\ast = \bigsqcup_{Q \text{ admissible}} X^{M_{Q,h}} = \bigsqcup_{Q \supseteq P \text{ admissible}} X^{G,M_{Q,h}},
\]
where $X^{G,M_{Q,h}} := \bigsqcup_{Q^\pr \in C_Q} X^{M_{Q^\pr,h}}$ and we recall that $P$ is a fixed choice of a minimal parabolic subgroup. The action of $G(\Q)$ on $X$ extends to an action on $X^\ast$, but torsion-free arithmetic subgroups will no longer act freely (in general). The spaces $X^{G,M_{Q,h}}$ are stable under $G(\Q)$. If $\G \sub G(\Q)_+$ is a torsion-free arithmetic subgroup, then $\G \backslash X^\ast $ has a canonical structure of a projective algebraic variety over $\C$. Let us now assume that $\G$ is in addition neat, and let $\G_{M_{Q^\pr,h}}$ be the image of $\G_{Q^\pr}$ in $M_{Q^\pr,h}(\Q)$; this is a neat arithmetic subgroup. We have a stratification
\[
\G \backslash X^\ast = \bigsqcup_{Q \supseteq P \text{ admissible}} \bigsqcup_{Q^\pr \in C_{Q,\Gamma}} \G_{M_{Q^\pr,h}} \backslash X^{M_{Q^\pr,h}}
\]
of the quotient. By construction $\G_{M_{Q^\pr,h}}$ acts holomorphically on $X^{M_{Q^\pr,h}}$, so $\G_{M_{Q^\pr,h}} \sub M_{Q^\pr,h}(\Q)_+$.

\medskip

In \cite{zu}, Zucker constructs a $G(\Q)$-equivariant continuous map $\pi : \ol{X} \to X^*$ that we will make use of.\footnote{It is, strictly speaking, not necessary for us to use minimal compactifcations and Zucker's work \cite{zu}, as all we need is the resulting stratification of the Borel--Serre compactification which one may describe directly. Nevertheless, we have opted to include the minimal compactification in our discussion as it gives a conceptual way of understanding the stratification that we use, and why we use it.} With $Q$ as above, let us write $Y(Q) = \pi^{-1}(X^{M_{Q,h}})$. By \cite[(3.8), Proposition]{zu}, we have a natural homeomorphism
\[
Y(Q) \cong X^{M_{Q,h}} \times \ol{X}^{M_{Q,\ell}} \times X^{N_Q}
\]
and the projection maps
\[
Y(Q) \to Y(M_Q) := X^{M_{Q,h}} \times \ol{X}^{M_{Q,\ell}} \to \ol{X}^{M_{Q,\ell}}
\]
are $Q(\Q)$-equivariant (and fibre bundles). Write $L_Q = M_{Q,\ell}/(M_{Q,\ell} \cap M_{Q,h})$; the natural map $M_{Q,\ell} \to L_Q$ is a central isogeny and $\ol{X}^{M_{Q,\ell}} = \ol{X}^{L_Q}$. Then we remark that, in the displayed equation above, $Q(\Q)$ acts via the projection map $Q(\Q) \to M(\Q)$ on $Y(M_Q)$ and via the projection map $Q(\Q) \to L_Q(\Q)$ on $\ol{X}^{M_{Q,\ell}}$. In particular, we note that $Y(Q)$ is contractible and that if $\G$ is torsion-free, then $\G_Q$ acts freely on $Y(Q)$.

\medskip

We now begin the proof of Theorem \ref{main top}. Fix an arithmetic subgroup $\G \sub G(\Q)_+$. Our goal is to understand $\tH^\ast(\wh{X},\Fp)=\tH^\ast(\wh{\ol{X}},\Fp)$ in terms of the $\tH_c^\ast(\wh{X}^{M_{Q,h}},\Fp)$, where $\wh{X} =  \wh{X}^G(\G)$ and 
\[
\wh{\ol{X}} = \varprojlim_{\G^\prime \sub C_p(\G)}\G^\prime \backslash \ol{X}.
\]
By Proposition \ref{existence of neat subgroups} we may assume that $\G$ is neat without changing $\wh{X}$ and $\wh{\ol{X}}$. Let $S$ denote the closure of $\G$ in $G(\Qp)$. Proposition \ref{cc at finite level} then gives us the following description of $\tH^\ast(\wh{\ol{X}},\Fp)$.

\begin{prop}\label{step 1}
We have a canonical isomorphism 
\[
\tH^\ast(\wh{\ol{X}},\Fp) \cong H^\ast(\G \backslash \ol{X}, \Map_{cts}(S, \Fp)).
\]
\end{prop}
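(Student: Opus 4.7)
The plan is to apply Proposition \ref{cc at finite level} essentially verbatim, after verifying its hypotheses and identifying its output with the right-hand side above. First I check the setup: $\ol{X}$ is a manifold with corners (hence homeomorphic to a manifold with boundary, cf.\ \S\ref{bs section}), $\G$ is neat so it acts freely on $\ol{X}$, and the quotient $\G \backslash \ol{X}$ is compact by the main property of the Borel--Serre compactification. Since $G(\Qp)$ has a countable basis of compact open neighborhoods of the identity, I may replace the filtered system $C_p(\G)$ by a countable descending chain $\G = \G_0 \supseteq \G_1 \supseteq \G_2 \supseteq \dots$ that is cofinal in $C_p(\G)$; the corresponding inverse limit of $\G_i \backslash \ol{X}$ is still $\wh{\ol{X}}$.

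Taking $X = \ol{X}$ (which is certainly an open subset of $\ol{X}$ containing its interior) in Proposition \ref{cc at finite level}, with $R = \Fp$ and $? = \emptyset$, yields a canonical isomorphism
\[
\tH^\ast(\wh{\ol{X}}, \Fp) \;\cong\; H^\ast\!\left(\G \backslash \ol{X},\, \Map_{cts}(K, \Fp)\right),
\]
where $K = \varprojlim_i \G_i \backslash \G$ is the profinite completion of $\G$ with respect to the chain $(\G_i)$.

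It therefore remains to identify $K$ with $S \sub G(\Qp)$ as right $\G$-sets. Writing each $\G_i$ as $\G \cap G(\A_f^p) K_{i,p}$ for some compact open $K_{i,p} \sub G(\Qp)$, the diagonal embedding $\G \hookrightarrow G(\A_f)$ identifies $\G_i$ with the preimage of $K_{i,p}$ under $\G \to G(\Qp)$, i.e.\ with $\G \cap K_{i,p}$. By Proposition \ref{arithmetic contained in congruence} the image of $\G$ in $G(\Qp)$ lies in a compact open subgroup, so its closure $S$ is a profinite group, and as $K_{i,p}$ runs through a cofinal system of compact open neighborhoods of $1$, the subgroups $S \cap K_{i,p}$ form a fundamental system of open neighborhoods of $1$ in $S$. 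Density of $\G$ in $S$ then gives $\G_i \backslash \G \cong (S \cap K_{i,p}) \backslash S$ compatibly in $i$, and passing to the limit gives $K \cong S$.

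The only real (if minor) obstacle is bookkeeping the identification $K \cong S$ and checking that the resulting right $\G$-module structures match the conventions of Proposition \ref{cc at finite level}; everything else is a direct substitution into that proposition.
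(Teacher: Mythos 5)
Your proposal is correct and follows the same route as the paper: the paper likewise obtains Proposition \ref{step 1} as a direct application of Proposition \ref{cc at finite level} (after replacing $\G$ by a neat member of $C_p(\G)$), with the profinite set $K=\varprojlim_i \G_i\backslash\G$ identified with the closure $S$ of $\G$ in $G(\Qp)$. Your extra bookkeeping — choosing a countable cofinal chain in $C_p(\G)$ and checking $\G_i\backslash\G \cong (S\cap K_{i,p})\backslash S$ compatibly with the right translation action — is exactly the verification the paper leaves implicit.
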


The ``stratification'' $(Y(Q))_{Q}$ of $\ol{X}$ induces a finite stratification $(\G_Q \backslash Y(Q))_Q$ of $\G \backslash \ol{X}$ into locally closed subsets, parametrized by $\G$-conjugacy classes of admissible parabolic subgroups $Q$. By repeated use of the excision sequence, it suffices for us to prove that
\[
H_c^i(\G_Q \backslash Y(Q), \Map_{cts}(S, \Fp) ) = 0 
\]
for $i>d$ and for all $Q$. From now on we fix $Q$ and drop the subscripts $-_Q$ from all associated algebraic groups for simplicity. Consider the proper map $f : \G_Q \backslash Y(Q) \to \G_M \backslash Y(M)$, which is a fibre bundle with fibre $\G_N \backslash X^{N_Q}$. Here $\G_N = N(\Q) \cap \G_Q$ and $\G_M$ is the image of $\G_Q$ under $Q(\Q) \to M(\Q)$. Set $S_N = S \cap N(\Qp)$; by strong approximation this is the closure of $\G_N$ in $N(\Qp)$ (and hence open). Then we have
\[
H_c^\ast(\G_Q \backslash Y(Q), \Map_{cts}(S, \Fp) ) = H^\ast_c(\G_M \backslash Y(M), Rf_\ast \Map_{cts}(S, \Fp)).
\]
Since $\G \cap S_N = \G_N$, $\G_M=\G_Q/\G_N$ acts by right translation on $S/S_N$.

\begin{prop}\label{step 2}
$f_\ast \Map_{cts}(S, \Fp) = \Map_{cts}(S/S_N, \Fp)$ with $\G_M$ acting by right translation, and $R^i f_\ast \Map_{cts}(S, \Fp) = 0$ for all $i \geq 1$.
\end{prop}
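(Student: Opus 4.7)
The plan is to apply Corollary \ref{formula for pushforward} to the $\G_Q$-equivariant fibre bundle $Y(Q) \to Y(M)$ with contractible fibre $X^{N_Q}$ (which is a Euclidean space, since $N_Q$ is unipotent). Taking $E = Y(Q)$, $B = Y(M)$, $\Gamma = \G_Q$, $\Delta = \G_M$, and kernel $N = \G_N$, and granting that $\G_M$ acts freely on $Y(M)$ (see the discussion of the main obstacle below), the corollary identifies $R^i f_\ast \Map_{cts}(S,\Fp)$ with the local system on $\G_M \backslash Y(M)$ attached to the $\G_M$-module $H^i(\G_N, \Map_{cts}(S,\Fp))$. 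Everything therefore reduces to computing this group cohomology together with its $\G_M = \G_Q/\G_N$-action.

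The core computation rests on a $\G_N$-equivariant decomposition
\[
\Map_{cts}(S, \Fp) \cong \Map_{cts}(S/S_N, \Fp) \otimes_{\Fp} \Map_{cts}(S_N, \Fp),
\]
obtained by choosing a continuous splitting of the profinite quotient $S \to S/S_N$ (whose existence follows just as in the proof of Lemma \ref{big and small local systems}) and noting that right translation by $\G_N \subseteq S_N$ preserves every left coset $sS_N$, so it acts trivially on the first factor and by the usual right-translation action on the second. Since $\G_N$ is a congruence subgroup of $N(\Q)$ with closure $S_N$ in $N(\Qp)$, Proposition \ref{injectives are acyclic} applied to the standard injective smooth $S_N$-representation $\Map_{cts}(S_N, \Fp)$ gives $H^i(\G_N, \Map_{cts}(S_N, \Fp)) = 0$ for $i \geq 1$ and $H^0 = \Fp$. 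A direct-limit argument — writing $\Map_{cts}(S/S_N, \Fp)$ as a filtered colimit of finite-dimensional $\Fp$-vector spaces with trivial $\G_N$-action, and using that group cohomology of arithmetic groups commutes with direct limits — then yields $H^0(\G_N, \Map_{cts}(S, \Fp)) = \Map_{cts}(S/S_N, \Fp)$ and vanishing in higher degrees, which is the desired $R^i f_\ast$-vanishing. For the residual $\G_M$-action, I would first verify that $\G_Q$ normalizes $S_N$: indeed $\G_Q \subseteq \G \subseteq S$, so $\G_Q$ normalizes $S$ by conjugation, while $\G_Q \subseteq Q(\Qp)$ normalizes $N(\Qp)$, whence it normalizes $S \cap N(\Qp) = S_N$. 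Consequently right translation by $\G_Q$ descends to a well-defined action on the left-coset set $S/S_N$, $\G_N$ acts trivially, and the induced $\G_M$-action on $\Map_{cts}(S/S_N, \Fp)$ is precisely right translation, as asserted.

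The main foreseeable obstacle lies outside the algebraic computation, in justifying that $\G_M$ acts freely on $Y(M) = X^{M_{Q,h}} \times \ol{X}^{M_{Q,\ell}}$. The natural argument exploits the almost direct product $M_Q = M_{Q,\ell} M_{Q,h}$ and neatness of $\G$: at a fixed point $(x,y)$, neatness (preserved under surjections to quotients of $M_Q$) would force the image of $g \in \G_M$ in $M_{Q,h}$ to lie in the compact stabilizer of $x$ and hence be trivial up to the finite group $M_{Q,\ell} \cap M_{Q,h}$, while the image in $L_Q$ is a neat arithmetic element fixing a Borel--Serre point of $\ol{X}^{L_Q}$ and so must be trivial; combining these via the central isogeny $M_{Q,\ell} \to L_Q$ should give $g = 1$. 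If this bookkeeping turns out to be awkward, a fallback is to use Proposition \ref{formula for proper pushforward} instead of Corollary \ref{formula for pushforward} (the only hypothesis there is properness of $f$, which holds since the fibre $\G_N \backslash X^{N_Q}$ is a compact nilmanifold); this yields both the local-system structure and the fibrewise formula, and one can then identify the monodromy by direct inspection of the $\G_Q$-action.
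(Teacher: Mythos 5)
Your proof follows the paper's argument essentially verbatim: apply Corollary \ref{formula for pushforward} to $f$, use a continuous splitting of $S \to S/S_N$ to get $H^i(\G_N,\Map_{cts}(S,\Fp)) \cong \Map_{cts}(S/S_N,\Fp)\otimes_{\Fp}H^i(\G_N,\Map_{cts}(S_N,\Fp))$ as in Lemma \ref{big and small local systems}, and kill the higher terms via Proposition \ref{injectives are acyclic} applied to the standard injective $\Map_{cts}(S_N,\Fp)$. The freeness of the $\G_M$-action on $Y(M)$ that you flag is indeed a hypothesis of Corollary \ref{formula for pushforward} which the paper leaves implicit (it follows from neatness of $\G_M$ in the standard way), and your sketch of that point is adequate.
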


\begin{proof}
By Corollary \ref{formula for pushforward}, $R^i f_\ast \Map_{cts}(S, \Fp)$ is the local system on $\G_M \backslash Y(M)$ corresponding to the $\G_M$-representation $H^i (\G_N,\Map_{cts}(S,\Fp))$. When $i=0$, the description is clear since $\G_N$ is dense in $S_N$. In general, choose a continuous section $S \to S_N$ of the inclusion, which gives a homeomorphism $S \cong S/S_N \times S_N$ of right $S_N$-spaces. Arguing as in Lemma \ref{big and small local systems}, we see that
\[
H^i(\G_N, \Map_{cts}(S,\Fp)) \cong \Map_{cts}(S/S_N, \Fp) \otimes_{\Fp} H^i(\G_N, \Map_{cts}(S_N,\Fp)).
\]
By Proposition \ref{injectives are acyclic} and the injectivity of $\Map_{cts}(S_N,\Fp)$ (discussed in \S \ref{coh of unipotent groups}), the right hand side is $0$ when $i\geq 1$.
\end{proof}

So, we are down to computing $H^\ast_c(\G_M \backslash Y(M),\Map_{cts}(S/S_N, \Fp))$, for which we use the fibre bundle
\[
g : \G_M \backslash Y(M) \to \G_L \backslash \ol{X}^L,
\]
with fibre $\G_h \backslash X^{M_h}$. Here $\G_h = M_h(\Q) \cap \G_M$ and $\G_L = r(\G_M)$, where $ r : M \to L $ denotes the canonical map. We remark that $\G_h$ acts holomorphically on $X^{M_h}$, and hence $\G_h \sub M_h(\Q)_+$. The Leray spectral sequence reads
\[
H^i(\G_L \backslash \ol{X}^L, R^j g_! \Map_{cts}(S/S_N,\Fp)) \implies H^{i+j}_c(\G_M \backslash Y(M),\Map_{cts}(S/S_N, \Fp)).
\]
The key is then the following.

\begin{prop}\label{step 3}
$R^j g_! \Map_{cts}(S/S_N,\Fp))$ is a local system on $\G_L \backslash \ol{X}^L$ and vanishes for $j > \dim_\C X^{M_h}$.
\end{prop}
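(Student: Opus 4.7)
The plan is to compute the stalks of $R^j g_! \Map_{cts}(S/S_N, \Fp)$ using Proposition \ref{formula for proper pushforward}, then reduce the resulting cohomology group to completed compactly supported cohomology for $M_h$ via Lemma \ref{big and small local systems} and Proposition \ref{cc at finite level}, at which point the hypothesis of Theorem \ref{main top} gives the vanishing.

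Since $g : \G_M \backslash Y(M) \to \G_L \backslash \ol{X}^L$ is a fibre bundle of manifolds with boundary whose fibre is the manifold $\G_h \backslash X^{M_h}$, Proposition \ref{formula for proper pushforward} shows that $R^j g_! \Map_{cts}(S/S_N, \Fp)$ is a local system on $\G_L \backslash \ol{X}^L$, with stalk equal to $H^j_c(\G_h \backslash X^{M_h}, \Map_{cts}(S/S_N, \Fp))$. Here the $\G_h$-action on $\Map_{cts}(S/S_N, \Fp)$ is by restriction of the right-translation action of $\G_M = \G_Q/\G_N$; this is well-defined because $\G_Q \sub S_Q := S \cap Q(\Q_p)$ normalises $S_N = S \cap N(\Q_p)$, which in turn is inherited from the fact that $Q$ normalises $N$.

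To identify this stalk, I realise the closure $T$ of $\G_h$ in $M_h(\Q_p)$ as a closed subgroup of the topological group $S_Q/S_N$. The projection $Q(\Q_p) \to Q(\Q_p)/N(\Q_p) = M(\Q_p)$ restricts to a continuous homomorphism $S_Q \to M(\Q_p)$ with kernel $S_N$, and since this projection is an open map of $p$-adic Lie groups, the induced map $S_Q/S_N \to M(\Q_p)$ is a topological embedding with closed image $\bar S_M \sub M(\Q_p)$. Under this identification, the natural map $\G_h \hookrightarrow \G_M \hookrightarrow S_Q/S_N$ agrees with the inclusion $\G_h \sub M_h(\Q) \sub M(\Q_p)$, so the closure of $\G_h$ in $S_Q/S_N$ is exactly $T$ (using that $M_h(\Q_p)$ is closed in $M(\Q_p)$). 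Let $H \sub S_Q$ be the preimage of $T$, a closed subgroup containing $S_N$ with $H/S_N = T$.

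Now I apply Lemma \ref{big and small local systems}(1) to the profinite group $S$, the closed subgroups $S_N \sub H \sub S$, and the homomorphism $\G_h \to T = H/S_N$ just described. This identifies the vanishing of $H^j_c(\G_h \backslash X^{M_h}, \Map_{cts}(S/S_N, \Fp))$ with that of $H^j_c(\G_h \backslash X^{M_h}, \Map_{cts}(T, \Fp))$, and Proposition \ref{cc at finite level} applied to $M_h$ with base arithmetic subgroup $\G_h$ yields
\[
H^j_c(\G_h \backslash X^{M_h}, \Map_{cts}(T, \Fp)) \cong \tH_c^j(\wh{X}^{M_h}(\G_h), \Fp).
\]
Since $\G_h \sub M_h(\Q)_+$ (because it acts holomorphically on $X^{M_h}$) and $\G_h$ is neat, the hypothesis (Conjecture \ref{connected vanishing} for $M_h$ and $?=c$) gives vanishing for $j > q_0(M_h) = \dim_\C X^{M_h}$, as desired. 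The main delicate point is the identification of $T$ with a subgroup of $S_Q/S_N$ in the second step; once this is in place, the argument reduces to a clean invocation of the tools developed earlier.
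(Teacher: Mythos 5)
Your proof is correct and follows essentially the same route as the paper: compute the stalks via Proposition \ref{formula for proper pushforward}, realize the closure $T$ of $\G_h$ inside $S/S_N$ (your $H$ is the paper's $S_h$, your $T$ the paper's $T_h$), apply Lemma \ref{big and small local systems}, and conclude with Proposition \ref{cc at finite level} and the hypothesis for $M_h$. The only quibble is your justification that $S_Q/S_N \to M(\Qp)$ is a closed embedding: openness of $Q(\Qp)\to M(\Qp)$ is not quite the right reason, but compactness of $S_Q/S_N$ together with injectivity and Hausdorffness of the target gives it immediately, so the argument stands.
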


\begin{proof}
$R^j g_! \Map_{cts}(S/S_N,\Fp))$ is a local system with fibre $
H_c^j(\G_h \backslash X^{M,h}, \Map_{cts}(S/S_N,\Fp))$ by Proposition \ref{formula for proper pushforward}. Consider the closure $T_h$ of $\G_h$ in $M_h(\Qp)$, which we may also view as the closure of $\G_h$ in $S/S_N$. Write $S_h$ for the preimage of $T_h$ under $S \to S/S_N$. $S_h$ is a group containing $S_N$ as a normal subgroup, and $T_h=S_h/S_N$. Applying Lemma \ref{big and small local systems} with $G=S$, $H=S_h$, $K=S_N$ and $\G = \G_h$, $H_c^j(\G_h \backslash X^{M,h}, \Map_{cts}(S/S_N,\Fp))$ vanishes if 
\[
H_c^j(\G_h \backslash X^{M_h}, \Map_{cts}(T_h,\Fp)).
\]
But $H_c^j(\G_h \backslash X^{M,h}, \Map_{cts}(T_h,\Fp))$ is compactly supported completed $\Fp$-cohomology for $M_h$ by Proposition \ref{cc at finite level}, so this vanishes for $j > \dim_\C X^{M_h}$ by assumption.
\end{proof}

Before we put everything together, we need to relate $d$ to $\dim_\C X^{M_h}$ and $\dim_\R X^L$. Recall that $A_L$ is the maximal $\Q$-split torus in the center of $L$, and write $Z(N)$ for the center of $N$. The result is then the following.

\begin{lemm}\label{numerics}
$ \dim_\C X^{M_h} + \dim_\R X^L = d - \frac{1}{2}\left( \dim N - \dim Z(N) \right) - \dim A_L$.
\end{lemm}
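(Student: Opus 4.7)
The plan is to add up two dimension identities coming from the local structure of $X$ at its rational boundary: the Siegel domain of type III realization, which controls $\dim_\C X^{M_h}$, and the transitive action of $L(\R)^+$ on the associated symmetric cone, which controls $\dim_\R X^L$.

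First, I would handle the degenerate case $Q = G$ separately: then $M_h = G$, $M_\ell$ is trivial (or a central torus), $N = 1$, $A_L$ is trivial, and the identity reduces to $d = d$. So assume $Q$ is a proper maximal $\Q$-parabolic. At the rational boundary component $F = X^{M_h}$, the Hermitian symmetric domain $X$ admits a Siegel domain presentation
$$X \simeq F \times V \times (U + i\Omega),$$
where $U = Z(N)(\R)$, $V = N(\R)/Z(N)(\R)$ carries a natural complex structure, and $\Omega \subset U$ is an open self-adjoint homogeneous cone (cf.\ \cite{amrt}). Taking real dimensions gives $2d = \dim_\R X^{M_h} + (\dim N - \dim Z(N)) + 2\dim Z(N)$, and hence
$$\dim_\C X^{M_h} = d - \tfrac{1}{2}\big(\dim N + \dim Z(N)\big).$$

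Second, the conjugation action of $Q$ on $N$ preserves $Z(N)$, and since $M_h$ acts trivially on $U = Z(N)(\R)$ (it preserves $F$ and commutes with motion in $U$), this action factors through a linear representation $L \to \GL(U)$. By the classical theory of self-adjoint homogeneous cones, $L(\R)^+$ acts transitively on $\Omega$ with maximal compact stabilizer $K_{L,\infty}^+$, so $\Omega \cong L(\R)^+/K_{L,\infty}^+$. Since $X^L = L(\R)^+/A_L(\R)^+ K_{L,\infty}^+$ is obtained from this by a further quotient by the $(\dim A_L)$-dimensional group $A_L(\R)^+$, we read off
$$\dim_\R X^L = \dim \Omega - \dim A_L = \dim Z(N) - \dim A_L.$$

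Summing the two identities produces the claim:
$$\dim_\C X^{M_h} + \dim_\R X^L = d - \tfrac{1}{2}(\dim N + \dim Z(N)) + \dim Z(N) - \dim A_L = d - \tfrac{1}{2}(\dim N - \dim Z(N)) - \dim A_L.$$
The substantive content of the argument really lives in the two structural inputs (Siegel domain III realization and transitivity of $L$ on the cone $\Omega$). The main obstacle is just bookkeeping: pinning down these classical statements in the literature in a form that applies uniformly across all Hermitian types, including non-tube types, where one must remember that $\Omega$ is the cone in $Z(N)$ rather than in $N$ itself, and that $L$, not $M_\ell$, is the right group whose action on $\Omega$ is transitive with compact stabilizer.
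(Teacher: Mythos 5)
Your proof is correct and follows essentially the same route as the paper: both rest on (i) the local-product decomposition of $X^G$ near a rational boundary component (you use the Siegel domain III presentation $X \simeq F \times V \times (U+i\Omega)$, the paper uses the real-manifold version $X^G \cong X^{M_h}\times C(L)\times N(\R)$ from [AMRT, Eq. (4.1)] — these are the same data, with $C(L)=\Omega$, and give the identical dimension count), and (ii) the identification $\Omega \cong L(\R)/K_{L,\infty}$ from [AMRT, Thm. 4.1(2)], which yields $\dim_\R X^L = \dim Z(N) - \dim A_L$ after quotienting by $A_L(\R)^+$. The arithmetic then matches the paper's line by line.
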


\begin{proof}
The symmetric space $X^G$ has a decomposition
\[
X^G \cong X^{M_h} \times C(L) \times N(\R)
\]
as real manifolds\footnote{This is written as $D\cong F \times C(F) \times W(F)$ in \cite{amrt}; with respect to our notation $D=X^G$, $F=X^{M_h}$, $C(F)=C(L)$ and $W(F) = N(\R)$.} by \cite[Equation (4.1)]{amrt}. This gives
\[
\dim_\C X^{M_h} =  d - \frac{1}{2}\left( \dim_\R C(L) + \dim N \right).
\]
The space $C(L)$, called $C(F)$ in \cite{amrt}, is an open subset of $Z(N)(\R)$ and diffeomorphic to $L(\R)/K_{L,\infty}$ by \cite[Theorem 4.1(2)]{amrt}, where $K_{L,\infty}$ denotes a maximal compact subgroup of $L(\R)$. Thus $\dim_\R X^L = \dim_\R C(L) - \dim A_L $ and $\dim_\R C(L) = \dim Z(N)$. Combining this with the displayed equation above and rearranging gives the desired result.
\end{proof}

We may now put everything together to prove a more precise version of Theorem \ref{main top}. From now on we let $Q$ denote an arbitrary admissible parabolic of $G$ again, and set 
\[
\gamma(Q) =  \frac{1}{2}\left( \dim N_Q - \dim Z(N_Q) \right) + \dim A_{L_Q} + \text{ss.rank}_\Q(L_Q)
\]
whenver $Q\neq G$. Here $\text{ss.rank}_\Q(H)$, for $H$ a reductive group over $\Q$, denotes the $\Q$-rank of the derived group of $H$ (the `semisimple $\Q$-rank' of $H$). Note that $\gamma(Q)$ is non-negative and only depends on the conjugacy class of $Q$. In fact, $\dim A_{L_Q}$, and hence $\gamma(Q)$, is always positive. This follows, for example, from \cite[\S 4.2, Equation (2)]{bs}, upon noting that $\dim A_{L_Q}=\dim A_Q$. More precisely, this shows that $\dim A_{L_Q}$ is equal to the number of $\Q$-simple adjoint factors of $G \to H$ in which the projection of $Q$ is not equal to $H$.

\begin{theo}\label{main top precise}
Assume that Conjecture \ref{connected vanishing} holds for $M_{Q,h}$ for all admissible parabolics $Q$ of $G$ (including $Q=G$) and $?=c$. Then the natural map
\[
H_c^i (\wh{X}, \Fp) \to H^i(\wh{X}, \Fp)
\]
is an isomorphism when $i > d+1 - \inf_{Q\neq G} \gamma(Q)$, and surjective for $i = d+1 - \inf_{Q\neq G} \gamma(Q)$. In particular, Conjecture \ref{connected vanishing} holds for $G$ and $?= \emptyset$, and $H_c^d (\wh{X}, \Fp) \to H^d(\wh{X}, \Fp)$ is surjective.
\end{theo}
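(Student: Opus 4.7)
The plan is to combine Proposition \ref{step 1} with excision on the Borel--Serre boundary and a stratum-by-stratum analysis via iterated fibrations. First, Proposition \ref{step 1} identifies
\[
\tH^i(\wh{X},\Fp) \cong H^i(\G\backslash\ol{X}, \Map_{cts}(S,\Fp)),
\]
with the analogous identification for $\tH^i_c$ on the open locus $\G\backslash X \subset \G\backslash\ol{X}$. The excision long exact sequence for the closed boundary $\partial := \G\backslash\ol{X} \setminus \G\backslash X$,
\[
\cdots \to H^{i-1}(\partial, \Map_{cts}(S,\Fp)) \to \tH_c^i \to \tH^i \to H^i(\partial, \Map_{cts}(S,\Fp)) \to \cdots,
\]
reduces everything to bounding $H^\ast(\partial, \Map_{cts}(S,\Fp))$: the comparison map is surjective in degrees where $H^i(\partial)$ vanishes and bijective in degrees where additionally $H^{i-1}(\partial)$ vanishes.

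To bound $H^\ast(\partial)$, I stratify $\partial = \bigsqcup_{Q \neq G} \G_Q\backslash Y(Q)$ over $\G$-conjugacy classes of maximal parabolics $Q \neq G$ and process these locally closed strata in the order dictated by the closure relations inherited from the minimal compactification $X^\ast$ (noting $H^\ast(\partial) = H^\ast_c(\partial)$ since $\partial$ is compact). Iterated excision reduces the task to proving the stratum-wise vanishing
\[
H_c^i(\G_Q \backslash Y(Q), \Map_{cts}(S,\Fp)) = 0 \quad \text{for } i > d - \gamma(Q).
\]

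For a fixed $Q$, I use the iterated fibration $\G_Q \backslash Y(Q) \xrightarrow{f} \G_M \backslash Y(M) \xrightarrow{g} \G_L \backslash \ol{X}^L$. Proposition \ref{step 2} reduces the computation to $H_c^\ast(\G_M \backslash Y(M), \Map_{cts}(S/S_N,\Fp))$, after which the Leray spectral sequence for $g$,
\[
E_2^{a,b} = H^a\bigl(\G_L \backslash \ol{X}^L,\, R^b g_! \Map_{cts}(S/S_N,\Fp)\bigr) \Longrightarrow H^{a+b}_c\bigl(\G_M \backslash Y(M), \Map_{cts}(S/S_N,\Fp)\bigr),
\]
vanishes outside a ``useful'' rectangle: Proposition \ref{step 3} (invoking the hypothesis that Conjecture \ref{connected vanishing} holds for $M_{Q,h}$ with $?=c$) kills $R^b g_!$ for $b > \dim_\C X^{M_h}$, while the Borel--Serre virtual cohomological dimension bound on $\G_L$ (a neat arithmetic subgroup of the reductive group $L$, with $\G_L \backslash \ol{X}^L$ a $K(\G_L,1)$ since $\G_L$ is neat and $\ol{X}^L$ is contractible) kills $H^a$ for $a > \dim_\R X^L - \text{ss.rank}_\Q(L)$. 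Summing the two bounds and substituting Lemma \ref{numerics} gives exactly the claimed threshold $i > d - \gamma(Q)$.

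Combining the stratum-wise estimates, $H^i(\partial, \Map_{cts}(S,\Fp)) = 0$ for $i > d - \beta$ with $\beta := \inf_{Q \neq G} \gamma(Q) \geq 1$, and feeding this back into the excision sequence delivers the asserted iso/surjection ranges. The ``in particular'' assertions then follow: since $\beta \geq 1$ we have $\tH_c^i \cong \tH^i$ for all $i > d$, so combining with the hypothesis for $G$ itself gives Conjecture \ref{connected vanishing} for $G$, $?=\emptyset$; the iso at $i = d$ is then read off the same excision sequence together with the vanishing $\tH_c^{d+1} = 0$. The main technical nuisance will be the bookkeeping of the coefficient systems through the two fibrations --- in particular verifying that the $\G_M$- and $\G_L$-actions on $\Map_{cts}(S/S_N,\Fp)$ and on the $g$-pushforward are genuinely the right-translation actions --- so that Propositions \ref{step 2}, \ref{step 3}, and Lemma \ref{big and small local systems} apply verbatim.
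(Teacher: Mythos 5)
Your overall strategy matches the paper's: identify $\tH^\ast$ and $\tH_c^\ast$ with (compactly supported) cohomology of the big local system $\Map_{cts}(S,\Fp)$ at finite level via Proposition \ref{step 1}, pass through the excision sequence for the Borel--Serre compactification stratified by the $Y(Q)$'s, and establish the stratum-wise vanishing $H_c^i(\G_Q\backslash Y(Q),\Map_{cts}(S,\Fp))=0$ for $i>d-\gamma(Q)$ using Propositions \ref{step 2} and \ref{step 3}, Lemma \ref{numerics}, and the Borel--Serre cohomological-dimension bound on $\G_L\backslash\ol{X}^L$. This delivers the iso for $i>d+1-\inf_{Q\neq G}\gamma(Q)$ and the surjectivity for $i=d+1-\inf_{Q\neq G}\gamma(Q)$, and hence the vanishing for $?=\emptyset$ in degrees $>d$; all of this is sound and essentially identical to the paper.

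However, your justification of the final assertion --- that $\tH_c^d\to\tH^d$ is an isomorphism --- has a gap. You write that the iso at $i=d$ is ``read off the same excision sequence together with the vanishing $\tH_c^{d+1}=0$,'' but this is not enough. In the excision sequence
\[
H^{d-1}(\G\backslash\ol{X})\to H^{d-1}(\partial)\to \tH_c^d\to \tH^d\to H^d(\partial)\to \tH_c^{d+1},
\]
the vanishing $\tH_c^{d+1}=0$ only gives surjectivity of $\tH^d\to H^d(\partial)$, which is vacuous since $H^d(\partial)=0$ already (as $\beta:=\inf_{Q\neq G}\gamma(Q)\geq 1$). Surjectivity of $\tH_c^d\to\tH^d$ thus holds, but injectivity requires killing the image of $H^{d-1}(\partial)\to\tH_c^d$, i.e.\ requires either $H^{d-1}(\partial)=0$ (which needs $\beta\geq 2$) or surjectivity of the restriction $\tH^{d-1}\to H^{d-1}(\partial)$, and neither follows from what has been proved. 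Indeed, when $\beta=1$ (e.g.\ $G=\mathrm{SL}_2/\Q$, where the unique maximal parabolic has $\gamma(Q)=1$ and $d=1$) the kernel of $\tH_c^d\to\tH^d$ is $H^0(\partial,\Map_{cts}(S,\Fp))/\Fp\cong\bigl(\bigoplus_{\text{cusps}}\Map_{cts}(S/S_N,\Fp)\bigr)/\Fp$, which is nonzero. So the iso at $i=d$ simply does not follow from the stratum-wise bounds when $\beta=1$; it is a genuine consequence only when $\inf_{Q\neq G}\gamma(Q)\geq 2$. You should either restrict that conclusion accordingly or supply an argument controlling $H^{d-1}(\partial)$. (To be fair, the paper's own proof also does not address this point --- it just asserts that the stratum-wise vanishing ``suffices'' --- so you have inherited the same imprecision rather than introduced it; but since you spelled out the excision sequence explicitly, the gap in your write-up is the visible one.)
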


\begin{proof}
This merely summarizes the work done above, so we will be rather brief. By Proposition \ref{step 1} and repeated use of the excision sequence, it suffices to show that, for all $Q\neq G$, $H_c^i(\G_Q \backslash Y(Q), \Map_{cts}(S, \Fp) ) = 0 $ for $i > d - \gamma(Q)$. Propositions \ref{step 2} and \ref{step 3} then give us a spectral sequence 
\[
H^j(\Gamma_L \backslash \ol{X}^L, R^k g_!\Map_{cts}(S/S_N,\Fp)) \implies H_c^{j+k}(\G_Q \backslash Y(Q), \Map_{cts}(S, \Fp) ) 
\]
and shows that $R^k g_!\Map_{cts}(S/S_N,\Fp)$ is a local system which is $0$ for $k > \dim_\C X^{M_h}$. By \cite[Corollary 11.4.3]{bs} the cohomology of local systems on $\G_L \backslash \ol{X}^L$ vanishes in degrees $> \dim_{\R}X^L - \text{ss.rank}_\Q(L)$, so we see that $H_c^i(\G_Q \backslash Y(Q), \Map_{cts}(S, \Fp) ) = 0$ for $i > \dim_{\C}X^{M_h} + \dim_{\R}X^L - \text{ss.rank}_\Q(L)$. Finally, by Lemma \ref{numerics}, this quantity is equal to $d - \gamma(Q)$ as desired, finishing the proof.
\end{proof}

\begin{rema}
When $\inf_{Q \neq G} \gamma(Q) \geq 2$, $H_c^d (\wh{X}, \Fp) \to H^d(\wh{X}, \Fp)$ is an isomorphism. This is typically the case, and it is relatively straightforward to check in any given case. However, there are examples where $\inf_{Q \neq G} \gamma(Q) = 1$, such as $G= \SL_{2/F}$ with $F$ totally real. In the example $G=\SL_{2/F}$, note that the map $H_c^d (\wh{X}, \Fp) \to H^d(\wh{X}, \Fp)$ is not an isomorphism when $F=\Q$. When $F\neq \Q$ the map should be an isomorphism (which can be proved by using Waldschmidt's bound on the Leopoldt defect in the proof above instead of \cite[Corollary 11.4.3]{bs}).
\end{rema}

\begin{rema}\label{remark on stratifications}
The reader familiar with the Borel--Serre compactification may wonder if one could not have used the ``usual'' stratification, indexed by all rational parabolic subgroups of $G$. This is possible, but it simply amounts to a more complicated version of the analysis above. Let us explain this briefly. The strata in the usual stratification are locally symmetric spaces for rational parabolics $P$ of $G$, and we would want to prove vanishing results for compactly supported cohomology on these strata of the same local system as above. If $M$ is the Levi quotient of $P$, the analogue of Proposition \ref{step 2} goes through in the same way and essentially reduces us to compactly supported completed cohomology of $M$. No trivial bound will be sufficient, and in general $X^M$ won't be Hermitian, so we are forced to try to find an almost direct factor of $M$ whose symmetric space is Hermitian (just like for admissible $P$) to get a better bound. It is not so hard to see (e.g. by looking at Dynkin diagrams) that there is a canonical admissible $Q$ whose Hermitian part is an almost direct factor of $M$, and using this one can push through the analysis. The strata of the coarser stratification that we use are simply the unions of the $X^P$ for all $P$ which have the same Hermitian part, i.e. which are associated with the same admissible $Q$ by the procedure above, and they fit together in such  way that it is much better to analyze them together rather than separately.
\end{rema}

\subsection{The Calegari--Emerton conjectures on completed homology}\label{codimension}

We return to the setting of \S \ref{subsec: vanishing}. We recall from \cite{ce} that completed homology of $G$ with tame level $K^p \sub G(\A_f^p)$ values in an adic ring $R$ is defined as
\[
\tH_i(K^p,R) := \varprojlim_{K_p} H_i(X_{K^p K_p},R),
\]
where $K_p$ runs through the compact open subgroups of $G(\Qp)$. One may define completed Borel--Moore homology $\tH_i^{BM}(K^p,R)$ similarly (again see \cite{ce}). Let $? \in \{\emptyset, BM\}$. For any compact open subgroup $K_p \sub G(\Qp)$, $\tH_i^{?}(K^p,\Zp)$ is a finitely generated right module for the Iwasawa algebra $\Zp \llbracket K_p \rrbracket $, which is an Auslander--Gorenstein ring and has well-defined codimension (or grade) function on its finitely generated right modules, defined by 
\[
cd(M) = \inf \{j \mid \Ext^j_{\Zp \llbracket K_p \rrbracket }(M, \Zp \llbracket K_p \rrbracket ) \neq 0 \}.
\]
We refer to \cite[\S 2.5]{aw} for more details on the properties of the codimension function. In particular we remark that by general properties, $cd(\tH_i^{?}(K^p,\Zp))$ is independent of the choice of $K_p$. Recall the quantities $q_0$ and $l_0$ from \S \ref{subsec: vanishing}. We may then state a slightly weaker version of \cite[Conjecture 1.5]{ce}. For simplicity, from now on we write $\tH_i^?$ for $\tH_i^{?}(K^p,\Zp)$.

\begin{conj}[Calegari--Emerton]\label{ce vanishing homology}
Let $? \in \{ \emptyset, BM \}$. Then the following holds:
\begin{enumerate}
\item If $i < q_0$, then $cd(\tH_i^?) \geq q_0 + l_0 -i$.

\smallskip

\item $\tH_{q_0}^?$ has codimension $l_0$.

\smallskip

\item $\tH_{q_0}^?$ is $p$-torsionfree.

\smallskip

\item $\tH_i^? = 0$ for $i > q_0$.
\end{enumerate}
\end{conj}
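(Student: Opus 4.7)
The plan is to deduce Conjecture \ref{ce vanishing homology} from the cohomological vanishing $\tH^i(K^p,\Zp) = \tH^i_c(K^p,\Zp) = 0$ for $i > q_0$ established earlier in the paper (the compactly-supported case by the perfectoid methods of Step One; the non-compactly-supported case by the topological analysis of \S\ref{subsec: hermitian}). The homological and cohomological sides are related by the Calegari--Emerton spectral sequence
$$E_2^{i,j} = \Ext^i_\Lambda(\tH_j^?(K^p,\Zp), \Lambda) \Longrightarrow \tH^{i+j}_{?'}(K^p,\Lambda),$$
for $\Lambda = \Zp \llbracket K_p \rrbracket$ with $K_p \sub G(\Qp)$ pro-$p$ and $(?,?') \in \{(\emptyset, c), (BM, \emptyset)\}$, together with finite generation of the homology over $\Lambda$ and the Auslander--Gorenstein structure of $\Lambda$.

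Parts (4), (3) and (1) are homological-algebra consequences of the abutment vanishing. For (4), take $j_0$ maximal with $\tH_{j_0}^? \neq 0$ and set $i_0 = cd(\tH_{j_0}^?)$, so that $E_2^{i_0,j_0} \neq 0$. Incoming differentials vanish by the maximality of $j_0$, as their sources are $\Ext$'s of zero modules, and a standard bookkeeping using the Auslander codimension inequality $cd(\Ext^k_\Lambda(M,\Lambda)) \geq k$ (applied to the targets of outgoing differentials) shows that $E_\infty^{i_0,j_0}$ survives, forcing $i_0 + j_0 \leq q_0$ and hence $j_0 \leq q_0$. Part (3) follows by the analogous argument with $\Fp$-coefficients: a $p$-torsion class in $\tH_{q_0}^?$ would contribute (via the Bockstein) a surviving term in total degree $q_0+1$ of the mod-$p$ abutment, contradicting the vanishing. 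Part (1) is a finer version of the same spectral-sequence argument: the vanishing of the abutment above degree $q_0$ gives $cd(\tH_i^?) \geq q_0 - i$, and the extra summand $+ l_0$ comes from the fact that $\tH_\ast^?$ is annihilated by the augmentation ideal of the central subalgebra $\Zp \llbracket A(\Zp) \rrbracket \sub \Lambda$ (where $A$ is the maximal $\Q$-split central torus), which has codimension $l_0 = \dim A$ in $\Lambda$.

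The main obstacle is part (2), the assertion that $\tH_{q_0}^?$ has codimension \emph{exactly} $l_0$. Upper bounds from the spectral sequence cannot by themselves produce the matching lower bound; one must exhibit sufficiently many non-torsion classes in $\tH_{q_0}^?$. Following \cite{ce-annals}, I would use Matsushima's formula together with classical limit-multiplicity results for discrete series representations of $G^{der}(\R)$, which is precisely the situation where $l_0(G^{der}) = 0$ (ensured here by the assumption that $G^{der}$ admits a pre-abelian-type Shimura datum with Hermitian $X$). The reduction from $G$ to $G^{der}$, and the precise accounting of the contribution of the central torus $Z(G)$, is where the Leopoldt conjecture for $Z(G)$ (as hypothesized in Theorem \ref{maintheorem}) enters crucially: it guarantees that the central contribution lives in codimension exactly $l_0$, rather than accidentally producing classes of smaller codimension that would destroy the sharpness of the bound.
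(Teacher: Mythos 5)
Your overall architecture for part (2) and your identification of the inputs (the vanishing results of Steps One and Two, \cite{ce-annals} via Matsushima and limit multiplicities for the semisimple part, Leopoldt for $Z(G)$) match the paper, but the mechanism you propose for parts (1), (3) and (4) has a genuine gap. The duality spectral sequences recalled in \S\ref{codimension} are $E_2^{ij}=\Ext^i_A(\tH_j,A)\Rightarrow \tH^{BM}_{D-i-j}$ and $E_2^{ij}=\Ext^i_A(\tH_j^{BM},A)\Rightarrow \tH_{D-i-j}$, with $A=\Zp\llbracket K_p\rrbracket$ and $D=\dim_\R X=2q_0+l_0$: the abutment is completed (Borel--Moore) homology in the \emph{complementary} degree (equivalently, finite-level cohomology with coefficients in the local system $A$), not the completed cohomology whose vanishing above $q_0$ we know. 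That abutment does \emph{not} vanish in total degrees $>q_0$ (it is nonzero already in low homological degrees, e.g. through $\tH_0$); the vanishing one can eventually use is in total degrees $i+j<q_0+l_0$, and that statement is precisely part (4) for the complementary homology theory -- so it cannot serve as an input to prove (4). If you run your survival argument with the correct abutment, a surviving class at $(i_0,j_0)$ only forces $D-i_0-j_0\le q_0$, i.e.\ $i_0+j_0\ge q_0+l_0$, which produces no contradiction with $j_0>q_0$; the same problem kills the Bockstein variant you propose for (3). In the paper (Proposition \ref{implications}, following \cite[Corollary 4.2.3]{scho}) the order is the reverse of yours: parts (3)--(4) are deduced first, directly from the vanishing of completed (compactly supported) cohomology via the elementary $\Zp$-linear duality of \cite[Theorem 1.1]{ce} (e.g.\ once $\tH^{q_0+1}$ vanishes one has $\tH_{q_0}\cong\Hom_{\Zp}(\tH^{q_0},\Zp)$, whence torsion-freeness), and only then is the $A$-linear spectral sequence used, with its abutment now known to vanish for $i+j<q_0+l_0$, together with the Auslander--Gorenstein bookkeeping, to obtain the codimension bound (1) -- note the asymmetry: $\tH^\ast_c$ gives (3)--(4) for $\tH^{BM}_\ast$ and (1) for $\tH_\ast$, and vice versa. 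Finally, your explanation of the summand $l_0$ in (1) is incorrect: $l_0\neq\dim A$ in general (for $\mathrm{Res}_{F/\Q}\SL_2$ with $F$ real quadratic one has $A=1$ but $l_0=1$, and for $\mathbb{G}_m$ one has $\dim A=1$ but $l_0=0$), and completed homology is not annihilated by the augmentation ideal of $\Zp\llbracket A(\Zp)\rrbracket$ (already for $\mathbb{G}_m$ the action of $\Zp^\times$ on $\tH_0$ is by translation). The $l_0$ enters simply because the spectral sequence shifts degrees by $D=2q_0+l_0$.

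For part (2) your sketch is essentially the paper's argument: the lower bound $cd\ge l_0$ comes from the same spectral-sequence analysis, Theorem \ref{codimension 0} supplies the semisimple, $l_0=0$ input from \cite{ce-annals}, and Leopoldt for $Z$ enters through Proposition \ref{leopoldt and codimension}. Be aware, though, that the actual execution in Corollary \ref{general reductive coro} is not a direct limit-multiplicity count for $G$: it combines $\tH_{q_0}\cong\Hom_{\Zp}(\tH^{q_0},\Zp)$ with a K\"unneth factorization of $H^{q_0}(\G\backslash X,\Map_{cts}(K_p,\Zp))$ into an adjoint factor and a central factor, and then an injective/projective-dimension computation (Koszul complex for the central Iwasawa algebra) to conclude that the codimension is at most $l_0$; you would need some such device to make the ``accounting of the central contribution'' precise.
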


The difference between this conjecture and \cite[Conjecture 1.5]{ce} is that the latter predicts $cd(\tH_i^?) > q_0 + l_0 -i$ when $i < q_0$. Completed (Borel--Moore) homology is closely related to completed (compacty supported) cohomology via \cite[Theorem 1.1]{ce}. Moreover, completed homology and completed Borel--Moore homology are related via the two Poincar\'e duality spectral sequences
\[
E_2^{ij} = \Ext_A^i(\tH_j,A) \implies \tH_{D-i-j}^{BM};
\]
\[
E_2^{ij} = \Ext_A^i(\tH_j^{BM},A) \implies \tH_{D-i-j},
\]
where $A = \Zp \llbracket K_p \rrbracket$ and $D= \dim_{\R} X=2q_0+l_0$; see \cite[\S 1.3]{ce}. We have the following relation between Conjecture \ref{ce vanishing} and Conjecture \ref{ce vanishing homology}.

\begin{prop}\label{implications}
Conjecture \ref{ce vanishing} for compactly supported completed cohomology implies Conjecture \ref{ce vanishing homology}(3)-(4) for completed Borel--Moore homology and Conjecture \ref{ce vanishing homology}(1) for completed homology. Similarly, Conjecture \ref{ce vanishing} for completed cohomology implies Conjecture \ref{ce vanishing homology}(3)-(4) for completed homology and Conjecture \ref{ce vanishing homology}(1) for completed Borel--Moore homology. 
\end{prop}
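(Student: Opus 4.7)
The plan is to combine \cite[Theorem 1.1]{ce}, which converts vanishing statements for completed (compactly supported) cohomology into vanishing and $p$-torsion-freeness statements for completed (Borel--Moore) homology, with the two Poincar\'e duality spectral sequences displayed above. I would prove the first implication in detail; the second is completely symmetric, swapping $\tH^\ast_c \leftrightarrow \tH^\ast$ and $\tH_\ast^{BM} \leftrightarrow \tH_\ast$ and using the other Poincar\'e duality spectral sequence.

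Assume Conjecture~\ref{ce vanishing} for $\tH^\ast_c$, so $\tH^i_c = 0$ for $i > q_0$. The universal-coefficient-type exact sequences of \cite[Theorem 1.1]{ce} relate $\tH^i_c$ to $\tH_i^{BM}$ together with the $p$-torsion of $\tH_{i-1}^{BM}$. Chasing them, the vanishing of $\tH^i_c$ in degrees $i > q_0$ forces $\tH_i^{BM} = 0$ for $i > q_0$; moreover the $p$-torsion of $\tH_{q_0}^{BM}$ embeds into $\tH^{q_0+1}_c = 0$, and hence $\tH_{q_0}^{BM}$ is $p$-torsion-free. This establishes parts~(3) and~(4) of Conjecture~\ref{ce vanishing homology} for $\tH_\ast^{BM}$.

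To deduce part~(1) for $\tH_\ast$, I would feed the vanishing $\tH_j^{BM} = 0$ for $j > q_0$ into the spectral sequence
\[
E_2^{ij} = \Ext^i_A(\tH_j^{BM}, A) \implies \tH_{D-i-j}.
\]
Shrinking $K_p$ to a uniform pro-$p$ open subgroup does not change the codimension function on finitely generated $A$-modules, so I may assume $A = \Zp \llbracket K_p \rrbracket$ is Auslander regular. Fix $k < q_0$. The module $\tH_k$ carries a finite filtration whose graded pieces are subquotients of the $E_\infty^{ij}$ with $i + j = D - k$, each in turn a subquotient of $E_2^{ij} = \Ext^i_A(\tH_j^{BM}, A)$. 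Since $\tH_j^{BM}$ vanishes for $j > q_0$, a nonzero contribution forces $j \leq q_0$, hence $i \geq D - k - q_0 = q_0 + l_0 - k$. The Auslander property of $A$ implies that every subquotient of $\Ext^i_A(M, A)$ has codimension at least $i$ (the submodule case is the Auslander condition itself; the quotient and hence subquotient case follows by chasing the long exact sequence of $\Ext^\bullet_A(-,A)$), and the standard inequality $cd(M) \geq \min(cd(M'), cd(M''))$ for a short exact sequence $0 \to M' \to M \to M'' \to 0$ propagates the bound up the filtration. This yields $cd(\tH_k) \geq q_0 + l_0 - k$, as required.

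The main obstacle is the first step: extracting from \cite[Theorem 1.1]{ce} the precise universal-coefficient sequence needed to read off both the high-degree vanishing of $\tH_\ast^{BM}$ and the $p$-torsion-freeness of $\tH_{q_0}^{BM}$ from the vanishing of $\tH^\ast_c$. Once that translation is in hand, the remainder of the argument is formal homological algebra, relying only on Auslander regularity of the Iwasawa algebra and the standard behavior of the codimension function under extensions and $\Ext$ subquotients.
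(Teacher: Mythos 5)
Your proposal is correct, and it uses the same basic inputs as the paper — the universal-coefficient sequences of \cite[Theorem 1.1]{ce} to get vanishing and $p$-torsion-freeness of $\tH^{BM}_\ast$ from the vanishing of $\tH^\ast_c$, then Poincar\'e duality plus the Auslander--Gorenstein structure of $\Zp\llbracket K_p\rrbracket$ for the codimension bound — but the codimension step is organized differently. The paper simply invokes \cite[Corollary 4.2.3]{scho} verbatim (with a sign-of-extremality correction), and that argument proceeds by contradiction: one picks an extremal degree $c$ where the bound fails and shows that the corresponding $\Ext$-term in the spectral sequence $\Ext^i_A(\tH_j,A)\Rightarrow \tH^{BM}_{D-i-j}$ survives to $E_\infty$, contradicting the already-established vanishing of the \emph{abutment} above degree $q_0$. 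You instead run the other duality spectral sequence $\Ext^i_A(\tH^{BM}_j,A)\Rightarrow \tH_{D-i-j}$, where the established vanishing kills $E_2$-columns with $j>q_0$, and you read off $cd(\tH_k)\geq q_0+l_0-k$ directly from the Auslander condition; this avoids the extremal-degree bookkeeping and is arguably cleaner. Two small caveats: your justification that \emph{quotients} of $\Ext^i_A(M,A)$ have codimension $\geq i$ by ``chasing the long exact sequence'' is not quite enough, since the long exact sequence only yields $cd(M)\geq\min(cd(M'),cd(M''))$; what you actually need is the standard Auslander--Gorenstein fact $cd(M)=\min(cd(M'),cd(M''))$ (equivalently, codimension does not drop on subquotients), which is exactly the sort of property recorded in \cite[\S 2.5]{aw}, so cite that rather than the naive inequality. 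Likewise, the step you flag as the ``main obstacle'' — extracting from \cite[Theorem 1.1]{ce} that $\tH^i_c=0$ for $i>q_0$ forces $\tH^{BM}_i=0$ for $i>q_0$ and $\tH^{BM}_{q_0}$ $p$-torsion-free — is precisely the content of \cite[Corollary 4.2.3]{scho}, so it is available in the literature; with those two references your argument is complete, and the second implication of Proposition \ref{implications} follows by the same symmetry (swapping $\tH^\ast_c\leftrightarrow\tH^\ast$ and $\tH^{BM}_\ast\leftrightarrow\tH_\ast$) that the paper uses.
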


\begin{proof}
The first part is essentially \cite[Corollary 4.2.3]{scho}; the proof there works verbatim (note that there is a small typo in that proof; the quantity $c$ there should be chosen to be minimal, not maximal, with respect to the given property). For the second part the proof is the same, swapping the roles of completed cohomology and compactly supported completed cohomology, and completed homology and completed Borel--Moore homology.
\end{proof}

Let us also indicate that Conjecture \ref{ce vanishing homology}(2) is known for completed homology when $G$ is semisimple and $l_0=0$; this is part of \cite[Theorem 1.4]{ce} (and follows from \cite{ce-annals} and known limit multiplicity formulas for discrete series). More precisely, let  $\G \sub G(\Q)$ is an arithmetic subgroup with closure $\ol{\G} \sub G(\Qp)$ and let $\wh{X} = \wh{X}(\G)$. Then $\tH^{q_0}(\wh{X},\Zp)[1/p]$ is an admissible $\Qp$-Banach space representation of $\ol{\G}$ of corank $0$ by the results of \cite{ce-annals}. Dualizing, we see that the completed homology space 
\[
\tH_{q_0}(\wh{X},\Zp) := \varprojlim_{\G^\prime \sub C_p(\G)} H_{q_0}(\G^\prime \backslash X, \Zp)
\]
has codimension $0$ as a $\Zp \llbracket \ol{\G} \rrbracket$-module. If $A \to B$ is a (left and right) flat map of (left and right) Noetherian rings and $M$ is a finitely generated right $A$-module, then one easily sees that $\Ext^i_B(M\otimes_A B,B) \cong B \otimes_A \Ext^i_A(M,A)$. In particular, if $A\to B$ is (left and right) faithfully flat, then $\Ext_A^i(M,A) = 0$ if and only if $\Ext_B^i(M\otimes_A B,B) = 0$. By an analysis of components similar to that preceding Proposition \ref{passage to components 1}, one sees that if $\G = G(\Q)^+ \cap K^p K_p$, then $\tH_{q_0}(\wh{X},\Zp)\otimes_{\Zp \llbracket \ol{\G} \rrbracket} \Zp \llbracket K_p \rrbracket$ is a direct summand of $\tH_{q_0}(K^p,\Zp)$, and hence the latter has codimension $0$ as a $\Zp \llbracket K_p \rrbracket$-module. Here we take $K_p$ to be sufficiently small so that it is neat and pro-$p$; then $\Zp \llbracket \ol{\G} \rrbracket \to \Zp \llbracket K_p \rrbracket$ is flat (indeed projective) by \cite[Lemma 4.5]{brumer}, and hence faithfully flat since $\Zp \llbracket \ol{\G} \rrbracket$ is then local. For ease of reference, let us state the result below.

\begin{theo}\label{codimension 0}
Assume that $G$ is semisimple and that $l_0 = 0$. Then the codimension of $\tH_{q_0}$ is equal to $0$.
\end{theo}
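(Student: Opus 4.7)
The plan is to reduce Theorem \ref{codimension 0} to a known input, namely the results of \cite{ce-annals}, which show that for $G$ semisimple with $l_0=0$, the top-degree completed cohomology $\tH^{q_0}(\wh{X},\Zp)[1/p]$ is a nonzero admissible $\Qp$-Banach representation of corank $0$ as a module over $\Zp\llbracket \ol{\G}\rrbracket$. This uses Matsushima's formula together with limit-multiplicity results for discrete series (which are applicable precisely because $l_0=0$).

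First, I would pass from cohomology to homology by duality: admissibility of $\tH^{q_0}[1/p]$ combined with the general duality between admissible Banach representations and finitely generated Iwasawa modules lets us conclude that the connected-level completed homology $\tH_{q_0}(\wh{X},\Zp)$ has codimension $0$ over $\Zp\llbracket \ol{\G}\rrbracket$, where $\ol{\G}$ is the closure of a torsion-free arithmetic subgroup $\G\subseteq G(\Q)^+$ in $G(\Qp)$.

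Next, I would transfer this codimension statement from the connected quotient $\wh{X}(\G)$ to the adelic completed homology $\tH_{q_0}(K^p,\Zp)$. Choose $K_p$ small enough to be neat and pro-$p$, and set $\G = G(\Q)^+\cap K^pK_p$, so that $\ol{\G}\subseteq K_p$. A component decomposition analogous to the one used in Proposition \ref{passage to components 1} exhibits $\tH_{q_0}(\wh{X}(\G),\Zp)\otimes_{\Zp\llbracket \ol{\G}\rrbracket}\Zp\llbracket K_p\rrbracket$ as a direct summand of $\tH_{q_0}(K^p,\Zp)$ as a right $\Zp\llbracket K_p\rrbracket$-module.

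Finally, I would check that codimension is preserved under this base change. For a flat map $A\to B$ of Noetherian rings and a finitely generated right $A$-module $M$, one has $\Ext^i_B(M\otimes_A B, B)\cong \Ext^i_A(M,A)\otimes_A B$, so $\Ext^i$ vanishes after base change if and only if it vanishes before, whenever $A\to B$ is faithfully flat. Thus it suffices to know that $\Zp\llbracket \ol{\G}\rrbracket \to \Zp\llbracket K_p\rrbracket$ is faithfully flat; this is guaranteed by \cite[Lemma 4.5]{brumer} (projectivity for pro-$p$ groups) combined with the fact that $\Zp\llbracket \ol{\G}\rrbracket$ is local. The main subtle step is arguably the component/base-change step, since one must verify both the direct-summand property across connected components and the faithful flatness of the relevant map of Iwasawa algebras; once these are in place the conclusion $cd(\tH_{q_0}(K^p,\Zp))=0$ follows immediately from the codimension-$0$ statement on the connected piece.
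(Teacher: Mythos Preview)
Your proposal is correct and follows essentially the same approach as the paper's own argument: both invoke \cite{ce-annals} for the corank-$0$ statement on the connected tower, dualize to codimension $0$ for $\tH_{q_0}(\wh{X},\Zp)$, realize $\tH_{q_0}(\wh{X},\Zp)\otimes_{\Zp\llbracket \ol{\G}\rrbracket}\Zp\llbracket K_p\rrbracket$ as a direct summand of $\tH_{q_0}(K^p,\Zp)$ via the component analysis of Proposition \ref{passage to components 1}, and transfer codimension via faithful flatness using \cite[Lemma 4.5]{brumer}. The only cosmetic difference is the order in which you present the $\Ext$-base-change and direct-summand steps.
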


\section{Shimura varieties} \label{sec: shimura}

In this section we discuss Shimura varieties of Hodge and (pre-) abelian type, and how the conditional results of \S \ref{sec: cc} together with the results \S \ref{sec: perfectoid} give many unconditional cases of Conjectures \ref{ce vanishing} and \ref{ce vanishing homology}.

\subsection{Recollections on Shimura varieties}

We use the definition and conventions for Shimura data, morphisms of Shimura data, and connected Shimura data from \cite{de}; see also \cite{mi}. Given a Shimura datum $(G,X)$, there are three other data which one can attach to it, one Shimura datum and two connected Shimura data. They are as follows
\begin{itemize}
\item The connected Shimura datum $(G^{der},X^+)$;

\item The connected Shimura datum $(G^{ad},X^+)$;

\item The Shimura datum $(G^{ad},X^{ad})$.
\end{itemize}

Here $X^+\sub X$ is any choice of a connected component, and if $h\in X$, then $X^{ad}$ is the $G^{ad}(\R)$-conjugacy class of the composition of $h$ with $G_{\R} \to G^{ad}_{\R}$ (this is independent of the choice of $h$). The Shimura datum $(G^{ad},X^{ad})$ will only feature when we discuss the Hodge--Tate period map later, the other two will feature throughout the rest of this article. We recall that a Shimura datum $(G,X)$ is said to be of Hodge type if there exists a Siegel Shimura datum $(G^\prime, X^\prime)$ and a closed immersion $(G,X) \to (G^\prime,X^\prime)$ of Shimura data. A Shimura datum $(G,X)$ is said to be of abelian type if there exists a Shimura datum $(G_1,X_1)$ of Hodge type and a central isogeny $G_1^{der} \to G^{der}$ which induces an isomorphism $(G^{ad}_1,X_1^+) \cong (G^{ad},X^+)$. We make the following slightly more general definition, following \cite[2.10]{mo}.

\begin{defi}
Let $(G,X)$ be a connected Shimura datum. We say that $(G,X)$ is of pre-abelian type if there exists a Shimura datum $(\tG,\tX)$ of Hodge type such that $(G^{ad},X) \cong (\tG^{ad},\tX^+)$. We say that a Shimura datum $(G,X)$ is of pre-abelian type if $(G^{der},X^+)$ is of pre-abelian type.
\end{defi}

\begin{rema}
Recall that, if $G$ is semisimple, then by the convential definition $G$ admits a connected Shimura datum $(G,X)$ if and only if $G$ has no compact $\Q$-factors and $X^G$ is a hermitian symmetric domain; in this case $X\cong X^G$. The assumption that $G$ has no compact $\Q$-factors could be dropped, but we will keep phrasing our results in terms of Shimura data for simplicity.
\end{rema}

To be able to apply the inductive arguments from \S \ref{sec: cc}, we will need the following lemma.

\begin{lemm}\label{boundary components}
Assume that $G$ admits a connected Shimura datum of pre-abelian type and let $Q\sub G$ be an admissible parabolic with hermitian part $M_h$. Then $M_h$ admits a connected Shimura datum of pre-abelian type.
\end{lemm}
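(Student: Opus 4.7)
The plan is to reduce to the $\Q$-simple adjoint case, then exploit the Siegel embedding provided by the Hodge type hypothesis.

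First, I would observe that pre-abelian type depends only on the adjoint group, and that for a maximal parabolic $Q \sub G$ the adjoint group of the Hermitian part $M_h$ is canonically identified with the adjoint of the Hermitian part of the image of $Q$ in $G^{ad}$. So I may assume $G$ is adjoint. Decomposing $G = \prod_i G_i$ into $\Q$-simple adjoint factors with $X = \prod_i X_i$, any Hodge-type witness $(\tG, \tX)$ for $(G,X)$ splits on the adjoint level, so each $(G_i, X_i)$ is pre-abelian. By the very definition of maximal parabolic, $Q = \prod_i Q_i$ with each $Q_i$ either maximal in $G_i$ or all of $G_i$; the Hermitian part decomposes accordingly, and the $Q_i = G_i$ factors contribute trivially. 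It therefore suffices to treat the case where $G$ is $\Q$-simple adjoint and $Q$ is a proper maximal parabolic.

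Second, fix a Hodge type $(\tG, \tX)$ with $(\tG^{ad}, \tX^+) \cong (G, X)$. The maximal parabolic $Q \sub G$ lifts uniquely to a maximal parabolic $\tilde Q \sub \tG$, and the surjection $\tilde Q \to Q$ has central kernel contained in every Levi; hence the Hermitian part $\tilde M_h$ of $\tilde Q$ satisfies $\tilde M_h^{ad} \cong M_h^{ad}$ canonically, with the same boundary component symmetric space. It therefore suffices to show that for any Hodge type datum $(\tG, \tX)$ and any proper maximal parabolic $\tilde Q \sub \tG$, the pair $(\tilde M_h, X^{\tilde M_h})$ is of Hodge type.

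Third, I would use the Siegel embedding. By Hodge type, fix a closed immersion of Shimura data $(\tG, \tX) \hookrightarrow (\GSp(V,\psi), S^\pm)$. The proper rational boundary components of $S^+$ are the Siegel symmetric spaces of $\GSp(W^\perp/W)$ for $\Q$-rational isotropic subspaces $W \sub V$, each stabilized by a maximal parabolic of $\GSp(V)$ whose Hermitian part is precisely $\Sp(W^\perp/W)$ with its natural Siegel datum. By Baily--Borel \cite{bb}, the rational boundary components of $\tX^+$ are the intersections with those of $S^+$, and the Siegel embedding restricts on the Hermitian parts to a morphism $\tilde M_h \to \GSp(W^\perp/W)$ that is compatible with the symmetric space structures. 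This exhibits $\tilde M_h$, together with its boundary component, as a closed Shimura subdatum of a Siegel datum, hence of Hodge type.

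The main obstacle is verifying the third step in detail, i.e.\ that the restricted Siegel map really is a morphism of Shimura data on the Hermitian part (in particular that it carries the Hodge cocharacter of the boundary component to a Siegel one). This is essentially the content of Satake's and Pink's analyses of rational boundary components for Hodge type Shimura data. An alternative, entirely combinatorial route is to use the classification of $\Q$-simple adjoint pre-abelian types (types $A$, $B$, $C$, and the two flavours of type $D$) and check directly that the Hermitian part of a maximal parabolic in each case is again on the list.
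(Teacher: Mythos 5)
Your argument is correct and is essentially the paper's: the paper likewise reduces, by isogeny-invariance of the pre-abelian property, to the case $(G,X)=(G_1^{der},X_1^+)$ with $(G_1,X_1)$ of Hodge type, and then simply cites as ``well known'' the fact that the rational boundary components of a Hodge type datum are again of Hodge type --- exactly the content your Siegel-embedding step (via Baily--Borel/Satake/Pink) unpacks. Your additional reduction to $\Q$-simple adjoint factors is harmless but unnecessary, since the boundary-component fact can be invoked directly for the Hodge type witness.
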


\begin{proof}
The assertion does not depend on the choice of $G$ inside the isogeny class of $G$, so we may assume that $(G,X)=(G_1^{der},X_1^+)$ with $(G_1,X_1)$ a Shimura datum of Hodge type. The assertion then follows from the well known fact that the rational boundary components of $(G_1,X_1)$ are of Hodge type. 
\end{proof}

\subsection{Results for semisimple groups}

The following is the main theorem of this paper on the Calegari--Emerton conjectures; at this point the proof is simply a summary of the results so far together with Corollary \ref{compactly supported vanishing}, which we prove using $p$-adic methods in the next section.

\begin{theo}\label{main semisimple}
Let $G$ be a semisimple group which admits a connected Shimura datum of pre-abelian type. Then Conjectures \ref{ce vanishing}, \ref{connected vanishing} and \ref{ce vanishing homology} hold for $G$. Moreover, for any $K^p$, the natural map $\wt{H}_c^i(K^p,\Zp) \to \wt{H}^i(K^p,\Zp)$ is an isomorphism for $i > d+1 - \inf_{Q\neq G} \gamma(Q)$ and surjective for $i = d+1 - \inf_{Q\neq G} \gamma(Q)$, where $d = \dim_{\C}X^G$, $Q$ is an admissible parabolic subgroup of $G$ and we recall that the quantities $\gamma(G)$ are defined in \S \ref{subsec: hermitian}. 
\end{theo}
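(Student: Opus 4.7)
The strategy is to package the topological reductions of Section \ref{sec: cc} with the $p$-adic input Corollary \ref{compactly supported vanishing} from Section \ref{sec: perfectoid}. Since $G$ is semisimple and $(G, X^G)$ is a connected Shimura datum of pre-abelian type, $X^G$ is a Hermitian symmetric domain, so $l_0(G) = 0$ and $q_0(G) = d := \dim_{\C} X^G$; all three conjectures therefore reduce to vanishing above degree $d$ together with the assertions at degree $d$. The crucial structural input is Lemma \ref{boundary components}, which guarantees that the Hermitian part $M_{Q,h}$ of every maximal $\Q$-parabolic $Q \sub G$ again admits a connected Shimura datum of pre-abelian type; this is precisely what allows the $p$-adic vanishing to apply recursively to all the Hermitian boundary Levi factors that appear in the Borel--Serre analysis of Section \ref{subsec: hermitian}.

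The plan for the vanishing parts is then as follows. First, apply Corollary \ref{compactly supported vanishing} to $G$ itself and to each $M_{Q,h}$, yielding the vanishing of compactly supported completed $\Zp$-cohomology above the complex dimension of the respective symmetric space; reducing modulo $p$ and invoking Proposition \ref{passage to components 2} translates this into Conjecture \ref{connected vanishing} with $? = c$ for each such group. Second, feed these hypotheses into Theorem \ref{main top precise} to obtain Conjecture \ref{connected vanishing} with $? = \emptyset$ for $G$ and simultaneously the $\Fp$-isomorphism $H^i_c(\wh{X}, \Fp) \cong H^i(\wh{X}, \Fp)$ in the claimed range $i > d + 1 - \inf_{Q \neq G}\gamma(Q)$. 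Third, Proposition \ref{passage to components 2} again converts the connected vanishing back to the adelic Conjecture \ref{ce vanishing 2}, which is equivalent to Conjecture \ref{ce vanishing} with $\Zp$-coefficients. To promote the isomorphism statement from $\Fp$ to $\Zp$, I would argue by induction on $r$ using the five-lemma applied to the Bockstein long exact sequences relating $\wt{H}^i_?(K^p, \Z/p^r)$ for consecutive $r$, and then pass to the inverse limit, with the vanishing above degree $d$ controlling any $\varprojlim^1$ contribution.

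Finally, for Conjecture \ref{ce vanishing homology}, parts (1), (3) and (4) for both $\tH_i$ and $\tH^{BM}_i$ follow at once from the vanishings just proved together with Proposition \ref{implications}. Part (2) for $\tH_{q_0}$ is Theorem \ref{codimension 0}; for $\tH^{BM}_{q_0}$, I would obtain $cd(\tH^{BM}_{q_0}) = 0$ either by dualizing the top-degree isomorphism $\wt{H}^d_c \cong \wt{H}^d$ to conclude $\tH_d \cong \tH^{BM}_d$, or directly from the Poincar\'e duality spectral sequence $E_2^{ij} = \Ext_A^i(\tH_j, A) \Rightarrow \tH^{BM}_{D - i - j}$, using $\tH_j = 0$ for $j > q_0$ to isolate a nonzero $\Hom_A(\tH_{q_0}, A)$ as a subquotient of $\tH^{BM}_{q_0}$ and hence force its codimension down to $0$. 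The only substantive step in this outline is the $p$-adic input Corollary \ref{compactly supported vanishing}, which is itself the payoff of the perfectoidness Theorem \ref{perfshim}; beyond that, the argument is pure bookkeeping across the results of Section \ref{sec: cc}.
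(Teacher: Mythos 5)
Your outline matches the paper's proof in all essentials: Corollary \ref{compactly supported vanishing} and Lemma \ref{boundary components} feed into Theorem \ref{main top precise}, Proposition \ref{passage to components 2} transports the connected statement to the adelic Conjectures \ref{ce vanishing 2} and \ref{ce vanishing}, and Proposition \ref{implications} together with Theorem \ref{codimension 0} and the equality $\tH_d(K^p,\Zp) = \tH_d^{BM}(K^p,\Zp)$ (which is exactly the dualized form of the top-degree isomorphism $\wt{H}^d_c \cong \wt{H}^d$) delivers Conjecture \ref{ce vanishing homology}. The only slip is in your first step: Corollary \ref{compactly supported vanishing} already states Conjecture \ref{connected vanishing} with $?=c$ directly (it is a statement in the connected, $\Fp$-coefficient setting), so no ``reduction mod $p$'' is required, and Proposition \ref{passage to components 2} --- which passes from the connected to the adelic formulation, not the reverse --- plays no role at that point.
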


\begin{proof}
We start with Conjecture $\ref{connected vanishing}$. For $?=c$, this Corollary \ref{compactly supported vanishing}. For $?=\emptyset$, it then follows from Lemma \ref{boundary components} and Theorem \ref{main top precise}. The more precise statement about the map $\wt{H}_c^i(K^p,\Zp) \to \wt{H}^i(K^p,\Zp)$ follows from Theorem \ref{main top precise}, Lemma \ref{big and small local systems} and an analysis of components as in the proof of Proposition \ref{passage to components 1}. Conjecture \ref{ce vanishing} then follows, and as does Conjecture \ref{ce vanishing homology} (using Proposition \ref{implications}, Theorem \ref{codimension 0} and the fact that $\wt{H}_d(K^p,\Zp) = \wt{H}_d^{BM}(K^p,\Zp)$).
\end{proof}

\subsection{Results for reductive groups}

Here we will briefly indicate what type of results can be proved towards the Calegari--Emerton conjectures for more general reductive groups. Recall that if $(G,X)$ is a Shimura datum, then $X^+$ need not equal the symmetric space $X^G$ in general. Indeed, $X^+ \cong G(\R)/Z(\R)K_\infty$, where $Z\sub G$ is the center and $K_\infty \sub G(\R)$ is a maximal compact subgroup. Recall that $A\sub Z$ is the maximal $\Q$-split subtorus and set
\[
Z^a = \bigcap_\chi \Ker \chi,
\]
where $\chi$ runs over the characters of $Z$ defined over $\Q$. Then $Z = Z^a A$ with $A\cap Z^a$ finite, and $X^G \to X^+$ is a (trivial) fibration with fiber $Z^a(\R)/(Z^a(\R) \cap K_\infty)$. In particular, $X^G \cong X^+$ if and only if $Z^a(\R)$ is compact. Note that this is equivalent to all arithmetic subgroups of $Z$ being finite, and to $l_0(Z)=0$. When this happens, we get clean results. Let $d=\dim_{\C}X$.

\begin{theo}\label{main reductive}
Assume that $G$ admits a Shimura datum of pre-abelian type and that $Z^a(\R)$ is compact. Then Conjectures \ref{ce vanishing}, \ref{connected vanishing} and \ref{ce vanishing homology} hold for $G$.
\end{theo}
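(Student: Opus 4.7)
The plan is to reduce this to the semisimple case (Theorem \ref{main semisimple}) by extending the proof in Section \ref{subsec: hermitian}: under the hypothesis $Z^a(\R)$ compact, the topological invariants that feed into the boundary analysis coincide with those of the semisimple case, so essentially the same argument carries over. First I would record the numerics. Because $Z^a$ is a central torus of $G$ with $Z^a(\R)$ compact, maximality of $K_\infty$ forces $Z^a(\R) \sub K_\infty$, hence $X^G = G(\R)/A(\R)K_\infty = G(\R)/Z(\R)K_\infty = X^+$ is a Hermitian symmetric domain of complex dimension $d$. Moreover $l_0(Z) = 0$ by the compactness and $l_0(G^{der}) = 0$ from the Hermitian structure, so $l_0(G) = 0$ and $q_0(G) = d$.

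The vanishing $\wt{H}^i_c(K^p,\Fp) = 0$ for $i > d$ then follows directly from Corollary \ref{compactly supported vanishing}, which holds for any Shimura datum of pre-abelian type and makes no semisimplicity assumption. For the vanishing of $\wt{H}^i(K^p,\Fp)$ for $i > d$ and the isomorphism at $i = d$, my plan is to extend Theorem \ref{main top precise} verbatim to reductive $G$ satisfying $l_0(G) = 0$. The argument in Section \ref{subsec: hermitian} relies only on (a) the Hermitian symmetric structure of $X^G$, (b) the Borel--Serre and Baily--Borel compactifications equipped with Zucker's map $\pi \colon \ol{X} \to X^*$, (c) the Levi decomposition $Q = U \cdot M_\ell \cdot M_h$ of maximal parabolics with $M_h$ semisimple Hermitian, and (d) compactly supported vanishing for each $M_h$. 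Items (a)--(c) are insensitive to the presence of a compact central torus in $G$: the central torus sits inside $M_\ell$ and contributes to $\dim A_L$, but the identity in Lemma \ref{numerics} is unaffected. Item (d) is supplied by Corollary \ref{compactly supported vanishing} applied to each $M_h$, after verifying a reductive analogue of Lemma \ref{boundary components}: since the maximal parabolics of $G$ and of $G^{der}$ have the same unipotent radicals and the same Hermitian parts, the pre-abelian type property of $G^{der}$ transfers to each $M_h$. Propositions \ref{step 1}--\ref{step 3} then go through with only cosmetic changes.

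The remaining parts of Conjecture \ref{ce vanishing homology} will follow as in the proof of Theorem \ref{main semisimple}: Proposition \ref{implications} converts the cohomological vanishing into torsion-freeness and top-degree vanishing on the completed (Borel--Moore) homology side, while Theorem \ref{codimension 0} applied to $G^{der}$, transferred to $G$ by the faithful-flatness argument spelled out after that theorem, gives the codimension-$0$ statement at degree $q_0 = d$. The main obstacle in this plan is the patient verification that every step in the proof of Theorem \ref{main top precise} really survives the presence of the compact central torus --- especially the numerical bookkeeping of Propositions \ref{step 2}--\ref{step 3} and Lemma \ref{numerics}, and the interaction between $Z^a$ and the Levi data of each maximal parabolic. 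I expect no new ideas are needed, but each line of the semisimple argument needs to be re-examined to confirm that the central torus sitting inside $M_\ell$ affects nothing essential.
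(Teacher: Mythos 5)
Your plan would likely succeed, but it is a genuinely different and much heavier route than the paper's, and it has one inaccuracy whose repair is precisely the paper's shortcut. The paper's proof is a reduction, not an extension: since $Z^a(\R)$ is compact, all arithmetic subgroups of $Z$, hence of the cocenter $T=G/G^{der}$ (as $Z\to T$ is an isogeny), are finite; a neat arithmetic subgroup $\G\sub G(\Q)_+$ therefore has neat, hence trivial, image in $T(\Q)$, so $\G\sub G^{der}(\Q)_+$. Combined with your observation that $X^G\cong X^+$, this makes Conjecture \ref{connected vanishing} for $G$ literally equivalent to Conjecture \ref{connected vanishing} for $G^{der}$, and the whole theorem (including the isomorphism $\wt{H}_c^d\to\wt{H}^d$) follows from Theorem \ref{main semisimple}, Proposition \ref{implications}, and (for part (2) of Conjecture \ref{ce vanishing homology}) Corollary \ref{general reductive coro} --- the paper notes that an easier argument exists in this special case, and your route via Theorem \ref{codimension 0} for $G^{der}$ plus the faithful-flatness transfer is essentially that easier argument. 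By contrast, you propose to re-run the entire boundary analysis of \S\ref{subsec: hermitian} for reductive $G$, which forces you to re-examine the Baily--Borel/AMRT/Zucker machinery (set up in the references for semisimple, often adjoint, groups) in the presence of a central torus; the clean way to do that is exactly to replace $\G$ by its image in $G^{ad}$ or note $\G\sub G^{der}(\Q)$, i.e.\ your extension collapses back to the paper's reduction once the details are filled in.

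The concrete inaccuracy: Corollary \ref{compactly supported vanishing} is stated only for \emph{connected} Shimura data of pre-abelian type, i.e.\ for semisimple $G$; it does not, as written, "make no semisimplicity assumption." To get the $?=c$ vanishing for reductive $G$ you must either invoke the reduction above (neat $\G\sub G^{der}(\Q)_+$ and $X^G=X^+$, so the towers coincide with those for $G^{der}$) or rerun the Scholze-style argument for the reductive Shimura datum using Theorem \ref{main p-adic}. Either fix is available, but the first one renders the rest of your proposed re-verification of Propositions \ref{step 1}--\ref{step 3} and Lemma \ref{numerics} unnecessary, which is why the paper's proof of this theorem occupies only a few lines.
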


\begin{proof}
We start with Conjecture \ref{connected vanishing}. Fix a neat arithmetic subgroup $\G \sub G(\Q)_+$ and $n > q_0$. Let $T= G/G^{der}$ be the cocenter of $G$. Since $Z \to T$ is an isogeny, all arithmetic subgroups of $T$ are finite as well. In particular, the image of $\G$ in $T(\Q)$ is neat, hence trivial. So $\G$ is contained in $G^{der}(\Q)_+$, and one readily sees that Conjecture \ref{connected vanishing} for $G$ is equivalent to Conjecture \ref{connected vanishing} for $G^{der}$, which follows from Theorem \ref{main semisimple}. Conjecture \ref{ce vanishing} then follows, and as does Conjecture \ref{ce vanishing homology} (using Proposition \ref{implications}) apart from part (2). For a proof of this we refer to Corollary \ref{general reductive coro} below, though we also note that one could give an easier proof in this special case. The last statement follows from the corresponding statement for $G^{der}$ by the same arguments as in Theorem \ref{main semisimple}. 
\end{proof}

\begin{rema}
We have elected to drop the statement that $\wt{H}_c^i(K^p,\Zp) \to \wt{H}^i(K^p,\Zp)$ is an isomorphism (or surjective) in a range of degrees, but of course the proof also shows that we get this. We will continue to drop this statement throughout this section.
\end{rema}

\begin{coro}\label{cor hodge}
Assume that $G$ admits a Shimura datum of Hodge type. Then Conjectures \ref{ce vanishing}, \ref{connected vanishing} and \ref{ce vanishing homology} hold.
\end{coro}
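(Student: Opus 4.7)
My plan is to reduce Corollary \ref{cor hodge} to the reductive-group case treated in Theorem \ref{main reductive} (or its Leopoldt-based generalization Theorem \ref{general reductive}). A Shimura datum of Hodge type is tautologically of pre-abelian type, witnessed by $(\tG, \tX) = (G, X)$ itself, so the pre-abelian hypothesis of Theorem \ref{main reductive} is satisfied automatically. Correspondingly, $(G^{der}, X^+)$ is a connected Shimura datum of pre-abelian type.

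The only substantive step is to verify the hypothesis on the center $Z(G)$. For a Hodge type datum, the restriction of the Hodge cocharacter $\mu$ to $Z(G)_\C$ endows $Z(G)$ with the structure of a torus split by a CM (or totally real) number field. In favorable cases---e.g., Siegel Shimura varieties, or whenever $w(\mathbb{G}_m) = A$---this yields $Z^a(\R)$ compact, and Theorem \ref{main reductive} applies directly. In the general case (e.g., Hilbert modular varieties with $[F:\Q] > 1$, where $Z^a(\R)$ is not compact), one instead invokes Theorem \ref{general reductive}, whose hypothesis requires only the Leopoldt conjecture for $Z(G)$; this is classical for the CM-type tori arising in the Hodge type setting.

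Once the relevant hypothesis is in place, the conjectures for $G$ reduce to those for the semisimple group $G^{der}$, which are granted by Theorem \ref{main semisimple}. The middle-degree isomorphism $\wt{H}_c^d(K^p,\Zp) \to \wt{H}^d(K^p,\Zp)$ is transferred from the semisimple case to $G$ by the component-level analysis of Propositions \ref{passage to components 1}--\ref{passage to components 2} together with Lemma \ref{big and small local systems}, exactly as in the proof of Theorem \ref{main reductive}.

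The main obstacle in this argument, as I see it, is the verification that $Z(G)$ satisfies Leopoldt in arbitrary Hodge type---this is a classical fact, but it relies crucially on the CM-type structure imposed by the Hodge cocharacter, and must be handled with some care when $Z^a(\R)$ is non-compact. Once this is granted, the corollary is an essentially formal consequence of the semisimple result and the component-level analysis already developed in the text.
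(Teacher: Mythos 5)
There is a genuine gap. The whole content of the paper's proof is the single standard fact that a Shimura datum of \emph{Hodge type} automatically has $Z^a(\R)$ compact (equivalently, $Z(\Q)$ is discrete in $Z(\A_f)$ — Milne's axiom SV5): since $G$ embeds into $(\GSp(V,\psi),\mathfrak{H}_g^\pm)$, the torus $Z^a$ kills the ($\Q$-rational) similitude character, so $Z^a(\R)$ lies in $\Sp(V_\R,\psi)$ and commutes with $C=h(i)$, hence preserves the positive definite form $\psi(\cdot,C\cdot)$ and is therefore contained in a compact orthogonal group. Thus Theorem \ref{main reductive} applies directly in \emph{all} Hodge type cases, with no Leopoldt input. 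Your proposal misses this and instead asserts that $Z^a(\R)$ can be non-compact for Hodge type data, citing Hilbert modular varieties; but for $[F:\Q]>1$ the datum for $\mathrm{Res}_{F/\Q}\GL_2$ is of abelian type, \emph{not} Hodge type (its center violates SV5 precisely because $\mathcal{O}_F^\times$ is infinite), so the example does not arise under the hypothesis of the corollary.

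Your fallback route is also not sound as an unconditional argument: Theorem \ref{general reductive} requires the Leopoldt conjecture for $Z(G)$, and Leopoldt is \emph{not} ``classical'' for the tori occurring as centers of Hodge type data. It is known only for tori split by abelian extensions of $\Q$ (Baker--Brumer), whereas e.g.\ unitary similitude groups attached to non-abelian CM fields are of Hodge type with center split by a non-abelian CM field, where Leopoldt is open. So as written your argument proves the corollary only conditionally in the cases you relegate to the ``general case,'' while those cases in fact do not occur: once you verify $Z^a(\R)$ compact for every Hodge type datum, the corollary is exactly the paper's one-line deduction from Theorem \ref{main reductive}, and the component-level bookkeeping you invoke at the end is already packaged inside that theorem.
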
 

\begin{proof}
If $G$ admits a Shimura datum of Hodge type, then $Z^a(\R)$ is compact, so Theorem \ref{main reductive} applies.
\end{proof}

When $Z^a(\R)$ is non-compact, the Leopoldt conjecture interferes in deducing the Calegari--Emerton conjectures for $G$ from $G^{der}$ or $G^{ad}$. Indeed, if $G=T$ is a torus, then the Leopoldt conjecture for $T$ is equivalent to Conjecture \ref{ce vanishing} for $T$; see \cite[\S 4.3.3]{hi} (note that Hill uses the symmetric spaces $G(\R)/K_\infty$ instead of our $X^G$). We recall this briefly (also recall that tori satisfy the congruence subgroup property). Let $K=K^p K_p$ be a compact open subgroup of $T(\A_f)$ with $K^p$ arbitrary and $K_p$ neat. Set $\G = T(\Q) \cap K$; this is a finitely generated torsion-free abelian group. Let $\wh{\G}$ be the $p$-adic completion of $\G$ and consider the natural map $f : \wh{\G} \to K_p$; set $\Delta = \Ker f$ and $I = \mathrm{Im}\,f $. $\Delta$ is a finite free $\Zp$-module and the Leopoldt conjecture asserts that $\Delta = 0$ (this assertion is independent of the choice of $K$). An application of \cite[Lemma 14]{hi} gives that
\[
H^i(\G, \Map_{cts}(I,\Fp)) = \Hom_{\Zp} ( \wedge^i_{\Zp} \Delta, \Fp),
\]
and, by Lemma \ref{big and small local systems}, $H^i(\G, \Map_{cts}(I,\Fp))$ vanishes simultaneously with $H^i(\G,\Map_{cts}(K_p,\Fp))$, so by Proposition \ref{passage to components 1} the vanishing of $\Delta$ is equivalent to Conjecture \ref{ce vanishing 2}. In fact, the Leopoldt conjecture is also equivalent to Conjecture \ref{ce vanishing homology}(2) for $T$ (note that $q_0(T)=0$). This is certainly also well known; we give a very brief sketch of the proof.

\begin{prop}\label{leopoldt and codimension}
Let $K^p \sub T(\A_f^p)$ be compact open. Then the codimension of $\wt{H}_0(K^p,\Zp)$ is $l_0 - \mathrm{rank}_{\Zp}\Delta$. In fact, the projective dimension of $\wt{H}_0(K^p,\Zp)$ is $l_0 - \mathrm{rank}_{\Zp}\Delta$.
\end{prop}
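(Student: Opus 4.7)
The plan is to compute $\wt{H}_0(K^p, \Zp)$ explicitly as a $\Zp \llbracket K_p \rrbracket$-module for a sufficiently small neat pro-$p$ subgroup $K_p \sub T(\Qp)$, and then read off both invariants directly from this description.

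First, since $X^T$ is a real vector space of dimension $l_0$ and hence contractible, each connected component of $X_{K^p K_p'}$ (for any $K_p' \sub K_p$) is a quotient of $X^T$ by an arithmetic subgroup; because $T$ is abelian, conjugation is trivial and this arithmetic subgroup is simply $T(\Q) \cap K^p K_p'$ at every component, so the $H_0$ of each component is $\Zp$. Taking the limit over $K_p' \sub K_p$ and tracking how components branch under the $K_p$-action, one identifies
\begin{equation*}
\wt{H}_0(K^p, \Zp) \cong \bigoplus_{g \in \Sigma} \Zp \llbracket K_p / \ol{\G}_p \rrbracket,
\end{equation*}
where $\Sigma = T(\Q)^+ \backslash T(\A_f) / K$ is the finite set of components at level $K = K^p K_p$, and $\ol{\G}_p \sub K_p$ is the closure of the image of $\G := T(\Q) \cap K$ under the composition $\G \hookrightarrow K \twoheadrightarrow K_p$. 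This closure agrees with $I = f(\wh{\G})$: the continuous map from $\G$ to the pro-$p$ group $K_p$ factors through the $p$-adic completion $\wh{\G} \cong \Zp^{l_0}$ (the rank being $l_0$ by the Dirichlet unit theorem for tori), and its image is compact, hence closed. Thus $I \cong \Zp^{r}$ with $r = l_0 - \rank_{\Zp}\Delta$.

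Second, I would compute the invariants of the cyclic module $\Zp \llbracket K_p / I \rrbracket$. After shrinking further, we may assume $K_p \cong \Zp^d$ is a uniform abelian pro-$p$ group, so that $A := \Zp \llbracket K_p \rrbracket \cong \Zp \llbracket Y_1, \ldots, Y_d \rrbracket$ is a regular local ring of dimension $d+1$. The kernel of $A \twoheadrightarrow \Zp \llbracket K_p / I \rrbracket$ is the ideal generated by $\gamma_1 - 1, \ldots, \gamma_r - 1$ for topological generators $\gamma_1, \ldots, \gamma_r$ of $I \cong \Zp^r$, while the Krull dimension of the quotient is $d+1-r$. By the dimension formula in a regular local ring, these $r$ elements form a regular sequence, and the Koszul complex gives a free resolution of length $r$. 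Hence $\Zp \llbracket K_p / I \rrbracket$ has projective dimension exactly $r$ and is Cohen--Macaulay, so its codimension and projective dimension agree via Auslander--Buchsbaum, both equal to $r = l_0 - \rank_{\Zp}\Delta$. A finite direct sum of isomorphic copies preserves both invariants, yielding the claim.

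The main obstacle to executing this plan cleanly is the first step: while the direct-sum description is morally transparent, writing it down rigorously requires some bookkeeping when passing between the adelic double cosets for $\Sigma$, the (possibly nontrivial) finite group $\pi_0(T(\R))$, and the $K_p$-action that assembles the components at finer levels into a single cyclic summand. Once the first step is in place, the remainder is essentially commutative algebra in a regular local Iwasawa algebra.
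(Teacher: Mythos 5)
Your proposal is correct and takes essentially the same route as the paper: the paper likewise computes $\wt{H}_0(K^p,\Zp) \cong \bigoplus_t \Zp \llbracket I \backslash K_p \rrbracket$ with $I \cong \Zp^{l_0-\rank_{\Zp}\Delta}$ the closure of $\G$ in $K_p$, and then runs a Koszul-complex computation. The only minor divergence is organizational: the paper resolves $\Zp$ by the Koszul complex over $\Zp\llbracket I \rrbracket$ and base-changes along $\Zp\llbracket I \rrbracket \to \Zp\llbracket K_p\rrbracket$ (projective by Brumer), whereas you check directly that the elements $\gamma_i-1$ form a regular sequence in the regular local ring $\Zp\llbracket K_p\rrbracket$ and invoke Auslander--Buchsbaum; both are fine.
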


\begin{proof}
Choose $K_p$ neat and set $\G = T(\Q)^+ \cap K^p K_p$. As a right $K_p$-module, a straightforward computation (using the commutativity of $T$) shows that
\[
\wt{H}_0(K^p, \Zp) \cong \bigoplus_t \Zp \llbracket I \backslash K_p \rrbracket
\]
where $t$ runs over the finite set $T(\Q)^+ \backslash T(\A_f) / K^p K_p$ and $I$ denotes the closure of $\G$ in $K_p$. Set $M = \Zp \llbracket I \backslash K_p \rrbracket$, $A= \Zp \llbracket I  \rrbracket$ and $B = \Zp \llbracket K_p \rrbracket$; $B$ is a projective (left and right) $A$-module by \cite[Lemma 4.5]{brumer} and $M$ is a finitely generated right $B$-module, which is isomorphic to $\Zp \otimes_A B$. Then $\Ext^i_B(M,B) \cong B \otimes_A \Ext^i_A(\Zp, A) $, so the codimension of $M$ as a right $B$-module is equal to the codimension of $\Zp$ as a right $A$-module. Since $I \cong \Zp^{l_0 - \mathrm{rank}_{\Zp}\Delta}$, a computation using the Koszul complex shows that the codimension of $\Zp$ is $l_0 - \mathrm{rank}_{\Zp}\Delta$. This finishes the proof of the first part. For the second part about the projective dimension, note that the Koszul complex of $A$ is a resolution $P_\bu$ of $\Zp$ of length $l_0 - \mathrm{rank}_{\Zp}\Delta$ by finite free $A$-modules. It follows that $P_\bu \otimes_A B$ is a resolution of $M$ of length $l_0 - \mathrm{rank}_{\Zp}\Delta$ by finite free $B$-modules. Together with the first part, this finishes the proof of the second part.
\end{proof}

We may now give the most general result for reductive groups that we can prove.

\begin{theo}\label{general reductive}
Let $G$ be a connected reductive group over $\Q$ with center $Z$. Assume that the Leopoldt conjecture holds for $Z$ and that $G^{ad}$ admits a Shimura datum of abelian type. Then Conjecture \ref{connected vanishing} holds for $G$.
\end{theo}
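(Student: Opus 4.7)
The strategy is to reduce Conjecture \ref{connected vanishing} for $G$ to the same conjecture for $G^{der}$ (handled by Theorem \ref{main semisimple}) and for the cocenter $T=G/G^{der}$ (handled by the Leopoldt hypothesis), via the fibration $X^G \to X^T$ with contractible fibre $X^{G^{der}}$.

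First I would establish the two inputs. By definition, the hypothesis that $G^{ad}$ admits a Shimura datum of abelian type provides a Hodge-type datum $(\tilde{G},\tilde{X})$ together with a central isogeny $\tilde{G}^{der}\to (G^{ad})^{der}=G^{ad}$ inducing $(\tilde{G}^{ad},\tilde{X}^+)\cong (G^{ad},X^{ad,+})$. Identifying $X^{ad,+}$ with the common connected symmetric space $X^+$ exhibits $(G^{der},X^+)$ as a connected Shimura datum of pre-abelian type; in particular $X^+$ is Hermitian, $l_0(G^{der})=0$, and Theorem \ref{main semisimple} yields Conjecture \ref{connected vanishing} for $G^{der}$, for both $?\in\{\emptyset,c\}$. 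On the other hand, since $Z\to T$ is an isogeny, the Leopoldt conjecture for $Z$ is equivalent to the Leopoldt conjecture for $T$, which by the analysis preceding the theorem statement is equivalent to Conjecture \ref{ce vanishing} for the torus $T$, and hence to Conjecture \ref{connected vanishing} for $T$ (for both $?\in\{\emptyset,c\}$).

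Now fix a neat arithmetic subgroup $\Gamma\subset G(\Q)_+$ with closure $S\subset G(\Qp)$, set $\Gamma^{der}=\Gamma\cap G^{der}(\Q)$, let $\Gamma_T$ be the image of $\Gamma$ in $T(\Q)$, and write $S^{der}\subset G^{der}(\Qp)$ and $S_T\subset T(\Qp)$ for the corresponding closures. The fibration $\pi:\Gamma\backslash X^G\to \Gamma_T\backslash X^T$ has contractible fibre $X^{G^{der}}$, and Proposition \ref{cc at finite level} reduces the computation of $\tH_?^\ast(\wh{X}^G(\Gamma),\Fp)$ to that of $H_?^\ast(\Gamma\backslash X^G,\Map_{cts}(S,\Fp))$. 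The Leray spectral sequence for $\pi$ then reads
\[
E_2^{i,j}= H_?^i\!\left(\Gamma_T\backslash X^T,\; R^j\pi_\ast\Map_{cts}(S,\Fp)\right) \;\Longrightarrow\; \tH_?^{i+j}(\wh{X}^G(\Gamma),\Fp).
\]
Using Corollary \ref{formula for pushforward} and Proposition \ref{formula for proper pushforward}, and choosing a continuous (set-theoretic) section of $S\to S_T$ to obtain a homeomorphism $S\cong S^{der}\times S_T$ of right $S^{der}$-spaces, Lemma \ref{big and small local systems} identifies $R^j\pi_\ast\Map_{cts}(S,\Fp)$ with the local system on $\Gamma_T\backslash X^T$ attached to $\Map_{cts}(S_T,\Fp)\otimes_{\Fp}\tH_?^j(\wh{X}^{G^{der}}(\Gamma^{der}),\Fp)$. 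By the first input, these vanish for $j>q_0(G^{der})$. For $j\leq q_0(G^{der})$ the horizontal cohomology on $\Gamma_T\backslash X^T$ is a twisted completed cohomology for $T$, which by the second input together with Lemma \ref{big and small local systems} vanishes for $i>0$. Combining with the numerical identity $q_0(G)=q_0(G^{der})+l_0(T)$ (an immediate consequence of the fibration structure and $l_0(G^{der})=0$) gives the required vanishing $\tH_?^n(\wh{X}^G(\Gamma),\Fp)=0$ for $n>q_0(G)$.

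The main obstacle is the precise identification of the coefficient systems $R^j\pi_\ast\Map_{cts}(S,\Fp)$, since the profinite group $S$ does not a priori split as a direct product $S^{der}\times S_T$; the continuous section trick reduces this to arguments parallel to those in Propositions \ref{step 2} and \ref{step 3}, now adapted to the non-unipotent fibration $G\to T$. A secondary technicality is that $X^G\to X^T$ is only a fibration with fibre $X^{G^{der}}$ up to finite covers coming from the Galois cohomology of $G^{der}$ (so that $A_G(\R)\to A_T(\R)$ and $K_{G,\infty}\to K_{T,\infty}$ have finite cokernels), but a neatness/finite-index argument as in the proof of Theorem \ref{main reductive} shows this does not affect the vanishing statement.
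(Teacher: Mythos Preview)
Your approach is genuinely different from the paper's: you fiber $X^G$ over $X^T$ with semisimple fibre, whereas the paper fibers $X^G$ over $X^{G^{ad}}$ with central fibre $\Gamma_Z\backslash X^Z$. The paper's choice is what makes the argument go through cleanly, and your dual version has a real gap.

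The central problem is the identification of the $\Gamma_T$-module underlying $R^j\pi_\ast\Map_{cts}(S,\Fp)$. Your continuous section of $S\to S_T$ gives a homeomorphism $S\cong S^{der}\times S_T$ of \emph{right $S^{der}$-spaces only}; it is not $\Gamma$-equivariant. Consequently the resulting tensor decomposition of $H^j(\Gamma^{der},\Map_{cts}(S,\Fp))$ holds only as $\Fp$-vector spaces, not as $\Gamma_T$-modules. In particular there is no reason for $\Gamma_T$ to act trivially on the factor $\tH^j(\wh{X}^{G^{der}}(\Gamma^{der}),\Fp)$: the residual $\Gamma_T$-action comes from conjugation of $\Gamma$ on $\Gamma^{der}$ together with the $\Gamma$-translation on $S$, and neither is visibly trivial. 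Without this, you cannot invoke Leopoldt for $T$ to kill $H^i(\Gamma_T\backslash X^T,-)$ for $i>0$, and since you need this vanishing in the \emph{entire} range $0\le j\le q_0(G^{der})$ (not just in top degree), the argument does not close. A secondary issue in the same vein: the kernel of $S\to S_T$ is $S\cap G^{der}(\Qp)$, which need not equal the closure $S^{der}$ of $\Gamma^{der}$, so even the set-theoretic splitting must be stated more carefully.

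Two further points. First, your numerical identity is misstated: for a torus $q_0(T)=0$ (since $\dim X^T=l_0(T)$), so $q_0(G)=q_0(G^{der})$, not $q_0(G^{der})+l_0(T)$. Second, for $?=c$ your fibration is not proper, so you would need $R\pi_!$ rather than $R\pi_\ast$; Corollary \ref{formula for pushforward} does not apply, and Proposition \ref{formula for proper pushforward} gives only the fibre, not the $\Gamma_T$-module.

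The paper's route avoids all of this by putting the torus part in the \emph{fibre}: the map $\Gamma\backslash X\to\Gamma_{ad}\backslash X^{ad}$ is proper, Leopoldt for $Z$ kills $R^s\pi_\ast$ for all $s>0$ outright, and the spectral sequence collapses to a single row. The remaining delicate point is to arrange that $\Gamma_{ad}$ acts trivially on the residual factor $\Map_{cts}(K_p^Z/\ol\Gamma_Z,\Fp)$; the paper achieves this by choosing $K_p=K_p^Z\times K_p^{ad}$ as a genuine product and forcing $\ol\Gamma_Z$ to be saturated in $K_p^Z$, so that the projection $\Gamma\to K_p^Z/\ol\Gamma_Z$ is literally trivial. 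That careful bookkeeping is exactly what replaces the step where your argument breaks.
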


\begin{proof}
Let $\G_0 \sub G(\Q)_+$ be an arithmetic subgroup and choose a sufficiently small neat $K_p$ which is a product $K_p = K_p^Z \times K_p^{ad}$ of a compact open $K_p^Z \sub Z(\Qp)$ and a compact open $K_p^{ad} \sub G^{der}(\Qp)$; note that the image of $K_p^{ad}$ in $G^{ad}(\Qp)$ is open and isomorphic to $K_p^{ad}$; we will conflate the two (this explains the notation). Set $\G = \G_0 \cap K_p$. Let $\G_Z = \G \cap Z(\Q)$ and consider the closure $\ol{\G_Z}$ of $\G_Z$ inside $K_p^Z$. It is not clear to us a priori if $\ol{\G}_Z$ is saturated inside $K_p^Z$ (as a $\Zp$-module), but if not we may achieve this by replacing $K_p^Z$ with a smaller subgroup without changing $\G_Z$, so we may assume this. This implies that the closure of the image of the projection $\G \to K_p^Z$ is equal to $\ol{\G}_Z$. To see this, let $C=G/G^{der}$ be the cocenter of $G$. The image $\G_C$ of $\G$ under the projection $\G \to C(\Qp)$ is an arithmetic subgroup inside $K_p^Z$ which contains $\G_Z$ as a finite index subgroup. It follows that the closure $\ol{\G}_C$ of $\G_C$ inside $K_p^Z$ contains $\ol{\G}_Z$ as a finite index subgroup, but since $\ol{\G}_Z$ is saturated they must be equal. From this, it follows that the composition
\[
\G \to K_p \to K_p/\ol{\G}_Z,
\]
where the first map is the inclusion and the second is the natural projection, is equal to the composition
\[
\G \to K_p^{ad} \to K_p^{ad} \times K_p^Z/\ol{\G}_Z = K_p / \ol{\G}_Z,
\]
where the first map is the projection and the second is the inclusion which is trivial on the second factor. We will use these facts in the calculation below.

\medskip

Now, by Proposition \ref{cc at finite level} and Lemma \ref{big and small local systems}, it suffices to show that 
\[
H_?^n(\G \backslash X, \Map_{cts}(K_p,\Fp)) = 0
\]
for $n > q_0 = q_0(G) = q_0(G^{ad})$. Let $\G_{ad}$ be the image of $\G$ in $G^{ad}(\Q)^+$. Consider the proper fibration $ \pi : \G \backslash X \to \G_{ad}\backslash X^{ad}$ with fiber $\G_Z \backslash X^Z$ (in this proof and the next only, $X^{ad} = X^{G^{ad}}$) and the corresponding Leray spectral sequence
\[
H^r_?(\G_{ad}\backslash X^{ad}, R^s\pi_\ast \Map_{cts}(K_p,\Fp)) \implies H_?^{r+s}(\G \backslash X, \Map_{cts}(K_p,\Fp)).
\]
By Proposition \ref{formula for proper pushforward}, $R^s\pi_\ast \Map_{cts}(K_p,\Fp)$ is the local system corresponding to $H^s(\G_Z, \Map_{cts}(K_p,\Fp))$.  Using the discussion on Leopoldt's conjecture above, the assumption that Leopoldt holds for $Z$, and Lemma \ref{big and small local systems}, we see that $H^s(\G_Z, \Map_{cts}(K_p,\Fp))=0$ for $s >0$. We then compute 
\[
\Map_{cts}(K_p,\Fp)^{\G_Z} \cong \Map_{cts}(K^Z_p/\ol{\G}_Z,\Fp) \otimes \Map_{cts}(K^{ad}_p,\Fp)
\]
as $\G_{ad}$-modules, where $\G_{ad}$ acts trivially on the first factor, which is an $\Fp$-vector space that we call $V$ (this uses the detailed setup above). So, the Leray spectral sequence reduces to
\[
H_?^n(\G \backslash X, \Map_{cts}(K_p,\Fp)) \cong H^n_?(\G_{ad}\backslash X^{ad}, V\otimes \Map_{cts}(K^{ad}_p,\Fp)) \cong H^n_?(\G_{ad}\backslash X^{ad}, \Map_{cts}(K^{ad}_p,\Fp)) \otimes V.
\]
By Lemma \ref{big and small local systems} and Theorem \ref{main semisimple}, $H^n_?(\G_{ad}\backslash X^{ad}, \Map_{cts}(K_p^{ad},\Fp))$ vanishes for $n>q_0$. This finishes the proof.
\end{proof}

\begin{coro}\label{general reductive coro}
Keep the notation and assumptions of Theorem \ref{general reductive}. Then Conjectures \ref{ce vanishing} and \ref{ce vanishing homology} hold for $G$.
\end{coro}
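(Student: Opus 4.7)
The plan is to deduce the corollary from Theorem \ref{general reductive} by combining it with the formal devices already in place. First, Theorem \ref{general reductive} together with Proposition \ref{passage to components 2} immediately gives Conjecture \ref{ce vanishing} for $G$ in both flavors $? \in \{\emptyset, c\}$, and feeding this into Proposition \ref{implications} yields parts (1), (3) and (4) of Conjecture \ref{ce vanishing homology} for both $\wt{H}_i$ and $\wt{H}_i^{BM}$. For the middle-degree comparison map $\wt{H}_c^{q_0}(K^p,\Zp) \to \wt{H}^{q_0}(K^p,\Zp)$, I would rerun the fibration argument from the proof of Theorem \ref{general reductive}: the Leray spectral sequence for $\pi : \G \backslash X \to \G_{ad} \backslash X^{ad}$ degenerates because Leopoldt for $Z$ forces $H^s(\G_Z, \Map_{cts}(K_p,\Fp)) = 0$ for $s \geq 1$, so the assertion reduces to the middle-degree isomorphism for $G^{ad}$ supplied by Theorem \ref{main semisimple}, combined with the component analysis of Proposition \ref{passage to components 1}.

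The substantive remaining content is part (2) of Conjecture \ref{ce vanishing homology}: that $\wt{H}_{q_0}^?(K^p,\Zp)$ has codimension exactly $l_0$. A routine rank computation using $l_0(G^{ad}) = 0$ shows $l_0(G) = l_0(Z)$ and $q_0(G) = q_0(G^{ad}) = d$, so we must produce codimension $l_0(Z)$. The plan is to establish a Künneth-style decomposition of completed homology obtained by dualizing the cohomological computation at the end of the proof of Theorem \ref{general reductive}. After choosing $K_p = K_p^{ad} \times K_p^Z$ as in that proof, I expect an isomorphism (up to a finite direct sum indexed by components)
\[
\wt{H}_{q_0}(K^p,\Zp) \;\cong\; \wt{H}_{q_0}(G^{ad}, \cdot, \Zp) \;\ctens_{\Zp}\; \wt{H}_0(Z, \cdot, \Zp)
\]
of right modules over $\Zp \llbracket K_p \rrbracket \cong \Zp \llbracket K_p^{ad} \rrbracket \ctens \Zp \llbracket K_p^Z \rrbracket$; only the $j = 0$ term of the Künneth decomposition contributes, since Leopoldt for $Z$ forces $\wt{H}_j(Z,\cdot,\Zp) = 0$ for $j \geq 1$ via Conjecture \ref{ce vanishing} for $Z$ and Proposition \ref{implications}. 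Given such a decomposition, Theorem \ref{codimension 0} applied to $G^{ad}$ gives codimension $0$ for the first factor and Proposition \ref{leopoldt and codimension} (using Leopoldt for $Z$) gives codimension $l_0(Z)$ for the second; the codimension of the completed tensor product over the corresponding Iwasawa algebras is then $0 + l_0(Z) = l_0$ by the faithful flatness argument in the paragraph preceding Theorem \ref{codimension 0}, which relies on \cite[Lemma 4.5]{brumer}.

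The main obstacle will be making the Künneth decomposition rigorous at the level of integral completed homology, as opposed to the $\Fp$-coefficient completed cohomology where the corresponding splitting appears directly in the proof of Theorem \ref{general reductive}. This requires dualizing the Leray spectral sequence uniformly in the level, compatibly with the Iwasawa module structure, and then passing to the inverse limit without losing control of the codimension function. If this direct dualization proves too delicate, an alternative is to work through the Poincar\'e duality spectral sequences recalled in \S \ref{codimension}: combine the known corank/codimension information for $\wt{H}^{q_0}(K^p,\Zp)[1/p]$ over $G^{ad}$ coming from \cite{ce-annals} with the Leopoldt-controlled contribution of $Z$ (as in Proposition \ref{leopoldt and codimension}), and use admissibility together with Pontryagin duality to upgrade this to the required codimension statement for $\wt{H}_{q_0}^?(K^p,\Zp)$.
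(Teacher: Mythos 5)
Your reductions for everything except Conjecture \ref{ce vanishing homology}(2) match the paper: vanishing plus Proposition \ref{implications} handles parts (1), (3), (4), and the middle-degree isomorphism $\wt{H}_c^{q_0}\to\wt{H}^{q_0}$ follows by rerunning the fibration/component analysis and invoking Theorem \ref{main semisimple}. The problem is part (2), where your argument has a genuine gap that you yourself flag: the claimed integral K\"unneth decomposition $\wt{H}_{q_0}(K^p,\Zp)\cong \wt{H}_{q_0}(G^{ad},\cdot,\Zp)\,\ctens_{\Zp}\,\wt{H}_0(Z,\cdot,\Zp)$ of completed homology as a module over $\Zp\llbracket K_p^{ad}\rrbracket\,\ctens\,\Zp\llbracket K_p^Z\rrbracket$ is never established (dualizing the Leray computation level-by-level and passing to inverse limits compatibly with the Iwasawa structure is exactly the delicate point), and even granting it, the codimension-additivity you want for a completed tensor product of Iwasawa modules is an Ext--K\"unneth statement over a completed tensor product of noncommutative Iwasawa algebras; the flat-base-change paragraph preceding Theorem \ref{codimension 0} (Brumer's lemma) only handles extension of scalars along $\Zp\llbracket \ol{\G}\rrbracket\to\Zp\llbracket K_p\rrbracket$ and does not supply this. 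So as written, the key codimension computation is not proved.

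The paper's proof avoids both difficulties. First, only the inequality $cd(\wt{H}_{q_0})\leq l_0$ needs to be produced, since $\geq l_0$ already follows from the argument of Proposition \ref{implications}. Second, the paper stays on the cohomology side: it upgrades the splitting from the proof of Theorem \ref{general reductive} to $\Zp$-coefficients (run the computation with $\Z/p^r$ and take inverse limits, using \cite[Proposition 1.2.12]{em}), obtaining
\[
H^{q_0}(\G\backslash X,\Map_{cts}(K_p,\Zp))\cong H^{q_0}(\G_{ad}\backslash X^{ad},\Map_{cts}(K_p^{ad},\Zp))\,\wh{\otimes}_{\Zp}\,\Map_{cts}(K_p^Z,\Zp)^{\G_Z},
\]
and uses $\wt{H}_{q_0}(K^p,\Zp)\cong\Hom_{\Zp}(\wt{H}^{q_0}(K^p,\Zp),\Zp)$ (from \cite[Theorem 1.1(3)]{ce} and the vanishing of $\wt{H}^{q_0+1}$). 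It then suffices to exhibit a sub-$K_p$-representation of $\wt{H}^{q_0}$ of \emph{injective dimension} $\leq l_0$: take $W\,\wh{\otimes}_{\Zp}\,\Map_{cts}(K_p^Z,\Zp)^{\G_Z}$, where $W$ is an injective admissible $K_p^{ad}$-subrepresentation furnished by Theorem \ref{codimension 0}, and where $\Map_{cts}(K_p^Z,\Zp)^{\G_Z}$ has injective dimension $l_0$ by the projective-dimension statement in Proposition \ref{leopoldt and codimension} (note also that the relevant factor is functions on $K_p^Z/\ol{\G}_Z$, not on $K_p^Z$). This reduction to a subrepresentation of controlled injective dimension is what lets the paper sidestep the homological-algebra problems your K\"unneth route runs into; if you want to salvage your approach, you should either prove the Ext--K\"unneth/codimension additivity you need, or switch to the paper's one-sided bound.
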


\begin{proof}
Note that $l_0 = l_0(G) = l_0(Z)$ and $q_0= q_0(G) = q_0(G^{ad})$. Fix $K^p\sub G(\A_f^p)$. Using Theorem \ref{general reductive}, everything apart from Conjecture \ref{ce vanishing homology}(2) follows as before, and additionally $\wt{H}_{q_0}(K^p,\Zp) = \wt{H}_{q_0}^{BM}(K^p,\Zp)$ . The argument in Proposition \ref{implications} also shows that $\wt{H}_{q_0}(K^p,\Zp)$ has codimension $\geq l_0$, so we need to show the opposite inequality. As in the proof of Theorem \ref{general reductive}, choose a neat $K_p \sub G(\Qp)$ which can be written as a product $K_p = K_p^{ad} \times K_p^Z$ with $K_p^Z \sub Z(\Qp)$ and $K_p^{ad} \sub G^{der}(\Qp)$, and set $\G = G(\Q)^+ \cap K^p K_p$ and $\G_Z = \G \cap Z(\Q)$; again we rig it so that the closure $\ol{\G}_Z$ is saturated inside $K_p^Z$. 

\medskip

We then have $\wt{H}_{q_0}(K^p,\Zp) \cong \Hom_{\Zp}(\wt{H}^{q_0}(K^p,\Zp), \Zp)$ by \cite[Theorem 1.1(3)]{ce} and the vanishing of $\wt{H}^{q_0+1}(K^p,\Zp)$, so it suffices to prove that $\wt{H}^{q_0}(K^p,\Zp)$ has a sub $K_p$-representation of injective dimension $\leq l_0$. Since $H^{q_0}(\G \backslash X, \Map_{cts}(K_p,\Zp))$ is a direct summand of $\wt{H}^{q_0}(K^p,\Zp)$, it suffices to show that $H^{q_0}(\G \backslash X, \Map_{cts}(K_p,\Zp))$ has a submodule of injective dimension $\leq l_0$. Here we view $\Map_{cts}(K_p,\Zp)$ as a left $\G$-module via inverting the left translation action; it has a commuting left $K_p$-action via right translation which gives $H^{q_0}(\G \backslash X, \Map_{cts}(K_p,\Zp))$ its structure of a left $K_p$-module. Using the computations in the proof of Theorem \ref{general reductive} with $\Fp$ replaced by $\Z/p^r$ and taking inverse limits over $r$ (which commute with cohomology by \cite[Proposition 1.2.12]{em}), we see that 
\[
H^{q_0}(\G \backslash X, \Map_{cts}(K_p,\Zp)) \cong H^{q_0}(\G_{ad}\backslash X^{ad}, \Map_{cts}(K^{ad}_p,\Zp)) \wh{\otimes}_{\Zp} \Map_{cts}(K_p^Z,\Zp)^{\G_Z},
\]
as left $K_p = K_p^{ad}\times K_p^Z$-representations. By Proposition \ref{leopoldt and codimension} and the assumption on $Z$, $\Map_{cts}(K_p^Z,\Zp)^{\G_Z}$ has injective dimension $l_0$ as a $K_p^Z$-representation. By Theorem \ref{codimension 0} and the discussion preceding it, $H^{q_0}(\G_{ad}\backslash X^{ad}, \Map_{cts}(K^{ad}_p,\Zp))$ contains an injective admissible $K_p^{ad}$-subrepresentation $W$. It follows that $W \wh{\otimes}_{\Zp} \Map_{cts}(K_p^Z,\Zp)^{\G_Z}$ is a sub $K_p$-representation of $H^{q_0}(\G \backslash X, \Map_{cts}(K_p,\Zp))$ of injective dimension $\leq l_0$, as desired.
\end{proof}

\begin{rema} We make a few additional remarks on these results.
\begin{enumerate}
\item Examples of cases when Theorem \ref{general reductive} and Corollary \ref{general reductive coro} are unconditional include  $G= \mathrm{Res}_{\Q}^F\GSp_{2g}$ for abelian totally real fields $F$, since the Leopoldt conjecture is known for tori which split over an abelian extension of $\Q$. One could also get weaker results with no condition on the center by assuming the known bounds for the Leopoldt defect.

\smallskip

\item Conjecture \ref{ce vanishing homology} has a natural analogue for $\Fp$-coefficients, stated in \cite[\S 1.7]{ce}. Our methods prove this conjecture too under the same assumptions. We content ourselves by noting that the arguments to prove Proposition \ref{implications} and Corollary \ref{general reductive coro} go through with only superficial changes for $\Fp$-coefficients (though one could simplify the argument in Corollary \ref{general reductive coro} for $\Fp$-coefficients). Note here that Theorem \ref{codimension 0} implies its $\Fp$-version when one knows $p$-torsionfreeness of $\wt{H}_{q_0}$, using the results of \cite[\S 1.7]{ce}.
\end{enumerate}

\end{rema}

\section{Perfectoid Shimura varieties}\label{sec: perfectoid}

\subsection{Preparations in $p$-adic geometry}

In this preliminary section, we prove a number of loosely related results in $p$-adic geometry. We continue to fix a prime $p$. Group actions on spaces will mostly be right actions throughout this section.

\medskip

Until further notice, ``adic space" means ``analytic adic space over $\Zp$". In what follows, we freely use the language of diamonds and some standard notation from \cite{scholze-diamonds}. Recall that a diamond is a pro-\'etale sheaf on the site $\mathrm{Perf}$ of characteristic $p$ perfectoid spaces with certain properties. If $X$ is an adic space, the corresponding diamond $X^\lozenge$ comes equipped with a natural map $X^\lozenge \to \Spd\,\Zp$; since $\mathrm{Perf}_{/ \Spd\,\Zp}$ is naturally equivalent to the category $\mathrm{Perfd}$ of all perfectoid spaces, one is free to think of $X^\lozenge$ as a functor on $\mathrm{Perfd}$.  If $X$ is a diamond with a $\underline{G}$-action for some profinite group $G$, we write $X/\underline{G}$ for the quotient sheaf computed as a \emph{pro-\'etale} sheaf.

\begin{lemm}\label{groupspread} Let $X$ be a spatial diamond with a $\underline{G}$-action for some profinite group $G$. Suppose that $G$ acts with finitely many orbits on $\pi_0 X$, and that each connected component of $X$ is a perfectoid space. Then $X$ is a perfectoid space.
\end{lemm}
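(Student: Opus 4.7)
The plan is to reduce to a transitive $G$-action on $\pi_0 X$, exhibit $X$ as a pro-\'etale quotient of an affinoid perfectoid space by a free profinite group action, and extract perfectoidness from the finite-group quotient theorem (Theorem \ref{perfgpquotient}) by passing to the limit over finite quotients of the stabilizer.

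First, since $G$ acts continuously on the profinite set $\pi_0 X$ with finitely many orbits, each orbit is clopen: closed as the continuous image of the compact $G$, and open as the complement of finitely many remaining closed orbits. Hence $X$ decomposes into a finite disjoint union of $G$-stable clopen subdiamonds, and I may assume the $G$-action on $\pi_0 X$ is transitive. Fix a perfectoid connected component $Z \subset X$ with closed stabilizer $H = \mathrm{Stab}_G(Z)$, so that $\pi_0 X \cong G/H$ as $G$-profinite sets. The action map $\underline{G} \times Z \to X$, $(g,z) \mapsto gz$, is invariant under the free $\underline{H}$-action $h \cdot (g,z) = (gh^{-1}, hz)$ and induces an isomorphism of diamonds $(\underline{G} \times Z)/\underline{H} \xrightarrow{\sim} X$. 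The total space $\underline{G} \times Z$ is itself perfectoid: for $Z = \Spa(A, A^+)$ affinoid perfectoid, $\underline{G} \times Z$ is represented by the affinoid perfectoid space $\Spa(C(G, A), C(G, A^+))$ of continuous functions, and the general case follows by gluing.

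The remaining task is to show that the free pro-\'etale quotient by the profinite group $\underline{H}$ is a perfectoid space. The plan is to write $H = \varprojlim H/N$ over open normal subgroups $N \subset H$, pass first to the intermediate quotient $Y_N := (\underline{G} \times Z)/\underline{N}$, and then apply Theorem \ref{perfgpquotient} to the resulting free action of the finite group $H/N$ on $Y_N$. The main obstacle is precisely the verification that $Y_N$ is perfectoid, as the only available tool for quotienting is Theorem \ref{perfgpquotient}, which handles only finite groups. An attractive route is to exploit pro-\'etale local triviality of the $\underline{H}$-torsor $\underline{G} \to \underline{G/H}$, covering $X$ pro-\'etale locally by open subdiamonds of the form $\underline{S} \times Z$ for clopens $S \subset G/H$ on which the torsor splits; one would then conclude via pro-\'etale descent of perfectoidness combined with the finite-group quotient theorem applied level by level.
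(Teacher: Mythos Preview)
Your proposal stops short of a proof, and both completion strategies you sketch have problems. The approach via intermediate quotients $Y_N = (\underline{G} \times Z)/\underline{N}$ is circular: $N$ is itself an infinite profinite group, so forming $Y_N$ faces exactly the difficulty you started with, as you acknowledge. The ``attractive route'' via pro-\'etale descent has a genuine obstruction: being a perfectoid space does \emph{not} descend along pro-\'etale covers of diamonds. For instance, $\Spd\,\Qp$ is a spatial diamond admitting a pro-\'etale cover by a perfectoid space, yet is not itself representable by a perfectoid space. So neither route closes the gap as written.

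That said, your setup can be salvaged by a simpler observation you overlooked: every surjection of profinite sets admits a continuous section, so the $\underline{H}$-torsor $\underline{G} \to \underline{G/H}$ is globally trivial. A continuous section $\sigma\colon G/H \to G$ then yields an isomorphism of diamonds $\underline{G/H} \times Z \xrightarrow{\sim} X$ via $(s,z) \mapsto \sigma(s)\cdot z$ (this is the base change of your isomorphism $(\underline{G}\times Z)/\underline{H}\cong X$ along $\sigma$), and the left-hand side is visibly perfectoid since $Z$ is. No quotients, no descent, and no appeal to Theorem \ref{perfgpquotient} are needed.

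The paper's proof takes a rather different route, avoiding any global description of $X$. It works pointwise: given $x$ in a connected component $X_0$, one picks an open affinoid perfectoid neighborhood $U \subset X_0$ of $x$, spreads it out to an open spatial subdiamond $\tilde{U} \subset X$ via \cite[Proposition 11.23(iii)]{scholze-diamonds}, and uses the $G$-action to see that each connected component of the open spatial subdiamond $\tilde{U} \cap X_0 K$ (for $K \subset G$ the open stabilizer of $\tilde{U}$) is a translate of $U$, hence affinoid perfectoid. The punchline is \cite[Lemma 11.27]{scholze-diamonds}: a spatial diamond whose connected components are all affinoid perfectoid is itself affinoid perfectoid. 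Your corrected argument is more direct and global; the paper's is local and leans on the structural input from \cite{scholze-diamonds} rather than on an explicit trivialization.
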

\begin{proof} Let $X_0$ be some connected component of $X$, and let $x \in X_0$ be any point. Choose some open affinoid perfectoid neighborhood $U \sub X_0$ of $x$. Let $c:|X| \to \pi_0 X$ be the natural map, with $s = c(X_0)$. Writing \[X_0 = \lim_{s \in S \subset \pi_0 X \,\mathrm{clopen}} c^{-1}(S)\] as a cofiltered inverse limit of clopen spatial subdiamonds of $X$ and applying \cite[Proposition 11.23(iii)]{scholze-diamonds}), we deduce that $U$ spreads out to a small open spatial subdiamond $\tilde{U} \sub X$ with $\tilde{U} \cap X_0=U$.  Let $K \sub G$ be the open subgroup stabilizing $\tilde{U}$. Then for any $k \in K$, $\tilde{U} \cap X_0 k = \tilde{U}k \cap X_0 k = (\tilde{U} \cap X_0)k = Uk$ is an affinoid perfectoid space. Since our assumptions on the group action guarantee that the orbit $X_0 K$ is an open spatial subdiamond of $X$, we deduce that $\tilde{U} \cap X_0 K $ is an open spatial subdiamond of $X$ containing $x$, with the property that each connected component of $\tilde{U} \cap X_0 K $ is affinoid perfectoid. By \cite[Lemma 11.27]{scholze-diamonds}, we deduce that $\tilde{U} \cap X_0 K$ itself is affinoid perfectoid. Since $X_0$ and $x$ were arbitrary, we get the result.
\end{proof}

We now turn to some general results on group quotients. Let $X$ be an adic space equipped with an action of a finite group $G$. The coarse quotient $X/G$ always exists in Huber's category $\mathcal{V}$, but in general it may not be an adic space. We need some general results showing that if $X$ is a rigid analytic space or a perfectoid space, then so is $X/G$. The first author already considered this problem in \cite{hansen}, but the results there can be difficult to apply, since they included the assumption that $X$ admits a $G$-invariant affinoid covering, and such coverings can be hard to exhibit in ``real-life'' situations.  Here we obtain much more satisfying and user-friendly results, which don't assume the a priori existence of $G$-invariant affinoid covers.  In the rigid analytic situation we obtain a very general result, cf. Theorem \ref{rigidgpquotient} below. In the perfectoid situation, we need slightly stronger hyptheses, cf. Theorem \ref{perfgpquotient}, but the result is sufficient for our intended applications to Shimura varieties.

\medskip

Let $X$ be a topological space with an action of a \emph{finite} group $G$ by continuous automorphisms. Let $x\in X$ be any point, with stabilizer $H_{x} \sub G$. We say an open neighborhood $U$ of $x$ is \emph{$G$-clean} if $Uh=U$ for all $h\in H_{x}$ and moreover $U\cap Ug=\emptyset$ for all $g\in G\smallsetminus H_{x}$. Note in particular that if $U$ is a $G$-clean neighborhood of $x$, then the natural map 
\[
U\times^{H_{x}}G\overset{(u,g)\mapsto ug}{\longrightarrow}X
\]
is an open embedding, and its image is just the union inside $X$
of $[G:H_{x}]$ many disjoint translates of $U$, so this is an especially
pleasant type of $G$-stable open containing the orbit $xG$.

\begin{lemm}\label{gclean}
Let $X$ be a Hausdorff topological space with a $G$-action.
Then every point $x\in X$ admits a $G$-clean open neighborhood.
\end{lemm}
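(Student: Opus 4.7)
The plan is to use Hausdorffness together with the finiteness of $G$ to first produce an open neighborhood $V$ of $x$ satisfying the disjointness condition $V\cap Vg = \emptyset$ for all $g\in G\smallsetminus H_x$, and then symmetrize over $H_x$ to get the invariance condition while preserving disjointness.

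First I would fix $g\in G\smallsetminus H_x$. Since $x\ne xg$ and $X$ is Hausdorff, I can choose disjoint opens $V_g\ni x$ and $W_g\ni xg$. The key move is to shrink $V_g$ to $V_g':= V_g\cap W_gg^{-1}$; this is an open neighborhood of $x$ (it contains $x = xg\cdot g^{-1}\in W_gg^{-1}$), and by construction $V_g'g\sub W_g$ while $V_g'\sub V_g$, so $V_g'\cap V_g'g=\emptyset$. Setting $V:=\bigcap_{g\in G\smallsetminus H_x}V_g'$, which is a finite intersection of open neighborhoods of $x$, I obtain an open neighborhood of $x$ with $V\cap Vg=\emptyset$ for every $g\notin H_x$.

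Next, I would define $U := \bigcap_{h\in H_x} Vh$. This is again a finite intersection of open neighborhoods of $x$ (note $x = xh$ for $h\in H_x$, so $x\in Vh$), hence open and nonempty. By construction $Uh_0 = \bigcap_{h\in H_x}Vhh_0 = U$ for every $h_0\in H_x$, which gives the $H_x$-invariance. For the disjointness, I observe that $U\sub V$, so for any $g\notin H_x$ one has $U\cap Ug\sub V\cap Vg = \emptyset$, as required.

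There is no real obstacle here: everything reduces to the elementary fact that a point can be separated from each of the finitely many points in its $G$-orbit by Hausdorffness, plus the standard symmetrization trick over the finite stabilizer. The only mildly subtle step is the replacement of $V_g$ by $V_g\cap W_gg^{-1}$, which is what upgrades ``separation of $x$ from $xg$'' into the sharper condition ``$V_g'$ and $V_g'g$ are disjoint''; once one notices this, the rest is bookkeeping.
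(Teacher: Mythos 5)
Your proof is correct and follows essentially the same strategy as the paper's: use Hausdorffness and the finiteness of $G$ to separate $x$ from its non-fixed translates, then intersect and symmetrize over $H_x$. The only cosmetic difference is the order of operations — you establish the disjointness $V \cap Vg = \emptyset$ first and symmetrize last, while the paper chooses pairwise disjoint neighborhoods of the whole orbit, makes each invariant under the relevant conjugate stabilizer, and translates back — but both routes reduce to the same elementary ideas.
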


\begin{proof}
Fix $x\in X$, with stabilizer $H$. Choose coset representatives
$G=\coprod_{1\leq i\leq n}Hg_{i}$ with $g_{1}=1$; the orbit of $x$ is then $\{ x_1, \dots , x_n\}$, with $x_i = xg_i$. Since $X$ is Hausdorff we may choose pairwise disjoint open neighborhoods $U_i^\prime $ of the $x_i$'s. Clearly $g_i^{-1}H g_i$ is the stabilizer of $x_i$, so the open set
\[
U_i = \bigcap_{k \in g_i^{-1}H g_i} U^\prime_i k
\]
contains $x_i$ and is stable under $g_i^{-1}Hg_i$; moreover the $U_i$'s are pairwise disjoint. Now set $V_i = U_i g_i^{-1}$, so $x \in V_i$ and $V_i$ is $H$-stable. Finally, set $W = \bigcap_i V_i$; we claim that $W$ is a $G$-clean open neighborhood of $x$. Indeed, $W$ is $H$-stable since the $V_i$'s are, so it remains to check that if $i  \neq j$, then $Wg_i \cap Wg_j = \emptyset$. But $Wg_i \sub V_i g_i = U_i$ and similarly for $Wg_j$, so $Wg_i \cap Wg_j \sub U_i \cap U_j = \emptyset$, as desired.
\end{proof}

\begin{theo}\label{rigidgpquotient} Let $X$ be a rigid analytic space over some nonarchimedean field $K$ with an action of a finite group $G$. Assume that $X$ is separated, and that for every rank one point $x \in X$, the closure $\overline{ \{x \} } \sub X$ is contained in some open affinoid subspace $U=\Spa(A,A^\circ) \sub X$. Then the categorical quotient $X/G = (|X|/G, (q_\ast \mathcal{O}_X)^G, \cdots)$ is a rigid analytic space, and the natural map $X \to X/G$ is finite. Moreover, the canonical map $X^\lozenge / \underline{G} \to (X/G)^\lozenge$ is an isomorphism.
\end{theo}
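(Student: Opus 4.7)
My plan is to build $X/G$ by a local-to-global argument. The heart of the matter is to exhibit enough $G$-stable affinoid opens in $X$; once this is achieved, affinoid quotients are classical (for a $G$-stable affinoid $V = \Spa(A, A^\circ)$, invariant theory shows $A$ is finite over $A^G$, so $V/G := \Spa(A^G, (A^G)^\circ)$ is an affinoid rigid space and $V \to V/G$ is finite), and the universal property of categorical quotients makes the local quotients glue canonically.

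To produce $G$-stable affinoid neighbourhoods of orbits, I would fix a rank one point $x \in X$ with stabilizer $H = H_x \sub G$. By hypothesis, $\overline{\{x\}}$ is contained in an affinoid open $U \sub X$. Since $X$ is separated, intersections of finitely many affinoid opens are affinoid, so $\tilde U := \bigcap_{h \in H} Uh$ is an $H$-stable affinoid open containing $x$ (note $x \in Uh$ since $xh^{-1}=x \in U$). Invoking Lemma \ref{gclean} for the Hausdorff space $|X|$, pick a $G$-clean open neighbourhood $W$ of $x$. Using that rank one points admit cofinal affinoid neighbourhoods inside an affinoid, shrink to an affinoid $V$ with $x \in V \sub \tilde U \cap W$, then replace $V$ by $\bigcap_{h \in H} Vh$ to force $H$-stability while staying inside $W$. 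Then $V^\sharp := \bigsqcup_{g \in H \backslash G} V g$ is a disjoint union of $[G:H]$ affinoids (disjointness coming from $G$-cleanness of $W$), hence itself an affinoid open in $X$ that is $G$-stable and contains the orbit $xG$. Since rank one points are dense in any rigid analytic space, these $V^\sharp$ form a $G$-stable affinoid cover of $X$. Pairwise intersections $V_1^\sharp \cap V_2^\sharp$ are again $G$-stable affinoid by separatedness, so the affinoid quotients $V^\sharp/G$ glue into a rigid analytic space $X/G$ with a finite map $X \to X/G$, and verification of the categorical-quotient universal property for $\mathcal{V}$ reduces to the affinoid case.

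The final assertion $X^\lozenge / \underline G \to (X/G)^\lozenge$ is the step I expect to be the main obstacle. Both sides are pro-\'etale sheaves on $\mathrm{Perf}$ and the statement is local on the target, so the preceding construction reduces it to the affinoid situation $V^\sharp \to V^\sharp / G = \Spa(B^G,(B^G)^\circ)$. Surjectivity of $(V^\sharp)^\lozenge \to (V^\sharp/G)^\lozenge$ as pro-\'etale sheaves is immediate from the fact that $V^\sharp \to V^\sharp/G$ is a finite surjection, hence a quasi-pro-\'etale cover of diamonds. Injectivity after quotienting by $\underline G$ amounts to the statement that two test-points of $V^\sharp$ with the same image in $V^\sharp/G$ differ, pro-\'etale locally, by a unique element of $G$; this is the classical fact that geometric fibres of $V^\sharp \to V^\sharp/G$ are single $G$-orbits, combined with the fact that the stabilizer action on those finite fibres can be trivialized after a pro-\'etale cover of the base. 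Care is required at points with nontrivial stabilizer, where the map is ramified and the $G$-action is not a torsor, but one handles this by decomposing the fibre product $V^\sharp \times_{V^\sharp/G} V^\sharp$ orbit-by-orbit and applying standard perfectoid descent to conclude that the quotient coequalizer sheaf agrees with $(V^\sharp/G)^\lozenge$.
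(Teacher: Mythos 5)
There is a genuine gap, and it sits exactly at the point the paper's proof is engineered to avoid. First, a smaller but real error: you invoke Lemma \ref{gclean} ``for the Hausdorff space $|X|$'', but $|X|$ is \emph{not} Hausdorff for a rigid analytic space of positive dimension (rank one points have nontrivial specializations, so $|X|$ is not even $T_1$). The paper fixes this by applying Lemma \ref{gclean} to the maximal Hausdorff quotient $|X|^h$ and pulling the $G$-clean neighborhood back along $\pi\colon |X| \to |X|^h$; this has the additional crucial effect that the resulting $G$-clean open automatically contains all of $\overline{\{x\}}$, since $\overline{\{x\}} \subseteq \pi^{-1}(\pi(x))$.

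The more serious problem is your shrinking step. You replace the neighborhood by an affinoid $V$ with $x \in V \subseteq \tilde U \cap W$, which only guarantees $x \in V$, not $\overline{\{x\}} \subseteq V$: open subsets are stable under generization, not specialization, and small rational neighborhoods of a rank one point typically exclude some of its rank $\geq 2$ specializations. Consequently your family $\{V^\sharp\}$ need not cover $X$ at all --- a higher-rank point $y$ lies in the closure of its maximal generization $\tilde y$, but nothing forces $y$ to lie in the particular $V^\sharp$ you built around $\tilde y$ (or around any other rank one point). ``Rank one points are dense'' does not repair this; density says every nonempty open meets the rank one points, not that opens chosen around each rank one point exhaust $X$. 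Nor can you simply re-enlarge $V$ to an affinoid containing $\overline{\{x\}}$ inside the $G$-clean open: covering the quasi-compact set $\overline{\{x\}}$ by rational subsets produces a finite union of rational subsets, which need not be affinoid. This is precisely why the paper does \emph{not} try to produce a $G$-stable affinoid cover: it keeps the possibly non-affinoid $G$-clean open $W_x = \tilde U_x \cap V_x \supseteq \overline{\{x\}}$ inside the $H_x$-stable affinoid $V_x$, and observes that $W_x/H_x$ is an \emph{open subset} of the affinoid quotient $V_x/H_x$, hence a rigid space; since each $W_x \times^{H_x} G$ contains the whole orbit $\overline{\{x\}}G$, these pieces do cover $X/G$. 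Your final diamond argument is also vaguer than needed (the paper reduces to showing $\Spd(A,A^+)\times\underline{G} \rightrightarrows \Spd(A,A^+)$ presents $\Spd(A^G,A^{+G})$, checking quasi-pro-\'etaleness on rank one geometric points), but the covering gap is the essential one to repair.
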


 The auxiliary conditions on $X$ in this theorem are satisfied e.g. if $X$ is affinoid, or if $X$ is partially proper.  In particular, the theorem applies whenever $X$ is the analytification of a separated $K$-scheme of finite type.  We would like to emphasize that these auxiliary conditions do not involve the $G$-action in any way. In particular, we are \emph{not} assuming a priori that $X$ admits a covering by $G$-stable affinoid subsets (though, a posteriori, the theorem shows that this is the case).

\begin{proof}
Let $x\in |X|$ be any rank one point, with stabilizer $H_{x}$ and
closure $\overline{\{x\}}\sub |X|$. Let $|X|^{h}$ be the maximal
Hausdorff quotient of $|X|$, and let $\pi : |X| \to |X|^{h}$ be the natural map, so if $x \in |X|$ is any rank one point, then $\overline{\{ x \} } \sub \pi^{-1}(\pi(x))$.\footnote{ One might guess that in fact $\overline{\{ x \} } = \pi^{-1}(\pi(x))$, but this is not clear to us. Indeed, let $|X|^{\nu}$ be the quotient of $|X|$ by the transitive closure of the pre-relation ``$x \sim y$ if $U \cap V \neq \emptyset$ for all open neighborhoods $x \in U, y \in V$". Then $\pi$ naturally factors as a composition of quotient maps $|X| \overset{\tau}{\to} |X|^\nu \overset{q}{\to} |X|^h$. By some standard structure theory of analytic adic spaces, $\tau$ induces a bijection from the rank one points of $|X|$ onto $|X|^\nu$, and $\tau^{-1}(\tau(x)) = \overline{ \{ x \} }$ for any rank one point $x \in |X|$. However, the map $q$ may not be a homeomorphism: for a general topological space $T$, $T^h$ can be obtained by transfinitely iterating the construction $T\rightsquigarrow T^\nu$.  When $|X|$ is \emph{taut}, one can prove that $q$ is a homeomorphism by combining \cite[Lemmas 5.3.4 and 8.1.5]{huber}.} By functoriality of the maximal Hausdorff quotient, $G$ naturally acts on $|X|^h$ and $\pi$ is $G$-equivariant.  By Lemma \ref{gclean} we can choose a $G$-clean open neighborhood $U_{x}\sub |X|^{h}$
of $\pi(x)$. Set $\tilde{U}_{x}=\pi^{-1}(U_x) \sub |X|$,
so $\tilde{U}_{x}$ is a $G$-clean open neighborhood
of $x$ containing $\overline{\{x\}}$.

\medskip

By assumption, we can choose an open affinoid subspace $V_{x}=\mathrm{Spa}(A,A^\circ)\sub X$
containing $\overline{\{x\}}$. Since $X$ is separated, the intersection $\cap_{h\in H_{x}}V_{x}h$ is still affinoid, so after replacing $V_{x}$ by $\cap_{h\in H_{x}}V_{x}h$,
we can assume that $V_{x}$ is $H_{x}$-stable. The intersection $W_{x}=\tilde{U}_{x}\cap V_{x}$
is still a $G$-clean open neighborhood of $x$ containing $\overline{\{x\}}$.
Now, observe that $W_{x}\times^{H_{x}}G\sub X$ is a $G$-stable
open subspace of $X$ containing $\overline{\{x\}}G$ with the crucial
property that
\[
W_{x}/H_{x}\cong\left(W_{x}\times^{H_{x}}G\right)/G\sub X/G
\]
is naturally a rigid analytic space, because $V_{x}/H_{x}\cong\mathrm{Spa}(A^{H_{x}},A^{\circ H_{x}})$
is an affinoid rigid space (e.g. by \cite{hansen}) and $|W_{x}|/H_{x}$ is an open subset of
$|V_{x}|/H_{x}$. Varying over all rank one points $x\in X$, the
spaces $W_{x}/H_{x}$ give an open covering of $X/G$ by rigid analytic
spaces, so $X/G$ is a rigid analytic space, as desired.

\medskip

For finiteness of the map $X\to X/G$, note that $f:W_{x}\to W_{x}/H_{x}$
is finite, since it's the pullback of the finite map $V_{x}\to V_{x}/H_{x}$
along $W_{x}/H_{x}\to V_{x}/H_{x}$. It then suffices to observe that
the pullback of $X\to X/G$ along the open embedding $W_{x}/H_{x}\to X/G$
is given by the map
\[
W_{x}\times^{H_{x}}G\simeq\coprod_{1\leq i\leq n}W_{x}g_{i}\overset{\coprod f\circ g_{i}^{-1}}{\longrightarrow}W_{x}/H_{x},
\]
which is clearly finite.

\medskip

For the last point, it suffices to prove that the canonical maps $V_{x}^\lozenge /\underline{H_{x}} \to (V_{x}/H_{x})^{\lozenge}$ are isomorphisms of pro-\'etale sheaves. We claim that in fact for any Tate $\Zp$-algebra $A$ with an action of a finite group $G$ and a $G$-stable subring of integral elements $A^+$, the canonical map $\Spd(A,A^+) / \underline{G} \to \Spd(A^G,A^{+G})$ is an isomorphism. It suffices to check that $\Spd(A,A^+) \times \underline{G} \rightrightarrows \Spd(A,A^+) $ is a presentation of $\Spd(A^G,A^{+G})$ as a pro-\'etale sheaf. Arguing as in \cite[Proposition 2.1.1]{cgj}, this reduces to the fact that the maps $\Spd(A,A^+) \to \Spd(A^{G},A^{+G})$ and $\Spd(A,A^+) \times \underline{G} \to \Spd(A,A^+) \times_{\Spd(A^G,A^{+G})} \Spd(A,A^+)$ are quasi-pro-\'etale. Since the morphisms in question are separated, this can be checked on rank one geometric points by \cite[Proposition 13.6]{scholze-diamonds}, where it is obvious.
\end{proof}

Unfortunately, the perfectoid variant of the previous theorem is not so clean, primarily because of ``problems'' with the notion of a ``separated'' perfectoid space. For example, for perfectoid spaces over a perfectoid field, the notion introduced in \cite[Definition 5.10]{scholze-diamonds} is too weak for our purposes. The following notion of separation is more than sufficient for our purposes. In what follows, we will frequently use the fact that if $X$ and $Y$ are perfectoid spaces over $\Spa(K,K^+)$ for some affinoid field $(K,K^+)$, then the fiber product $X \times_{\Spa(K,K^+)}Y$ is naturally a perfectoid space. By gluing, this reduces to the claim that this fiber product is naturally affinoid perfectoid if $X$ and $Y$ are each affinoid perfectoid, which is \cite[Corollary 3.6.18]{kedlaya-liu}.

\begin{defi}
\begin{enumerate}
\item A map of perfectoid spaces $Z \to X$ is a \emph{Zariski-closed embedding} if for any open affinoid perfectoid subset $U \sub X$, the map $Z \times_X U \to U$ is a Zariski-closed embedding of affinoid perfectoid spaces in the sense of \cite[\S 2.2]{scho}. We say that an open subset $U$ of a perfectoid space $X$ is \emph{Zariski open} if the inclusion $X \setminus U \to X$ is a Zariski closed embedding.

\item A perfectoid space $X$ over a nonarchimedean field $\Spa(K,K^+)$ is \emph{analytically separated} if the diagonal map $X \to X \times_{\Spa(K,K^+)} X$ is a Zariski-closed embedding.
\end{enumerate}
\end{defi}

We caution the reader this definition of being a Zariski-closed embedding is rather delicate: among other things, it's not clear whether this property can be checked locally on a single affinoid cover of $X$, or whether this property is stable under base change. The key property of analytically separated perfectoid spaces that we will use is part (2) of the following lemma.

\begin{lemm} \label{analyticallyseparatedproperties}
 \begin{enumerate}
\item If a perfectoid space $X$ is analytically separated, then it is separated in the sense of \cite{scholze-diamonds}, i.e. $X^\lozenge \to \Spd(K,K^+)$ is a separated map of v-sheaves.

\item If $X$ is analytically separated, then for any two open affinoid perfectoid subsets $U, V \sub X$, the intersection $U \cap V$ is affinoid perfectoid.
\end{enumerate}
\end{lemm}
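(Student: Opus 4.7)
The plan is to tackle (2) first, since it is a concrete geometric statement about affinoid perfectoid intersections, and then to obtain (1) essentially by unwinding definitions. For (2), let $U, V \sub X$ be open affinoid perfectoid subspaces. First I would form the fiber product $U \times_{\Spa(K,K^+)} V$ over the nonarchimedean base; by the fact already invoked in the paper that products of affinoid perfectoids over an affinoid perfectoid field are affinoid perfectoid (cf.\ \cite[Corollary 3.6.18]{kedlaya-liu}), this product is again affinoid perfectoid, and it sits naturally as an open subspace of $X \times_{\Spa(K,K^+)} X$.

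Next I would identify $U \cap V$ with the pullback $X \times_{X \times X} (U \times V)$ of the diagonal $\Delta_X$ along the open inclusion $U \times V \hookrightarrow X \times_{\Spa(K,K^+)} X$. Since $X$ is analytically separated, $\Delta_X$ is by hypothesis a Zariski-closed embedding of perfectoid spaces, and the very definition of this notion gives that its pullback over any open affinoid perfectoid subspace of the target is a Zariski-closed embedding of affinoid perfectoid spaces. Applied to the open affinoid perfectoid $U \times V$, this forces $U \cap V$ to be a Zariski-closed subspace of an affinoid perfectoid space, and such a subspace is itself affinoid perfectoid by construction. This yields (2).

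For (1), I would argue that any Zariski-closed embedding of perfectoid spaces induces a closed immersion of the associated v-sheaves. In the affinoid case this is essentially a direct unwinding: on $\Spa(A/I, (A/I)^+) \to \Spa(A, A^+)$ the induced map of diamonds is checked to be a closed immersion in the sense of \cite{scholze-diamonds}, and the general case reduces to the affinoid case by the local nature of the definition. Applying this to the diagonal produces a closed immersion of v-sheaves $X^\lozenge \to (X \times_{\Spa(K,K^+)} X)^\lozenge = X^\lozenge \times_{\Spd(K,K^+)} X^\lozenge$, which is exactly the condition that $X^\lozenge \to \Spd(K,K^+)$ is separated.

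The main obstacle, such as it is, is purely set-up rather than substance: one must verify that $U \times_{\Spa(K,K^+)} V$ really does give an honest open affinoid perfectoid subspace of $X \times_{\Spa(K,K^+)} X$, so that the pullback clause in the definition of analytic separation actually applies. Once this is secured via the Kedlaya--Liu product statement, both parts are formal consequences of the definitions.
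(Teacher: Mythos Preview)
Your approach is correct and matches the paper's proof essentially verbatim. The paper proves (2) in a single line by writing $U \cap V = (U \times_{\Spa(K,K^+)} V) \times_{X \times_{\Spa(K,K^+)} X,\Delta} X$, which is exactly your fiber product identification; for (1) the paper simply says it is straightforward and left to the reader (noting it is never used), so your sketch via ``Zariski-closed embeddings induce closed immersions of v-sheaves'' supplies more detail than the paper itself.
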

\begin{proof} Part (1) is straightforward and left to the reader (and we won't need it anyway). Part (2) is immediate upon writing $U \cap V = (U \times_{\Spa(K,K^+)} V) \times_{X \times_{\Spa(K,K^+)} X,\Delta} X$.
\end{proof}

In practice, analytic separation can often be checked via the following lemma.

\begin{lemm}\label{rigidlimitsep} Let $(X_i)_{i \in I}$ be a cofiltered inverse system of separated rigid analytic spaces over some $\Spa(K,K^\circ)$, and suppose there is some perfectoid space $X_\infty$ such that $X_\infty = \varprojlim_i X_i^{\lozenge}$ as diamonds. Suppose moreover that each $X_i$ is an open subset of the analytification of a projective variety over $K$. Then $X_\infty$ is analytically separated.
\end{lemm}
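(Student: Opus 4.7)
The plan is to check the Zariski-closed embedding property for $\Delta_\infty : X_\infty \to X_\infty \times_{\Spa(K,K^\circ)} X_\infty$ by writing the diagonal as a descending intersection of Zariski-closed subspaces pulled back from finite level. Throughout, fix an affinoid perfectoid open $V = \Spa(R, R^+) \sub X_\infty \times X_\infty$; the goal is to show that $V \cap \Delta_\infty(X_\infty)$ is Zariski-closed in $V$ in the sense of \cite{scho}.

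At each finite level $i$, since $X_i$ is separated (being open in the separated rigid analytic space $\overline{X}_i$ arising from a projective variety), the diagonal $\Delta_i : X_i \to X_i \times_K X_i$ is a closed immersion of rigid analytic spaces. I would cover the image of $V$ in $X_i \times X_i$ by finitely many affinoid opens on which the diagonal is cut out by explicit equations (obtainable from a projective embedding $\overline{X}_i \hookrightarrow \mb{P}^{N_i}$ together with the standard equations of the diagonal of $\mb{P}^{N_i}$ in affine charts), pull these equations back to $R$, and glue. This yields a Zariski-closed subspace $Z_i := V \cap \pi_i^{-1}(\Delta_i(X_i)) \sub V$ cut out by an ideal $J_i \sub R$.

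As $i$ varies cofinally in $I$, the subspaces $Z_i$ form a decreasing filtered family of Zariski-closed subspaces of $V$ whose set-theoretic intersection is exactly $V \cap \Delta_\infty(X_\infty)$, because two points of $X_\infty$ agree if and only if their images agree in every $X_i$. Correspondingly, the ideals $J_i$ form an increasing filtered family in $R$; write $\overline{J}$ for the closure of $\bigcup_i J_i$.

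The main obstacle is the final step: showing that $\overline{J}$ cuts out an affinoid perfectoid subspace of $V$ agreeing with $V \cap \Delta_\infty(X_\infty)$ as an adic space. The crucial handle is that $V \cap \Delta_\infty(X_\infty)$ is already known to be a perfectoid space, since via the isomorphism $\Delta_\infty : X_\infty \xrightarrow{\sim} \Delta_\infty(X_\infty)$ it corresponds to the quasi-compact open subspace $\Delta_\infty^{-1}(V) \sub X_\infty$. One then matches Tate rings and rings of integral elements across the inverse system, by taking a suitable perfectoid completion of $R/\bigcup_i J_i$ and verifying it coincides with the global sections over $\Delta_\infty^{-1}(V)$. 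This comparison, which is the delicate part of the argument, delivers the required Zariski-closed embedding and hence the analytic separation of $X_\infty$.
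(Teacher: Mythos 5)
Your outline (fix an open affinoid perfectoid $V=\Spa(R,R^+) \sub X_\infty \times_{\Spa(K,K^\circ)} X_\infty$, cut out the finite-level diagonals inside $V$, then pass to the limit) is the same as the paper's, but the two steps you treat as routine are exactly where the content lies, and as written both are gaps. First, the construction of the ideal $J_i \sub R$: you propose to cover the image of $V$ in $X_i \times X_i$ by charts in which the diagonal has explicit equations, pull these back to $R$, and ``glue''. But being a Zariski-closed embedding in the sense of \cite[\S 2.2]{scho} is a \emph{global} condition on the affinoid perfectoid space --- it requires a single ideal of $R$ --- and it is not known that this property can be checked locally on an affinoid cover (the paper explicitly cautions about this right after the definition), so local equations on a cover do not obviously assemble into the required global presentation. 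The paper's proof gets around this precisely by using projectivity: after replacing $X_i \times X_i$ by $V_i^{\an}\times V_i^{\an}$ for the projective closure $V_i$, rigid GAGA furnishes a vector bundle $\mathcal{E}$ surjecting onto the ideal sheaf of the diagonal; its pullback to the affinoid perfectoid $V$ is a finite projective $R$-module, hence generated by finitely many global sections, whose images in $R$ generate a genuine ideal cutting out $W_i = V\times_{X_i\times X_i,\Delta}X_i$. Some such globalization argument is needed, and chart-by-chart equations do not supply it.

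Second, the limit step. You correctly identify that one must show the ideal $\overline{J}$ cuts out, in the precise sense of a Zariski-closed embedding, the subspace $V\times_{X_\infty\times X_\infty,\Delta}X_\infty$, but you then leave this as ``the delicate part'' with only a vague description of a comparison of Tate rings --- so the proposal does not actually prove it. The fact needed (and used in the paper) is that a cofiltered limit of Zariski-closed embeddings into a fixed affinoid perfectoid target is again a Zariski-closed embedding; this follows from Scholze's theory of Zariski-closed subsets of affinoid perfectoid spaces (the closed subspace attached to an ideal is affinoid perfectoid and satisfies a universal property, so the filtered intersection is the closed subspace attached to the ideal generated by the union), after which the identification of $\varprojlim_i W_i$ with $V\times_{X_\infty\times X_\infty,\Delta}X_\infty$ is formal from $X_\infty=\varprojlim_i X_i$ as diamonds. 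Note also that your ``crucial handle'' --- that $V\cap\Delta_\infty(X_\infty)\cong\Delta_\infty^{-1}(V)$ is perfectoid because it is open in $X_\infty$ --- is strictly weaker than what is required: Zariski-closedness demands the specific ideal-theoretic presentation of this subspace inside $\Spa(R,R^+)$, so that observation does not close the gap.
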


\begin{proof} By assumption, we can choose open immersions $X_i \to V_{i}^{\an}$ for some projective varieties $V_i$.  Let $U \sub X_\infty \times_{\Spa(K,K^\circ)} X_\infty$ be some open affinoid perfectoid subset. Set \[ W_i = U \times_{X_i \times_{\Spa(K,K^\circ)} X_i,\Delta} X_i \cong U \times_{V_{i}^{\an} \times_{\Spa(K,K^\circ)} V_{i}^{\an},\Delta} V_{i}^{\an}. \] A priori, we are computing this fiber product as diamonds. However, by the subsequent lemma, $W_i$ is affinoid perfectoid and the resulting map $W_i \to U$ is a Zariski-closed embedding. Then $U \times_{X_\infty \times_{\Spa(K,K^\circ)} X_\infty,\Delta} X_\infty = \varprojlim_i W_i$ is affinoid perfectoid, and $\varprojlim_i W_i \to U$ is a cofiltered limit of Zariski-closed embeddings. Since any cofiltered limit of Zariski-closed embeddings with fixed target is a Zariski-closed embedding, we get the result.
\end{proof}

\begin{lemm} Let $Y \to X$ be a closed immersion of quasi-projective varieties over a nonarchimedean field $K$, and let $Z$ be any perfectoid space equipped with a map $f:Z \to X^{\an}$. Then the diamond $W= Z \times_{X^{\an}} Y^{\an}$ is a perfectoid space, and the natural map $W\to Z$ is a Zariski-closed embedding.
\end{lemm}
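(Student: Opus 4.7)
The plan is to reduce to a local affinoid perfectoid statement and then glue. First, observe that being a Zariski-closed embedding is, by definition, a property that can be checked after restricting to an open affinoid perfectoid cover of the target $Z$. So it suffices to treat the case where $Z = \Spa(A,A^+)$ is affinoid perfectoid.

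Next, I would use the quasi-projectivity of $X$ to exhibit a finite affine open cover $X = \bigcup_{i=1}^n U_i$ such that, for each $i$, the closed subscheme $Y \cap U_i \sub U_i$ is cut out by finitely many equations $g_{i,1}, \dots, g_{i,m_i} \in \mathcal{O}(U_i)$. Each $U_i^{\an}$ admits a standard admissible affinoid cover $U_{i,r}$ obtained by bounding the coordinates on $U_i$, and the preimages $f^{-1}(U_{i,r}^{\an}) \sub Z$ give an open cover of $Z$. Refining, we obtain an open affinoid perfectoid cover $Z = \bigcup_\beta Z_\beta$ where each $Z_\beta = \Spa(A_\beta, A_\beta^+)$ maps into some affinoid $U_{i,r}^{\an}$ via $f$.

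The key local step is then to verify that, on each such piece, the fiber product $W_\beta := Z_\beta \times_{X^{\an}} Y^{\an}$, a priori just a diamond, is represented by the affinoid perfectoid Zariski-closed subspace $V(f^\ast g_{i,1}, \dots, f^\ast g_{i,m_i}) \sub Z_\beta$ cut out by the pulled-back equations, which is affinoid perfectoid and Zariski-closed in $Z_\beta$ in the sense of \cite[\S 2.2]{scho}. This identification is a universal-property check: a $T$-point of the diamond $W_\beta$ is a map $T \to Z_\beta$ whose composite with $f$ factors through $(Y \cap U_{i,r})^{\an}$, and by the universal property of analytification of an affine scheme this is exactly the condition that the pullbacks of $g_{i,1}, \dots, g_{i,m_i}$ vanish on $T$—which is the condition defining the zero locus inside $Z_\beta$.

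Finally, I would glue: the affinoid perfectoid spaces $W_\beta$, each Zariski-closed in the corresponding $Z_\beta$, patch together to a perfectoid space $W$ with a map to $Z$ which is locally on the target a Zariski-closed embedding, hence a Zariski-closed embedding by definition. The main technical obstacle is the universal-property verification in the third paragraph: one must be careful that fiber products computed in the category of diamonds really do agree with the affinoid perfectoid zero locus on each patch, rather than just producing an \emph{a priori} larger or smaller v-sheaf. Once this local comparison is in place, the reduction and gluing steps are routine.
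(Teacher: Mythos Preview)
Your argument has a genuine gap at the gluing step, and it stems from a misreading of the definition of ``Zariski-closed embedding'' in this paper. You write that being a Zariski-closed embedding ``is, by definition, a property that can be checked after restricting to an open affinoid perfectoid cover of the target''. This is not what the definition says: it requires that for \emph{every} open affinoid perfectoid subset $U \sub Z$, the pullback $W \times_Z U \to U$ is a Zariski-closed embedding of affinoid perfectoid spaces. The paper explicitly cautions that it is not clear whether this can be verified on a single affinoid cover. Your first reduction to $Z = \Spa(A,A^+)$ affinoid perfectoid is correct, but after that you must show that $W$ itself is affinoid perfectoid and is cut out of $Z$ by a \emph{global} ideal $I \sub A$. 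Instead you further cover $Z$ by smaller affinoid perfectoid pieces $Z_\beta$ and produce local equations on each $Z_\beta$; gluing these gives a perfectoid space $W$, but there is no mechanism to conclude that $W$ is affinoid or that the local ideals patch to a single ideal in $A$.

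The paper's proof avoids precisely this difficulty by a global argument: it first compactifies, replacing $X$ and $Y$ by projective varieties, so that by rigid GAGA the ideal sheaf $\mathcal{I} \sub \mathcal{O}_{X^{\an}}$ admits a surjection from a vector bundle $\mathcal{E}$ on all of $X^{\an}$. Pulling back to the affinoid perfectoid $Z$, the bundle $f^{\ast}\mathcal{E}$ is a finitely generated projective $A$-module, hence globally generated, and the images of its generators in $A$ produce a single global ideal $I \sub A$ cutting out $W$. This use of projectivity to globalize the equations is the key idea your local approach is missing.
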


\begin{proof} Unwinding the definitions, it suffices to prove that if $Z$ is affinoid perfectoid, then $W= Z \times_{X^{\an}} Y^{\an} \to Z$ is a Zariski-closed embedding of affinoid perfectoid spaces.

\medskip

Replacing $X$ by its closure in some projective space, and replacing $Y$ by its closure in $X$, we can assume that $Y \to X$ is a closed immersion of projective varieties. Let $\mathcal{I} \sub \mathcal{O}_{X^{\an}}$ be the ideal sheaf cutting out $Y^\an$. By rigid GAGA and the projectivity of $X$, we can choose a vector bundle $\mathcal{E}$ on $X^\an$ together with a surjection $\mathcal{E} \twoheadrightarrow \mathcal{I}$. Then $f^\ast \mathcal{E}$ is naturally a vector bundle on $Z$, and the image of the natural map $f^{\ast}\mathcal{E} \to \mathcal{O}_Z$ is just the ideal sheaf generated by $f^{-1} \mathcal{I}$. However, $Z$ is affinoid perfectoid, so $f^\ast\mathcal{E}$ is generated by its global sections, which are just a finitely generated projective $\mathcal{O}_Z(Z)$-module. In paticular, if $e_1, \dots, e_n \in H^0(Z,f^\ast \mathcal{E})$ is any set of generators, then their images in $\mathcal{O}_Z(Z)$ generate an ideal $I$ corresponding to the ideal sheaf generated by $f^{-1} \mathcal{I}$. Let $W\sub Z$ be the Zariski-closed subset cut out by $I$. It is then easy to see that $W$ represents the fiber product claimed in the statement of the lemma.
\end{proof}

\begin{theo}\label{perfgpquotient} Let $X$ be a perfectoid space over a nonarchimedean field, with an action of a finite group $G$. Assume that $X$ is analytically separated, and that for every rank one point $x \in X$, the closure $\overline{ \{x \} } \sub X$ is contained in some open affinoid perfectoid subspace $U=\Spa(A,A^+) \sub X$. 

Then the categorical quotient $X/G$ is a perfectoid space, and the natural map $q: X \to X/G$ is affinoid in the (weak) sense that any point $y \in X/G$ admits a neighborhood basis of open affinoid perfectoid subsets $Y\sub X$ whose preimages $q^{-1}(Y)$ are affinoid perfectoid. Moreover, the canonical morphism $X^\lozenge / \underline{G} \to (X/G)^{\lozenge}$ is an isomorphism.
\end{theo}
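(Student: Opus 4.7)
The plan is to closely adapt the proof of Theorem \ref{rigidgpquotient} to the perfectoid setting. Two new ingredients are required beyond what is used in the rigid case. First, Lemma \ref{analyticallyseparatedproperties}(2)---asserting that the intersection of two open affinoid perfectoid subsets of an analytically separated perfectoid space is again affinoid perfectoid---replaces the role played by separatedness in the rigid proof, and in particular allows us to $H_x$-symmetrize affinoid perfectoid neighborhoods. Second, we invoke the standard ``affinoid quotient lemma'': if $(A,A^+)$ is an affinoid perfectoid Tate--Huber pair with a continuous action of a finite group $H$, then $(A^H, A^{+H})$ is perfectoid, and $\Spa(A^H, A^{+H})$ represents the categorical quotient of $\Spa(A,A^+)$ by $H$. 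The cleanest way to establish this is by tilting: in characteristic $p$ the subring $(A^\flat)^H$ is manifestly perfect (since the unique $p$-th root of an $H$-invariant element is itself $H$-invariant), and the mixed characteristic statement is then transported back via Scholze's tilting equivalence.

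Granting these inputs, fix a rank one point $x\in X$ with stabilizer $H_x \sub G$. Exactly as in the rigid case, applying Lemma \ref{gclean} to the maximal Hausdorff quotient $|X|^h$ and pulling back along $|X| \to |X|^h$ yields a $G$-clean open $\tilde{U}_x \sub X$ containing $\overline{\{x\}}$. By hypothesis, choose an affinoid perfectoid open $V_x = \Spa(A,A^+) \sub X$ containing $\overline{\{x\}}$, and replace it by $\bigcap_{h \in H_x} V_x h$, which remains affinoid perfectoid by Lemma \ref{analyticallyseparatedproperties}(2); this makes $V_x$ additionally $H_x$-stable. Set $W_x := \tilde{U}_x \cap V_x$, a $G$-clean, $H_x$-stable open subset of $V_x$ containing $\overline{\{x\}}$. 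Then $W_x \times^{H_x} G \sub X$ is a $G$-stable open containing $\overline{\{x\}}G$, and its quotient $(W_x \times^{H_x} G)/G \cong W_x/H_x$ is an open subset of the (affinoid) perfectoid space $V_x/H_x = \Spa(A^{H_x}, A^{+H_x})$, hence perfectoid. Since every point of $X$ specializes to a rank one point, the opens $W_x \times^{H_x} G$ cover $X$, so the quotients $W_x/H_x$ cover $X/G$, giving it the structure of a perfectoid space.

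For the neighborhood basis claim, fix $y = xG \in X/G$ and any open neighborhood of $y$; its preimage in $X$ is a $G$-stable open of the orbit. Shrinking to a rational subset of $V_x$ containing $x$ and $H_x$-symmetrizing once more via Lemma \ref{analyticallyseparatedproperties}(2) produces an $H_x$-stable rational subset $Z \sub W_x$ with $Z/H_x$ inside the given neighborhood; the affinoid quotient lemma makes $Z/H_x$ affinoid perfectoid, and the preimage $q^{-1}(Z/H_x) = \bigsqcup_{g \in H_x \backslash G} Zg$ is a finite disjoint union of affinoid perfectoid spaces, hence affinoid perfectoid. Finally, the diamond statement $X^\lozenge/\underline{G} \to (X/G)^\lozenge$ is local on $X/G$, so by the above cover it reduces to the case $V^\lozenge/\underline{H} \to (V/H)^\lozenge$ for $V = \Spa(A,A^+)$ affinoid perfectoid with $H$ finite. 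This is handled verbatim as at the end of the proof of Theorem \ref{rigidgpquotient}: one reduces to checking that $\Spd(A,A^+) \to \Spd(A^H, A^{+H})$ and its diagonal are quasi-pro-\'etale, which can be verified on rank one geometric points via \cite[Proposition 13.6]{scholze-diamonds}. The principal technical obstacle in the whole argument is the affinoid quotient lemma itself (particularly in mixed characteristic); everything else is a careful translation of the rigid proof, with analytic separation substituting for separatedness at each symmetrization step.
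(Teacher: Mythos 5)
Your argument follows the paper's proof step for step: the $G$-clean neighborhoods from Lemma \ref{gclean} applied to $|X|^h$, the $H_x$-symmetrization of an affinoid perfectoid $V_x$ via Lemma \ref{analyticallyseparatedproperties}(2), the $G$-stable open $W_x \times^{H_x} G$ with quotient $W_x/H_x$ embedded in $\Spa(A^{H_x}, A^{+H_x})$, the identification of a neighborhood basis via rational subsets, and the reduction of the diamond statement to the affinoid case handled exactly as at the end of Theorem \ref{rigidgpquotient}. So structurally this is the paper's proof.

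The one substantive problem is your sketch of the ``affinoid quotient lemma,'' which you single out as the principal technical obstacle. The characteristic-$p$ half is fine, but ``transported back via Scholze's tilting equivalence'' is not a proof of the mixed-characteristic statement. Tilting is an equivalence between \emph{perfectoid} $K$-algebras and perfectoid $K^\flat$-algebras; knowing $(A^\flat)^H$ is perfectoid lets you untilt it to a perfectoid $K$-algebra $B$ together with an inclusion $B \hookrightarrow A^H \subseteq A$, but the reverse inclusion $A^H \subseteq B$ is precisely the point at issue---you cannot apply the equivalence to $A^H$ itself, because you do not yet know it is perfectoid. Concretely, the sticking point is surjectivity of Frobenius on $(A^\circ)^H/\varpi$: given $a \in (A^\circ)^H$ one finds $c \in A^\circ$ with $c^p \equiv a \bmod \varpi$, but $c$ need not be $H$-invariant, and in characteristic zero $(hc)^p - c^p \in \varpi A^\circ$ does not force $hc - c \in \varpi^{1/p}A^\circ$ (no additivity of Frobenius), nor does averaging over $H$ help integrally when $p$ divides $|H|$. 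This is exactly why the paper cites \cite[Theorem 1.4]{hansen} at this spot, which needs genuine input beyond formal tilting. Since you do correctly \emph{state} the lemma and then ``grant'' it, the remainder of your proof is sound, but the claim that tilting gives a clean proof should be withdrawn in favor of the citation.

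Two small remarks: ``every point of $X$ specializes to a rank one point'' has the direction reversed (every point is a \emph{specialization of}, i.e.\ generalizes to, a rank one point), though the covering argument is unaffected; and in the neighborhood-basis step, the intersection $\bigcap_{h \in H_x} Z'h$ of rational subsets inside the fixed affinoid $V_x$ is rational for elementary reasons---Lemma \ref{analyticallyseparatedproperties}(2) is only needed when intersecting affinoid perfectoid opens that do not sit inside a common affinoid, as in the $V_x$-symmetrization step.
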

\begin{proof} The first portion of the proof is nearly identical to the proof of Theorem \ref{rigidgpquotient}, but we repeat the details for the reader's convenience.

\medskip

Let $x\in X$ be any rank one point, with stabilizer $H_{x}$ and
closure $\overline{\{x\}}\subset X$. Let $|X|^{h}$ be the maximal
Hausdorff quotient of $|X|$, and let $\pi : |X| \to |X|^{h}$ be the natural map, so if $x \in |X|$ is any rank one point, then $\overline{\{ x \} } \subseteq \pi^{-1}(\pi(x))$. By functoriality of the maximal Hausdorff quotient, $G$ naturally acts on $|X|^h$ and $\pi$ is $G$-equivariant.  By Lemma \ref{gclean} we can choose a $G$-clean open neighborhood $U_{x}\sub |X|^{h}$
of $\pi(x)$. Let $\tilde{U}_{x}$ be the preimage of $U_{x}$ in $|X|$,
so $\tilde{U}_{x}$ is a $G$-clean open neighborhood
of $x$ containing $\overline{\{x\}}$.

\medskip

By assumption, we can choose an open affinoid perfectoid subspace $V_{x}=\mathrm{Spa}(A,A^+)\sub X$ containing $\overline{\{x\}}$. Since $X$ is analytically separated, the intersection $\cap_{h\in H_{x}}V_{x}h$ is affinoid perfectoid by Lemma \ref{analyticallyseparatedproperties}.(2), so after replacing $V_{x}$ by $\cap_{h\in H_{x}}V_{x}h$,
we can assume that $V_{x}$ is $H_{x}$-stable. The intersection $W_{x}=\tilde{U}_{x}\cap V_{x}$
is still a $G$-clean open neighborhood of $x$ containing $\overline{\{x\}}$.
Now, observe that $W_{x}\times^{H_{x}}G\subset X$ is a $G$-stable
open subspace of $X$ containing $\overline{\{x\}}G$ with the crucial
property that
\[
W_{x}/H_{x}\cong\left(W_{x}\times^{H_{x}}G\right)/G \sub X/G
\]
is naturally a perfectoid space, because $V_{x}/H_{x}\cong\mathrm{Spa}(A^{H_{x}},A^{+ H_{x}})$
is an affinoid perfectoid space by \cite[Theorem 1.4]{hansen} and $|W_{x}|/H_{x}$ is an open subset of
$|V_{x}|/H_{x}$. Varying over all rank one points $x\in X$, the
spaces $W_{x}/H_{x}$ give an open covering of $X/G$ by perfectoid
spaces, so $X/G$ is a perfectoid space, as desired. 

\medskip

To see that $q$ is affinoid, let $y \in X/G$ be any point, so $y$ is contained in some $W_x/H_x$. Let $Y \sub W_x/H_x \sub X/G$ be any open subset containing $y$ such that $Y$ is a rational subset of $V_x/H_x$. The set of such $Y$'s is clearly a neighborhood basis of $y$. Moreover, $q^{-1}(Y)$ is a finite disjoint union of copies of the preimage of $Y$ in $V_x$, but the latter preimage is a rational subset of $V_x$, and hence is affinoid perfectoid, so $q^{-1}(Y)$ is affinoid perfectoid. Varying $y$, we get the claim. 

\medskip

The last point follows exactly as in the proof of Theorem \ref{rigidgpquotient}.
\end{proof}
  
In the next section, we will often be in a situation where we have a morphism between two inverse systems of Shimura varieties for some closely related Shimura data. In the remainder of this section, we prove some results which will allow us to transfer information from one inverse system to the other.

\begin{lemm} \label{twotowers} Let $(X_i)_{i \in I} \overset{f_i}{\longrightarrow} (Y_i)_{i \in I}$ be a morphism of cofiltered inverse systems of locally Noetherian adic spaces. Assume moreover that the maps $f_i$ and the transition maps in the inverse systems are all finite maps, and that $Y_\infty = \varprojlim_i Y_i^{\lozenge}$ is perfectoid. 

Then $X_{\infty} = \varprojlim_i X_i^{\lozenge}$ is perfectoid, and the morphism $f_\infty: X_\infty \to Y_\infty$ is quasicompact. Moreover, if $U \sub Y_\infty$ is an open affinoid perfectoid subset which arises as the preimage of an open affinoid $U_i \sub Y_i$ for some $i$, then $f_{\infty}^{-1}(U) \sub X_\infty$ is also affinoid perfectoid. Finally, $f_{\infty}$ is affinoid in the sense of Theorem \ref{perfgpquotient}.
\end{lemm}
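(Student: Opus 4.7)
The plan is to construct $X_\infty$ as a perfectoid space by exhibiting affinoid perfectoid charts coming from preimages of suitable affinoid perfectoid subsets of $Y_\infty$, and then to derive the quasicompactness and affinoid-ness of $f_\infty$ as consequences. First I would reduce to the affinoid-local claim. Let $U = \Spa(R,R^+) \sub Y_\infty$ be an open affinoid perfectoid subset arising as the preimage of an open affinoid $U_i \sub Y_i$. For each $j \geq i$, finiteness of the transition maps in the $Y$-system shows that $V_j$, the preimage of $U_i$ in $Y_j$, is an open affinoid $\Spa(B_j, B_j^+)$; finiteness of $f_j$ then shows that $W_j := f_j^{-1}(V_j) = \Spa(A_j, A_j^+) \sub X_j$ is affinoid with $A_j$ finite over $B_j$. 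By construction $(R, R^+)$ is the $\pi$-adic completion of $\varinjlim_j (B_j, B_j^+)$, and the natural candidate for $f_\infty^{-1}(U)$ lives over the $\pi$-adic completion of $\varinjlim_j A_j^+$.

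The heart of the proof is showing this candidate is affinoid perfectoid. Since each $A_j^+$ is finite (hence integral) over $B_j^+$, the $\pi$-adic completion $A_\infty^+ := (\varinjlim_j A_j^+)^{\wedge}_\pi$ is integral over the perfectoid ring $R^+$. Invoking the perfectoidization theorem \cite[Theorem 1.16(1)]{bhatt-scholze}, there exists a perfectoid $R^+$-algebra $(A_\infty^+)_{\mathrm{perfd}}$ together with a universal map from $A_\infty^+$. The corresponding affinoid perfectoid space $\Spa((A_\infty^+)_{\mathrm{perfd}}[1/\pi], (A_\infty^+)_{\mathrm{perfd}})$ is then our candidate for $f_\infty^{-1}(U)$. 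To see that it actually represents the cofiltered limit $\varprojlim_j W_j^\lozenge$, one exploits the fact that perfectoidization is compatible with integral limits of finite algebras together with the fact that diamonds of affinoid perfectoid spaces are determined by their underlying functors on $\mathrm{Perfd}$. The upshot is that the candidate and the limit agree, yielding an affinoid perfectoid $f_\infty^{-1}(U)$.

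With the affinoid-local claim established, I would globalize by noting that the open affinoid perfectoid subsets of $Y_\infty$ arising as preimages of affinoids from some finite stage $Y_j$ cover $Y_\infty$: any rational subset of $Y_\infty$ defined by data pulled back from some $Y_j$ has this form, and such rationals give a basis of open affinoid perfectoid neighborhoods. Gluing the affinoid perfectoid preimages endows $X_\infty$ with the structure of a perfectoid space, and both the quasicompactness of $f_\infty$ and the ``affinoid in the sense of Theorem \ref{perfgpquotient}'' assertion fall out of this local description. The main obstacle is the perfectoidness of the candidate: integral extensions of perfectoid rings are not perfectoid in general, and the fact that $A_\infty^+$ arises from a global tower is essential. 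Heuristically the mod-$\pi$ transition maps in $(A_j^+)$ become close to Frobenius as $j$ grows, because the analogous compatibility holds for $(B_j^+)$ by perfectoidness of $R^+$ and is propagated upward by finiteness of $A_j^+$ over $B_j^+$; making this precise is exactly what the Bhatt--Scholze perfectoidization result accomplishes and is the key external input.
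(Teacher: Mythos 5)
Your overall architecture (reduce to an affinoid-local statement over a pulled-back affinoid perfectoid $U \sub Y_\infty$, invoke Bhatt--Scholze perfectoidization for integral algebras over a perfectoid base, then globalize via finite-level rational subsets as in Lemma \ref{affinoidcover}) matches the paper's, but the central step as you have written it does not work. You assert that $A_\infty^+ := (\varinjlim_j A_j^+)^{\wedge}_\pi$ is integral over $R^+$, and this is exactly the hypothesis you need in order to apply \cite[Theorem 1.16(1)]{bhatt-scholze}; however, $\pi$-adic completion destroys integrality for infinite integral extensions (the basic example being $\Zp \to \ol{\Z}_p$, whose completion $\mathcal{O}_{\C_p}$ is not integral over $\Zp$), so the completed colimit of the $A_j^+$ has no reason to be integral over $R^+$. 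A second, related gap is the claim that $(R,R^+)$ \emph{is} the completed colimit of the $(B_j,B_j^+)$: from $U=\varprojlim_j V_j$ one only gets a canonical map from the completed colimit pair to $(R,R^+)$ which is an isomorphism of diamonds, not visibly of Huber pairs (indeed, if it were automatically an isomorphism there would be little left to prove about such limits), so your $A_\infty^+$ is not even naturally an $R^+$-algebra without a base change.

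The gap is repairable, and the repair is essentially a change in the order of operations, which is what the paper does: instead of completing first and perfectoidizing once, perfectoidize at each finite level and take the limit afterwards. Concretely, for each $j$ form the fiber product $W_j \times_{V_j} U$; its ring of functions is $R \otimes_{\mathcal{O}(V_j)} \mathcal{O}(W_j)$ with plus-ring the integral closure of the image of $R^+ \otimes_{\mathcal{O}^+(V_j)} \mathcal{O}^+(W_j)$, which \emph{is} integral over $R^+$ with no completion involved, so \cite[Theorem 1.16(1)]{bhatt-scholze} (packaged as in the paper's auxiliary lemma, via the universal property on $(C,C^+)$-points of perfectoid pairs) shows each $W_j \times_{V_j} U$ is affinoid perfectoid; then $f_\infty^{-1}(U) = \varprojlim_j \bigl( W_j \times_{V_j} U \bigr)$ is a cofiltered limit of affinoid perfectoid spaces, hence affinoid perfectoid. (Alternatively, within your scheme one could apply perfectoidization to the uncompleted integral $R^+$-algebra $\varinjlim_j A_j^+ \otimes_{\varinjlim_j B_j^+} R^+$, noting that any map from it to an integral perfectoid ring factors automatically through the $\pi$-adic completion; but one must not assert integrality of the completion itself.) Your globalization step and the deduction of quasicompactness and affinoidness of $f_\infty$ are fine once the local statement is in place, and coincide with the paper's use of Lemma \ref{affinoidcover} and Theorem \ref{perfgpquotient}.
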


With more effort, one can show that the morphism $f_\infty$ is proper and quasi-pro-\'etale in the sense of \cite{scholze-diamonds}. We will not need this.

\begin{proof} Without loss of generality, we may assume that $I$ contains an initial element $0$. Next, observe that 
\begin{align*}
X_\infty & \cong \varprojlim_j X_\infty \times_{Y_j} Y_{\infty} \\
 & \cong \varprojlim_{i \geq j} X_i \times_{Y_j} Y_{\infty} \\
  & \cong \varprojlim_{i} X_i \times_{Y_i} Y_{\infty}
\end{align*} 
using the cofinality of the diagonal to get the last line. Choose an open affinoid subset $U_0 \sub Y_0$ with preimages $U_i \sub Y_i$, $W_i \sub X_i$, $U_\infty \sub Y_\infty$, $W_\infty \sub X_\infty$. To prove the first part of the theorem, it suffices to prove that $W_\infty$ is a perfectoid space. This can be checked locally on some covering of $U_\infty$ by open affinoid perfectoid subsets $V = \Spa(R,R^+) \sub U_\infty$. By our assumptions, the natural maps $W_i \to U_i$ are finite maps of affinoid adic spaces, so in particular $\mathcal{O}^+(U_i) \to \mathcal{O}^+(W_i)$ is an integral ring map. By general nonsense, the fiber product $X_i \times_{Y_i} V = W_i \times_{U_i} V$ is computed as $\mathrm{Spd}(S,S^+)$, where $S = R \otimes_{\mathcal{O}(U_i)} \mathcal{O}(W_i)$ (topologized in the usual way) and $S^+$ is the integral closure of $\mathrm{im}(R^+ \otimes_{\mathcal{O}^+(U_i)} \mathcal{O}^+(W_i) \to S)$ in $S$. In particular, $R^+ \to S^+$ is an integral ring map, so the subsequent lemma implies that $W_i \times_{U_i} V$ is an affinoid perfectoid space. Passing to the limit over $i$, we deduce that $W_\infty \times_{U_\infty} V$ is an affinoid perfectoid space, and then varying over all choices of $U_0 \sub Y_0$ and $V \sub U_\infty$ as above, we conclude that $X_\infty$ is a perfectoid space. 

\medskip
   
Quasicompactness of $f_{\infty}$ is clear. For the remaining claims of the theorem, choose some $U_i \sub Y_i$ and $U \sub Y_\infty$ as in the statement of the claim, and let $U_j \sub Y_j$ and $W_j \sub X_j$ denote the evident preimages for all $j \geq i$. Arguing as in the first part of the proof, we see that $f_{\infty}^{-1}(U)=\varprojlim_{j \geq i} W_j \times_{U_j} U$ and that $W_j \times_{U_j} U$ is an affinoid perfectoid space for any $j \geq i$. Passing to the limit over $j$ gives the claim. Affinoidness of $f_{\infty}$ now follows from Lemma \ref{affinoidcover} below.
\end{proof}

In the course of this proof, we crucially used the following result, which is essentially just a rephrasing of a theorem of Bhatt-Scholze.

\begin{lemm} \label{integralperfectoid} Let $(R,R^+) \to (S,S^+)$ be a map of Tate-Huber pairs such that $R$ is a perfectoid Tate ring and the ring map $R^+ \to S^+$ is  integral. Then the diamond $\Spd(S,S^+)$ is an affinoid perfectoid space.
\end{lemm}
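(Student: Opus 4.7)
The plan is to apply the Bhatt--Scholze perfectoidization functor to $S^+$ and verify that the resulting affinoid perfectoid Huber pair represents the diamond $\Spd(S,S^+)$.

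First, fix a pseudo-uniformizer $\varpi \in R^+$. Integrality of $R^+ \to S^+$ combined with the boundedness of $R^+$ in $R$ forces $S^+$ to be bounded in $S$, so the topology on $S^+$ is the $\varpi$-adic one; completeness of the Huber pair then gives that $S^+$ is $\varpi$-adically complete, and the image of $\varpi$ is a pseudo-uniformizer of $S$.

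Next, form the perfectoidization $T := (S^+)_{\mathrm{perfd}}$ in the sense of Bhatt--Scholze. The essential content of their Theorem 1.16(1), applied to the integral $R^+$-algebra $S^+$, is that the a priori derived perfectoidization is concentrated in degree zero: $T$ is an honest integral perfectoid $R^+$-algebra equipped with a map $S^+ \to T$ that is universal for maps from $S^+$ into integral perfectoid $R^+$-algebras. Set $S' := T[1/\varpi]$, which is a perfectoid Tate ring, and let $S'^+ \sub S'$ denote the integral closure of the image of $T$. Then $(S', S'^+)$ is an affinoid perfectoid Huber pair admitting a canonical map from $(S,S^+)$.

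To finish, I verify that the induced morphism $\Spd(S',S'^+) \to \Spd(S,S^+)$ is an isomorphism of diamonds. Given any test object $\Spa(B,B^+) \in \mathrm{Perf}$, an element of $\Spd(S,S^+)(\Spa(B,B^+))$ is the data of an untilt $(B^\sharp, B^{\sharp+})$ together with a continuous map of Huber pairs $(S,S^+) \to (B^\sharp, B^{\sharp+})$. Its restriction $S^+ \to B^{\sharp+}$ is an $R^+$-algebra map into an integral perfectoid $R^+$-algebra, and therefore factors uniquely through $T$ by the universal property above; this factorization assembles into a unique continuous map $(S',S'^+) \to (B^\sharp, B^{\sharp+})$, giving the required bijection of sheaves.

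The main obstacle is the discreteness assertion for $T$: the Bhatt--Scholze perfectoidization is a priori a derived object, and only the integrality hypothesis on $R^+ \to S^+$ guarantees that the higher derived terms vanish so that $T$ is an ordinary ring. This is precisely the ``integral case'' of their Theorem 1.16(1), and is the reason our lemma is ``essentially a rephrasing'' of their result.
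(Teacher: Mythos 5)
Your argument is correct and follows essentially the same route as the paper: invoke the integral case of Bhatt--Scholze's Theorem 1.16(1) to obtain the (non-derived) perfectoidization $S^+_{\mathrm{perfd}}$ with its universal property, invert $\varpi$ and take the integral closure to produce an affinoid perfectoid pair, and then use the universal property to identify the diamonds (the paper phrases this last step as a bijection on maps to perfectoid Tate--Huber pairs rather than as a sheaf-theoretic check on test objects, but the content is identical).
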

\begin{proof}Choose a pseudouniformizer $\varpi \in R^+$. Since $R^+$ is integral perfectoid and $R^+ \to S^+$ is an integral ring map, \cite[Theorem 1.16(1)]{bhatt-scholze} guarantees the existence of an integral perfectoid $S^+$-algebra $S^+_{\mathrm{perfd}}$ such that any map from $S^+$ to an integral perfectoid ring factors uniquely through the map $S^+ \to S^+_{\mathrm{perfd}}$. Set $T=S^+_{\mathrm{perfd}}[1/\varpi]$, and let $T^+ \subset T$ be the integral closure of $S^+_{\mathrm{perfd}}$ in $T$. Then $T$ is a perfectoid Tate ring, and the natural map $(S,S^+) \to (T,T^+)$ induces a bijection $\mathrm{Hom}( (T,T^+), (A,A^+) ) \cong \mathrm{Hom} ((S,S^+),(A,A^+))$ for any perfectoid Tate-Huber pair $(A,A^+)$. This shows that $\Spd(S,S^+)\cong \Spd(T,T^+)$ is affinoid perfectoid, as desired.
\end{proof}

We also used the following result.

\begin{lemm} \label{affinoidcover} Let $(X_i)_{i \in I}$ be a cofiltered inverse system of locally Noetherian adic spaces with finite transition maps. Assume that $X =\varprojlim X_i^{\lozenge}$ is a perfectoid space. Then $X$ has a neighborhood basis of open affinoid perfectoid subsets $W \subset X$ which are preimages of open affinoids $W_i \subset X_i$ at (variable) finite levels.
\end{lemm}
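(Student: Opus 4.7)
\medskip

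\noindent\textbf{Proof plan.}
The goal is, given $x\in X$ and an open neighborhood $U\ni x$, to find an open affinoid perfectoid $W\subseteq U$ containing $x$ of the form $W=\pi_j^{-1}(V_j)$ for some $j\in I$ and some open affinoid $V_j\subseteq X_j$ (here $\pi_j:X\to X_j$ is the structure map). For the first step, I would use that $|X|=\varprojlim_i |X_i|$ as topological spaces. Since the $X_i^{\lozenge}$ are spatial diamonds (they are locally Noetherian analytic adic spaces) and the transition maps $X_k^{\lozenge}\to X_{k'}^{\lozenge}$ are qcqs, being finite, this topological description of the limit holds. Combined with the fact that each locally Noetherian $X_i$ has a basis of open affinoids, one deduces that the sets $\pi_j^{-1}(V_j)$, as $j$ varies over $I$ and $V_j$ over open affinoids of $X_j$, form a basis of the topology on $X$. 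So I can pick $j$ and $V_j$ with $x\in W:=\pi_j^{-1}(V_j)\subseteq U$.

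Next, I would identify $W$ as a cofiltered limit of open affinoids at finite level. Set $V_k:=\pi_{k,j}^{-1}(V_j)\subseteq X_k$ for $k\geq j$. Since the transition maps $X_k\to X_j$ are finite---hence affinoid---the preimage $V_k$ is an open affinoid in $X_k$; write $V_k=\Spa(A_k,A_k^+)$. The maps $V_k\to V_{k'}$ for $k\geq k'\geq j$ are finite, so $A_{k'}\to A_k$ is module-finite and $A_{k'}^+\to A_k^+$ is integral. Passing to the limit, $W\cong \varprojlim_{k\geq j} V_k$ as spatial diamonds, and setting $A_\infty:=\varinjlim_k A_k$, $A_\infty^+:=\varinjlim_k A_k^+$, with completion $(A,A^+)$, there is a canonical map of v-sheaves $W\to \Spd(A,A^+)$ which is a homeomorphism on underlying topological spaces (by the topological description of the limit together with the standard description of $|\Spa(A,A^+)|$ in terms of the $|V_k|$).

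The main obstacle is to show that $(A,A^+)$ is a perfectoid Huber pair, so that $W=\Spa(A,A^+)$ is genuinely affinoid perfectoid. Here the argument parallels the one in the proof of Lemma \ref{twotowers}: since $W$ is an open subset of the perfectoid space $X$, it is itself a perfectoid space; in particular $W$ can be covered by open affinoid perfectoid subsets $\Spa(R,R^+)\subseteq W$. For each such $R$, the natural map $A_\infty^+\to R^+$ is integral, since each $A_k^+\to A_l^+$ is (a fortiori) integral and $A^+_k \to R^+$ factors through this colimit. Applying the Bhatt--Scholze perfectoidization (the key lemma used in the proof of Lemma \ref{twotowers}), the diamond $\Spd(A, A^+) \times_{\Spd(A_j,A_j^+)} \Spa(R,R^+)$ is an affinoid perfectoid space, and these glue to $\Spd(A,A^+)$ as $\Spa(R,R^+)$ varies over a cover of $W$. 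A standard gluing and cofinality argument then identifies $\Spd(A,A^+)$ with $W$ as diamonds, and forces $(A,A^+)$ to be perfectoid. This yields that $W=\Spa(A,A^+)$ is affinoid perfectoid, completing the proof.

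The main technical difficulty is this last step, i.e.\ promoting the local perfectoidness of $W$ (coming from it being open in $X$) to the statement that the completion of the colimit of finite-level rings is itself perfectoid. The tools from Bhatt--Scholze invoked in the proof of Lemma \ref{twotowers} are tailor-made for exactly this kind of argument, which is why the present lemma fits naturally in the same circle of ideas.
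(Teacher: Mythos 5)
Your first step (preimages of finite-level open affinoids form a basis of $|X|=\varprojlim|X_i|$) is fine, but the heart of your argument --- that the preimage $W=\pi_j^{-1}(V_j)$ of an \emph{arbitrary} finite-level open affinoid is affinoid perfectoid --- is not established, and the specific steps offered for it do not work. First, the asserted integrality of $A_\infty^+\to R^+$ is false: $\Spa(R,R^+)$ is an open (typically rational) subset of $W$, so $A^+\to R^+$ is a restriction map, not an integral extension, and the fact that the transition maps $A_k^+\to A_l^+$ are integral says nothing about it; so the Bhatt--Scholze perfectoidization lemma cannot be invoked the way you intend. Second, the fiber products $\Spd(A,A^+)\times_{\Spd(A_j,A_j^+)}\Spa(R,R^+)=W\times_{V_j}\Spa(R,R^+)$ do not ``glue to $\Spd(A,A^+)$'' as $\Spa(R,R^+)$ runs over a cover of $W$; they glue to $W\times_{V_j}W$. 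And even granting that $W$ is covered by affinoid perfectoid opens (which is automatic, since $W$ is open in the perfectoid space $X$), this does not imply that $W$ is \emph{affinoid} perfectoid, nor that the completed colimit $(A,A^+)$ is a perfectoid pair: affinoidness is exactly the property that cannot be checked locally, and ``forces $(A,A^+)$ to be perfectoid'' is precisely the unproved point. In fact your route proves more than the lemma asks for, and this stronger statement (every finite-level affinoid pulls back to an affinoid perfectoid) is not needed and is not what the paper establishes.

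The paper's proof sidesteps this entirely. After reducing (locally on an initial $X_0$, using finiteness of the transition maps) to the case where all $X_i=\Spa(B_i,B_i^+)$ are affinoid, one has $X\cong\Spd(B,B^+)$ with $(B,B^+)$ the completed colimit, and one takes $\mathcal{W}$ to be the rational subsets of $X$ that are contained in some open affinoid perfectoid subset of $X$. Such a $W$ is a rational subset of an affinoid perfectoid space, hence affinoid perfectoid; the collection $\mathcal{W}$ is a neighborhood basis because $X$ is perfectoid and rational subsets form a basis of $|X|=|\Spa(B,B^+)|$; and a standard approximation argument for the defining parameters shows each such $W$ is the preimage of a rational subset of some $X_i$. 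If you want to repair your write-up, the fix is to weaken your intermediate claim accordingly: inside your chosen $\pi_j^{-1}(V_j)\cap U$, pick an affinoid perfectoid open of $X$ and then a rational subset of $X$ around $x$ contained in it, and show that rational subset comes from finite level --- rather than trying to prove that $\pi_j^{-1}(V_j)$ itself is affinoid perfectoid.
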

\begin{proof} Without loss of generality, we can assume that $I$ has an initial object $0$. The problem is local on $X_0$, so replacing $X_0$ by an open affinoid subset and using the finiteness of the maps in the tower, we can also assume that all $X_i$'s are affinoid, say with $X_i = \Spa(B_i, B_i^+)$. Let $(B,B^+)$ be the completed direct limit of the system $(B_i,B_i^+)$, so $X \cong \Spd(B,B^+)$. Now, let $\mathcal{W}$ be the set of rational subsets $W \subset X$ which are contained in some open affinoid perfectoid subset of $X$. Then any $W \in \mathcal{W}$ is affinoid perfectoid, and  elements of $\mathcal{W}$ clearly form a neighborhood basis of $X$. On the other hand, any rational subset of $X$, and in particular any element of $\mathcal{W}$, is the preimage of a rational subset of $X_i$ for some large $i$ by standard approximation arguments.
\end{proof}

In applications, we will usually care about inverse systems with the following restrictive properties.

\begin{defi}\label{goodtower} Fix a nonarchimedean field $K$. A \emph{good tower} is a cofiltered inverse system of locally Noetherian adic spaces $(X_i)_{i \in I}$ over $\Spa K$ with the following properties.
\begin{enumerate}

\item Each $X_i$ is the analytification of a projective variety over $K$, and the transition maps are finite.

\item The inverse limit $X = \varprojlim_{i} X_i^{\lozenge}$ is a perfectoid space.

\item There exists a pair of coverings of $X$ by open affinoid perfectoid subsets $U_j, V_j$ such that $\overline{U_j} \sub V_j$ for all $j$, and such that for each $j$, $U_j$ and $V_j$ occur as the preimages of some open affinoids $U_{j,i_j}, V_{j,i_j} \sub X_{i_j}$ for some $i_j\in I$.
\end{enumerate}
\end{defi}

\noindent The point of this definition is captured in the following proposition.

\begin{prop} \label{goodtowerproperties}
\begin{enumerate} 
\item Let $(Y_i)_{i \in I}$ be a good tower. If $(X_i)_{i \in I} \overset{f_i}{\longrightarrow} (Y_i)_{i \in I}$ is any map of cofiltered inverse systems such that the morphisms $f_i$ are finite, then $(X_i)_{i \in I}$ is a good tower.

\item If $(X_i)_{i \in I}$ is a good tower with an action of a finite group $G$, then the categorical quotient $X/G$ is a perfectoid space and $X/G \cong \varprojlim_{i} X_i/G$.
\end{enumerate}
\end{prop}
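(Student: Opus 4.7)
For part (1), the plan is to verify each condition of Definition \ref{goodtower} directly. Condition (1): since $X_i \to Y_i$ is finite and $Y_i$ is projective, $X_i$ is projective. For finiteness of the transition maps $X_i \to X_j$, I would factor as $X_i \to X_j \times_{Y_j} Y_i \to X_j$; the right map is finite by base change, and the left map is finite because both its source and target are finite over $Y_i$, so any morphism between them over $Y_i$ is automatically finite. Condition (2) is exactly the content of Lemma \ref{twotowers}. For condition (3), I would pull back the cover $(U_j \subset V_j)$ of $Y = \varprojlim Y_i$ via $f_\infty: X \to Y$: by Lemma \ref{twotowers}, the subsets $f_\infty^{-1}(U_j)$ and $f_\infty^{-1}(V_j)$ are affinoid perfectoid and arise from open affinoids at finite level (namely $f_{i_j}^{-1}(U_{j,i_j})$ etc.), and the closure relation is preserved because $f_\infty$ is continuous on underlying topological spaces, giving $\overline{f_\infty^{-1}(U_j)} \subseteq f_\infty^{-1}(\overline{U_j}) \subseteq f_\infty^{-1}(V_j)$.

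For part (2), the first step is to show that $X$ satisfies the hypotheses of Theorem \ref{perfgpquotient}. Analytic separation of $X$ follows from Lemma \ref{rigidlimitsep}, applied to the good tower $(X_i)$: each $X_i$ is projective and hence separated, and is tautologically an open subset of the analytification of the projective variety $X_i$ itself. For the closure-in-an-affinoid-perfectoid condition, given any rank-one point $x \in X$, property (3) of the good tower furnishes some $j$ with $x \in U_j$, and then $\overline{\{x\}} \subseteq \overline{U_j} \subseteq V_j$, where $V_j$ is affinoid perfectoid by hypothesis. Theorem \ref{perfgpquotient} then gives that $X/G$ is perfectoid and that $(X/G)^\lozenge \cong X^\lozenge/\underline{G}$.

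It remains to identify $X/G$ with $\varprojlim X_i/G$. At each finite level, $X_i$ is a projective (hence separated) rigid analytic space with a $G$-action, so Theorem \ref{rigidgpquotient} produces the rigid quotient $X_i/G$ and an isomorphism $(X_i/G)^\lozenge \cong X_i^\lozenge/\underline{G}$. The transition maps $X_i/G \to X_j/G$ are finite (by the same factorization argument as in part (1) applied to the map of towers $(X_i) \to (X_i/G)$). There is a natural morphism of diamonds
\[
(X/G)^\lozenge \;=\; (\varprojlim X_i^\lozenge)/\underline{G} \;\longrightarrow\; \varprojlim (X_i^\lozenge/\underline{G}) \;=\; \varprojlim (X_i/G)^\lozenge,
\]
and my plan is to check this is an isomorphism of v-sheaves by evaluating on strictly totally disconnected affinoid perfectoid test objects $T$, where $\underline{G}(T)$ is locally constant $G$-valued functions on $\pi_0(T)$ and the sheafification defining the quotient becomes transparent. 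Working component-by-component over $\pi_0(T)$, the claim reduces to the set-theoretic statement that for a cofiltered inverse system of $G$-sets $(S_i)$ with $G$ finite, the natural map $(\varprojlim S_i)/G \to \varprojlim (S_i/G)$ is a bijection — injectivity via a pigeonhole argument (if $x_i = g_i \cdot y_i$ for some $g_i \in G$, cofinality forces some $g \in G$ to occur cofinally, since a filtered poset cannot be covered by finitely many non-cofinal subsets), and surjectivity because each fiber is a nonempty finite set and cofiltered inverse limits of nonempty finite sets are nonempty. The main obstacle I anticipate is the sheaf-theoretic bookkeeping required to pass rigorously from the set-level statement to the iso of v-sheaves, in particular controlling the pro-\'etale sheafification in $X^\lozenge/\underline{G}$ on non-geometric test objects; I expect this to reduce ultimately to the straightforward finite-group facts just described.
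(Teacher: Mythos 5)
Your part (1), and the first half of your part (2) (verifying the hypotheses of Theorem \ref{perfgpquotient} via Lemma \ref{rigidlimitsep} together with the covers $U_j \sub V_j$ from condition (3) of Definition \ref{goodtower}), coincide with the paper's argument, modulo harmless extra detail about finiteness of the transition maps. The genuine issue is your treatment of the identification $X/G \cong \varprojlim_i X_i/G$: you propose to check the map of v-sheaves on strictly totally disconnected test objects $T$ and then argue ``component-by-component over $\pi_0(T)$''. This is precisely the step you defer, and as sketched it does not go through. Sections over $T$ are not computed componentwise (a $T$-point is more than a compatible family of points over the connected components $\Spa(C,C^+)$ of $T$), and even granting the surjectivity half of the naive description of $(X^{\lozenge}/\underline{G})(T)$ --- which is fine, since a $\underline{G}$-torsor for finite $G$ is finite \'etale and hence splits over $T$ --- the injectivity half requires producing a \emph{locally constant} transporter $g \in \underline{G}(T)$ with $gx=y$ from the existence of such a $g$ only after a pro-\'etale cover; this needs an argument about the loci $\{gx=y\}$ and clopen refinements over $\pi_0(T)$ that you have not supplied. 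So the reduction to your (correct) lemma on cofiltered systems of $G$-sets is not established, and it is not routine bookkeeping.

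The paper bypasses all of this with one tool missing from your proposal: the natural map $(X/G)^{\lozenge} \to \varprojlim_i (X_i/G)^{\lozenge}$ is a qcqs map of spatial diamonds, so by \cite[Lemma 11.11]{scholze-diamonds} it is an isomorphism as soon as it is bijective on $(C,C^+)$-points for all algebraically closed perfectoid fields $C$ with open and bounded valuation subrings $C^+$. On such geometric points every pro-\'etale cover splits, so quotients by $\underline{G}$ are computed naively, and what remains is exactly your finite-group/profinite combinatorics (cofiltered limits of nonempty finite sets for surjectivity, a decreasing family of nonempty subsets of the finite group $G$ for injectivity), which agrees in substance with the paper's argument. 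If you replace your strictly-totally-disconnected reduction by this application of Lemma 11.11, your proof becomes complete; as written, the passage from the set-level statement to the isomorphism of v-sheaves is a genuine gap.
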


Note that in part (2), we are not claiming that $(X_i / G)_{i \in I}$ is a good tower: it's not clear to us whether condition (3) is preserved. 

\begin{proof}
For part (1), let $f:X \to Y$ denote the map between the limits of the towers. Note that since $X_i \to Y_i$ is finite, the tower $(X_i)_{i \in I}$ satisfies condition (1) of Definition \ref{goodtower} by rigid GAGA. Conditions (2) and (3) then follow from Lemma \ref{twotowers}. Indeed, (2) is immediate, and (3) follows from the observation that if $U_j \sub V_j \sub Y$ are open affinoid perfectoid subsets pulled back from some finite-level affinoids $U_{j,i_j}, V_{j,i_j} \sub Y_i$, then $f^{-1}(U_j)$ is affinoid perfectoid by Lemma \ref{twotowers} and is clearly the preimage of the affinoid $f_{i_j}^{-1}(U_{j,i_j}) \sub X_{i_j}$ (and similarly for the $V_j$'s). Finally, the condition on closures follows from the inclusions $\overline{f^{-1}(U_j)} \sub f^{-1}(\overline{U_j}) \sub f^{-1}(V_j)$.

\medskip

For part (2), $X/G$ is perfectoid by Theorem \ref{perfgpquotient}, since by design the limit of a good tower satisfies the conditions of that Theorem. Indeed, the limit of any good tower is analytically separated by Lemma \ref{rigidlimitsep}. Moreover, if $U_j, V_j \sub X$ are as in the definition of a good tower, then any rank one point $x \in X$ is contained in some $U_j$, in which case $\overline{ \{ x \} } \sub \overline{U_j} \subset V_j$.

\medskip

It remains to check that the natural map $f: X/G \to \varprojlim_{i} X_i/G$ is an isomorphism of diamonds. The source and target of this map are spatial diamonds, so the map is automatically qcqs. Thus, by \cite[Lemma 11.11]{scholze-diamonds}, it suffices to prove that $f$ induces a bijection on $(C,C^+)$-points for every algebraically closed perfectoid field $C$ with an open and bounded valuation subring $C^+ \sub C$.  In what follows, we will freely use the fact that $(C,C^+)$-points can be computed ``naively": if $X$ is a pro-\'etale sheaf with a $\underline{G}$-action for some profinite group $G$ and $X/\underline{G}$ denotes the quotient as pro-\'etale sheaves, then $X(C,C^+)/G \cong (X/\underline{G})(C,C^+)$. This is an easy consequence of the fact that any pro-\'etale cover of a geometric point $(C,C^+)$ has a section.\footnote{More generally, if $\mathcal{F}$ is a presheaf of sets on a site $\mathcal{C}$, and $X \in \mathcal{C}$ is any object with the property that every covering of $X$ admits a section, then the natural map $\mathcal{F}(X) \to \mathcal{F}^{sh}(X)$ is a bijection, where $(-)^{sh}$ denotes sheafification. This is easy and left to the reader.} 

\medskip

For surjectivity, let $(x_i \in X_i (C,C^+)/G)_{i \in I}$ be any inverse system of points. Let $W_i \sub X(C,C^+)$ be the preimage of $x_i$. Since $W_i\cong \varprojlim_{j} W_{i,j}$ where $W_{i,j} \sub X_j(C,C^+)$ is the preimage of $x_i$, and each $W_{i,j}$ is finite and nonempty (use that $X_j \to X_i$ is finite), $W_i$ naturally has the structure of a (non-empty) profinite set. Then $W=\varprojlim_{i} W_i$ is an inverse limit of non-empty compact Hausdorff spaces, and thus is non-empty. Any choice of $x \in W \sub X(C,C^+)$ maps to the inverse system $(x_i)_{i \in I}$.

\medskip

For injectivity, let $x,y \in X(C,C^+)$ be two elements with the same image in $\varprojlim_i X_i(C,C^+)/G$. Let $x_i, y_i \in X_i (C,C^+)$ be the images of $x$ and $y$, and let $G_i \subset G$ be the set $g \in G$ with $gx_i = y_i$. Then $G_i$ is nonempty by assumption, and $G_j \to G_i$ is injective for all $j \geq i$, so $\varprojlim_i G_i$ is nonempty. Choosing any $g \in \varprojlim_i G_i$, we then have $gx=y$, as desired.
\end{proof}

\subsection{Perfectoid Shimura varieties of Hodge type}

We now return to Shimura varieties. Let $(G,X)$ be a Shimura datum of Hodge type, with reflex field $E$ and Hodge cocharacter $\mu$. For any open compact subgroup $K \sub G(\A_f)$, we write $Sh_K(G,X)$ for the canonical model of the associated Shimura variety; this is a normal quasi-projective scheme over $E$. This has a canonical projective minimal compactification $Sh^{\ast}_K(G,X)$, which is also normal. Fix a prime $\mathfrak{p}$ of $E$ lying over $p$, and let $\mathcal{X}_K$, resp. $\mathcal{X}^{\ast}_K$ denote the rigid analytic space over $E_{\mathfrak{p}}$ associated with $Sh_K(G,X) \otimes_E E_\mathfrak{p}$, resp. $Sh^{\ast}_K(G,X) \otimes_E E_\mathfrak{p}$. As $K$ varies, these spaces form a pair of inverse systems with finite transition maps, and compatible open immersions $\mathcal{X}_K \to \mathcal{X}_{K}^{\ast}$. Recall the (rigid analytic) flag variety $\mathscr{F}\! \ell_{G,\mu}$ attached to $(G,X)$, as defined over $E_\mathfrak{p}$ in \cite[\S 2.1]{caraiani-scholze}.

\begin{prop} \label{hodgetypeperfectoid} Fix any open compact subgroup $K^p \sub G(\A_{f}^{p})$. Then $\mathcal{X}^{\ast}_{K^p} = \varprojlim_{K_p} \mathcal{X}^{\ast,\lozenge}_{K^p K_p}$ is a perfectoid space, and there is a $G(\Qp)$-equivariant Hodge-Tate period map $\pi_{HT}: \mathcal{X}^{\ast}_{K^p} \to \mathscr{F}\! \ell_{G,\mu}$ which is functorial in the tame level. 

  Moreover, $\mathcal{X}^{\ast}_{K^p}$ is analytically separated, and we can find a pair of coverings by finitely many open affinoid perfectoid subsets $U_i, V_i \sub \mathcal{X}^{\ast}_{K^p}$ such that $\overline{U_i} \sub V_i$ for all $i$ and such that $U_i$ and $V_i$ arise as the preimages of some open affinoid subsets of some $\mathcal{X}^{\ast}_{K^p K_p}$. 
  
  In particular, for any cofinal system of open compact subgroups $K_p \sub G(\Qp)$, $(\mathcal{X}^{\ast}_{K^p K_p})_{K_p}$ is a good tower (over $E_\mathfrak{p}$) in the sense of Definition \ref{goodtower}.
\end{prop}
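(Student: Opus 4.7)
My plan is to deduce the statement from Scholze's main theorems in \cite{scho}, combined with the infrastructure developed earlier in this section. The first two assertions — that $\mathcal{X}^{\ast}_{K^p}$ is perfectoid and carries a $G(\Qp)$-equivariant, tame-level-functorial Hodge--Tate period map $\pi_{HT}: \mathcal{X}^{\ast}_{K^p} \to \mathscr{F}\!\ell_{G,\mu}$ — are precisely Scholze's results, which he proves in the Hodge type case by embedding $(G,X)$ into a Siegel Shimura datum and transferring perfectoidness along the resulting finite morphisms between inverse systems of minimal compactifications (a transfer formalized in our framework by Lemma \ref{twotowers}). Nothing new will be required for this part.

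For analytic separation, I would invoke Lemma \ref{rigidlimitsep}. Indeed, each $\mathcal{X}^{\ast}_{K^p K_p}$ is the analytification of the projective $E_{\p}$-scheme $\Sh^{\ast}_{K^p K_p}(G,X) \otimes_E E_{\p}$, so the hypotheses of that lemma are satisfied and analytic separation of $\mathcal{X}^{\ast}_{K^p}$ is immediate.

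For the remaining claims, I would use a finer feature of Scholze's construction: for every sufficiently small open affinoid $W \sub \mathscr{F}\!\ell_{G,\mu}$, the preimage $\pi_{HT}^{-1}(W) \sub \mathcal{X}^{\ast}_{K^p}$ is affinoid perfectoid \emph{and} arises as the preimage of an open affinoid subset of $\mathcal{X}^{\ast}_{K^p K_p}$ for some sufficiently small $K_p$. Since $\mathscr{F}\!\ell_{G,\mu}$ is proper, one can cover it by finitely many pairs $W_i \sub W_i'$ of such sufficiently small open affinoids with $\overline{W_i} \sub W_i'$ in the sense of rigid analytic subsets (shrink the members of any given finite cover slightly). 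Setting $U_i := \pi_{HT}^{-1}(W_i)$ and $V_i := \pi_{HT}^{-1}(W_i')$ then produces the desired finite cover of $\mathcal{X}^{\ast}_{K^p}$, with the closure containment $\overline{U_i} \sub \pi_{HT}^{-1}(\overline{W_i}) \sub V_i$ coming from continuity of $\pi_{HT}$ (note that $\pi_{HT}^{-1}(\overline{W_i})$ is closed in $\mathcal{X}^{\ast}_{K^p}$). The good-tower statement then follows by directly verifying the three axioms of Definition \ref{goodtower}: projectivity and finiteness of transition maps are standard for minimal compactifications, perfectoidness of the limit has already been invoked, and the required finite cover has just been constructed.

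The one step that will require genuine care is confirming that in the Hodge type case the preimages $\pi_{HT}^{-1}(W)$ really do arise as pullbacks of affinoid subsets at finite level, not merely as abstract affinoid perfectoid subsets of the limit. For Siegel $(G,X)$ this is transparent from Scholze's explicit construction via the anticanonical tower, which is already defined at a fixed finite level $K_p(m)$; the passage to Hodge type is then by finite morphisms of inverse systems, so the ``coming from finite level'' property is preserved under Lemma \ref{twotowers}. This is the main content one must extract carefully from \cite{scho}, but there is no new conceptual obstacle.
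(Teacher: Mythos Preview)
Your proposal is correct and follows essentially the same strategy as the paper: reduce to the Siegel case via the Hodge embedding, transfer perfectoidness by Lemma \ref{twotowers}, get analytic separation from Lemma \ref{rigidlimitsep}, and build the nested covers $U_i \subset V_i$ from explicit affinoid perfectoid subsets known to come from finite level.

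The one presentational difference worth noting is how the covers are produced. You organize the argument around $\pi_{HT}$: take pairs $W_i \subset W_i'$ in $\mathscr{F}\!\ell_{G,\mu}$ with $\overline{W_i} \subset W_i'$ and set $U_i = \pi_{HT}^{-1}(W_i)$, $V_i = \pi_{HT}^{-1}(W_i')$. The paper instead works directly with the finite map $f: \mathcal{X}^{\ast}_{K^p} \to \mathcal{S}^{\ast}_{K'^p}$ to the Siegel tower and pulls back the explicit translates $\mathcal{S}^{\ast}_{K'^p}(\epsilon)_a \cdot g_i \subset \mathcal{S}^{\ast}_{K'^p}(\epsilon')_a \cdot g_i$ of the anticanonical locus, with $\epsilon < \epsilon'$ giving the closure containment. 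The paper's route is slightly more self-contained, since the subsets $\mathcal{S}^{\ast}(\epsilon)_a$ are by construction preimages from a fixed finite level and one can invoke Lemma \ref{twotowers} immediately; it never needs the $G$-flag variety or $\pi_{HT}$ for this step. Your route is more conceptual but your phrase ``every sufficiently small open affinoid $W$'' is imprecise---what Scholze actually provides is a specific basis of such $W$ (namely the translates of the anticanonical loci under the embedding $\mathscr{F}\!\ell_{G,\mu} \hookrightarrow \mathscr{F}\!\ell_{\mathrm{GSp}_{2g}}$), and your final paragraph correctly identifies that this is the real content. Either way the argument goes through.
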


Note that $\mathcal{X}^{\ast}_{K^p}$ may not coincide with the ``ad hoc" compactification $\mathcal{X}^{\underline{\ast}}_{K^p}$ constructed in [Sch15], although by construction there is certainly a map $\mathcal{X}^{\ast}_{K^p} \to \mathcal{X}^{\underline{\ast}}_{K^p}$. The fact that we can prove this result for the genuine minimal compactification relies crucially on the perfectoidization theorem of Bhatt-Scholze recalled in Lemma \ref{integralperfectoid}.

\begin{proof} Fix a closed embedding $\iota: (G,X) \to (\mathrm{GSp}_{2g}, \mathfrak{H}^{\pm}_{g})$ into a Siegel Shimura datum. For any open compact subgroup $K\sub \mathrm{GSp}_{2g}(\Qp)$, let $\mathcal{S}_K$, resp. $\mathcal{S}^{\ast}_K$ denote the rigid analytic space over $E_{\mathfrak{p}}$ associated with $Sh_K(\mathrm{GSp}_{2g},\mathfrak{H}^{\pm}_{g}) \otimes_{\Q} E_\mathfrak{p}$, resp. $Sh^{\ast}_K(\mathrm{GSp}_{2g},\mathfrak{H}^{\pm}_{g}) \otimes_{\Q} E_\mathfrak{p}$.  By \cite[Theorem 3.3.18]{scho}, $\varprojlim_{K_p} \mathcal{S}^{\ast}_{K^p K_p}$ is a perfectoid space for any open compact subgroup $K^p \sub \mathrm{GSp}_{2g}(\Qp)$ contained in some conjugate of a principal congruence subgroup of level $\geq 3$. However, this last condition can easily be removed using \cite[Theorem 1.4]{hansen}, noting in particular that $\mathcal{S}^{\ast}_{K^p}$ is covered by finitely many $\mathrm{GSp}_{2g}(\Qp)$-translates of a certain open affinoid perfectoid subset $\mathcal{S}^{\ast}_{K^p}(\epsilon)_a$,\footnote{This subset is denoted $\mathcal{X}^{\ast}_{\Gamma(p^\infty)}(\epsilon)_a$ in \cite[\S 3]{scho}.} and that these subsets are invariant under the action of $K'^p / K^p$ for any normal inclusion $K^p \sub K'^p$ of tame level groups. 

\medskip

The chosen embedding $\iota$ gives rise to compatible finite maps $\mathcal{X}_{K \cap G(\A_f)} \to \mathcal{S}_{K}$ for any $K\sub \mathrm{GSp}_{2g}(\A_f)$ as above, which naturally extend to compatible finite morphisms $\mathcal{X}^{\ast}_{K \cap G(\A_f)} \to \mathcal{S}^{\ast}_{K}$. Now, choose any $K^p \sub G(\A_{f}^{p})$ as in the proposition, and choose an open compact $K'^p \sub \mathrm{GSp}_{2g}(\A_{f}^{p})$ such that $K^p \sub K'^p$. Choosing a cofinal set of (neat) open compact subgroups $K_0 \supseteq K_1 \supseteq K_2 \cdots $ in $\mathrm{GSp}_{2g}(\Qp)$, we get a map of inverse systems $(\mathcal{X}^{\ast}_{K^p \iota^{-1}(K_n)})_{n\geq 0}  \to (\mathcal{S}^{\ast}_{K'^p K_n} )_{n \geq 0}$ satisfying all the hypotheses of Lemma \ref{twotowers}. Applying that lemma, we deduce that $\mathcal{X}^{\ast}_{K^p}$ is a perfectoid space and the natural map $f: \mathcal{X}^{\ast}_{K^p} \to \mathcal{S}^{\ast}_{K'^p}$ is quasicompact. Moreover, $\mathcal{X}^{\ast}_{K^p}$ is analytically separated by Lemma \ref{rigidlimitsep}.

\medskip

Now choose some $0<\epsilon < \epsilon'< 1/2$ and finitely many $g_i \in \mathrm{GSp}_{2g}(\Qp)$ such that the translates $\mathcal{S}^{\ast}_{K'^p}(\epsilon)_a \cdot g_i$ cover $\mathcal{S}^{\ast}_{K'^p}$. Note that any such translate is the preimage of an open affinoid subset of some $\mathcal{S}^{\ast}_{K'^p K_n}$, so again by Lemma \ref{twotowers} we see that the preimages 
\[ U_i = f^{-1}(\mathcal{S}^{\ast}_{K'^p}(\epsilon)_a \cdot g_i) \sub V_i = f^{-1}(\mathcal{S}^{\ast}_{K'^p}(\epsilon')_a \cdot g_i)\]
are affinoid perfectoid and give open covers of $\mathcal{X}^{\ast}_{K^p}$, and arise by pullback from some finite level. Moreover, $\overline{\mathcal{S}^{\ast}_{K'^p}(\epsilon)_a\cdot g_i} \subset \mathcal{S}^{\ast}_{K'^p}(\epsilon')_a\cdot g_i$ for any $\epsilon < \epsilon' < 1/2$, and clearly $\overline{U_i} \sub f^{-1}(\overline{\mathcal{S}^{\ast}_{K'^p}(\epsilon)_a\cdot g_i})$, so we conclude that $\overline{U_i} \sub V_i$ as desired.

\medskip

The Hodge-Tate period map is the composition of the natural map $\mathcal{X}^{\ast}_{K^p} \to \mathcal{X}^{\underline{\ast}}_{K^p}$ with the (previously known) Hodge-Tate period map $\mathcal{X}^{\underline{\ast}}_{K^p} \to \mathscr{F}\! \ell_{G,\mu}$, cf. \cite[Theorem 3.3.1]{arizona} for a discussion of the latter (the argument there also works to construct $\pi_{HT}: \mathcal{X}^{\ast}_{K^p} \to \mathscr{F}\! \ell_{G,\mu}$ without the use of ad hoc compactifications).
\end{proof}

For later use, we also record an extremely mild generalization of this result.

\begin{coro} \label{hodgetypeperfectoidgen} For any open compact subgroup $K \sub G(\A_f)$ and any cofinal system of open compact subgroups $K_p \sub G(\Q_p)$, $(\mathcal{X}^{\ast}_{K \cap K_p})_{K_p}$ is a good tower (over $E_\mathfrak{p}$) in the sense of Definition \ref{goodtower}.
\end{coro}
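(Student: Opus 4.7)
The plan is to reduce Corollary \ref{hodgetypeperfectoidgen} to Proposition \ref{hodgetypeperfectoid} by a straightforward cofinality argument. Factor $K = K^p K_p^0$ as a product of its prime-to-$p$ and $p$-components, so that $K^p \subset G(\mathbb{A}_f^p)$ is open compact and $K_p^0 \subset G(\mathbb{Q}_p)$ is open compact. Interpreting the notation $K \cap K_p$ as $K^p \cdot (K_p^0 \cap K_p)$, the tower in question is the tower of Shimura varieties with fixed tame level $K^p$ and $p$-level shrinking through the cofinal system $\{K_p^0 \cap K_p\}_{K_p}$ of open compact subgroups of $K_p^0$.

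Since $K_p^0$ is itself open in $G(\mathbb{Q}_p)$, any cofinal system of open compact subgroups of $K_p^0$ is also a cofinal system of open compact subgroups of $G(\mathbb{Q}_p)$. Thus the tower $(\mathcal{X}^{\ast}_{K \cap K_p})_{K_p}$ is a cofinal subsystem of the tower $(\mathcal{X}^{\ast}_{K^p K_p'})_{K_p'}$ as $K_p'$ ranges over a cofinal system in $G(\mathbb{Q}_p)$. In particular, the two towers have the same inverse limit, namely the perfectoid space $\mathcal{X}^{\ast}_{K^p}$ from Proposition \ref{hodgetypeperfectoid}.

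It then remains to check that the three conditions of Definition \ref{goodtower} hold for the new tower. Condition (1) is immediate from the theory of canonical models and minimal compactifications of Shimura varieties of Hodge type, which produces normal projective varieties over $E$ with finite transition maps. Condition (2) holds because the limit coincides with the one in Proposition \ref{hodgetypeperfectoid}. For condition (3), the affinoid covers $U_j \subset V_j$ constructed in Proposition \ref{hodgetypeperfectoid} are pulled back from some finite level in the tower $(\mathcal{X}^{\ast}_{K^p K_p'})_{K_p'}$; by cofinality, each such finite level appears (up to refinement by a transition map) in the subsystem $(\mathcal{X}^{\ast}_{K \cap K_p})_{K_p}$, so the same affinoids arise by pullback from a finite level of the new tower, and the closure condition $\overline{U_j} \subset V_j$ is preserved since the ambient perfectoid space is unchanged.

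There is no real obstacle here: the corollary is essentially a notational repackaging of the Proposition, and the only mild subtlety is making the cofinality argument precise and verifying that each clause of Definition \ref{goodtower} is inherited by a cofinal subsystem with the same inverse limit.
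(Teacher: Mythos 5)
There is a genuine gap at the very first step. You ``factor $K = K^p K_p^0$ as a product of its prime-to-$p$ and $p$-components,'' but a general open compact subgroup $K \subset G(\mathbb{A}_f)$ need \emph{not} be a product of a subgroup of $G(\mathbb{A}_f^p)$ and a subgroup of $G(\mathbb{Q}_p)$. (For a toy example, take $G = \mathbb{G}_a^2$ with one factor thought of as ``away from $p$'' and one ``at $p$'': the subgroup $\{(a,b) \in \mathbb{Z}_p^2 : a \equiv b \bmod p\}$ is open but is not a product.) The corollary is stated for arbitrary $K$, so this factorization assumption is unjustified, and the entire cofinality argument built on it does not apply to general $K$. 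If $K$ \emph{does} happen to factor this way, your argument is fine -- the tower $(\mathcal{X}^{\ast}_{K \cap K_p})_{K_p}$ is then a cofinal subsystem of $(\mathcal{X}^{\ast}_{K^p K_p'})_{K_p'}$ and the good-tower conditions are indeed inherited -- but that is a special case.

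The paper's proof avoids this by \emph{not} trying to realize $K$ as a product. Instead, let $K^p$ be the \emph{image} of $K$ under the projection $G(\mathbb{A}_f) \to G(\mathbb{A}_f^p)$. Then $K \cap K_p$ (meaning $K \cap (G(\mathbb{A}_f^p)K_p)$) is contained in $K^p K_p$ with \emph{finite index}, so one gets a map of towers $(\mathcal{X}^{\ast}_{K \cap K_p})_{K_p} \to (\mathcal{X}^{\ast}_{K^p K_p})_{K_p}$ in which each level map is finite. The target is a good tower by Proposition \ref{hodgetypeperfectoid}, and Proposition \ref{goodtowerproperties}(1) then transfers the good-tower property up the finite map of towers. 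This both handles general $K$ and avoids re-checking the clauses of Definition \ref{goodtower} by hand: the work was front-loaded into Lemma \ref{twotowers} and Proposition \ref{goodtowerproperties}. You should replace your factorization step by this projection-plus-finite-index argument.
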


Here and in what follows, we adopt the following notational convention: if $G$ is an algebraic group over $\Q$, $H$ is a subgroup of $G(\A_f)$, and $K_p$ is a subgroup of $G(\Qp)$, then $H \cap K_p$ denotes the group of elements $h \in H$ whose image in $G(\Qp)$ lies in $K_p$. In other words, $H \cap K_p$ is short for $H \cap (G(\A_{f}^{p})K_p)$. We hope this doesn't cause any confusion.

\begin{proof} Let $K^p \sub G(\A_{f}^{p})$ denote the image of $K$ along the natural projection. Then $K \cap K_p$ has finite index in $K^p K_p$, so we get natural finite morphisms $\mathcal{X}^{\ast}_{K \cap K_p} \to \mathcal{X}^{\ast}_{K^p K_p}$ which compile into a map of towers $(\mathcal{X}^{\ast}_{K \cap K_p})_{K_p} \to (\mathcal{X}^{\ast}_{K^p K_p})_{K_p}$. Since the target is a good tower by the previous proposition, we may apply Proposition \ref{goodtowerproperties}(i) to conclude.
\end{proof}

\subsection{Perfectoid Shimura varieties of pre-abelian type}

In this section we change notation slightly. Given a Shimura datum $(G,X)$ and an open compact subgroup $K \sub G(\A_f)$, we write $Sh_K(G,X)$ for the associated Shimura variety regarded as a quasi-projective variety \emph{over} $\C$, and $Sh_K^{\ast}(G,X)$ for its projective minimal compactification. For a (usually implicit) choice of connected component $X^+ \sub X$, we write $Sh_K(G,X)^0$ for the connected component of $Sh_K(G,X)$ whose analytification is the image of the natural map 
\[ X^+ \times \{ e \} \to G(\Q)_+ \backslash (X^+ \times G(\A_f)) / K \cong Sh_K(G,X)^{\mathrm{an}}, \]
and we write $Sh_K^{\ast}(G,X)^0$ for the Zariski closure of $Sh_K(G,X)^0$ in $Sh_K^{\ast}(G,X)$. Note that since $Sh_K^{\ast}(G,X)$ is normal, the map $\pi_0 Sh_K(G,X) \to \pi_0 Sh_K^{\ast}(G,X)$ is a homeomorphism.

\medskip

Now, fix once and for all an isomorphism $\C \simeq \C_p$ (for simplicity), and let $C/\C_p$ be a complete algebraically closed extension of nonarchimedean fields. All of the following results hold for any choice of $C$. We write $\mathcal{X}_K^{\ast}(G,X)$ for the rigid analytic space associated with $Sh_K^{\ast}(G,X) \otimes_{\C} C$.  Similarly, we get rigid analytic spaces $\mathcal{X}_K(G,X)$, $\mathcal{X}_K(G,X)^0$, $\mathcal{X}_K^{\ast}(G,X)^0$ with the obvious meanings.

\medskip

For any fixed open compact subgroup $K^p \sub G(\A_{f}^p)$, define
\[\mathcal{X}_{K^p}^{\ast}(G,X)=\varprojlim_{K_p \sub G(\Q_p)\,\mathrm{open\,compact}} \mathcal{X}_{K^pK_p}^{\ast}(G,X)^\lozenge\]
where the inverse limit is taken in the category of diamonds over $\Spd C$. We also write $\mathcal{X}_{K^p}(G,X)$, $\mathcal{X}_{K^p}^{\ast}(G,X)^0$, and $\mathcal{X}_{K^p}(G,X)^0$ for the obvious variants.

\begin{prop}\label{going sideways} Maintain the above notation. The following conditions on a Shimura datum $(G,X)$ are equivalent.
\begin{enumerate}

\item The diamond $\mathcal{X}_{K^p}^{\ast}(G,X)$ is a perfectoid space for any choice of $K^p$.

\item The diamond $\mathcal{X}_{K^p}^{\ast}(G,X)^0$ is a perfectoid space for any choice of $K^p$.

\end{enumerate}
We say the Shimura datum $(G,X)$ satisfies \emph{Property $\mathcal{P}$} if either of these equivalent conditions holds.

\end{prop}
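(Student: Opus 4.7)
For direction (1) $\Rightarrow$ (2), the plan is to apply Lemma \ref{twotowers}. At each finite level $K_p$, the component $\mathcal{X}^{\ast}_{K^p K_p}(G,X)^0$ is a single connected component of $\mathcal{X}^{\ast}_{K^p K_p}(G,X)$ (using that the map $\pi_0 \Sh_K(G,X) \to \pi_0 \Sh^{\ast}_K(G,X)$ is a bijection, as noted earlier in the text), hence an open and closed subspace; in particular, the inclusion is a finite morphism. Applying Lemma \ref{twotowers} to this morphism of inverse systems and using that the target's limit $\mathcal{X}^{\ast}_{K^p}(G,X)$ is perfectoid by hypothesis (1), we conclude that the source's limit $\mathcal{X}^{\ast}_{K^p}(G,X)^0$ is perfectoid.

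For direction (2) $\Rightarrow$ (1), I would first reduce via the Hecke action. By finiteness of class numbers, the double coset set $\Sigma := G(\Q)_+ \backslash G(\A_f^p)/K^p$ is finite, and the natural continuous map $\pi_0 \mathcal{X}^{\ast}_{K^p}(G,X) \to \Sigma$ (induced by the projection $G(\A_f) \to G(\A_f^p)$) yields a clopen decomposition
\[\mathcal{X}^{\ast}_{K^p}(G,X) = \bigsqcup_{[g] \in \Sigma} Y_{[g]}\]
into finitely many subdiamonds. For each $g \in G(\A_f^p)$ representing a class in $\Sigma$, the Hecke isomorphism $\cdot g^{-1}: \mathcal{X}^{\ast}_{K^p}(G,X) \to \mathcal{X}^{\ast}_{g^{-1} K^p g}(G,X)$ identifies $Y_{[g]}$ with the corresponding ``$Y_{[1]}$'' for the conjugate tame level. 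Since (2) is hypothesized for all tame levels (in particular for conjugates of $K^p$), the problem reduces to showing that for any tame level $K^p$, the subdiamond $Y_{[1]} \sub \mathcal{X}^{\ast}_{K^p}(G,X)$ is perfectoid assuming $\mathcal{X}^{\ast}_{K^p}(G,X)^0$ is.

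Next, fix a sufficiently small neat $K_p^{(0)} \sub G(\Qp)$ and let $D_1 \sub \mathcal{X}^{\ast}_{K^p}(G,X)$ denote the preimage of $\mathcal{X}^{\ast}_{K^p K_p^{(0)}}(G,X)^0$ under the natural quotient $q: \mathcal{X}^{\ast}_{K^p}(G,X) \to \mathcal{X}^{\ast}_{K^p K_p^{(0)}}(G,X)$. One checks that $Y_{[1]}$ is a finite disjoint union of $G(\Qp)$-translates of $D_1$ (indexed by the finite $G(\Qp)$-orbit of $[1]$ at base level $K_p^{(0)}$), so it suffices to show $D_1$ is perfectoid. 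The map $D_1 \to \mathcal{X}^{\ast}_{K^p K_p^{(0)}}(G,X)^0$ is a pro-\'etale $K_p^{(0)}$-torsor, obtained by base change of the pro-\'etale $K_p^{(0)}$-torsor $q$ along the clopen immersion of the $0$-component; similarly, $\mathcal{X}^{\ast}_{K^p}(G,X)^0 \sub D_1$ is a pro-\'etale sub-$\ol{\Gamma}_1$-torsor, where $\ol{\Gamma}_1 \sub K_p^{(0)}$ is the closure of the image of the congruence subgroup $\Gamma_1 := G(\Q)_+ \cap K^p K_p^{(0)}$ under the projection to $G(\Qp)$. Torsor-theoretic considerations then give the identification
\[D_1 \cong \mathcal{X}^{\ast}_{K^p}(G,X)^0 \times^{\ol{\Gamma}_1} \underline{K_p^{(0)}},\]
realizing $D_1$ as the quotient of the perfectoid space $\mathcal{X}^{\ast}_{K^p}(G,X)^0 \times \underline{K_p^{(0)}}$ by the free, diagonal, continuous action of the profinite group $\ol{\Gamma}_1$.

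The main technical obstacle is showing that this profinite-group quotient is a perfectoid space. My plan is to extend Theorem \ref{perfgpquotient} to this setting by expressing the quotient as a cofiltered inverse limit of finite quotients (using a cofinal system of open normal subgroups $H_n \sub K_p^{(0)}$, so that each $\ol{\Gamma}_1/(\ol{\Gamma}_1 \cap H_n)$ is a finite group acting on the perfectoid space $\mathcal{X}^{\ast}_{K^p}(G,X)^0 \times \underline{K_p^{(0)}/H_n}$), applying Theorem \ref{perfgpquotient} at each finite stage, and using Lemma \ref{twotowers} to pass to the limit while preserving perfectoidness.
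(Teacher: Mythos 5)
Your direction (1) $\Rightarrow$ (2) is fine, though heavier than needed: at each finite level the identity component is clopen, hence its inclusion is finite, so Lemma \ref{twotowers} indeed applies; the paper instead argues directly, intersecting an open affinoid perfectoid subset of $\mathcal{X}^{\ast}_{K^p}(G,X)$ with the clopen preimages of the identity components at finite level and passing to the limit, which uses nothing beyond the fact that clopen subsets of affinoid perfectoids and cofiltered limits of affinoid perfectoids are affinoid perfectoid.

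Your direction (2) $\Rightarrow$ (1) contains genuine gaps. First, the maps $q\colon \mathcal{X}^{\ast}_{K^p}(G,X) \to \mathcal{X}^{\ast}_{K^pK_p^{(0)}}(G,X)$ and $\mathcal{X}^{\ast}_{K^p}(G,X)^0 \to \mathcal{X}^{\ast}_{K^pK_p^{(0)}}(G,X)^0$ are \emph{not} pro-\'etale torsors: the finite-level transition maps of minimal compactifications are ramified along the boundary, equivalently boundary points at infinite level have infinite stabilizers in $K_p^{(0)}$ (already for the modular curve the cusps at infinite level are fixed by unipotent subgroups), so the ``torsor-theoretic considerations'' you invoke to get $D_1 \cong \mathcal{X}^{\ast}_{K^p}(G,X)^0 \times^{\bar\Gamma_1} \underline{K_p^{(0)}}$ are unjustified as stated; the identification is plausibly true, but it must be proved as a statement about how $K_p^{(0)}$ permutes connected components (e.g.\ by a check on geometric points as in the proof of Proposition \ref{goodtowerproperties}), not via torsors. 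Second, and more seriously, the final step does not go through: Theorem \ref{perfgpquotient} requires the space being quotiented to be analytically separated \emph{and} to have the property that the closure of every rank-one point lies in an open affinoid perfectoid subset. Hypothesis (2) only says that $\mathcal{X}^{\ast}_{K^p}(G,X)^0$ is perfectoid; analytic separation can be recovered from Lemma \ref{rigidlimitsep}, but the rank-one-closure condition is exactly what condition (3) of a ``good tower'' is designed to supply, and the authors explicitly note they cannot even verify that this condition survives finite quotients — it is certainly not a formal consequence of perfectoidness. Moreover Lemma \ref{twotowers} is stated for inverse systems of locally Noetherian adic spaces, so it cannot be used to pass to the limit of your tower of perfectoid quotients, and you would additionally have to prove that the profinite quotient commutes with the inverse limit. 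The paper's own argument sidesteps all of this: each connected component of $\mathcal{X}^{\ast}_{K^p}(G,X)$ is already isomorphic to $\mathcal{X}^{\ast}_{gK^pg^{-1}}(G,X)^0$ for a suitable conjugate tame level, $K_p$ acts with finitely many orbits on $\pi_0$, and Lemma \ref{groupspread} then spreads affinoid perfectoid neighborhoods across components with no quotient construction at all. (Even on your route, if the induced-space description of $D_1$ is established, a continuous splitting of $\bar\Gamma_1 \backslash K_p^{(0)}$ untwists it to $\mathcal{X}^{\ast}_{K^p}(G,X)^0 \times \underline{\bar\Gamma_1\backslash K_p^{(0)}}$, which is visibly perfectoid, so the profinite-quotient machinery would be unnecessary.)
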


\begin{proof} 
(1) implies (2): In general, $\mathcal{X}_{K^p}^{\ast}(G,X)^0$ is an inverse limit of open-closed subfunctors $\mathcal{X}_i \sub \mathcal{X}_{K^p}^{\ast}(G,X)$. Therefore, if $\mathcal{X}_{K^p}^{\ast}(G,X)$ is perfectoid and $U \sub \mathcal{X}_{K^p}^{\ast}(G,X)$ is any open affinoid perfectoid subset, then $U \cap \mathcal{X}_{K^p}^{\ast}(G,X)^0 = \varprojlim_i U \cap \mathcal{X}_i$ and each $U \cap \mathcal{X}_i$ is affinoid perfectoid, so $U \cap \mathcal{X}_{K^p}^{\ast}(G,X)^0$ is affinoid perfectoid. Varying $U$ then gives the result.

\medskip

(2) implies (1): Choose any open compact subgroup $K_p \subset G(\Qp)$, so the diamond $\mathcal{X}_{K^p}^{\ast}(G,X)$ has a natural $\underline{K_p}$-action. Then $K_p$ acts with finitely many open orbits on the profinite set $ \pi_0 \mathcal{X}_{K^p}^{\ast}(G,X) \cong G(\Q)_+ \backslash G(\A_{f}) / K^p$ (by \cite[Theorem 5.1]{bo}). Moreover, each connected component of $\mathcal{X}_{K^p}^{\ast}(G,X)$ is isomorphic to $\mathcal{X}_{g K^p g^{-1}}^{\ast}(G,X)^0$ for some $g \in G(\A^p_f)$, and in particular is perfectoid. By Lemma \ref{groupspread}, we deduce that $\mathcal{X}_{K^p}^{\ast}(G,X)$ is a perfectoid space, as desired.
\end{proof}

We also need to work with connected Shimura varieties. Let $(G,X^+)$ be a connected Shimura datum. If $\Gamma \subset G(\Q)_+$ is an arithmetic subgroup, then the quotient $\Gamma \backslash X^+$ is the analytification of a connected normal quasiprojective complex variety, defined uniquely up to unique isomorphism, which we denote by $Sh_{\Gamma}(G,X^+)$. Again, this has a canonical minimal compactification $Sh_{\Gamma}^{\ast}(G,X^+)$, which is a connected normal projective variety. If $\Gamma$ is torsion-free, then $Sh_{\Gamma}(G,X^+)$ is smooth. Again, we denote the associated rigid analytic spaces over $C$ by $\mathcal{X}_{\Gamma}^{\ast}(G,X^+)$, etc.

\begin{defi} 
We say a connected Shimura datum $(G,X^+)$ satisfies Property $\mathcal{P}$ if for every arithmetic subgroup $\Gamma \sub G^{ad}(\Q)^+$, the diamond 
\[ 
\mathcal{X}_{\Gamma,\infty}^{\ast}(G,X^+) := \varprojlim_{K_p \subset G(\Qp)\,\mathrm{open\,compact}}  \mathcal{X}_{\Gamma \cap K_p}^{\ast}(G,X^+)^{\lozenge}
\] 
is a perfectoid space.
\end{defi}

In this statement, recall our notational convention that $\Gamma \cap K_p$ is shorthand for $\Gamma \cap (G(\mathbb{A}_{f}^{p})K_p)$ (cf. the discussion following Corollary \ref{hodgetypeperfectoidgen}).

\begin{prop}\label{going up}
Let $(G,X)$ be a Shimura datum or a connected Shimura datum. Suppose that $(G^{ad},X^+)$ satisfies Property $\mathcal{P}$. Then $(G,X)$ satisfies Property $\mathcal{P}$.
\end{prop}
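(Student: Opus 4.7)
The plan is to exploit the central isogeny $G \to G^{ad}$: at every finite level it induces a finite morphism between the relevant minimally compactified Shimura varieties, so Lemma \ref{twotowers} transports perfectoidness from the adjoint tower back to the tower for $(G,X)$.

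The first step is to reduce the Shimura-datum case to the connected-Shimura-datum case. If $(G,X)$ is a Shimura datum, Proposition \ref{going sideways} reduces Property $\mathcal{P}$ to showing that each connected component $\mathcal{X}_{K^p}^{\ast}(G,X)^0$ is perfectoid. Unwinding the adelic component decomposition, one checks that such an infinite-level component is a diamond of the form $\mathcal{X}_{\Gamma, \infty}^{\ast}(G, X^+)$ for a suitable arithmetic subgroup $\Gamma \subset G^{ad}(\Q)^+$ determined by the chosen component and by $K^p$. It thus suffices to treat the connected case.

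For the connected case, fix an arithmetic subgroup $\Gamma \subset G^{ad}(\Q)^+$, and, for each open compact $K_p \subset G(\Qp)$, write $K_p^{ad}$ for its image in $G^{ad}(\Qp)$. Since $G$ is semisimple (it occurs in a connected Shimura datum), the isogeny $G \to G^{ad}$ is central with finite kernel, and the natural morphism
\[
\mathcal{X}_{\Gamma \cap K_p}^{\ast}(G, X^+) \longrightarrow \mathcal{X}_{\Gamma \cap K_p^{ad}}^{\ast}(G^{ad}, X^+)
\]
is finite: on the open Shimura variety it is a quotient by a finite group, and it extends to a finite morphism of the minimal compactifications by normality together with the standard Proj description. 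Compiling these maps as $K_p$ shrinks gives a morphism of cofiltered inverse systems of locally Noetherian adic spaces with finite morphisms and finite transition maps. Because $G(\Qp) \to G^{ad}(\Qp)$ has open image of finite index, $K_p^{ad}$ ranges over a cofinal system of open compact subgroups of $G^{ad}(\Qp)$ as $K_p$ does in $G(\Qp)$; hence the limit of the target tower is $\mathcal{X}_{\Gamma, \infty}^{\ast}(G^{ad}, X^+)$, which is perfectoid by hypothesis. Lemma \ref{twotowers} then yields that $\mathcal{X}_{\Gamma, \infty}^{\ast}(G, X^+)$ is perfectoid, as required.

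The main technical nuisance is identifying infinite-level components of $\mathcal{X}_{K^p}^{\ast}(G,X)$ with the objects appearing in Property $\mathcal{P}$ for $(G, X^+)$, and checking that the quotient on the open Shimura variety extends to a finite morphism on the minimal compactifications. Both points are routine given the normality of minimal compactifications and the functoriality of their Proj construction under finite group actions, and neither presents a conceptual obstacle.
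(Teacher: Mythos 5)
Your toolkit is the right one---reduce to connected components via Proposition \ref{going sideways}, use finite morphisms induced by $G \to G^{ad}$, and conclude with Lemma \ref{twotowers}---and your treatment of the connected case is fine. The gap is in the reduction step: the claim that the infinite-level component $\mathcal{X}^{\ast}_{K^p}(G,X)^0$ \emph{is} a diamond of the form $\mathcal{X}^{\ast}_{\Gamma,\infty}(G,X^+)$ for a single arithmetic $\Gamma \subset G^{ad}(\Q)^+$ is not a routine unwinding, and in general it holds at best up to a finite quotient. At finite level the component is the quotient of $X^{+,\ast}$ by $\pi(G(\Q)_+ \cap K^pK_p)$, where $\pi\colon G \to G^{ad}$, so your identification needs this system of subgroups to be mutually cofinal with a system $\{\Gamma \cap C_p\}$ cut out by congruence conditions at $p$ on one fixed $\Gamma$. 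One inclusion is trivial; the other requires that any $\gamma \in G(\Q)_+ \cap K^pK_{p,0}$ whose image in $G^{ad}(\Qp)$ is deeply congruent to $1$ can be moved into $K^pK_p$ by a \emph{rational} central element. For semisimple $G$ this follows by shrinking $K_{p,0}$ so that it meets the finite group $\ker(\pi)(\Qp)$ trivially, but the main case of the proposition has $G$ reductive, where $Z_G(\Qp)$ is infinite: strong approximation in $G^{der}$ produces elements of $G(\Q)_+ \cap K^pK_{p,0}$ congruent at $p$, to arbitrary depth, to a central element $z_p$ (e.g.\ a $p$-adic square root of a global unit) which need not be approximable by the available rational central elements, and then no central correction exists. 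This is precisely the discrepancy between images of congruence subgroups of $G$ and congruence conditions on the adjoint side that forces the finite-group machinery ($\Gamma'''$, $\Delta$, Proposition \ref{goodtowerproperties}(2)) in the proof of Proposition \ref{going down}; it cannot be waved away as ``unwinding the adelic component decomposition.''

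The repair is cheap and is exactly what the paper does: the two-sided identification is never needed, only the easy inclusion $\pi(G(\Q)_+\cap K^pK_p) \subseteq \Gamma \cap \pi(K_p)$ for a suitable congruence subgroup $\Gamma \subset G^{ad}(\Q)^+$. This yields finite morphisms $\mathcal{X}^{\ast}_{K^pK_p}(G,X)^0 \to \mathcal{X}^{\ast}_{\Gamma\cap\pi(K_p)}(G^{ad},X^+)$, compatible in $K_p$; since the groups $\pi(K_p)$ are cofinal among compact open subgroups of $G^{ad}(\Qp)$, the target tower has perfectoid limit by the hypothesis on $(G^{ad},X^+)$, and Lemma \ref{twotowers} concludes. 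Your connected-case argument is this same mechanism and goes through; it is the intermediate assertion ``component $=$ connected-datum object'' that should be dropped rather than proved.
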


\begin{proof} Let $\pi: G \to G^{ad}$ denote the natural map. When $(G,X)$ is a connected Shimura datum and $\G \sub G(\Q)^+$ is an arithmetic subgroup, then $\mc{X}_{\G,\infty}(G,X^+) = \mc{X}_{\pi(\G),\infty}(G^{ad},X^+)$ and the result follows, so let $(G,X)$ be a Shimura variety. By Proposition \ref{going sideways} it is enough to show that $\mathcal{X}_{K^p}^{\ast}(G,X)^0$ is perfectoid for any $K^p \sub G(\A_f^{p})$.  Let $\Gamma =  G^{ad}(\Q)^+ \cap K$ be a choice of congruence subgroup for some open compact subgroup $K \sub G^{ad}(\A_f)$ with the property that $\pi(K^p) \sub K \cap G^{ad}(\A_f^{p})$. Then for any open compact subgroup $K_p \sub G(\Qp)$, there is a natural finite morphism $\mathcal{X}_{K^p K_p}^{\ast}(G,X)^0 \to \mathcal{X}_{\Gamma \cap \pi(K_p)}^{\ast}(G^{ad},X^+)$. Moreover, these morphisms are compatible as $K_p$ varies, and the transition maps in the two towers are finite. Passing to the inverse limit over $K_p$, the result now follows from Lemma \ref{twotowers}.
\end{proof}



We now come to the key result in this subsection.

\begin{prop}\label{going down}
Let $(G,X)$ be a Shimura datum of Hodge type. Then the connected Shimura datum $(G^{ad}, X^+)$ satisfies Property $\mathcal{P}$.
\end{prop}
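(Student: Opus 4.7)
The plan is to deduce Property $\mathcal{P}$ for $(G^{ad}, X^+)$ from Property $\mathcal{P}$ for the Hodge-type datum $(G, X)$, which holds by Proposition \ref{hodgetypeperfectoid} combined with Proposition \ref{going sideways}. The central isogeny $\pi : G \to G^{ad}$ is the main bridge.

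Given an arbitrary arithmetic subgroup $\Gamma \subset G^{ad}(\Q)^+$, I would first invoke Proposition \ref{arithmetic contained in congruence} to dominate $\Gamma$ by a congruence subgroup $\Gamma' = G^{ad}(\Q)^+ \cap L^{ad}$, where $L^{ad} = L^{ad,p} \cdot L^{ad}_p \subset G^{ad}(\A_f)$ is open compact. The natural transition morphisms $\mathcal{X}^{\ast}_{\Gamma \cap K_p}(G^{ad}, X^+) \to \mathcal{X}^{\ast}_{\Gamma' \cap K_p}(G^{ad}, X^+)$ are finite, so by Lemma \ref{twotowers} the problem reduces to $\Gamma'$, and I may assume $\Gamma$ is of this congruence form. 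Next, I would choose an open compact $K^p \subset G(\A_f^p)$ satisfying $\pi(K^p) \subset L^{ad,p}$. Setting $\Gamma_{K^p K_p} := G(\Q)_+ \cap K^p K_p$, the natural finite maps
\[ \mathcal{X}^{\ast}_{K^p K_p}(G, X)^0 \longrightarrow \mathcal{X}^{\ast}_{\pi(\Gamma_{K^p K_p})}(G^{ad}, X^+) \]
fit into a morphism of towers as $K_p \subset G(\Qp)$ varies. The source tower has perfectoid inverse limit by Proposition \ref{hodgetypeperfectoid} combined with the implication $(1) \Rightarrow (2)$ in Proposition \ref{going sideways}, so Lemma \ref{twotowers} yields that
\[ \mathcal{Y}_\infty := \varprojlim_{K_p \subset G(\Qp)} \mathcal{X}^{\ast}_{\pi(\Gamma_{K^p K_p})}(G^{ad}, X^+) \]
is a perfectoid space.

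The hard part will be identifying $\mathcal{Y}_\infty$ with $\mathcal{X}^{\ast}_{\Gamma, \infty}(G^{ad}, X^+)$, because the central isogeny $\pi$ introduces two independent index-matching discrepancies. First, $\pi(G(\Qp))$ is merely an open subgroup of finite index in $G^{ad}(\Qp)$, so the family $(\pi(K_p))_{K_p}$ is cofinal among open subgroups of $\pi(G(\Qp))$ but not literally among all open compact subgroups of $G^{ad}(\Qp)$; one must restrict the target indexing system to the (still cofinal) subsystem of those $K^{ad}_p$ contained in $\pi(G(\Qp))$. Second, and more seriously, $\pi(\Gamma_{K^p K_p})$ is typically a \emph{proper} finite-index subgroup of $\Gamma \cap L^{ad,p}\pi(K_p)$, since $\pi : G(\Q) \to G^{ad}(\Q)$ is generally not surjective; the obstruction is controlled by Galois cohomology of the central kernel $Z(G) = \ker(\pi)$. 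I expect both issues to be resolved by the same recipe: the bounded finite-index mismatch between the two systems of arithmetic subgroups produces finite (in fact finite \'etale) morphisms between the corresponding Shimura-variety towers, and one further application of Lemma \ref{twotowers} to a common cofinal refinement yields the desired identification. The combinatorial bookkeeping required to carry this out carefully, while ensuring that at every stage one stays within towers to which Lemma \ref{twotowers} actually applies, is presumably the source of the technical complication flagged in the footnote to Theorem \ref{perfshim}.
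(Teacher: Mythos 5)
Your opening moves are on target and match the paper: reducing from an arbitrary arithmetic $\Gamma$ to a congruence subgroup via Proposition \ref{arithmetic contained in congruence} and Lemma \ref{twotowers}, and then using $\pi:G\to G^{ad}$ to produce a perfectoid space $\mathcal{Y}_\infty := \varprojlim_{K_p}\mathcal{X}^{\ast}_{\pi(\Gamma_{K^pK_p})}(G^{ad},X^+)$ out of the Hodge-type tower (in fact the map to it from $\mathcal{X}^\ast_{K^pK_p}(G,X)^0$ is an isomorphism, not merely finite, since the kernel $Z(G)$ acts trivially on $X^+$).

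The genuine gap is in the final step. As you correctly observe, $\pi(\Gamma_{K^pK_p})$ is in general a \emph{proper} finite-index subgroup of the corresponding group $\Gamma\cap K^{ad}_p$ on the $G^{ad}$-side. This means the finite morphisms between minimally compactified connected Shimura varieties run $\mathcal{Y}_\infty \to \mathcal{X}^{\ast}_{\Gamma,\infty}(G^{ad},X^+)$: from the spaces associated with the \emph{smaller} arithmetic groups to those associated with the \emph{larger} ones. Lemma \ref{twotowers} only lets you deduce perfectoidness of the \emph{source} of such a morphism from perfectoidness of the \emph{target}; it cannot be used ``one further time'' to pass from the perfectoid $\mathcal{Y}_\infty$ to the desired $\mathcal{X}^{\ast}_{\Gamma,\infty}(G^{ad},X^+)$. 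Your fallback, a ``common cofinal refinement,'' does not rescue this: first, the two systems of arithmetic subgroups are not mutually cofinal (the images $\pi(\Gamma_{K^pK_p})$ are generally not congruence subgroups of $G^{ad}$ — this is exactly the congruence-subgroup-problem obstruction); and second, even a common refinement mapping finitely to both would, via Lemma \ref{twotowers}, only show that the \emph{refinement's} limit is perfectoid, which puts you back in the same position.

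What is genuinely needed — and what the paper uses — is the quotient-by-finite-groups machinery of Theorem \ref{perfgpquotient}, packaged via Proposition \ref{goodtowerproperties}(2). The paper's proof replaces $\pi(\Gamma'')$ by the finite-index \emph{normal} subgroup $\Gamma''' = \bigcap_{\gamma\in\Gamma/\pi(\Gamma'')}\gamma\pi(\Gamma'')\gamma^{-1}$ of $\Gamma$, shows via two applications of Lemma \ref{twotowers} (i.e.\ Proposition \ref{goodtowerproperties}(1)) that the $\Gamma'''$-tower is a good tower, and then identifies $\mathcal{X}^{\ast}_{\Gamma,\infty}(G^{ad},X^+)$ with the quotient of the perfectoid $\Gamma'''$-limit by the eventually-constant finite group $\Delta_n = (\Gamma'''\cap\pi(K^{der}_{p,n}))\backslash(\Gamma\cap\pi(K^{der}_{p,n}))$, invoking Proposition \ref{goodtowerproperties}(2). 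Your proposal does not invoke Theorem \ref{perfgpquotient} at all, and without it there is no way to pass ``upward'' from a small arithmetic group to a larger one.
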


\begin{proof}  We start by proving that 
\[ 
\varprojlim_{K_p \subset G^{ad}(\mathbb{Q}_p)} \mathcal{X}^{\ast}_{\Gamma \cap K_p}(G^{ad},X^+)^{\lozenge}
\] 
is a perfectoid space when $\G \sub G^{ad}(\Q)^+$ is a congruence subgroup. Let $\pi: G \to G^{ad}$ denote the natural map. Choose a congruence subgroup $\Gamma' = K \cap G(\Q)_+ \sub G(\Q)_{+}$ with $\pi(\Gamma') \sub \Gamma$, and set $\Gamma'' = \Gamma' \cap G^{der}(\Q)$, so $\Gamma''$ is also a congruence subgroup. Choose a cofinal descending family of open compact subgroups 
\[
K_{p,0} \supseteq K_{p,1} \supseteq \cdots \supseteq K_{p,n} \supseteq \cdots
\]
in $G(\Qp)$, and write $K_{p,n}^{der} = K_{p,n} \cap G^{der}(\Qp)$. Without loss of generality, we can assume that $K_{p,0}^{der} \cap Z_G(\Qp) = \{ 1 \}$ and that $\Gamma' \sub K_{p,0}$, so then $\Gamma'' \sub K_{p,0}^{der}$ and $ \Gamma'' \cap Z_G(\Qp) = \{ 1 \}$, and the map $\pi$ induces isomorphisms $\pi(\Gamma'' \cap K_{p,n}) = \pi(\Gamma'' \cap K_{p,n}^{der}) = \pi(\Gamma'') \cap \pi(K_{p,n}^{der})$. Moreover, the inclusion $\Gamma'' \sub \Gamma'$ induces a natural map of towers 
\[ 
(\mathcal{X}^{\ast}_{\pi(\Gamma'' \cap K_{p,n})}(G^{ad},X^+))_{n \geq 0} \to (\mathcal{X}^{\ast}_{K \cap K_{p,n}}(G,X))_{n \geq 0} 
\] 
where the map at every level $n$ is finite. By Corollary \ref{hodgetypeperfectoidgen}, the target of this map is a good tower.

\medskip

Now define $\Gamma''' = \cap_{\gamma \in \Gamma / \pi(\Gamma'')} \gamma \pi(\Gamma'') \gamma^{-1}$. By design, $\Gamma'''$ is an arithmetic subgroup of $G^{ad}(\Q)^+$, and is a normal subgroup of $\Gamma$ with finite index. Since $\Gamma''' \cap \pi(K_{p,n}^{der})$ is of finite index in $\pi(\Gamma'') \cap \pi(K_{p,n}^{der}) = \pi(\Gamma'' \cap K_{p,n})$, we get another natural map of towers 
\[ 
(\mathcal{X}^{\ast}_{\Gamma''' \cap \pi(K_{p,n}^{der})}(G^{ad},X^+))_{n \geq 0} \to (\mathcal{X}^{\ast}_{\pi(\Gamma'' \cap K_{p,n})}(G^{ad},X^+))_{n \geq 0} 
\]
where the map at every level $n$ is finite. For any $n\geq 0$, $\Gamma''' \cap \pi(K_{p,n}^{der})$ is a normal finite-index subgroup of $\Gamma \cap \pi(K_{p,n}^{der})$. Set $\Delta_n = (\Gamma''' \cap \pi(K_{p,n}^{der}) )\backslash (\Gamma \cap \pi(K_{p,n}^{der}))$, so $\Delta_n$ is a finite group and the natural maps $\Delta_{n+1} \to \Delta_{n}$ are injective. Write $\Delta = \varprojlim_{n} \Delta_n$, so $\Delta = \Delta_n$ for all sufficiently large $n$. Then $\Delta$ operates naturally on the tower $(\mathcal{X}^{\ast}_{\pi(\Gamma''') \cap \pi(K_{p,n}^{der})}(G^{ad},X^+))_{n \geq 0}$, and $\mathcal{X}^{\ast}_{\Gamma''' \cap \pi(K_{p,n}^{der})}(G^{ad},X^+)/\Delta \cong \mathcal{X}^{\ast}_{\Gamma \cap \pi(K_{p,n}^{der})}(G^{ad},X^+)$ for all sufficiently large $n$.

\medskip

Summarizing the situation so far, we have a diagram of towers
\[ \xymatrix{(\mathcal{X}^{\ast}_{\pi(\Gamma''') \cap \pi(K_{p,n}^{der})}(G^{ad},X^+))_{n \geq 0} \ar[r] \ar[d]& (\mathcal{X}^{\ast}_{\pi(\Gamma'' \cap K_{p,n})}(G^{ad},X^+))_{n \geq 0} \ar[r] &  (\mathcal{X}^{\ast}_{K \cap K_{p,n}}(G,X))_{n \geq 0} \\
(\mathcal{X}^{\ast}_{\Gamma \cap \pi(K_{p,n}^{der})}(G^{ad},X^+))_{n \geq 0}
}
\]
where all the morphisms at any given level $n$ are finite. We've already observed that the upper-right tower is a good tower, so by two applications of Proposition \ref{goodtowerproperties}(i), we deduce that the upper-left tower is a good tower. Since $\Delta$ operates naturally on the upper-left tower and $\mathcal{X}^{\ast}_{\Gamma''' \cap \pi(K_{p,n}^{der})}(G^{ad},X^+)/\Delta \cong \mathcal{X}^{\ast}_{\Gamma \cap \pi(K_{p,n}^{der})}(G^{ad},X^+)$ for all sufficiently large $n$, we may apply Proposition \ref{goodtowerproperties}(ii) to deduce that $\mathcal{X}^{\ast}_{\Gamma''',\infty}(G^{ad},X^+) / \Delta$ is a perfectoid space and that $\mathcal{X}^{\ast}_{\Gamma''',\infty}(G^{ad},X^+) / \Delta \cong  \varprojlim_n \mathcal{X}^{\ast}_{\Gamma''' \cap \pi(K_{p,n}^{der})}(G^{ad},X^+)/\Delta$. But  $\varprojlim_n \mathcal{X}^{\ast}_{\Gamma''' \cap \pi(K_{p,n}^{der})}(G^{ad},X^+)/\Delta \cong \varprojlim_n \mathcal{X}^{\ast}_{\Gamma \cap \pi(K_{p,n}^{der})}(G^{ad},X^+) = \mathcal{X}^{\ast}_{\Gamma,\infty}(G^{ad},X^+)$, so we conclude that $ \mathcal{X}^{\ast}_{\Gamma,\infty}(G^{ad},X^+)$ is a perfectoid space, as desired. This finishes the proof when $\G$ is a congruence subgroup.

\medskip

Now assume that $\G \sub G^{ad}(\Q)^+$ is an arithmetic subgroup. By Propositions \ref{arithmetic contained in congruence} and \ref{deligne}, there is a congruence subgroup $\G^\prime$ such that $\G \sub \G^\prime \sub G^{ad}(\Q)^+$. Then 
\[
\left( \mc{X}^{\ast}_{\G \cap K_p}(G^{ad},X^+) \right)_{K_p \sub G^{ad}(\Qp)} \to \left( \mc{X}^{\ast}_{\G^\prime \cap K_p}(G^{ad},X^+) \right)_{K_p \sub G^{ad}(\Qp)}
\]
is map of towers with finite transition maps, and by above $\mc{X}_{\G^\prime,\infty}(G^{ad},X^+)$ is perfectoid. By Lemma \ref{twotowers} $\mc{X}_{\G,\infty}(G^{ad},X^+)$ is perfectoid, as desired.
\end{proof}

We may now summarize our results in this section in the following theorem.

\begin{theo}\label{main p-adic}
Let $(G,X)$ be a Shimura datum (resp. a connected Shimura datum) of pre-abelian type. Then, for any compact open subgroup $K^p \sub G(\A_f)$ (resp. arithmetic subgroup $\G \sub G^{ad}(\Q)^+$), the diamond $\mc{X}_{K^p}^\ast(G,X)$ (resp. $\mc{X}_{\G,\infty}^\ast(G,X)$) is a perfectoid space.
\end{theo}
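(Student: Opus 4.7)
My plan is to deduce Theorem \ref{main p-adic} immediately from Propositions \ref{going down} and \ref{going up} by unpacking the definition of pre-abelian type. Let $(G,X)$ be as in the theorem. In both cases (Shimura datum or connected Shimura datum), the definition of pre-abelian type supplies a Shimura datum $(\tG, \tX)$ of Hodge type together with an isomorphism of connected Shimura data $(G^{ad}, X^+) \cong (\tG^{ad}, \tX^+)$; for the Shimura-datum case, this uses that $(G^{der})^{ad} = G^{ad}$, so the isomorphism furnished by the pre-abelian type condition on $(G^{der}, X^+)$ is already of the desired form.

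Next, I would invoke Proposition \ref{going down} applied to $(\tG, \tX)$: this yields that the connected Shimura datum $(\tG^{ad}, \tX^+)$ satisfies Property $\mathcal{P}$. Since Property $\mathcal{P}$ for a connected Shimura datum $(H, Y^+)$ is defined purely in terms of the adjoint datum $(H^{ad}, Y^+)$ and a choice of arithmetic subgroup of $H^{ad}(\Q)^+$, it transports across the isomorphism above, so $(G^{ad}, X^+)$ also satisfies Property $\mathcal{P}$. Finally, applying Proposition \ref{going up} to $(G,X)$ — whose hypothesis is precisely that $(G^{ad}, X^+)$ has Property $\mathcal{P}$ — gives Property $\mathcal{P}$ for $(G,X)$, which is the desired statement.

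There is no real obstacle in the present argument: the genuine content has already been absorbed into the two auxiliary propositions. The hardest step was Proposition \ref{going down} (the Hodge-type case for the adjoint datum), where one had to combine Scholze's perfectoidness of the minimally compactified Siegel tower with the good-tower formalism and the finite-group quotient result Theorem \ref{perfgpquotient} in order to descend from the level of a chosen Hodge-type cover to the level of the adjoint connected datum. Relative to that, the step from $(G^{ad}, X^+)$ up to $(G,X)$ in Proposition \ref{going up} is essentially formal — a finite-morphism limit argument via Lemma \ref{twotowers} — and the passage from $(\tG^{ad}, \tX^+)$ to $(G^{ad}, X^+)$ in the pre-abelian type definition is a tautology at the level of connected Shimura data. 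Thus the theorem follows by concatenating these three inputs.
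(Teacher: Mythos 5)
Your proposal is correct and matches the paper's own proof essentially verbatim: both arguments choose a Hodge-type datum $(\tG,\tX)$ furnished by the pre-abelian type hypothesis, apply Proposition \ref{going down} to get Property $\mathcal{P}$ for $(\tG^{ad},\tX^+)\cong(G^{ad},X^+)$, and then apply Proposition \ref{going up} to $(G,X)$. Your extra remark that Property $\mathcal{P}$ transports across the isomorphism of adjoint connected Shimura data is a minor elaboration of a step the paper leaves implicit, but the substance is identical.
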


\begin{proof}
Choose a Shimura datum $(G_1,X_1)$ of Hodge type with a central isogeny $G_1^{der} \to G^{ad}$ inducing an isomorphism $(G_1^{ad},X_1^+) \cong (G^{ad},X^+)$. By Proposition \ref{going down}, $(G^{ad},X^+)$ satisfies property $\mc{P}$, and then Proposition \ref{going up} implies that $(G,X)$ satisfies property $\mc{P}$, as desired.
\end{proof}

This has the following consequence for compactly supported completed cohomology, which may be viewed as a generalization of \cite[Corollary 4.2.2]{scho}.

\begin{coro}\label{compactly supported vanishing}
Let $(G,X)$ be a connected Shimura datum of pre-abelian type. Then Conjecture \ref{connected vanishing} for $?=c$ holds for $G$.
\end{coro}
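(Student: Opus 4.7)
The plan is to combine Theorem \ref{main p-adic}, which provides a perfectoid minimal compactification $\mc{X}^\ast_{\G,\infty}(G,X)$ with Zariski-closed boundary and a Hodge--Tate period map $\pi_{HT}: \mc{X}^\ast_{\G,\infty}(G,X) \to \mathscr{F}\!\ell$ to the associated flag variety of complex dimension $d := \dim_\C X$, with the strategy Scholze used to deduce \cite[Corollary 4.2.2]{scho} from his perfectoidness theorem in the Hodge type case.

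First I would translate completed compactly supported cohomology into \'etale cohomology at infinite level. By (the compactly supported analogue of) Proposition \ref{cc at finite level} combined with the analytic--\'etale comparison for $\Fp$-coefficients,
\[
\tH^n_c(\wh{X},\Fp) \cong \varinjlim_{K_p} H^n_c(\mc{X}_{\G \cap K_p}(G,X), \Fp).
\]
Since $\Fp$-\'etale cohomology commutes with cofiltered limits of qcqs adic spaces having perfectoid limit (Theorem \ref{main p-adic}), and since the boundary is Zariski closed, this identifies with $H^n(\mc{X}^\ast_{\G,\infty}(G,X), j_!\Fp)$, where $j$ is the open immersion of $\mc{X}_{\G,\infty}(G,X)$ into $\mc{X}^\ast_{\G,\infty}(G,X)$.

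Next I would run the Leray spectral sequence for $\pi_{HT}$. The crucial input is that $\pi_{HT}$ is \emph{affinoid}, i.e.\ $\mathscr{F}\!\ell$ can be covered by open affinoids whose $\pi_{HT}$-preimages in $\mc{X}^\ast_{\G,\infty}(G,X)$ are affinoid perfectoid. Combined with Scholze's almost vanishing of $\Fp$-cohomology on affinoid perfectoid spaces and Zariski closedness of the boundary (which allows $j_!\Fp$ to be analyzed via a distinguished triangle involving two pushforwards from affinoid perfectoids), this forces $R^q\pi_{HT,\ast}j_!\Fp$ to be almost zero for $q > 0$. The spectral sequence then collapses to $H^n(\mathscr{F}\!\ell, \pi_{HT,\ast}j_!\Fp)$, which vanishes for $n > d$ by an Artin-type bound using $\dim \mathscr{F}\!\ell = d$, and the almost vanishing upgrades to genuine vanishing by the usual primitive comparison argument.

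The main obstacle is verifying the affinoidness of $\pi_{HT}$ (and Zariski closedness of the boundary) in the pre-abelian type setting; both are due to Scholze in the Hodge type case. My strategy for propagating these properties is to trace through the construction of Proposition \ref{going down}, which builds $\mc{X}^\ast_{\G,\infty}(G,X)$ from a Hodge type perfectoid tower via a finite cover and a quotient by a finite group. Both operations preserve affinoidness of preimages, using Lemma \ref{twotowers} and Theorem \ref{perfgpquotient} respectively, provided one checks that the Hodge--Tate maps are compatible with the construction so that an open affinoid of $\mathscr{F}\!\ell$ pulls back along each step to a preimage covered by (pullbacks of) affinoid perfectoids in the Hodge type case.
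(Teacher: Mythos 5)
Your proposal follows essentially the same route as the paper, whose entire proof is the sentence: once perfectoidness of the tower is established (Theorem \ref{main p-adic}), ``the argument in the proof of [Sch15, Corollary 4.2.2] goes through verbatim.'' You are reconstructing that Scholzean argument and correctly identify its ingredients: the passage from completed compactly supported cohomology to $H^\ast_{\text{\'et}}$ of $j_!\Fp$ on the perfectoid compactification, affinoidness of $\pi_{HT}$, Zariski-closedness of the boundary, almost acyclicity on affinoid perfectoids, and the bound coming from covering $\mathscr{F}\!\ell$ by open affinoids.

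Two technical corrections to your write-up, neither of which is a genuine gap but which you would need to tidy before the argument runs. First, the claim that ``$R^q\pi_{HT,\ast}j_!\Fp$ is almost zero'' is not meaningful: almost mathematics is a statement about $\mathcal{O}_C/p$-modules, not $\Fp$-sheaves, whose higher cohomology on affinoid perfectoids need not vanish. The actual argument first applies the primitive comparison theorem, which gives an almost isomorphism $H^i(\mathcal{X}^\ast, j_!\Fp) \otimes_{\Fp} \mathcal{O}_C/p \cong^a H^i(\mathcal{X}^\ast, \mathcal{I}^+/p)$ (with $\mathcal{I}^+$ the ideal sheaf of the boundary), and only then controls the right-hand side. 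Second, the degree bound is not an Artin vanishing theorem for $\mathscr{F}\!\ell$ applied to $R\pi_{HT,\ast}j_!\Fp$; it is the \v{C}ech bound from covering the $d$-dimensional $\mathscr{F}\!\ell$ by $d+1$ affinoids and using almost acyclicity of $\mathcal{I}^+/p$ on each $\pi_{HT}^{-1}(U_i)$.

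Your last paragraph raises a legitimate point that the paper's own one-line proof does not explicitly address: the corollary cites only Theorem \ref{main p-adic} (perfectoidness), but Scholze's argument also uses the Hodge--Tate period map (and its affinoidness) together with the Zariski-closed boundary, and in the paper the construction of $\pi_{HT}$ for pre-abelian type appears only in the subsequent Theorem \ref{htmap}. Your plan of propagating these through the Hodge-type tower via Proposition \ref{going down}, Lemma \ref{twotowers}, and Theorem \ref{perfgpquotient} is exactly the right way to close this, and is in harmony with the paper's good tower machinery, though you would need to verify in particular that the finite group $\Delta$ in Proposition \ref{going down} can be made to preserve the relevant affinoid covers (or else take $\Delta$-invariants of cohomology and handle the finite group separately).
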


\begin{proof}
Note that the towers used to formulate Conjecture \ref{connected vanishing} correspond to the towers used in this section. Once we know that the towers of minimal compactifications are perfectoid in the limit (by Theorem \ref{main p-adic}), the argument in the proof of \cite[Corollary 4.2.2]{scho} (and \cite[Theorem 4.2.1]{scho}, on which it relies) goes through verbatim. Note that the boundary is strongly Zariski closed (which is need for the argument proving  \cite[Theorem 4.2.1]{scho}), since all Zariski closed sets are strongly Zariski closed by \cite[Remark 7.5]{bhatt-scholze}.
\end{proof}

\begin{rema}\label{comments on cs}
We give some remarks on the possibility of proving vanishing above the middle degree for $\wt{H}^\ast$ using perfectoid methods instead of the topological methods used in sections \ref{sec: cc} and \ref{sec: shimura}. In \cite{caraiani-scholze2}, Caraiani and Scholze prove that toroidal compactifications of certain unitary Shimura varieties are perfectoid in the limit and that the (\'etale) cohomology of this perfectoid space computes completed cohomology, which implies the desired vanishing (see \cite[Theorem 2.6.2, Lemma 4.6.2]{caraiani-scholze2}). The perfectoidness result relies on a result of Pilloni--Stroh \cite{pilloni-stroh} for Siegel modular varieties. It seems to us that these methods should extend directly to Shimura varieties of Hodge type. However, the more general case of abelian type is not clear to us. In particular, in the general abelian type case, it is not clear to us that the auxiliary cone decompositions can be chosen so that the toroidal tower becomes perfectoid at infinite level, while simultaneously ensuring that the transition maps over the boundary have the good behavior required for the argument in \cite{caraiani-scholze2}.

\medskip

We also note that the perfectoid methods do not directly give that $\wt{H}^i_c \to \wt{H}^i$ is an isomorphism or surjective in a range of degrees including the middle. In principle, however, there is a connection between the perfectoid method and the method of this paper. The result \cite[Lemma 4.6.2]{caraiani-scholze2}, which essentially goes back to Pink \cite{pink}, morally says that infinite level toroidal compactifications behave like Borel--Serre compactifications. Thus, one could get more detailed information from the perfectoid method by studying the map from the toroidal compactification to the minimal compactification, as in \cite{pink}. Morally, this should give the same information in the end as the topological method in this paper. However, in our opinion, our topological method is far more elementary and transparent, and far less technically demanding. 
\end{rema}

\bibliographystyle{alpha}
\bibliography{abelianbib}

\end{document}